\title{Non-semisimple CFT/TFT correspondence I: General setup}
\author{
	Aaron Hofer$^a$\qquad
	Ingo Runkel$^b$\\[0.3cm]
	 \\   
	$^a$\normalsize\slshape Max-Planck-Institut für Mathematik,\\
	\normalsize\slshape Vivatsgasse 7, 53111 Bonn, Germany \\
    \normalsize{\texttt{\href{mailto:hofer@mpim-bonn.mpg.de}{hofer@mpim-bonn.mpg.de}}}
    \\
    \\  
	$^b$\normalsize\slshape Fachbereich Mathematik, Universit\"{a}t Hamburg,\\
	\normalsize\slshape Bundesstra{\ss}e 55, 20146 Hamburg, Germany\\
    \normalsize{\texttt{\href{mailto:ingo.runkel@uni-hamburg.de}{ingo.runkel@uni-hamburg.de}}}
	}
\date{}
\begin{document}
\maketitle
\begin{abstract}
\noindent We extend the TFT construction of CFT correlators of \autocite{FFFS2002,FRSI,FjFRS} to so-called finite logarithmic CFTs for which the algebraic input data is no longer semisimple but still finite. More specifically, starting from the data of a chiral CFT given in the form of a not necessarily semisimple modular tensor category $\calc$ we use a three dimensional topological field theory
with surface defects based on the surgery TFT of \autocite{DGGPR19} to construct a full CFT as a braided monoidal oplax natural transformation.
    
We make our construction explicit in the example of the transparent surface defect, resulting in the so-called Cardy case. In particular, we consider topological line defects and their action on bulk fields in these logarithmic CFTs, providing a source of examples for non-invertible and non-semisimple topological symmetries.
\end{abstract}

\thispagestyle{empty}

\newpage

{\small
\tableofcontents
}

\newpage

\section{Introduction}
Even though $2$d CFTs have been studied by mathematicians for decades, a rigorous construction, in the form of a complete set of consistent correlation functions, still remains elusive for a large class of theories.
Instead of tackling the problem of constructing a $2$d CFT directly, the situation becomes more tractable by splitting the problem into a complex-analytic/algebro-geometric and a purely algebraic/topological part. For the class of so-called \emph{rational} CFTs the work of \autocite{FFFS2002,FRSI,FjFRS} completely solved the second part, using three dimensional topological field theories (TFTs), over twenty years ago. Very recently, a great step towards combining the second with the first part, again in the rational setting, was achieved in \autocite{DW25modfuncVOA}.  

In this article, we will present a rigorous solution of the second part for the class of so-called \emph{finite logarithmic} CFTs, which includes all rational theories as well. More precisely, we will extend the TFT construction of rational CFT correlators of \autocite{FFFS2002,FRSI,FjFRS}, to the setting where the algebraic input data is no longer semisimple but still finite. The next section is devoted to review the general ideas behind the TFT construction of CFT correlators and motivate the need to go beyond semisimplicity.  

We also want to mention that a handful of full $2$d CFTs have been constructed directly using probabilistic methods, see e.g.\ \autocite{GKR24LiouRev} and references therein for a review of recent work in this direction. These theories are not covered by our approach as the corresponding algebraic structures contain an infinite number of simple objects.
\subsection{Background and previous results}
The complex-analytic part corresponds to the study of \emph{chiral} CFTs which are defined on complex curves and there are several approaches to study chiral CFTs rigorously. For our purposes the one based on vertex operator algebras (VOAs) will be the most suitable. In this setting a chiral CFT is encoded by a VOA $\calV$, its representation category $\mathrm{Rep}(\calV)$, and a modular functor $\mathrm{Bl}_\calV$, usually constructed from the VOA. A modular functor is, colloquially speaking, a systematic assignment of mapping class group representations to surfaces which is compatible with gluing along boundaries. In the context of chiral CFTs the modular functor encodes the monodromy of the so-called conformal block spaces, which are solution spaces of certain differential equations, which the correlation functions need to obey \autocite{FrieShen87CFT,Segal1988cftmodfunc,MS1989cft,TUY1989CFT,Tillmann1998modfunc,AU2007ModFunc,FBZ2001VOA,DGT19confblocks,DGT19factori,GZ24converge,GZ25anConfBl}. 

In this setting, the surfaces are equipped with a complex structure and the conformal block spaces are expected to form a vector bundle with projectively flat connection over the moduli space of complex curves. The mapping class group action corresponds to the monodromy of this bundle by the Riemann-Hilbert correspondence.

In this article we will not discuss the complex-analytic part of the CFT construction further, see \autocite{FSWY23algCFT} for an overview and further references. Instead, we will focus on the second part, which is concerned with \emph{full} CFT, or more precisely with how to construct a full CFT out of a given chiral one. Full CFTs are defined on conformal manifolds, possibly with boundary. Moreover, one can further consider stratifications of the manifold. In this setting $1$-dimensional strata are used to describe (topological) line defects between possibly different full CFTs living on the $2$-strata. We will call such a $2$-manifold a \emph{world sheet}. Topological defects generalise the notion of ordinary symmetries and can be used to understand further phenomena, such as dualities, on a conceptual level \autocite{FFRS2007defects}.

Given a chiral CFT in the form of a VOA $\calV$, its representation category $\mathrm{Rep}(\calV)$, and its modular functor $\mathrm{Bl}_\calV$, a corresponding full CFT consists of two extra pieces of data, the \emph{field content}, and a \emph{consistent system of correlators}. The field content describes the state space of the full CFT on the interval as well as on the circle. These spaces are equipped with an action of $\calV$ for the interval or $\calV\otimes_\kk\calV$ for the circle, i.e.\ they are objects in $\mathrm{Rep}(\calV)$ or $\mathrm{Rep}(\calV\otimes_\kk\calV)$. A consistent system of correlators is an assignment of a world sheet $\frakS$ to an element in the conformal block space of its complex double $\mathrm{Bl}_\calV(\widehat{\frakS})$ such that certain invariance and gluing properties are satisfied. Crucially, the conformal structure of the world sheet $\frakS$ is already encoded in the modular functor $\mathrm{Bl}_\calV$. Accordingly, only the topology of $\frakS$ will be relevant which leads us to consider only the underlying \emph{topological world sheet}.

Typically, a chiral CFT does not fully determine a full CFT and one needs to specify an extra input datum. One way to obtain this extra input datum is via the connection to $3$d topological field theories (TFTs). Since the 80's it has been known that there is a close relationship between chiral CFTs and $3$d TFTs where the CFT is expected to be a boundary theory of the TFT \autocite{witten1989jones}.   
As mentioned above, this idea has been used to solve the second problem outlined above for the class of so-called rational CFTs in the series of papers \autocite{FFFS2002,FRSI,FRSII,FRSIII,FRSIV,FjFRS,FFS12}. For these CFTs the representation category $\calc := \mathrm{Rep}(\calV)$ of the VOA $\calV$ is a modular fusion category, i.e.\ a certain type of finitely \emph{semisimple} ribbon tensor category.
This type of category is precisely the algebraic input datum used in the construction of the $3$d surgery TFTs in \autocite{RT1991,Tu16}. It has long been conjectured, and recently been shown \autocite{DW25modfuncVOA}, that the topological modular functor constructed from the $3$d TFT reproduces the algebro-geometric one constructed directly from the VOA $\calV$ \autocite{FBZ2001VOA,AU2007ModFunc,DGT19factori,DGT19confblocks}. In particular, the spaces of conformal blocks of the chiral CFT are isomorphic to the state spaces of the $3$d TFT. From this we see that the correlators of the full CFT correspond to special states of the TFT, which means we should be able to encode them completely topologically.

These special states were constructed systematically in \autocite{FFFS2002,FRSI,FRSII,FRSIII,FRSIV,FjFRS,FFS12} by considering certain $3$-manifolds, the so-called \emph{connecting manifolds}. They identified the extra input datum needed with a symmetric special Frobenius algebra in $\calc$ and obtained the field content via the representation theory of this algebra inside $\calc$. Later it was pointed out in \autocite{KS11def}, and further refined in \autocite{FSV11defbicat}, that this algebra corresponds to a surface defect, i.e.\ an embedded $2$-manifold, in the TFT. 

In this article we extend this ``TFT construction of CFT correlators'' to the class of \emph{finite logarithmic} CFTs in the sense of \autocite{CG17logCFT}. In this setting, the category of VOA-modules $\calc$ is expected to still give a modular tensor category, meaning that it is no longer semisimple, but still finite. These categories naturally generalise modular fusion categories and, in particular, are still rigid. Moreover, we will only need the modular tensor category $\calc$ itself without any knowledge of a VOA realisation. 
For these categories a $3$d TFT with embedded ribbon graphs
\begin{equation}
        \rmV_\calc \colon \bord_{3,2}^{\chi}(\calc) \to \vect
\end{equation}
has been constructed in \autocite{DGGPR19} generalising the $3$d TFTs of \autocite{RT1991,Tu16}. In \autocite{HR2024modfunc} we used this TFT to construct a (full) topological modular functor in the form of a symmetric monoidal $2$-functor
\begin{equation}
\mathrm{Bl}_{\calc}\colon \bord^{\oc}_{2+\epsilon,2,1} \to \cat{P}\mathrm{rof}_{\mathbbm{k}}^{\coend \mathrm{ex}}
\end{equation} 
where $\bord^{\oc}_{2+\epsilon,2,1}$ is the open-closed bordism $(2,1)$-category and $\cat{P}\mathrm{rof}_{\mathbbm{k}}^{\coend \mathrm{ex}}$ the $2$-category of left exact profunctors, see \Cref{subsec:modfunc} for details. 

Apart from the TFT construction a purely $2$-dimensional construction of full rational CFTs via string-net models has been achieved in \autocite{FSY2023RCFTstring1}. 

\medskip

Beyond semisimplicity, the situation is less well understood. 
Early algebraic descriptions based on on the idea of reconstructing the bulk theory from the boundary theory in logarithmic CFTs were given in \autocite{GR08boundCFT,GRW10triplet,GRW14centre}
A construction of correlators in the setting of 
ribbon Hopf algebras has been carried out in \autocite{FSS12FrobHopf,FSS13CardyCartan,FSS14Hopfgenus}. 
Some of the idea's developed there have been extended to general non-semisimple modular tensor categories to study full CFTs in a model-independent manner \autocite{FS17consCorrelators,FGSS18cardy,FS21CFTmodulecats}, see also \autocite{FS19FLCFT} for a concise review of some of these aspects.

More recently an operadic approach was used to classify and construct topological modular functors \autocite{BW22modfunc,MSWY23drinmodfunc} as well as full CFTs 
\autocite{Woike24microcosm,woike2025constructioncorrelatorsfiniterigid}
(with one fixed boundary condition and without defects).
We expect the operadic approach to be equivalent to the one based on symmetric monoidal $2$-categories and 3d TFT used here and in \autocite{HR2024modfunc}, but a detailed comparison has not yet been worked out.
    
The approach based on symmetric monoidal $2$-categories is
natural from the perspective of $3$d TFTs and it has the added benefit of implementing boundary conditions as well as topological defects for the full CFTs easily. In particular, it is connected with the \emph{symmetry TFT} framework which has recently gained popularity \autocite{PSV19nonabdualgauge,KLWZZ20catSym,GK20orbigroid,ABGHS21symTFT,FMT22symTFT,KOZ22symTFT,BSN23genchargesymTFT}, see also \autocite{Schafer-Nameki23ictplecture,BBFLGPT,CDR24topdefs} for further references. 
The precise connection between the CFT/TFT correspondence and the symTFT framework is via the so-called folding trick and is explained in \autocite[Sec.\,3.5]{CDR24topdefs} as well as \autocite[Sec.\,1.1.4]{hofer25thesis}.

\subsection{Statement of the main results}\label{subsec:mainres}
Since surface defects play a key role in the TFT construction of CFT correlators we need an extension the $3$d TFT $\rmV_\calc$ which includes surface defects.
To this end let 
\begin{equation}
        \rmZ_\calc \colon \bord_{3,2}^{\chi,\mathrm{ def}}(\DD_\calc) \to \vect
\end{equation}
be a $3$d defect TFT in the sense of \autocite{CMS16deftri,CRS17deforbi}.
extending the $3$d TFT with embedded ribbon graphs
\begin{equation}
        \rmV_\calc \colon \bord_{3,2}^{\chi}(\calc) \to \vect
\end{equation}
of \autocite{DGGPR19}. This roughly means that we want to view $\bord_{3,2}^{\chi}(\calc)$ as a subcategory of $\bord_{3,2}^{\chi,\mathrm{ def}}(\DD_\calc)$ and the restriction of $\rmZ_\calc$ to this subcategory should be naturally isomorphic to $\rmV_\calc$, see \Cref{def:deftftextension} for the precise statement.
We will see that the construction of CFT correlators can be achieved independently of the specifics of the defect TFT $\rmZ_\calc$ as long as certain algebraic conditions are satisfied, see \Cref{sec:tftconstcft}. 

Using the defect data $\DD_\calc$ we define the $(2,1)$-category $\WS(\DD_\calc)$ of \emph{topological world sheets} consisting of compact, stratified and labelled $1$-manifolds as objects,
stratified and labelled bordisms with corners  as 1-morphisms
between these, and isotopy classes of diffeomorphisms as 2-morphisms.
This is done by introducing stratifications to the open-closed bordism $(2,1)$-category $\bord^{\oc}_{2+\epsilon,2,1}$ and comes with an obvious forgetful $2$-functor $\WS(\DD_\calc) \to \bord^{\oc}_{2+\epsilon,2,1}$, see \Cref{subsec:topworld} for details. An example of a $1$-morphism
in $\WS(\DD_\calc)$ is shown in Figure~\ref{fig:intro-wsh-example}.

\begin{figure}[t]
    \centering
        \begin{tikzpicture}[baseline=0cm]
    \fill[white!80!magenta,opacity=0.7]
    {[rounded corners] (-0.75,-2) -- (-0.8,-0.8) -- (-1,-0.5) -- (-2,0.5) -- (-2.2,0.8) -- (-2.25,2)} --
    (-2.25,2) arc (180:0:0.75 and 0.5) --
    (-0.75,2) arc (180:360:0.75 and 1) --
    (0.75,2) arc (180:260:0.75 and 0.5) 
    {[rounded corners] ([xshift=1.5cm,yshift=2cm] 260:0.75 and 0.5) --([xshift=1.5cm,yshift=1.8cm] 260:0.75 and 0.5) -- ([yshift=-1.8cm] 110:0.75 and 1.5) -- ([yshift=-2cm] 110:0.75 and 1.5)} --
    ([yshift=-2cm] 110:0.75 and 1.5) arc (110:180:0.75 and 1.5);
    \fill[white!70!magenta,opacity=0.7]
    ([xshift=1.5cm,yshift=2cm] 70:0.75 and 0.5) arc (70:295.9:0.75 and 0.5);
    \fill[white!80!cyan,opacity=0.8]
    ([xshift=1.5cm,yshift=2cm] 70:0.75 and 0.5) arc (70:-64.1:0.75 and 0.5);
    \fill[white!70!magenta,opacity=0.7]
    (0.75,-2) arc (0:180:0.75 and 1.5) --
    (-0.75,-2) arc (180:0:0.75 and 0.5); 
    \fill[white!50!magenta,opacity=0.7]
    {[rounded corners] (1.3,-0.2) -- (0.8,-0.75) -- (0.75,-2)} --
    (0.75,-2) arc (0:50:0.75 and 1.5)  
    {[rounded corners] ([yshift=-2cm] 50:0.75 and 1.5) --
    ([xshift=0.15cm,yshift=-1.9cm] 50:0.75 and 1.5) -- (0.9,-0.3) -- (1.3,-0.2)} 
    ;
    \fill[white!60!magenta,opacity=0.7]
    {[rounded corners] (-0.75,-2) -- (-0.8,-0.8) -- (-1,-0.5) -- (-2,0.5) -- (-2.2,0.8) -- (-2.25,2)} --
    (-2.25,2) arc (180:360:0.75 and 0.5) --
    (-0.75,2) arc (180:360:0.75 and 1) --
    (0.75,2) arc (180:260:0.75 and 0.5) 
    {[rounded corners] ([xshift=1.5cm,yshift=2cm] 260:0.75 and 0.5) --([xshift=1.5cm,yshift=1.8cm] 260:0.75 and 0.5) -- ([yshift=-1.8cm] 110:0.75 and 1.5) -- ([yshift=-2cm] 110:0.75 and 1.5)} --
    ([yshift=-2cm] 110:0.75 and 1.5) arc (110:180:0.75 and 1.5)
    ;
    \fill[white!60!cyan,opacity=0.7]
    {[rounded corners] (1.4,-0.1) --  (1.4,0.1) -- ([xshift=1.6cm,yshift=0.7cm] 70:0.75 and 0.5) --([xshift=1.5cm,yshift=2cm] 295.9:0.75 and 0.5)} --([xshift=1.5cm,yshift=2cm] 295.9:0.75 and 0.5) arc (295.9:360:0.75 and 0.5) {[rounded corners] (2.25,2) --  (2.2,0.8) -- (2,0.5) -- (1.4,-0.1)}
    ;
    \fill[white!40!cyan,opacity=0.6]
    [rounded corners] ([xshift=1.5cm,yshift=2cm] 295.9:0.75 and 0.5) -- ([xshift=1.6cm,yshift=0.7cm] 70:0.75 and 0.5) --  (1.4,0.1) [sharp corners] --   (1.4,-0.1) -- (1.3,-0.2) [rounded corners] -- (0.9,-0.3) -- ([xshift=0.15cm,yshift=-1.9cm] 50:0.75 and 1.5)  [sharp corners] -- ([yshift=-2cm] 50:0.75 and 1.5) arc (50:110:0.75 and 1.5) [rounded corners] -- ([yshift=-1.8cm] 110:0.75 and 1.5) -- ([xshift=1.5cm,yshift=1.8cm] 260:0.75 and 0.5) [sharp corners] --([xshift=1.5cm,yshift=2cm] 260:0.75 and 0.5) arc (260:295.9:0.75 and 0.5);
    \begin{scope}[very thick,decoration={
                      markings,
                      mark=at position 0.45 with {\arrow{>}}}
                     ] 
    \draw[thick,string-red!70, line cap= round, dashed,postaction={decorate}] {[rounded corners] (1.4,-0.1) -- (1.4,0.1)  -- ([xshift=1.6cm,yshift=0.7cm] 70:0.75 and 0.5) -- ([xshift=1.5cm,yshift=2cm] 70:0.75 and 0.5)};
    \end{scope}
    \draw[line cap=round]{[rounded corners] (0.75,-2) -- (0.8,-0.8) -- (1,-0.5) -- (2,0.5) -- (2.2,0.8) -- (2.25,2)}
    {[rounded corners] (-0.75,-2) -- (-0.8,-0.8) -- (-1,-0.5) -- (-2,0.5) -- (-2.2,0.8) -- (-2.25,2)}
    (0.75,2) arc (0:-180:0.75 and 1);
    \begin{scope}[very thick,decoration={
                      markings,
                      mark=at position 0.5 with {\arrow{>}}}
                     ] 
    \draw[thick,string-green,postaction={decorate}] {[rounded corners]([xshift=1.5cm,yshift=2cm] 260:0.75 and 0.5) -- ([xshift=1.5cm,yshift=1.8cm] 260:0.75 and 0.5)-- ([yshift=-1.8cm] 110:0.75 and 1.5) -- ([yshift=-2cm] 110:0.75 and 1.5) };
    \end{scope}
    \draw[thick,string-green] {[rounded corners] ([yshift=-2cm] 110:0.75 and 1.5) -- ([yshift=-1.8cm] 110:0.75 and 1.5) -- ([xshift=1.5cm,yshift=1.8cm] 260:0.75 and 0.5) --([xshift=1.5cm,yshift=2cm] 260:0.75 and 0.5)};
    \draw[thick,string-red,line cap = round] {[rounded corners] ([yshift=-2cm] 50:0.75 and 1.5)  --([xshift=0.15cm,yshift=-1.9cm] 50:0.75 and 1.5) -- (0.9,-0.3) -- (1.3,-0.2)};
    \draw[thick, string-red, line cap= round] ([xshift=1.5cm,yshift=2cm] 70:0.75 and 0.5) --  ([xshift=1.5cm,yshift=2cm] 295.9:0.75 and 0.5);
    \draw[thick,white!30!magenta] (-1.5,2) ellipse (0.75 and 0.5)
    (0.75,-2) arc (0:180:0.75 and 0.5)
    ([xshift=1.5cm,yshift=2cm] 70:0.75 and 0.5) arc (70:260:0.75 and 0.5);
    \draw[thick,cyan]([xshift=1.5cm,yshift=2cm] 70:0.75 and 0.5) arc (70:-100:0.75 and 0.5);
    \begin{scope}[very thick,decoration={
                      markings,
                      mark=at position 0.502 with {\arrow{>}}}
                     ] 
    \draw[thick,line cap=round,string-violet!40!magenta,postaction={decorate}] (0.75,-2) arc (0:180:0.75 and 1.5);
    \draw[thick,line cap=round,string-blue!50!string-green,postaction={decorate}] ([yshift=-2cm] 110:0.75 and 1.5) arc (110:180:0.75 and 1.5);
    \end{scope}
    \begin{scope}[very thick,decoration={
                      markings,
                      mark=at position 0.7 with {\arrow{>}}}
                     ] 
    \draw[thick,line cap=round,string-violet,postaction={decorate}] ([yshift=-2cm] 0:0.75 and 1.5) arc (0:50:0.75 and 1.5);
    \end{scope}
    \fill[string-violet]
    (0.75,-2) circle (0.04);
   \fill[string-blue!50!string-green] (-0.75,-2) circle (0.04);
    \fill[black]([yshift=-2cm] 110:0.75 and 1.5) circle (0.04)
    ([yshift=-2cm] 50:0.75 and 1.5) circle (0.04);
    \fill[string-red]([xshift=1.5cm,yshift=2cm] 70:0.75 and 0.5) circle (0.03);
    \fill[string-green]([xshift=1.5cm,yshift=2cm] 260:0.75 and 0.5) circle (0.03);
    \draw[white!30!magenta,thick](3.2,-0.5) -- (3.2,0.5);
    \fill[]
    (3.2,-0.5) circle (0.04);
    \fill[string-violet] (3.2,0.5) circle (0.04);
    \draw[white!30!magenta,thick] (5.5,0) circle (0.5)
    (7,0.5) arc (90:270:0.5);
    \draw[cyan,thick] (7,0.5) arc (90:-90:0.5);
    \fill[string-green] (7,-0.5) circle (0.03);
    \fill[string-red] (7,0.5) circle (0.03);
    \draw[thick,->,line cap=round] (3.7,0) -- (4.7,0);
    \fill[black] (2.7,-0.07) circle (0.025)
    (2.7,0.07) circle (0.025);
\end{tikzpicture}
    \caption{A $1$-morphism from an interval to the disjoint union of two defect circles.    
    Different colours represent different labels for the strata. The three interval section on the boundary for which an orientation is indicated correspond to free boundary components with different boundary conditions. The other boundaries are gluing boundaries and carry the same label as the adjacent $2$-stratum.}
    \label{fig:intro-wsh-example}
\end{figure}
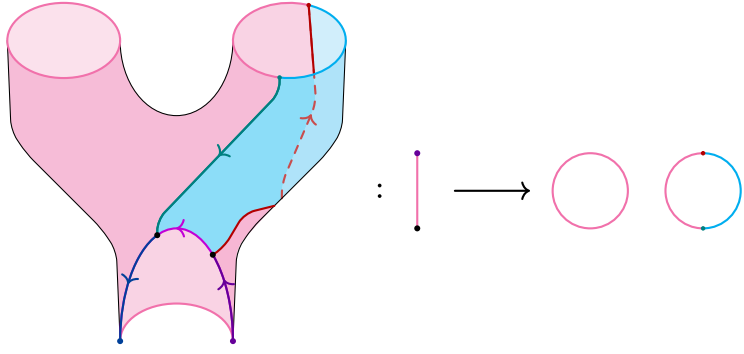

By pulling back the full modular functor constructed from $\rmV_\calc$ in \autocite{HR2024modfunc} we obtain a symmetric monoidal $2$-functor
\begin{equation}
\mathrm{Bl}_{\calc}\colon \WS(\DD_\calc) \to \cat{P}\mathrm{rof}_{\mathbbm{k}}^{\coend \mathrm{ex}}.
\end{equation}
A full conformal field theory is now defined as a braided monoidal oplax natural transformation
\begin{align}
    \begin{tikzcd}[ampersand replacement=\&]
	\WS(\DD_\calc) \&\& {\cat{P}\mathrm{rof}_{\mathbbm{k}}^{\coend \mathrm{ex}}}
	\arrow[""{name=0, anchor=center, inner sep=0}, "{\Delta_{\kk}}", curve={height=-24pt}, from=1-1, to=1-3]
	\arrow[""{name=1, anchor=center, inner sep=0}, "{\mathrm{Bl_\calc}}"', curve={height=24pt}, from=1-1, to=1-3]
	\arrow["\Cor", shorten <=6pt, shorten >=6pt, Rightarrow, from=0, to=1]
    \end{tikzcd}.
\end{align}
where $\Delta_\kk \colon \WS \to \cat{P}\mathrm{rof}_{\mathbbm{k}}^{\coend \mathrm{ex}}$ is the constant symmetric monoidal $2$-functor sending every object to $\vect$.
This definition captures the algebraic structure of a full CFT with the $1$-morphism components of $\Cor$ determining the field content and the $2$-morphism components being the actual correlators as elements in vector spaces of conformal blocks. Moreover, the axioms of an oplax natural transformation correspond precisely to the diffeomorphism covariance and compatibility under gluing of surfaces. This formulation is closely related to the notions of twisted or relative field theories considered in \autocite{ST11twist,FT12relQFT,JFS17relTFT}.

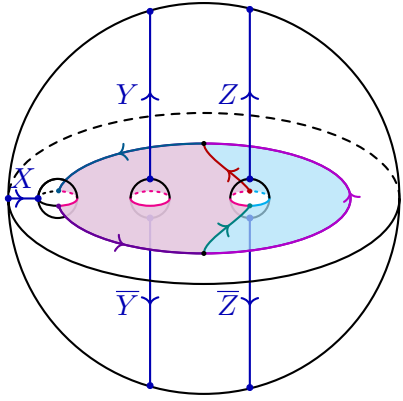
\begin{figure}[t]
    \centering
            \begin{tikzpicture}[scale=0.65]
        \draw[black,thick] (-1.1,0) circle (0.4)
        (0.94,0) circle (0.4)
        (-2.99,0) circle (0.4);
        \filldraw[string-blue]
        (0.94,-0.4) circle (0.06)
        (-1.1,-0.4) circle (0.06);
        \begin{scope}[very thick,decoration={
                      markings,
                      mark=at position 0.52 with {\arrow{>}}}
                     ] 
        \draw[string-blue,thick,postaction={decorate}] (0.94,-0.4) -- (0.94,-3.86);  
        \draw[string-blue,thick,postaction={decorate}] (-1.1,-0.4) -- (-1.1,-3.83); 
        \end{scope}
        \fill[cyan!30,opacity=0.7] (0,0) ellipse (3 and 1.125);
        \fill[magenta!30,opacity=0.7] {[rounded corners] (0,-1.125) -- (0,-1) -- (1,-0.125) -- (1,0) -- (1,0.125) -- (0,1) -- (0,1.125)}
        (0,1.125) arc (90:270:3 and 1.125);
        \draw[draw=string-violet,thick] (0,0) ellipse (3 and 1.125);
        \begin{scope}[very thick,decoration={
                      markings,
                      mark=at position 0.52 with {\arrow{>}}}
                     ] 
        \draw[thick,string-green,postaction={decorate}](0,-1.125) .. controls (0,-0.8) and (1,-0.325) .. (1,0);
        \draw[thick,string-red,postaction={decorate}] (1,0) .. controls (1,0.325) and (0,0.8) .. (0,1.125);
        \draw[thick,string-blue!30!string-green,postaction={decorate}] (0,1.125) arc (90:180:3 and 1.125);
        \draw[thick,string-violet,postaction={decorate}] (-3,0) arc (180:270:3 and 1.125);
        \draw[thick,string-violet!50!magenta,postaction={decorate}] (0,-1.125) arc (-90:90:3 and 1.125);
        \end{scope}
        \filldraw[white](-3,0) circle (0.15);
        \fill[white] (-2.99,-0.15) arc (-90:90:0.4 and 0.15);
        \draw[magenta,thick] (-2.99,-0.15) arc (-90:0:0.4 and 0.15);
        \draw[magenta,thick,dotted,line cap=round] (-2.59,0) arc (0:90:0.4 and 0.15);
        \filldraw[white](0.94,0) ellipse (0.4 and 0.15);
        \draw[draw=cyan,thick] (0.94,-0.15) arc (-90:0:0.4 and 0.15);
        \draw[draw=cyan,thick,dotted,line cap=round] (1.34,0) arc (0:90:0.4 and 0.15);
        \draw[magenta,thick,dotted,line cap=round] (0.94,0.15) arc (90:180:0.4 and 0.15);
        \draw[magenta,thick] (0.54,0) arc (180:270:0.4 and 0.15);
        \fill[white] (-1.1,0) ellipse (0.4 and 0.15);
        \draw[magenta,thick] (-1.5,0) arc (180:360:0.4 and 0.15);
        \draw[magenta,thick,dotted,line cap=round] (-1.5,0) arc (180:0:0.4 and 0.15);
        \draw[black,thick] (-0.7,0) arc (0:180:0.4)
        (-2.59,0) arc (0:180:0.4)
        (1.34,0) arc (0:180:0.4)
        (-3.39,0) arc (180:270:0.4 and 0.15);
        \draw[thick,dotted,line cap=round](-3.39,0) arc (180:90:0.4 and 0.15);
        \filldraw[thick] (0,-1.125) circle (0.03);
        \filldraw[thick](0,1.125) circle (0.03);
        \filldraw[string-violet,thick](188:3 and 1.125) circle (0.03);
        \filldraw[string-blue!30!string-green,thick](172:3 and 1.125) circle (0.03);
        \filldraw[string-green,thick](0.94,-0.15) circle (0.03);
        \filldraw[string-red,thick](0.94,0.15) circle (0.03);
        \draw[thick] (0,0) circle (4)
        (4,0) arc (0:-180: 4 and 1.75);
        \draw[thick,dashed,line cap=round] (4,0) arc (0:180: 4 and 1.75);
        \filldraw[string-blue] (-4,0) circle (0.06)
        (0.94,3.87) circle (0.06)
        (-1.1,3.83) circle (0.06)
        (0.94,-3.87) circle (0.06)
        (-1.1,-3.83) circle (0.06)
        (-3.39,0) circle (0.06)
        (0.94,0.4) circle (0.06)
        (-1.1,0.4) circle (0.06); 
        \begin{scope}[very thick,decoration={
                      markings,
                      mark=at position 0.52 with {\arrow{>}}}
                     ] 
        \draw[string-blue,thick,postaction={decorate}] (0.94,0.4) -- (0.94,3.86);  
        \draw[string-blue,thick,postaction={decorate}] (-1.1,0.4) -- (-1.1,3.83); 
        \draw[string-blue,thick,postaction={decorate}] (-4,0) -- (-3.39,0); 
        \end{scope}
        \node at (-3.7,0) [above,string-blue] {\small $X$};
        \node at (-1.1,2.13) [left,string-blue] {\small $Y$};
        \node at (-1.1,-2.13) [left,string-blue] {\small $\overline{Y}$};
        \node at (0.94,2.13) [left,string-blue] {\small $Z$};
        \node at (0.94,-2.13) [left,string-blue] {\small $\overline{Z}$};
    \end{tikzpicture}
    \caption{Connecting manifold for the $1$-morphism in Figure~\ref{fig:intro-wsh-example}. The blue $1$-strata are endowed with a canonical choice of framing and labelled with objects in $\calc$.}
    \label{fig:intro-connmf-example}
\end{figure}

As in the original construction of \autocite{FFFS2002,FRSI,FjFRS}, the correlators of the full CFT are obtained by evaluating the defect TFT on certain $3$-dimensional bordisms, the so-called \emph{connecting manifolds}, see Figure~\ref{fig:intro-connmf-example}.
Moreover, by considering the connecting manifolds also in one dimension lower, i.e.\ for objects in $\WS_\calc$ we obtain the field content as well. Under some technical assumptions on the defect TFT we prove our main result:
\begin{thm}[{\Cref{thm:fullcft}}]
Evaluating the $3$d defect TFT $\rmZ_\calc$ on the connecting manifolds induces a braided monoidal oplax natural transformation
\begin{align}
    \begin{tikzcd}[ampersand replacement=\&]
	\WS(\DD_\calc) \&\& {\cat{P}\mathrm{rof}_{\mathbbm{k}}^{\coend \mathrm{ex}}}
	\arrow[""{name=0, anchor=center, inner sep=0}, "{\Delta_{\kk}}", curve={height=-24pt}, from=1-1, to=1-3]
	\arrow[""{name=1, anchor=center, inner sep=0}, "{\mathrm{Bl_\calc}}"', curve={height=24pt}, from=1-1, to=1-3]
	\arrow["\Cor", shorten <=6pt, shorten >=6pt, Rightarrow, from=0, to=1]
    \end{tikzcd},
\end{align}
where $\Delta_\kk \colon \WS \to \cat{P}\mathrm{rof}_{\mathbbm{k}}^{\coend \mathrm{ex}}$ is the constant symmetric monoidal $2$-functor sending every object to $\vect$.
\end{thm}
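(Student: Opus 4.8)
The plan is to construct $\Cor$ componentwise from the defect TFT $\rmZ_\calc$ and then verify the axioms of a braided monoidal oplax natural transformation, each of which will correspond to a geometric gluing identity that follows from the symmetric monoidal functoriality of $\rmZ_\calc$. Concretely, to each object $S$ of $\WS(\DD_\calc)$ I would associate its connecting manifold one dimension lower and apply $\rmZ_\calc$ to obtain a left exact profunctor $\Cor_S \colon \vect = \Delta_{\kk}(S) \to \mathrm{Bl}_\calc(S)$; this is the field content. To each $1$-morphism $\frakS \colon S \to S'$ I would associate the three-dimensional connecting manifold $M_\frakS$ of Figure~\ref{fig:intro-connmf-example} and let $\Cor_\frakS$ be the $2$-morphism in $\cat{P}\mathrm{rof}_{\mathbbm{k}}^{\coend \mathrm{ex}}$ obtained from $\rmZ_\calc(M_\frakS)$; this is the \emph{correlator}, and under the identification of state spaces of $\rmV_\calc$ with conformal blocks it is an element of $\mathrm{Bl}_\calc(\widehat{\frakS})$.

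Next I would verify the structural axioms. The first and most substantial is compatibility with composition of $1$-morphisms, i.e.\ gluing of world sheets along their boundary $0$-manifolds. Here I would show that the connecting manifold of a composite $\frakS' \circ \frakS$ decomposes, up to a canonical diffeomorphism, as the gluing of $M_\frakS$ and $M_{\frakS'}$ along the connecting manifold of the intermediate object. Applying $\rmZ_\calc$ and using that it sends gluing to composition then produces exactly the oplax structure $2$-cell relating $\Cor_{\frakS'\circ\frakS}$ to $\Cor_{\frakS'}$ and $\Cor_\frakS$, and the associativity coherence of the oplax transformation reduces to the strict associativity of gluing of bordisms. The unit axiom follows similarly from the connecting manifold of an identity world sheet being diffeomorphic to a trivial cylinder. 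Covariance under $2$-morphisms, i.e.\ under isotopy classes of stratified diffeomorphisms, follows because a diffeomorphism of world sheets lifts to a diffeomorphism of connecting manifolds, and $\rmZ_\calc$ is a functor on bordisms.

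For the monoidal and braided structure I would use that disjoint union of world sheets is sent to disjoint union of connecting manifolds and that $\rmZ_\calc$ is symmetric monoidal; this yields the monoidal constraint cells of $\Cor$ together with their coherence, while the braiding data are supplied by the ribbon structure of $\calc$ encoded in $\rmZ_\calc$, realised geometrically by braiding the framed $1$-strata of the connecting manifolds past one another. The remaining hexagon- and compatibility-type diagrams then reduce to isotopies of the corresponding framed graphs inside the connecting manifolds, which hold by invariance of $\rmZ_\calc$ under ambient isotopy.

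The main obstacle is the gluing/factorisation step in the non-semisimple setting. Unlike the rational case of \autocite{FFFS2002,FRSI,FjFRS}, the modular functor $\mathrm{Bl}_\calc$ takes values in left exact profunctors and its factorisation is governed by coends rather than finite direct sums over simple objects, so the decomposition of $M_{\frakS'\circ\frakS}$ does not immediately translate into an identity of state spaces; one must verify that the gluing morphism of $\rmV_\calc$, equivalently the excision data of $\mathrm{Bl}_\calc$ established in \autocite{HR2024modfunc}, is compatible with the defect decomposition. This is precisely where the technical assumptions on $\rmZ_\calc$ enter: they should guarantee that cutting the connecting manifold along a gluing boundary is computed by the same coend that defines the factorisation of $\mathrm{Bl}_\calc$, so that the $2$-cells $\Cor_\frakS$ assemble coherently rather than merely up to an uncontrolled choice of gluing isomorphism.
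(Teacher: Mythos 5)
Your overall strategy coincides with the paper's: define $\Cor_\frakC$ by evaluating $\rmZ_\calc$ on the two-dimensional connecting manifold, define $\Cor_\frakS$ from the three-dimensional connecting bordism, obtain monoidality from disjoint union, get $2$-morphism naturality by lifting diffeomorphisms of world sheets to the connecting bordisms, and reduce factorisation to a gluing statement in $\borddef$. However, two steps that you treat as automatic are precisely where the paper's technical assumptions do their work, and your argument does not close without them. First, oplax unitality: you claim the unit axiom ``follows from the connecting manifold of an identity world sheet being diffeomorphic to a trivial cylinder''. Topological triviality is not the issue. The axiom (Diagram \ref{diag:unitality}) demands that $\Cor_{\id_\frakC}$ equal the counit $\epsilon_{\Cor_\frakC}$ of the profunctor adjunction $\Cor_\frakC \dashv \Cor_\frakC^\dagger$ of \Cref{lem:profadjunc}, i.e.\ the natural transformation induced by composition of morphisms in $\mathrm{Bl}_\calc(\frakC)$. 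Identifying $\rmZ_\calc(M_{\Gamma\times I}^{\underline{X},\underline{Y}})$ with this algebraically defined counit is exactly \Cref{ass:4}, and it is genuinely nontrivial: in the Cardy example the bulk case (\Cref{subsec:bulk2point}) requires the cutting property of \Cref{lem:cutting} and a surgery argument comparing invariants of graphs in $S^2\times S^1$ and $S^3$.

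Second, for factorisation you correctly identify that $\Cor_{\frakS_2}\diamond\Cor_{\frakS_1}$ is defined by a coend, and that the gluing of connecting bordisms must be computed by that coend (this is \Cref{ass:3}). But Diagram \ref{diag:horinatural} contains a second non-geometric ingredient that your proposal does not account for: its lower-left leg is built from the unit $\eta_{\Cor_\frakC}$ of the same adjunction, so before the whole diagram can be transported to $\borddef$ one must exhibit a bordism $\eta_{M_\Gamma}\colon \varnothing \to -M_{\Gamma}\sqcup_{\widehat{\Gamma}}M_{\Gamma}$ (a solid ball, resp.\ solid torus, as in \Cref{fig:unitadj}) whose value under $\rmZ_\calc$ corresponds to $\id_{\mathbbm{F}_\frakC}$; this is \Cref{ass:2} and is independent of the coend compatibility. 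A smaller correction: the ``braided monoidal'' structure of $\Cor$ has nothing to do with braiding framed $1$-strata past one another, and the remaining coherence diagrams do not reduce to isotopies of ribbon graphs; the structure consists of the modification $\Pi$ and the isomorphism $\Upsilon$ of \autocite[Def.\,2.7]{SchommerPries2011thesis}, which the paper obtains purely from monoidality of $\rmZ_\calc$ under disjoint union of connecting manifolds, together with the observation that $M_{\Sigma\sqcup\widetilde{\Sigma}} = M_\Sigma \sqcup M_{\widetilde{\Sigma}}$ and $M_\varnothing = \varnothing$.
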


Finally, we will show that the TFT 
\begin{equation*}
        {\rmV}_\calc \colon \bord_{3,2}^{\chi}(\calc) \to \vect
\end{equation*}
of \autocite{DGGPR19} satisfies the technical conditions mentioned above. The resulting CFT corresponds to the so-called \emph{Cardy} or \emph{diagonal} CFT and we show that our construction reproduces expectations from the literature \autocite{FGSS18cardy} for boundary states and partition functions.

The tensor category of topological line defects in this CFT is again given by $\mathcal{C}$, providing to our knowledge the first family of examples of non-semisimple and non-invertible topological line defects in a conformal field theory. In the setting of lattice models, such defects were studied in \autocite{DHY24nonsesisym, KLLMSV25bialgMPO}. We show:

\begin{prop}[{\Cref{prop:algofdefs}}]
    The algebra generated by the defect operators acting on the object of bulk fields is isomorphic (as an algebra) to the linearised Grothendieck ring 
    $\mathrm{Gr}_{\mathbb{C}}(\calc)$ 
    of $\calc$.
\end{prop}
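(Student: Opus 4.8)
The plan is to construct an explicit algebra homomorphism
\[
\Phi \colon \mathrm{Gr}_{\mathbb{C}}(\calc) \longrightarrow \mathrm{End}(\mathbb{F}),
\]
from the linearised Grothendieck ring into the endomorphism algebra of the object $\mathbb{F}$ of bulk fields of the Cardy CFT, sending the class $[U]$ of an object $U \in \calc$ to the defect operator $D_U$, then to identify its image with the algebra generated by the defect operators, and finally to show that $\Phi$ is injective. Surjectivity onto the algebra of defect operators is immediate, since by definition that algebra is generated by the $D_U$.

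First I would make the defect operators $D_U$ fully explicit by evaluating $\rmZ_\calc$ on the connecting manifold in which a topological line defect labelled $U$ encircles the bulk-field insertion. This realises $D_U$ as the ``encircling'' endomorphism built from the coevaluation, the double braiding $c_{-,U}c_{U,-}$, and the categorical trace over $U$. Two concentric loops labelled $U$ and $V$ can be fused into a single loop labelled $U \otimes V$ by an isotopy of the defect network in the connecting manifold, which yields $D_U \circ D_V = D_{U \otimes V}$; disjoint parallel loops decompose to give $D_{U \oplus V} = D_U + D_V$. Together with commutativity of $\mathrm{Gr}_{\mathbb{C}}(\calc)$, coming from the braiding of $\calc$, this provides the multiplicative part of the homomorphism property.

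A more delicate structural point is to upgrade additivity over direct sums to additivity over arbitrary short exact sequences, so that $[U] \mapsto D_U$ descends to the Grothendieck ring rather than merely to the split Grothendieck group. Here I would argue that, through the dinatural/coend structure defining $\mathbb{F}$, the operator $D_U$ coincides with the action of the internal character of $U$, and invoke that internal characters satisfy $D_U = D_{U'} + D_{U''}$ for every short exact sequence $0 \to U' \to U \to U'' \to 0$. Combined with the two relations above, this shows that $\Phi$ is a well-defined homomorphism of commutative algebras.

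Finally I would prove injectivity, which I expect to be the technical heart of the argument. Choosing representatives $\{L_i\}$ of the isomorphism classes of simple objects, the classes $[L_i]$ form a basis of $\mathrm{Gr}_{\mathbb{C}}(\calc)$, so it suffices to show that the endomorphisms $\{D_{L_i}\}$ are linearly independent in $\mathrm{End}(\mathbb{F})$. This is exactly a non-semisimple Verlinde-type statement: the encircling operators are governed by the Hopf-link pairing, and non-degeneracy of the braiding of the modular tensor category $\calc$ (factorisability) guarantees that this pairing separates the classes of the simple objects. The main obstacle is therefore to reduce the linear independence to the known fact that, for a factorisable finite ribbon category, the internal characters span a subspace whose dimension equals the number of simple objects, that is, $\dim_{\mathbb{C}} \mathrm{Gr}_{\mathbb{C}}(\calc)$. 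Combining surjectivity, the homomorphism property, and injectivity then yields the claimed algebra isomorphism.
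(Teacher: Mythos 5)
Your proposal follows essentially the same route as the paper: the paper identifies the defect operator with multiplication by the internal cocharacter, $\mathcal{O}_D = \mu \circ (\check{\chi}_D \otimes \id_\coend)$ on $\mathbbm{F}_{S^1}\cong\coend$, obtains $\mathcal{O}_E\circ\mathcal{O}_D=\mathcal{O}_{E\otimes D}$ from $\mu\circ(\check{\chi}_E\otimes\check{\chi}_D)=\check{\chi}_{E\otimes D}$, and then invokes Shimizu's character theory (additivity on exact sequences and linear independence of the characters of simples in a modular category) to conclude the algebra is $\mathrm{Gr}_{\kk}(\calc)$ --- exactly your ``identify $D_U$ with the internal character, descend to the Grothendieck ring, and prove injectivity via factorisability'' plan. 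The only imprecision, attributing $D_{U\oplus V}=D_U+D_V$ to disjoint parallel loops (which would instead compute $D_{U\otimes V}$), is harmless since additivity follows from the character identification you use anyway.
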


We note that $\mathrm{Gr}_{\mathbb{C}}(\calc)$ is semisimple (as a $\mathbb{C}$-algebra) if and only if $\calc$ is semisimple \autocite{GR17projmod}. Thus, conversely, in a finitely non-semisimple charge-conjugate CFT, the fusion algebra formed by topological defect operators is itself non-semisimple. In particular, there must be defect operators that act non-diagonalisable on the space of bulk fields.

\bigskip

The rest of this paper is organised as follows. In \Cref{sec:modtens} we recall the algebraic ingredients used throughout this paper, including modular tensor categories, and in particular a certain Hopf algebra and its (co)modules internal to such a category. In \Cref{sec:deftft} we recall and introduce the topological and combinatorial setup of the bordism categories needed for the considerations in the rest of the paper. Afterwards, in \Cref{sec:algcft} we recall the definition of topological modular functors as well as the construction of  \autocite{HR2024modfunc}. There we also define and discuss the notion of full CFT based on such a modular functor. In \Cref{sec:tftconstcft} we will construct a full CFT from a given $3$d TFT culminating in our main result \Cref{thm:fullcft}. Finally in \Cref{sec:logcardy} we compute various quantities of physical interest
in the simplest non-trivial example to which our construction applies.

\subsection*{Acknowledgements}
We thank Francesco Costantino and Terry Gannon for useful discussions as well as helpful comments. 
The majority of this work was carried out as part of AH’s doctoral research \autocite{hofer25thesis} at the University of Hamburg.
AH and IR acknowledge support by the Deutsche Forschungsgemeinschaft (DFG, German Research Foundation) under Germany`s Excellence Strategy - EXC 2121 ``Quantum Universe" - 390833306, and by the Collaborative Research Centre CRC 1624 ``Higher structures, moduli spaces and integrability'' - 506632645. AH gratefully acknowledges the Max Planck Institute for Mathematics in Bonn for its hospitality and financial support.

\subsection*{Conventions}
Throughout this article, $\calc$ will be a modular tensor category.
Moreover, we will appeal to the standard coherence results and assume $\calc$ to be strictly monoidal and strictly pivotal. We will always work over an algebraically closed field $\mathbbm{k}$ of characteristic zero. Unless otherwise noted, functors between abelian (and linear) categories will always be assumed to be additive (and linear). For details see \Cref{sec:modtens}.

By ``manifold'' we will always mean a compact smooth manifold. However since we are exclusively working in dimensions less then four we will sometimes work in the topological category instead. Every manifold we will consider will be oriented, and every diffeomorphism will be orientation preserving unless explicitly stated otherwise. For any manifold $M$ we will denote the manifold with reversed orientation by $-M$. The interval $[0,1]$ will be denoted by $I$ and the unit circle by $S^1$. Finally, by a closed manifold we mean a compact manifold without boundary.

 In this article a $2$-category will always mean a weak $2$-category otherwise known as a bicategory. Analogously a $2$-functor will always mean a weak $2$-functor with coherence isomorphisms otherwise known as a pseudofunctor.

\section{Algebraic preliminaries}\label{sec:modtens}
In this section, we collect definitions and results related to ribbon categories with a particular emphasis on modular tensor categories $\calc$. 
Moreover, we collect a number of results on relations between $\calc$, its Drinfeld centre $\calZ(\calc)$, and the Deligne product $\calc\boxtimes \overline{\calc}$ in the modular setting. 

\subsection{Finite ribbon categories}\label{subsec:ribboncats}
A linear category is called \emph{finite} if it is equivalent, as a linear category, to the category $A$-$\Mod$ of finite dimensional modules of some finite dimensional algebra $A$. In particular, a finite linear category is abelian, every object has a projective cover, and its Hom sets are finite dimensional vector spaces. For the complete intrinsic definition of finite linear categories see \autocite[Sec.\,1.8]{EGNO}. 
By a \emph{finite tensor category} we mean a finite linear category which is in addition a rigid monoidal category such that the monoidal product $\otimes$ is bilinear and the monoidal unit $\unit$ is simple. 

A \emph{finite ribbon category} $\calc$ is a finite tensor category which is also ribbon.  We will employ the following conventions for structure morphisms in $\calc$. Every object $X$ in $\calc$ has a two-sided dual $X^{*}$, with duality morphisms denoted by:
\begin{equation}
   \begin{aligned}
    \mathrm{ev}_X &\colon X^{*} \otimes X \to \unit, \qquad \mathrm{coev}_X \colon \unit \to X \otimes X^{*}, 
    \\
    \widetilde{\mathrm{ev}_X} &\colon X \otimes X^{*} \to \unit, \qquad \widetilde{\mathrm{coev}_X} \colon \unit \to X^{*} \otimes X. 
\end{aligned} 
\end{equation}
The components of the braiding and twist isomorphisms will be denoted by
\begin{align}
    \beta_{X,Y} \colon X \otimes Y \to Y \otimes X, \qquad \vartheta_X \colon X \to X.
\end{align}
The twist $\vartheta$ satisfies
\begin{align}
    \vartheta_{X\otimes Y} = \beta_{Y,X} \circ \beta_{X,Y} \circ (\vartheta_X \otimes \vartheta_Y), \quad \mathrm{and} \quad 
    (\vartheta_X)^* = \vartheta_{X^*}
\end{align}
for all $X,Y \in \calc$. A direct computation shows that the twist $\vartheta$ endows $\calc$ with a pivotal structure, which is even spherical \autocite[Sec.\,8.10]{EGNO}. Moreover, we will appeal to the standard coherence results and assume $\calc$ to be strictly monoidal and strictly pivotal.
In diagrammatic notation the structural morphisms of $\calc$ will be represented as
{\allowdisplaybreaks\begin{align}
        \lev_X &= 
    \begin{tikzpicture}[baseline]
        \draw[string-blue,thick,line cap=round] (0.5,0) arc (0:180:0.5cm);
        \draw[string-blue,thick,line cap=round,->] (0.5,0) arc (0:92:0.5cm);
        \node at (-0.5,0) [string-blue,thick,below] {\scriptsize $X^*$};
        \node at (0.5,0) [string-blue,thick,below] {\scriptsize $X$};
    \end{tikzpicture} 
    & \lcoev_X &= 
    \begin{tikzpicture}[baseline]
        \draw[string-blue,thick,line cap=round] (0.5,0) arc (0:-180:0.5cm);
        \draw[string-blue,thick,line cap=round,->] (0.5,0) arc (0:-92:0.5cm);
        \node at (0.5,0) [string-blue,thick,above] {\scriptsize $X^*$};
        \node at (-0.5,0) [string-blue,thick,above] {\scriptsize $X$};
    \end{tikzpicture} 
    & \beta_{X,Y} &= 
    \begin{tikzpicture}[baseline]
        \draw[string-blue,thick,line cap=round] (0.5,-0.5) node[string-blue,below] {\scriptsize $Y$} -- (0.15,-0.15) -- (-0.5,0.5) node[string-blue,above] {\scriptsize $Y$};
        \draw[white, line width=1.5mm] (-0.5,-0.5)-- (0.5,0.5);
        \draw[string-blue, thick] (-0.5,-0.5) node[string-blue,below] {\scriptsize $X$} -- (0.5,0.5) node[string-blue,above] {\scriptsize $X$};
\end{tikzpicture}
        \nonumber \\
        \rev_X &= 
        \begin{tikzpicture}[baseline]
            \draw[string-blue,thick,line cap=round] (0.5,0) arc (0:180:0.5cm);
            \draw[string-blue,thick,line cap=round,->] (-0.5,0) arc (180:87:0.5cm);
            \node at (-0.5,0) [string-blue,thick,below] {\scriptsize $X$};
            \node at (0.5,0) [string-blue,thick,below] {\scriptsize $X^{*}$};
        \end{tikzpicture}
        & \rcoev_X &= 
       \begin{tikzpicture}[baseline]
            \draw[string-blue,thick,line cap=round] (0.5,0) arc (0:-180:0.5cm);
            \draw[string-blue,thick,line cap=round,->] (-0.5,0) arc (-180:-87:0.5cm);
            \node at (0.5,0) [string-blue,thick,above] {\scriptsize $X$};
            \node at (-0.5,0) [string-blue,thick,above] {\scriptsize $X^{*}$};
        \end{tikzpicture} 
        & \theta_{X} &= 
        \begin{tikzpicture}[baseline]
        \draw[string-blue,thick,line cap=round] (0,-0.2) -- (0,0.-0.5) node[string-blue,below] {\scriptsize $X$} ;
         \draw[string-blue,thick,line cap=round] (0,0.2) --(0,0.5) node[string-blue,above] {\scriptsize $X$};
        \draw[string-blue,thick,line cap=round] (0,-0.2) arc (180:90:0.15 and 0.3);
        \draw[string-blue,thick,line cap=round] (0,0.2) arc (180:210:0.15 and 0.3);
        \draw[string-blue,thick,line cap=round] (0.15,0.1) arc (90:-90:0.1 and 0.1);
        \draw[string-blue,thick,line cap=round] (0.15,-0.1) arc (-90:-120:0.15 and 0.3);
\end{tikzpicture}
\end{align}}
Note that we read such diagrams from the bottom to the top.
We will use the same conventions as \autocite{TV} and denote with $\overline{\calc}$ the same underlying pivotal
tensor category, but equipped with the inverse braiding and twist, i.e.\
\begin{equation}
    \begin{aligned}
        \overline{\beta}_{X,Y} := \beta_{Y,X}^{-1} \colon X \otimes Y \to Y\otimes X \qquad \qquad \overline{\vartheta}_{X} := \vartheta_{X}^{-1} \colon X \to X
    \end{aligned}
\end{equation}
see \autocite[Sec.\,1.2.2 \& Ex.\ 3.1.7]{TV}. We will call $\overline{\calc}$ the \emph{mirrored} category of $\calc$.

Next recall that $\calc$ admits an inner Hom functor $\calc^{\mathrm{op}} \times \calc \to \calc$ which sends $(X,Y) \in \calc^{\mathrm{op}} \times \calc$ to $X^{*} \otimes Y$. Due to our finiteness assumptions the coend of this functor exists 
and can be explicitly described as a cokernel, see \autocite[Ch.\,5]{KL2001nsstqft} for details. We will denote this coend as
\begin{align}
    \coend := \int^{X \in \calc}\,X^{*} \otimes X
\end{align}
with universal dinatural transformation
\begin{align}
    \iota_X \colon X^* \otimes X \to \coend.
\end{align}
The object $\coend$ is referred to as the \emph{canonical coend} and naturally carries the structure of a Hopf algebra with Hopf pairing in the braided monoidal category $\calc$. For Hopf algebras in braided monoidal categories we will follow the convention of \autocite[Ch.\,6]{TV}.
The structural morphisms of $\coend$ are induced from the universal property of the coend as shown in \Cref{fig:coend-Hopf-def}. If we do not assume $\calc$ to be strictly pivotal the canonical isomorphisms $(X\otimes Y)^* \cong Y^* \otimes X^*$ for the product, $\unit^* \otimes \unit \cong \unit$ for the unit, and $X \cong X^{**}$ for the antipode are needed, see \autocite[Sec.\,6.4\,\&\,6.5]{TV} for details. 
Exchanging the over and under braidings in the definition of $\omega$ gives a pairing $\overline{\omega} \colon \coend \otimes \coend \to \unit $ which satisfies 
\begin{equation}\label{eq:hopfpairingmirror}
    \omega \circ (S\otimes \id_\coend)= \overline{\omega} = \omega \circ (\id_\coend \otimes S)
\end{equation}
see \autocite[Eq.\,5.2.8]{KL2001nsstqft}.

\begin{figure}[tb]
\begin{equation*}
    \begin{aligned}
      \mu : \begin{tikzpicture}[scale=0.85,baseline=0cm]
       \begin{scope}[decoration={
                      markings,
                      mark=at position 0.55 with {\arrow{>}}}
                     ] 
            \draw[string-red,thick,postaction={decorate},line cap=round] (-0.75,-0.1) -- (-0.75,-1);
            \draw[string-red,thick,postaction={decorate},line cap=round] (-0.25,-1) -- (-0.25,-0.1);
            \draw[string-red,thick,postaction={decorate},line cap=round] (0.75,-1) -- (0.75,-0.1);
            \draw[string-red,thick,postaction={decorate},line cap=round] (0.25,-0.1) -- (0.25,-1);
        \end{scope}
        \draw[string-blue,thick,line cap=round]
        (0.5,0) -- (0.5,-0.1)
        (-0.5,0) -- (-0.5,-0.1)
        (0.5,0) arc (0:180:0.5 and 0.5)
        (0,0.5) -- (0,1)
        (0.1,-0.1) -- (0.9,-0.1)
        (-0.1,-0.1) -- (-0.9,-0.1);
       \node at (-0.8,-0.1) [string-blue,left] {\scriptsize $\iota$\tiny${}_X$};
       \node at (0.8,-0.1) [string-blue,right] {\scriptsize $\iota$\tiny${}_Y$};
       \node at (-0.68,-0.875) [string-red,below] {\scriptsize $X^*$};
       \node at (-0.23,-0.9) [string-red,below] {\scriptsize $X$};
        \node at (0.32,-0.875) [string-red,below] {\scriptsize $Y^*$};
       \node at (0.77,-0.9) [string-red,below] {\scriptsize $Y$};
       \node at (0,0.915) [string-blue,above] {\scriptsize $\coend$};
   \end{tikzpicture}  
   &= \begin{tikzpicture}[scale=0.85,baseline=0.05cm]
        
        \begin{scope}[decoration={
                      markings,
                      mark=at position 0.85 with {\arrow{>}}}
                     ] 
        \draw[string-red,thick,postaction={decorate},line cap=round] (-0.75,0.3) .. controls (-0.75,-0.3) and (0.25,-0.4) .. (0.25,-1);
        \end{scope}
       \begin{scope}[decoration={
                      markings,
                      mark=at position 0.55 with {\arrow{>}}}
                     ] 
            \draw[string-red,thick,postaction={decorate},line cap=round] (0.75,-1) -- (0.75,0.3);
            \fill[white] (-0.23,-0.55) -- (-0.13,-0.4) -- (0.05,-0.45) -- (-0.05,-0.6) -- cycle;
            \begin{scope}[xshift=-0.3cm,yshift=0.3cm]
                \fill[white] (-0.23,-0.55) -- (-0.13,-0.4) -- (0.05,-0.45) -- (-0.05,-0.6) -- cycle;
            \end{scope}
            \draw[string-red,thick,postaction={decorate},line cap=round] (-0.25,0.3) .. controls (-0.25,-0.3) and (-0.75,-0.4) .. (-0.75,-1);
            \draw[string-red,thick,postaction={decorate},line cap=round] (-0.25,-1) .. controls (-0.25,-0.4) and (0.25,-0.3) .. (0.25,0.3);
        \end{scope}
        \draw[string-blue,thick,line cap=round]
        (0,0.3) -- (0,1)
        (0.9,0.3) -- (-0.9,0.3);
       \node at (0.8,0.3) [string-blue,right] {\scriptsize $\iota$\tiny${}_{X\otimes Y}$};
       \node at (-0.68,-0.875) [string-red,below] {\scriptsize $X^*$};
       \node at (-0.23,-0.9) [string-red,below] {\scriptsize $X$};
        \node at (0.32,-0.875) [string-red,below] {\scriptsize $Y^*$};
       \node at (0.77,-0.9) [string-red,below] {\scriptsize $Y$};
       \node at (0,0.915) [string-blue,above] {\scriptsize $\coend$};
   \end{tikzpicture}   &
   \hfill \eta : \begin{tikzpicture}[baseline]
        \draw[string-blue,thick,line cap=round] (0.25,-0.5) -- (0.25,0.5);
        \filldraw[draw=string-blue, thick,fill=white] (0.25,-0.5) circle (0.07);
        \node at (0.2475,0.475) [string-blue,above] {\scriptsize $\coend$};
   \end{tikzpicture}  
   &= \begin{tikzpicture}[baseline]
        \draw[string-blue,thick,line cap=round] (0.25,-0.5) -- (0.25,0.5);
        \draw[string-blue,thick,line cap=round] (-0.1,-0.5) -- (0.6,-0.5);
        \node at (0.2475,0.475) [string-blue,above] {\scriptsize $\coend$};
        \node at (0.5,-0.5) [string-blue,right] {\scriptsize $\iota$\tiny${}_{\unit}$};
   \end{tikzpicture}   \\
   \Delta : \begin{tikzpicture}[scale=0.85,baseline=0cm]
       \begin{scope}[decoration={
                      markings,
                      mark=at position 0.55 with {\arrow{>}}}
                     ] 
            \draw[string-red,thick,postaction={decorate},line cap=round] (-1.8,-0.8) -- (-1.8,0);
            \draw[string-red,thick,postaction={decorate},line cap=round] (-2.4,0) -- (-2.4,-0.8);
        \end{scope}
        \draw[string-blue,thick,line cap=round]
        (-1.6,1.2) -- (-1.6,1)
        (-2.6,1.2) -- (-2.6,1)
        (-1.6,1) arc (0:-180:0.5 and 0.5)
        (-2.1,0.5) -- (-2.1,0)
        (-1.6,0) -- (-2.6,0);
       \node at (-1.7,0) [string-blue,right] {\scriptsize $\iota$\tiny${}_X$};
       \node at (-1.78,-0.7) [string-red,below] {\scriptsize $X$};
       \node at (-2.3,-0.675) [string-red,below] {\scriptsize $X^*$};
       \node at (-1.575,1.15) [string-blue,above] {\scriptsize $\coend$};
       \node at (-2.575,1.15) [string-blue,above] {\scriptsize $\coend$};
   \end{tikzpicture}   
   &= \begin{tikzpicture}[scale=0.85,baseline=0.25cm]
        \begin{scope}[decoration={
                      markings,
                      mark=at position 0.52 with {\arrow{>}}}
                     ]   
            \draw[string-red,thick,postaction={decorate},line cap=round] (-1.9,0.7) arc (0:-180: 0.3 and 0.35); 
            \draw[string-red,thick,postaction={decorate},line cap=round] (-3.2,0.5) .. controls (-3.2,0.2) and (-2.55,0.1) .. (-2.55,-0.2);
            \draw[string-red,thick,postaction={decorate},line cap=round] (-1.85,-0.2) .. controls (-1.85,0.1) and (-1.2,0.2) .. (-1.2,0.5);
            \draw[string-red,thick,line cap=round] (-1.2,0.7) -- (-1.2,0.9)
            (-1.9,0.7) -- (-1.9,0.9)
            (-1.2,0.5) -- (-1.2,0.9)
            (-3.2,0.5) -- (-3.2,0.9)
            (-2.5,0.7) -- (-2.5,0.9)
            (-2.55,-0.2) -- (-2.55,-0.5)
            (-1.85,-0.2) -- (-1.85,-0.5);
            \draw[string-blue,thick,line cap=round] (-1.05,0.9) -- (-2.05,0.9)
            (-3.35,0.9) -- (-2.35,0.9);
            \draw[string-blue,thick,line cap=round] (-1.55,0.9) -- (-1.55,1.5)
            (-2.85,0.9) -- (-2.85,1.5);
            
        \end{scope}
        \node at (-1.5,1.475) [string-blue,above] {\scriptsize $\coend$};
        \node at (-2.85,1.475) [string-blue,above] {\scriptsize $\coend$};
        \node at (-1.8,-0.45) [string-red,below] {\scriptsize $X$};
        \node at (-2.45,-0.4125) [string-red,below] {\scriptsize $X^*$};
   \end{tikzpicture} 
   & \hfill \epsilon : \begin{tikzpicture}[baseline]
       \draw[string-red,thick,line cap=round] (0,-0.5) -- (0,0);
       \draw[string-red,thick,->,line cap=round] (0,0) -- (0,-0.3);
       \draw[string-red,thick,line cap=round] (0.5,-0.5) -- (0.5,0);
       \draw[string-red,thick,->,line cap=round] (0.5,-0.5) -- (0.5,-0.2);
        \draw[string-blue, thick,line cap=round] (-0.1,0) -- (0.6,0);
        \draw[string-blue,thick,line cap=round] (0.25,0) -- (0.25,0.5);
        \filldraw[draw=string-blue, thick,fill=white] (0.25,0.5) circle (0.07);
        \node at (0.5,0) [string-blue,right] {\scriptsize $\iota$\tiny${}_{X}$};
        \node at (0.5,-0.5) [string-red,below] {\scriptsize $X$};
        \node at (0,-0.5) [string-red,below] {\scriptsize $X^*$};
   \end{tikzpicture}   
   &= \begin{tikzpicture}[baseline]
        \draw[string-red,thick,line cap=round] (0.5,0.25) arc (0:180:0.25cm);
        \draw[string-red,thick,->,line cap=round] (0.5,0.25) arc (0:95:0.25cm);
        \draw[string-red,thick,line cap=round] (0.5,0.25) -- (0.5,-0.5);
        \draw[string-red,thick,line cap=round] (0,0.25) -- (0,-0.5);
        \node at (0,-0.5) [string-red,thick,below] {\scriptsize $X^*$};
        \node at (0.5,-0.5) [string-red,thick,below] {\scriptsize $X$};
    \end{tikzpicture}
   \\
   S :\begin{tikzpicture}[scale=0.85,baseline=0cm]
        
       \begin{scope}[decoration={
                      markings,
                      mark=at position 0.55 with {\arrow{>}}}
                     ] 
            \draw[string-red,thick,postaction={decorate},line cap=round] (-1.8,-0.8) -- (-1.8,0);
            \draw[string-red,thick,postaction={decorate},line cap=round] (-2.4,0) -- (-2.4,-0.8);
        \end{scope}
        \draw[string-blue,thick,line cap=round]
        (-2.1,0.6) circle (0.2)
        (-2.1,1.2) -- (-2.1,0)
        (-1.9,0.6) -- (-2.3,0.6)
        (-1.6,0) -- (-2.6,0);
       \node at (-1.7,0) [string-blue,right] {\scriptsize $\iota$\tiny${}_X$};
       \node at (-1.78,-0.7) [string-red,below] {\scriptsize $X$};
       \node at (-2.3,-0.675) [string-red,below] {\scriptsize $X^*$};
       \node at (-2.085,1.15) [string-blue,above] {\scriptsize $\coend$};
   \end{tikzpicture} 
   &= \begin{tikzpicture}[scale=0.85,baseline=0cm]
        
       \begin{scope}[decoration={
                      markings,
                      mark=at position 0.33 with {\arrow{<}}}
                     ]
            \draw[string-red,thick,line cap=round] (-1.54,-0.4) arc (0:-115:0.13 and 0.13);
            \draw[string-red,thick,line cap=round] (-1.8,-0.8) -- (-1.8,-0.2);
            \fill[white] (-1.85,-0.4) rectangle (-1.75,-0.25);
            \draw[string-red,thick,line cap=round] (-1.8,-0.4) arc (180:0:0.13 and 0.13);
            \draw[string-red,thick,line cap=round] (-2.4,-0.2) -- (-2.4,-0.8);
            \draw[string-red,thick,postaction={decorate},line cap=round] (-2.4,0.6) .. controls (-2.4,0.3) and (-1.8,0.1).. (-1.8,-0.2);
            \begin{scope}[xshift=-2cm,yshift=0.7cm]
                \fill[white] (-0.25,-0.55) -- (-0.13,-0.4) -- (0.05,-0.45) -- (-0.07,-0.6) -- cycle;
            \end{scope} 
            \draw[string-red,thick,postaction={decorate},line cap=round] (-2.4,-0.2) .. controls (-2.4,0.1) and (-1.8,0.3) .. (-1.8,0.6);
        \end{scope}
        \draw[string-blue,thick,line cap=round]
        (-2.1,1.2) -- (-2.1,0.6)
        (-1.6,0.6) -- (-2.6,0.6);
       \node at (-1.7,0.6) [string-blue,right] {\scriptsize $\iota$\tiny${}_{X^*}$};
       \node at (-1.78,-0.7) [string-red,below] {\scriptsize $X$};
       \node at (-2.3,-0.675) [string-red,below] {\scriptsize $X^*$};
       \node at (-2.085,1.15) [string-blue,above] {\scriptsize $\coend$};
   \end{tikzpicture}  
   & \hfill \omega : \begin{tikzpicture}[scale=0.85,baseline=0cm]
       \begin{scope}[decoration={
                      markings,
                      mark=at position 0.55 with {\arrow{>}}}
                     ] 
            \draw[string-red,thick,postaction={decorate},line cap=round] (-0.75,-0.1) -- (-0.75,-1);
            \draw[string-red,thick,postaction={decorate},line cap=round] (-0.25,-1) -- (-0.25,-0.1);
            \draw[string-red,thick,postaction={decorate},line cap=round] (0.75,-1) -- (0.75,-0.1);
            \draw[string-red,thick,postaction={decorate},line cap=round] (0.25,-0.1) -- (0.25,-1);
        \end{scope}
        \filldraw[fill=white,draw=string-blue,thick,line cap=round] (-0.5,0.6) rectangle (0.5,1);
        \draw[string-blue,thick,line cap=round]
        (0.5,0) -- (0.5,-0.1)
        (-0.5,0) -- (-0.5,-0.1)
        (0.1,-0.1) -- (0.9,-0.1)
        (-0.1,-0.1) -- (-0.9,-0.1)
        (0.5,0) .. controls (0.5,0.2) and (0.3,0.4) .. (0.3,0.6)
        (-0.5,0) .. controls (-0.5,0.2) and (-0.3,0.4) .. (-0.3,0.6);
       \node at (-0.8,-0.1) [string-blue,left] {\scriptsize $\iota$\tiny${}_X$};
       \node at (0.8,-0.1) [string-blue,right] {\scriptsize $\iota$\tiny${}_Y$};
       \node at (-0.68,-0.875) [string-red,below] {\scriptsize $X^*$};
       \node at (-0.23,-0.9) [string-red,below] {\scriptsize $X$};
        \node at (0.32,-0.875) [string-red,below] {\scriptsize $Y^*$};
       \node at (0.77,-0.9) [string-red,below] {\scriptsize $Y$};
       \node at (0,0.8) [string-blue] {\scriptsize $\omega$};
   \end{tikzpicture} 
   &= \begin{tikzpicture}[scale=0.85,baseline=0cm]
       \begin{scope}[decoration={
                      markings,
                      mark=at position 0.52 with {\arrow{>}}}
                     ] 
            \draw[string-red,thick,line cap=round] (-0.75,0.5) -- (-0.75,-1);
            \draw[string-red,thick,line cap=round] (-0.25,-1) -- (-0.25,-0.75);
            \draw[string-red,thick,line cap=round] (0.75,-1) -- (0.75,0.5);
            \draw[string-red,thick,line cap=round] (0.25,-0.75) -- (0.25,-1);
            \draw[string-red,thick,postaction={decorate},line cap=round] (0.75,0.5) arc (0:180:0.25 and 0.22);
            \draw[string-red,thick,postaction={decorate},line cap=round] (-0.25,0.5) arc (0:180:0.25 and 0.22);
            \draw[string-red,thick,line cap=round] (0.25,-0.75) .. controls (0.25,-0.55) and (-0.25,-0.325) .. (-0.25,-0.125);
            \begin{scope}[xshift=0.1cm,yshift=0.05cm]
                \fill[white] (-0.2,-0.5) -- (-0.1,-0.4) -- (0,-0.5) -- (-0.1,-0.6) -- cycle;
            \end{scope}
            \draw[string-red,thick,line cap=round] (-0.25,-0.75) .. controls (-0.25,-0.55) and (0.25,-0.325) .. (0.25,-0.125);
            \draw[string-red,thick] (-0.25,0.5) .. controls (-0.25,0.3) and (0.25,0.075) .. (0.25,-0.125);
            \begin{scope}[xshift=0.1cm,yshift=0.685cm]
                \fill[white] (-0.2,-0.5) -- (-0.1,-0.4) -- (0,-0.5) -- (-0.1,-0.6) -- cycle;
            \end{scope}
            \draw[string-red,thick,line cap=round] (0.25,0.5) .. controls (0.25,0.3) and (-0.25,0.075) .. (-0.25,-0.125);
        \end{scope}
       \node at (-0.68,-0.875) [string-red,below] {\scriptsize $X^*$};
       \node at (-0.23,-0.9) [string-red,below] {\scriptsize $X$};
        \node at (0.32,-0.875) [string-red,below] {\scriptsize $Y^*$};
       \node at (0.77,-0.9) [string-red,below] {\scriptsize $Y$};
   \end{tikzpicture} 
    \end{aligned}
\end{equation*}
\caption{The dinatural transformations defining the Hopf algebra structure morphisms and the Hopf pairing on $\coend$. We use red for the ribbons labelled with $X;Y \in \calc$ here to highlight that the structural morphism of $\coend$ do not depend on the specific labels.}\label{fig:coend-Hopf-def}
\end{figure}
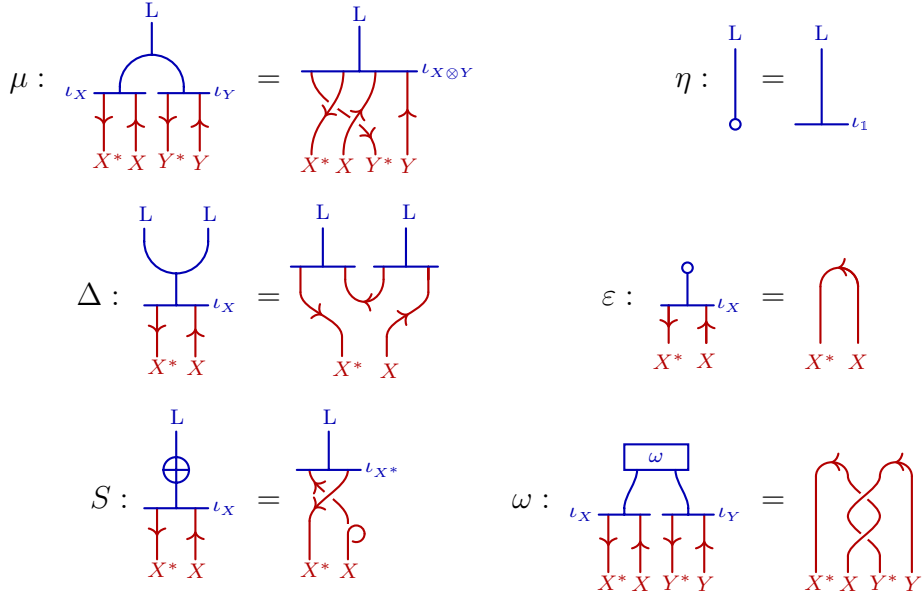

\begin{definition}\label{def:mtc}
    A finite ribbon category $\calc$ is called \emph{modular} if the canonical Hopf pairing $\omega$ of the coend $\coend$ is non-degenerate.
\end{definition}
There are other equivalent definitions of modularity, one of them will be discussed below, see \autocite[Thm.\,1.1]{SHIMIZU2019nondeg} for the full list. From now on we will always assume $\calc$ to be modular. 
It can be shown that $\calc$ is unimodular and that the Hopf algebra $\coend$ admits 
a unique-up-to-scalar two-sided integral $\Lambda \colon \unit \to \coend$ and cointegral $\lambda\colon \coend \to \unit$, see \autocite[Sec.\,2]{DGGPR19} and references therein for more details. 

We will normalise the integral and cointegral in terms of the \textsl{modularity parameter} $\zeta \in \mathbbm{k}^{\times}$ as
\begin{align}\label{eq:integralconv}
    \lambda \circ \Lambda = \id_{\unit} 
    \quad , \quad
    \omega \circ (\id_{\coend} \otimes \Lambda) = \zeta \lambda \ .
\end{align}
That the second condition is possible is in fact equivalent to non-degeneracy of $\omega$, see \autocite[Thm.\,5]{Kerler95genealogy} or \autocite[Lem.\,6.2]{TV}.
The following statement is analogous to the case of classical Hopf algebras \autocite[Cor.\,4.2.13]{KL2001nsstqft}. 
\begin{proposition}\label{prop:frobpairingcoend}
    The cointegral $\lambda$ induces a non-degenerate pairing 
    $\kappa := \lambda \circ \mu \colon \coend \otimes \coend \to \unit$ 
    which equips $\coend$ with the structure of a Frobenius algebra in $\calc$. The copairing is given by $ (S \otimes \id_{\coend})\circ\Delta \circ \Lambda\colon \unit \to \coend\otimes \coend$. 
\end{proposition}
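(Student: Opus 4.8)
The plan is to recognise the claim as the braided-categorical analogue of the Larson--Sweedler theorem that a finite-dimensional Hopf algebra is Frobenius with the cointegral as Frobenius form, exactly as in \autocite[Cor.\,4.2.13]{KL2001nsstqft}. Recall that equipping the algebra $(\coend,\mu,\eta)$ with a Frobenius structure is the same as producing a non-degenerate \emph{invariant} pairing, i.e.\ a $\kappa\colon\coend\otimes\coend\to\unit$ with $\kappa\circ(\mu\otimes\id_\coend)=\kappa\circ(\id_\coend\otimes\mu)$. For $\kappa=\lambda\circ\mu$ invariance is immediate from associativity of $\mu$:
\begin{equation*}
\kappa\circ(\mu\otimes\id_\coend)=\lambda\circ\mu\circ(\mu\otimes\id_\coend)=\lambda\circ\mu\circ(\id_\coend\otimes\mu)=\kappa\circ(\id_\coend\otimes\mu).
\end{equation*}
Hence the whole content is non-degeneracy of $\kappa$, which I would establish by exhibiting the copairing $\Omega:=(S\otimes\id_\coend)\circ\Delta\circ\Lambda\colon\unit\to\coend\otimes\coend$ and checking the two duality (zigzag) identities
\begin{equation*}
(\kappa\otimes\id_\coend)\circ(\id_\coend\otimes\Omega)=\id_\coend=(\id_\coend\otimes\kappa)\circ(\Omega\otimes\id_\coend).
\end{equation*}

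To verify these I would argue diagrammatically in $\calc$ using three inputs: the two-sided integral property of $\Lambda$ (multiplying any morphism into $\coend$ against $\Lambda$ factors through $\epsilon$), the two-sided cointegral property of $\lambda$ (the dual statement $(\lambda\otimes\id_\coend)\circ\Delta=\eta\circ\lambda=(\id_\coend\otimes\lambda)\circ\Delta$), and the antipode axioms $\mu\circ(S\otimes\id_\coend)\circ\Delta=\eta\circ\epsilon=\mu\circ(\id_\coend\otimes S)\circ\Delta$ together with the (anti)compatibility of $S$ with $\mu$ and $\Delta$. The computation turns on a preparatory slide lemma describing how multiplication by a morphism moves across the coproduct $\Delta\circ\Lambda$ of the integral at the cost of an antipode --- the braided-categorical analogue of the Hopf-algebra identity $x\,\Lambda_{(1)}\otimes\Lambda_{(2)}=\Lambda_{(1)}\otimes S(x)\,\Lambda_{(2)}$, itself a consequence of the integral property and the antipode axioms. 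Feeding this into each zigzag, the multiplication produced by $\kappa$ combines with the antipode on the first leg of $\Omega$ into $\eta\circ\epsilon$ via an antipode axiom, the cointegral property of $\lambda$ then collapses the remaining coproduct, and the normalisation $\lambda\circ\Lambda=\id_\unit$ from \eqref{eq:integralconv} reduces each side to $\id_\coend$.

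The main obstacle is the careful bookkeeping of braidings: since $\calc$ is only braided and not symmetric, the slide lemma and the interplay of $S$, $\mu$, $\Delta$ must be tracked with the correct over/under crossings, and one must invoke unimodularity of $\calc$ --- automatic here since $\calc$ is modular --- so that the left and right (co)integrals agree (in particular $S\circ\Lambda=\Lambda$ and the distinguished grouplike is trivial) and the two zigzags close up without a residual grouplike correction. Once both zigzags hold, $\kappa$ is non-degenerate with inverse copairing $\Omega$; the induced Frobenius coproduct $(\mu\otimes\id_\coend)\circ(\id_\coend\otimes\Omega)$ with counit $\lambda=\kappa\circ(\eta\otimes\id_\coend)$ then satisfies the Frobenius relation formally, completing the proof. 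As a consistency check one may record the relation $\kappa=\zeta^{-1}\,\omega\circ(\mu\otimes\Lambda)$, obtained by substituting $\lambda=\zeta^{-1}\,\omega\circ(\id_\coend\otimes\Lambda)$ from \eqref{eq:integralconv}, which ties $\kappa$ to the non-degenerate Hopf pairing of \Cref{def:mtc} in the modular setting.
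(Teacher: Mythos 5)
The paper contains no proof of this proposition: it is recorded as the braided analogue of the classical Hopf-algebra statement and justified solely by the citation \autocite[Cor.\,4.2.13]{KL2001nsstqft} --- which is precisely the source and strategy you identify, so your proposal in effect supplies the argument that the paper outsources to the literature. Your outline is sound and matches that argument: invariance of $\kappa=\lambda\circ\mu$ is immediate from associativity; non-degeneracy is equivalent to the two zigzag identities for the copairing $(S\otimes\id_\coend)\circ\Delta\circ\Lambda$; and these follow from the two-sided integral and cointegral properties, the antipode axioms, a slide lemma, the normalisation $\lambda\circ\Lambda=\id_\unit$, and unimodularity ($S\circ\Lambda=\Lambda$, hence by uniqueness of cointegrals also $\lambda\circ S=\lambda$), exactly the inputs you list. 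One bookkeeping error sits in the very place you flag as delicate: the slide lemma as you state it, $x\Lambda_{(1)}\otimes\Lambda_{(2)}=\Lambda_{(1)}\otimes S(x)\Lambda_{(2)}$, is off by $S^{2}$. For a left integral, sliding a left multiplication from the first leg to the second costs $S^{-1}$; the $S$-versions are $\Lambda_{(1)}\otimes x\Lambda_{(2)}=S(x)\Lambda_{(1)}\otimes\Lambda_{(2)}$ (left integral) and $\Lambda_{(1)}x\otimes\Lambda_{(2)}=\Lambda_{(1)}\otimes\Lambda_{(2)}S(x)$ (right integral, right multiplication). Since $\Lambda$ is two-sided here, all variants are available, so this is a repairable convention slip rather than a gap; with the correct variant the first zigzag closes as
\begin{equation*}
S(\Lambda_{(1)})\,\lambda(\Lambda_{(2)}a)\;=\;S\bigl(\Lambda_{(1)}S^{-1}(a)\bigr)\,\lambda(\Lambda_{(2)})\;=\;a\,S\bigl(\Lambda_{(1)}\,\lambda(\Lambda_{(2)})\bigr)\;=\;a,
\end{equation*}
using anti-multiplicativity of $S$, the cointegral property $(\id_\coend\otimes\lambda)\circ\Delta=\eta\circ\lambda$ and the normalisation; the second zigzag closes similarly but additionally uses $\lambda\circ S^{2}=\lambda$, which is where the unimodularity you invoke genuinely enters. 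In a full write-up these identities must of course be redone diagrammatically with the correct over/under crossings, as you anticipate, but the plan is correct.
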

The pairing $\kappa$ is called the \emph{Radford pairing}. Note that this convention for $\kappa$ keeps the algebra structure of $\coend$ fixed while the one we used in \autocite{HR2024modfunc} keeps the coalgebra structure. This difference in convention makes some of the computations in \cref{sec:logcardy} a bit more straightforward.
Since the Hopf pairing is also non-degenerate, the composition 
\begin{align}\label{eq:def-S-endo-of-L}
\calS := (\omega \otimes \id_{\coend}) \circ (\id_{\coend} \otimes ( (S \otimes \id_{\coend})\circ\Delta\circ \Lambda)) \colon \coend \to \coend
\end{align}
is invertible and satisfies
\begin{align}\label{eq:S-property-pairing}
    \kappa \circ (\calS \otimes \id_\coend) = \omega = \kappa \circ (\id_\coend \otimes \calS) 
\end{align}
by definition. We can define another endomorphism of $\coend$ by the universal property via $\cat{T} \circ \iota_X = \iota_X \circ (\id_{X^*} \otimes \theta_X)$. 
This morphism is invertible with inverse $\cat{T}^{-1} \circ \iota_X = \iota_X \circ (\id_{X^*} \otimes \theta_X^{-1})$. 
Moreover the constants $\Delta^{\pm}$ defined via 
\begin{align}\label{eq:defDeltapm}
    \epsilon \circ \cat{T}^{\pm1} \circ \Lambda = \Delta^{\pm} \, \id_{\unit}
\end{align} are non-zero 
and satisfy $\zeta = \Delta^+ \Delta^-$, see \autocite[Prop.\,2.6 \& Cor.\,4.6]{DGGPR19}. Using this and the normalisation of $\Lambda$ and $\lambda$ a direct computation shows that 
\begin{align} \label{eq:S^2}
    \calS^2 = \zeta S^{-1}
\end{align}
where $S^{-1}$ is the inverse of the antipode of $\coend$. The name \emph{modular} tensor category is justified by the following lemma:
\begin{lemma}[{\autocite[Thm.\,2.1.9]{Lyu95invproj}}]
    The morphisms $\calT$ and $\calS$ induce a projective $\mathrm{SL}(2,\Z)$-action on the morphism spaces $\Hom_{\calc}(\coend,\unit)$ and $\Hom_{\calc}(\unit,\coend)$ via pre- and postcompostion, respectively.
\end{lemma}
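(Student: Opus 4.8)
The plan is to reduce the assertion to a short list of relations among the two endomorphisms $\calS,\calT\in\mathrm{End}(\coend)$ and to check them one by one. I would use the presentation $\mathrm{SL}(2,\Z)=\langle \mathbf s,\mathbf t\mid (\mathbf s\mathbf t)^3=\mathbf s^2,\ \mathbf s^4=1\rangle$, in which $\mathbf s^2$ is central; a \emph{projective} $\mathrm{SL}(2,\Z)$-action is then nothing but a pair of invertible operators obeying these relations up to non-zero scalars. The spaces $\Hom_\calc(\coend,\unit)$ and $\Hom_\calc(\unit,\coend)$ are dual to one another under composition to $\Hom_\calc(\unit,\unit)=\kk$, and precomposition by $\calS,\calT$ is the transpose of postcomposition by the same morphisms; hence it suffices to verify the relations once, directly for the endomorphisms $\calS,\calT$ of $\coend$. (On $\Hom_\calc(\coend,\unit)$ precomposition realises the opposite group, which is harmless, since $\mathrm{SL}(2,\Z)$ is anti-isomorphic to itself and only scalars are being tracked.)

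Two of the required relations are essentially free. The identity $\calS^2=\zeta\,S^{-1}$ of \eqref{eq:S^2} already identifies the image of the central generator $\mathbf s^2$ with a scalar multiple of the antipode of $\coend$. Squaring gives $\calS^4=\zeta^2 S^{-2}$; under the canonical identification $\Hom_\calc(\coend,\unit)\cong\mathrm{End}(\mathrm{Id}_\calc)$ furnished by the universal property of the coend, precomposition with the antipode interchanges the strand labels $X$ and $X^{*}$, so precomposition with $S^2$ is the canonical double-dual isomorphism $X\cong X^{**}$. Strict pivotality (indeed sphericity) of $\calc$ makes this the identity, so $\calS^4$ acts as the scalar $\zeta^2$ on both Hom spaces, realising $\mathbf s^4=1$. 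Centrality of $\mathbf s^2$ is equally cheap: the compatibility $\theta_{X^{*}}=(\theta_X)^{*}$ of twist and duality yields $S\calT=\calT S$, while $\calS^2=\zeta S^{-1}$ forces $\calS$ to commute with $S$; thus $\calS^2$ commutes with both $\calS$ and $\calT$.

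The content of the lemma is the cubic relation $(\calS\calT)^3\propto\calS^2$, which I would obtain from the projective version of the braid relation $\mathbf s\mathbf t\mathbf s=\mathbf t^{-1}\mathbf s\mathbf t^{-1}$ valid in $\mathrm{SL}(2,\Z)$, namely
\begin{equation}
  \calS\,\calT\,\calS \;=\; c\,\calT^{-1}\,\calS\,\calT^{-1}
\end{equation}
for a non-zero scalar $c$. Granting this, one multiplies on the right by $\calT$ to get $(\calS\calT)^2=c\,\calT^{-1}\calS$, and then
\begin{equation}
  (\calS\calT)^3 \;=\; c\,\calT^{-1}\calS^2\calT \;=\; c\,\calS^2 ,
\end{equation}
where the last step uses $\calS^2=\zeta S^{-1}$ together with $S\calT=\calT S$ from the previous paragraph. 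This is precisely $(\mathbf s\mathbf t)^3=\mathbf s^2$ up to the scalar $c$, completing the list of relations.

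It remains to establish the braid identity, and this is the main obstacle. I would unfold $\calS$ from its definition \eqref{eq:def-S-endo-of-L} in terms of the Hopf pairing $\omega$ and the integral $\Lambda$, and $\calT^{\pm1}$ from $\calT^{\pm1}\iota_X=\iota_X(\id_{X^{*}}\otimes\theta_X^{\pm1})$, precompose both sides with the universal dinatural family $\iota_X$, and compare the resulting morphisms by graphical calculus using the structure maps of \Cref{fig:coend-Hopf-def}. The crux is the interaction of the twist with the integral: sliding the twists created by the outer factors $\calT^{\pm1}$ past $\Lambda$ produces exactly the Gauss-sum constants $\Delta^{\pm}$ of \eqref{eq:defDeltapm}, which pin down the scalar $c$ and, in the course of the computation, reproduce the identity $\zeta=\Delta^+\Delta^-$; non-degeneracy of $\omega$ — that is, modularity of $\calc$ — is then what lets the two families of dinatural morphisms be identified. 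This single graphical computation, together with the normalisation of $\Lambda$ and $\lambda$ fixed in \eqref{eq:integralconv}, is the genuinely hard step; the remainder is routine bookkeeping with the Hopf-algebra morphisms of $\coend$.
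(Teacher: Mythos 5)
This lemma is not actually proven in the paper at all: it is imported wholesale from Lyubashenko \autocite[Thm.\,2.1.9]{Lyu95invproj}, so your attempt has to be measured against that argument, whose skeleton you do reproduce (reduce to the presentation $\langle \mathbf{s},\mathbf{t}\mid (\mathbf{s}\mathbf{t})^3=\mathbf{s}^2,\ \mathbf{s}^4=1\rangle$, handle $\mathbf{s}^4$ via $\calS^2=\zeta S^{-1}$ from \eqref{eq:S^2}, and concentrate everything in one cubic relation). The genuine gap is that the cubic relation $\calS\calT\calS = c\,\calT^{-1}\calS\calT^{-1}$ is never established: you name the ingredients (unfold \eqref{eq:def-S-endo-of-L}, precompose with $\iota_X$, slide twists past $\Lambda$ to produce the constants $\Delta^{\pm}$ of \eqref{eq:defDeltapm}, invoke non-degeneracy of $\omega$) but explicitly defer the computation. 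That single identity \emph{is} the theorem; all the rest of your write-up --- passing from the braid identity to $(\calS\calT)^3=c\,\calS^2$, the commutation $S\calT=\calT S$, and the centrality check (which is in any case automatic once the two defining relations hold projectively) --- is formal bookkeeping. As submitted, this is a proof plan whose load-bearing step is missing, not a proof.

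A secondary problem is the justification of the $\mathbf{s}^4$ relation, which also exposes an inconsistency in your opening reduction. From \eqref{eq:S^2} one gets $\calS^4=\zeta^2 S^{-2}$ as endomorphisms of $\coend$, but $S^2\neq\id_{\coend}$ in general: squaring the defining diagram of $S$ in \Cref{fig:coend-Hopf-def} produces braiding and twist factors, not the double-dual identification on the nose, so strict pivotality alone does not finish the argument. What is true --- and what Lyubashenko uses --- is that pre-/post-composition with $S^{\pm2}$ is the identity on the two Hom spaces of the lemma: under $\Hom_\calc(\coend,\unit)\cong \mathrm{End}(\mathrm{Id}_\calc)$ the operator becomes conjugation by (double duals and) twists, and a natural endomorphism of the identity functor commutes with every morphism of $\calc$, in particular with $\vartheta$. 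This also shows that your claim that ``it suffices to verify the relations once, directly for the endomorphisms $\calS,\calT$ of $\coend$'' cannot be followed literally: the relation $\mathbf{s}^4=1$ fails, even projectively, at the level of $\mathrm{End}_\calc(\coend)$ and only holds after passing to $\Hom_\calc(\coend,\unit)$ and $\Hom_\calc(\unit,\coend)$, which is where the lemma lives.
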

Accordingly, we will call $\calS\colon \coend \to \coend$ the modular $S$-transformation of $\calc$. The modular group action on $\Hom_\calc(\coend,\unit)$ is actually a part of the chiral modular functor constructed in \autocite{HR2024modfunc}.

\subsection{(Co)modules of the canonical Hopf algebra}\label{subsec:reptheory}
Let us now discuss how the categories of $\coend$-modules and comodules are related to more well-known categories. 

To this end first recall the \emph{Drinfeld centre} of a monoidal category $\calA$ as the braided monoidal category $\calZ(\calA)$ with objects pairs $(X,\gamma)$, where $X \in \calA$, and $\gamma \colon X \otimes - \Rightarrow - \otimes X$ is a natural isomorphism, called a \emph{half braiding}, satisfying a hexagon type axiom \autocite[Def.\,7.13.1]{EGNO}.
A key property of the Drinfeld centre is that for suitable $\calA$ it is modular:
\begin{proposition}[{\autocite[Thm.\,5.11]{Shimizu17ribbondrinf}}]\label{prop:drinmod}
Let $\calA$ be finite tensor category satisfying the sphericality condition of \autocite[Def.\,3.5.2]{DSS13tensorcats}.
 Then $\calZ(\calA)$ is a modular tensor category.
\end{proposition}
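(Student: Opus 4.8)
My plan is to check, one at a time, the three properties that enter \Cref{def:mtc}: that $\calZ(\calA)$ is a finite tensor category, that it carries a ribbon structure, and that the canonical Hopf pairing $\omega$ on its coend is non-degenerate. The first is the most standard: the forgetful functor $U\colon \calZ(\calA)\to\calA$ admits a right adjoint $R$, and $\calZ(\calA)$ is the category of modules over the central monad $UR$ on $\calA$; since $\calA$ is finite this realises $\calZ(\calA)$ again as a finite tensor category (see \autocite{EGNO}), with rigidity inherited from $\calA$, simple unit $(\unit,\mathrm{triv})$, and a braiding supplied tautologically by the half-braidings. This reduces everything to producing a ribbon structure and to non-degeneracy.

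For the ribbon structure I would feed in the hypothesis that $\calA$ satisfies the DSS sphericality condition. A pivotal structure lifts canonically from $\calA$ to $\calZ(\calA)$, and combining it with the braiding produces a candidate twist $\theta_{(X,\gamma)}$ assembled from the half-braiding $\gamma$ and the pivotal isomorphism on $X$ — precisely the Drinfeld-element construction familiar from the ribbon element of a quantum double. The point of the sphericality hypothesis is that it trivialises the distinguished invertible object of $\calA$ (equivalently, the relevant Radford/Nakayama data) in a compatible way, which is exactly what makes this candidate twist a genuine ribbon twist, satisfying $\theta_{X^*}=(\theta_X)^*$ and compatibility with the braiding, and renders the induced pivotal structure on $\calZ(\calA)$ spherical. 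I would record the outcome as: $\calZ(\calA)$ is a finite ribbon category.

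The essential step is non-degeneracy of $\omega$, and the key structural fact is that the Drinfeld center of any finite tensor category is factorisable. I would establish this through the Müger centre: I claim the full subcategory $\calZ_2(\calZ(\calA))$ of transparent objects — those whose double braiding with every object is trivial — is equivalent to $\vect$. To see this one tests a transparent object $(W,\gamma)$ against the induced objects $R(X)$ for $X\in\calA$, whose half-braidings are the explicit canonical (coend) half-braidings and which generate $\calZ(\calA)$ under the monadic description above; the resulting transparency equations collapse the half-braiding $\gamma$ and pin $W$ down to a multiple of $\unit$. Once the Müger centre is trivial, I invoke the equivalence of non-degeneracy conditions for braided finite tensor categories — triviality of $\calZ_2$ and non-degeneracy of $\omega$ are among the equivalent conditions of \autocite[Thm.\,1.1]{SHIMIZU2019nondeg} — to conclude that $\omega$ is non-degenerate. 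Together with the first two paragraphs this shows $\calZ(\calA)$ is modular.

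The genuinely hard part, and the step I expect to be the main obstacle, is the triviality of the Müger centre: constructing the ribbon twist is careful bookkeeping once the sphericality trivialisation is in hand, but establishing that a transparent object must be a multiple of the unit requires really using the universal property of the center (equivalently, the structure of $\calA$ as a module category over $\calA\boxtimes\overline{\calA}$) rather than formal diagram manipulation.
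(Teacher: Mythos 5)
The paper does not prove \Cref{prop:drinmod} at all: it is imported verbatim from \autocite[Thm.\,5.11]{Shimizu17ribbondrinf}, so the only meaningful comparison is with the proof in that reference. At the level of architecture your proposal matches it: the centre is a braided finite tensor category for formal reasons; the DSS sphericality hypothesis of \autocite{DSS13tensorcats} enters only through the construction of the twist; and non-degeneracy is automatic for Drinfeld centres of finite tensor categories, independently of any sphericality. You separate these three ingredients correctly. One small correction to your first paragraph: if $R$ is the right adjoint of $U\colon\calZ(\calA)\to\calA$, then $UR$ is a \emph{comonad}, not a monad; the central monad is $U\circ F$ for the \emph{left} adjoint $F$, and $\calZ(\calA)$ is its Eilenberg--Moore category, exactly as recorded in \Cref{subsec:reptheory}.

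However, both substantive steps are asserted rather than carried out, and the non-degeneracy step contains a genuine gap. Your reduction of transparency to the induced objects is legitimate (they generate $\calZ(\calA)$, and triviality of the double braiding descends along epimorphisms and monomorphisms by naturality and exactness of the tensor product), but it is only a restatement of the problem: the claim that ``the transparency equations collapse $\gamma$ and pin $W$ down to a multiple of $\unit$'' is precisely the theorem to be proved, and you supply no mechanism for it --- in the non-semisimple setting the Frobenius--Perron dimension count that makes this tractable for fusion categories is unavailable. The known argument does not perform this computation; it verifies a \emph{different} condition from the list in \autocite[Thm.\,1.1]{SHIMIZU2019nondeg}, namely that the canonical braided functor $\calZ(\calA)\boxtimes\overline{\calZ(\calA)}\to\calZ(\calZ(\calA))$ is an equivalence, which is obtained from Morita invariance of the Drinfeld centre, using that $\calZ(\calA)$ is the dual of $\calA\boxtimes\calA^{\mathrm{rev}}$ with respect to the module category $\calA$ --- the structural input your final sentence gestures at but never actually uses. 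Secondarily, the ribbon step is not ``careful bookkeeping once the sphericality trivialisation is in hand'': lifting the pivotal structure and writing down the Drinfeld-element twist is easy, but proving that this twist satisfies $\theta_{(X,\gamma)^*}=(\theta_{(X,\gamma)})^*$ exactly when the square of the pivotal structure of $\calA$ agrees with the Radford isomorphism is the main theorem of \autocite{Shimizu17ribbondrinf}. So the skeleton is right, but the two steps that carry all the weight are missing.
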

In particular a modular tensor category $\calc$ satisfies this sphericality condition which implies that $\calZ(\calc)$ is a modular tensor category as well. To see this, combine \autocite[Thm.\,1.3]{SS21modtracenaka} and \autocite[Cor.\,4.7]{GR17projmod}.  
A generalisation of this result to pivotal tensor categories can be found in \autocite[Cor.\,2.13]{MW22drinGV}.

The forgetful functor $U \colon \calZ(\calc) \to \calc$ which sends a pair $(X,\gamma)$ to $X$ has a left adjoint $F \colon \calc \to \calZ(\calc)$. In the modular setting $F$ is actually also a right adjoint as we will see below. The corresponding monad $U \circ F$ is naturally isomorphic to the central monad $Z$ on $\calc$
\begin{equation}
        (U \circ F)(-) \cong Z (-) := \int^{X \in \calc} X^* \otimes - \otimes X
\end{equation}
and the corresponding Eilenberg-Moore category is precisely $\calZ(\calc)$ \autocite[Sec.\,5.6]{BV12monadcenter} (see also \autocite[Ch.\,9]{TV} for a textbook account). See also \autocite[Prop.\,5.9]{HR2024modfunc} for a topological proof of this statement using modular functors. 

From the braiding in $\calc$ we get an isomorphism of monads $Z(-)\cong -\otimes \coend$, see e.g.\ \autocite[Lem.\,3.7]{SHIMIZU2019nondeg}, which gives rise to the following:
\begin{lemma}\label{lem:center-modules}
    The functor 
    \begin{equation}
        \begin{aligned}
            \calZ(\calc) &\to \calc_\coend\\
            (X,\gamma) &\mapsto (X,\rho^\gamma)
        \end{aligned}
    \end{equation}
    where the right $\coend$-action $\rho^\gamma$ is obtained from the half-braiding $\gamma$ via
    \begin{equation}\label{eq:canonicalactiondef}
    \begin{tikzpicture}[baseline=1cm]
            
            \begin{scope}[very thick,decoration={
                      markings,
                      mark=at position 0.52 with {\arrow{>}}}
                     ]
            \draw[string-red,thick,postaction={decorate},line cap =round] (0.4,0.6) -- (0.4,0);
            \draw[string-red,thick,postaction={decorate},line cap =round] (0.8,0) -- (0.8,0.6);
            \end{scope}  
            \draw[string-blue,thick, line cap = round] (0,0) -- (0,2)
            (0.6,0.6)--(0.6,1)
            (0.3,0.6) -- (0.9,0.6);
            \filldraw[fill=white,draw=string-blue,thick] (-0.2,1) rectangle (0.8,1.5);
            \node at (0,0) [below,string-blue] {\scriptsize $X$};
            \node at (0,2) [above,string-blue] {\scriptsize $X$};
            \node at (0.5,0) [below,string-red] {\scriptsize $Y^*$};
            \node at (0.85,0) [below,string-red] {\scriptsize $Y$};
            \node at (0.8,0.6) [right,string-blue] {\scriptsize $\iota$\tiny${}_Y$};
            \node at (0.4,1.25) [string-blue] {\scriptsize $\rho$\tiny${}^\gamma$};
    \end{tikzpicture}
        =
    \begin{tikzpicture}[baseline=1cm]
            \draw[string-blue,thick, line cap = round] (0.8,1.5) -- (0.8,2);
            \draw[string-red,thick, line cap = round](0,1.4) -- (0,1.6);
            \draw[string-red,thick,-<,line cap = round] (0.4,0) -- (0.4,0.4);
            \draw[string-red,thick,line cap = round] (0.4,0.3) .. controls (0.4,0.6) and (-0.6,0.8) .. (-0.5,1.6);
            \draw[draw=white,double=string-blue,very thick, line cap = round] (0,0) -- (0,1);
            \draw[string-blue,thick, line cap = round]
            (0,0) -- (0,1);
            \begin{scope}[very thick,decoration={
                      markings,
                      mark=at position 0.52 with {\arrow{>}}}
                     ]
            \draw[string-red,thick,postaction={decorate},line cap =round] (0.8,0) -- (0.8,1);
            \draw[string-red,thick, line cap = round,postaction={decorate}] (0,1.6) arc (0:180:0.25 and 0.25);
            \end{scope}    
            \filldraw[fill=white,draw=string-blue,thick] (-0.2,1) rectangle (1,1.5);
            \node at (0,0) [below,string-blue] {\scriptsize $X$};
            \node at (0.8,2) [above,string-blue] {\scriptsize $X$};
            \node at (0.5,0) [below,string-red] {\scriptsize $Y^*$};
            \node at (0.85,0) [below,string-red] {\scriptsize $Y$};
            \node at (0.4,1.25) [string-blue] {\scriptsize $\gamma$\tiny${}_Y$};
    \end{tikzpicture}   
\end{equation}
is an equivalence categories.

\end{lemma}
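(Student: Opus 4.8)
The plan is to present the functor of the lemma as a composite of two equivalences that are already available above, and then to match the resulting module structure against the diagram \eqref{eq:canonicalactiondef}.

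First I recall, as stated above, that the Drinfeld centre is the Eilenberg--Moore category of the central monad $Z(-)=\int^{X}X^{*}\otimes-\otimes X$, see \autocite[Sec.\,5.6]{BV12monadcenter}. Under this identification an object $(X,\gamma)$ of $\calZ(\calc)$ corresponds to the $Z$-algebra whose structure morphism $Z(X)\to X$ is, by the universal property of the coend, the dinatural family $a_{Y}\colon Y^{*}\otimes X\otimes Y\to X$ given by $a_{Y}=(\mathrm{ev}_{Y}\otimes\id_{X})\circ(\id_{Y^{*}}\otimes\gamma_{Y})$, i.e.\ half-braiding $X$ past $Y$ and then evaluating.

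Next I use the isomorphism of monads $Z(-)\cong-\otimes\coend$ coming from the braiding, recalled above from \autocite[Lem.\,3.7]{SHIMIZU2019nondeg}. An isomorphism of monads induces an equivalence between the respective Eilenberg--Moore categories, and the algebras of the monad $-\otimes\coend$ are by definition the right $\coend$-modules, that is, the objects of $\calc_{\coend}$. Composing the two equivalences yields a functor $\calZ(\calc)\simeq\calc_{\coend}$ that is the identity on underlying objects, so it only remains to compute the induced action explicitly. For this I trace the monad isomorphism $\Phi\colon\int^{Y}Y^{*}\otimes X\otimes Y\to X\otimes\coend$ through the universal properties: on the $Y$-component it is $(\id_{X}\otimes\iota_{Y})$ precomposed with the braiding moving the factor $X$ in front of $Y^{*}$. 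Since the transported action satisfies $\rho^{\gamma}\circ\Phi=a$, precomposing with the universal dinatural map shows that $\rho^{\gamma}\circ(\id_{X}\otimes\iota_{Y})$ equals $a_{Y}$ with the $X$-over-$Y^{*}$ braiding inserted, which is exactly the dinatural morphism drawn on the right-hand side of \eqref{eq:canonicalactiondef}. As the two agree after composition with every $\iota_{Y}$, they agree as morphisms out of $\coend$, so $\rho^{\gamma}$ is well defined and is the claimed action; its associativity and unitality are inherited from those of the $Z$-algebra.

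The step I expect to be the main obstacle is this last verification: one has to fix whether the over- or the under-crossing braiding enters the monad isomorphism and then check graphically that it reproduces the half-braiding-and-evaluation picture of \eqref{eq:canonicalactiondef} rather than its mirror image. Once the braiding convention is pinned down, the remaining manipulations are routine string-diagram calculations, and fully faithfulness together with essential surjectivity are automatic since the functor is a composite of equivalences.
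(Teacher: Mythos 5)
Your proposal is correct and takes essentially the same route as the paper: the paper obtains the lemma from exactly the two ingredients you use --- the identification of $\calZ(\calc)$ with the Eilenberg--Moore category of the central monad $Z$ and the monad isomorphism $Z(-)\cong -\otimes\coend$ from the braiding --- and then simply cites Majid (Thm.\,3.2 of the braided-categories paper) as a special case instead of verifying the explicit action formula. Your extra step of tracing the transported action through the monad isomorphism to recover \eqref{eq:canonicalactiondef}, including the braiding-convention check you flag, is precisely the content hidden in that citation.
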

This lemma is a special case of \autocite[Thm.\,3.2]{Maj92braidcat}. 

Under the equivalence $\calZ(\calc) \simeq \calc_\coend$ the forgetful functor $U \colon \calZ(\calc) \to \calc$ simply corresponds to the functor forgetting the $\coend$-module structure. In particular, its left adjoint is given by the free module functor $-\otimes \coend \colon \calc \to \calc_\coend$. Moreover, since $\coend$ is a Frobenius algebra this is automatically also a right adjoint for $U$, as claimed above.

There is also a different functor $\calc \to \calZ(\calc)$ given by sending an object $X \in \calc$ to the object $(X,\beta_{X,-})\in \calZ(\calc)$ with half braiding given by the braiding $\beta_{X,-}$. In particular, we can equip every object in $\calc$ with a $\coend$-action by combining this functor with the above equivalence. We will call the resulting action the \emph{canonical} $\coend$-\emph{action on} $X$ and denote it with $\rho_X \colon X \otimes \coend \to X$. By taking the (left) partial trace $\lptr(\rho_X) \colon \coend \to \unit$ we get an element in $\chi_X \in \Hom_\calc(\coend,\unit)$ which is called the \emph{internal character of} $X$ \autocite[Sec.\,3.5]{Shimizu15character}.\footnote{
In \autocite{Shimizu15character} internal characters are defined
using the canonical end $\eend = \int_{X \in \calc} X \otimes X^*$ which can be identified with the dual of the coend $\coend$ via the Hopf pairing $\omega$ and under this identification the two versions of internal characters coincide.}

Next, recall the Deligne tensor product of two finite linear categories $\calA$ and $\calB$ is the finite linear category $\calA \boxtimes \calB$ together with a functor $\boxtimes \colon \calA \times \calB \to \calA \boxtimes \calB$ which is left exact in each argument and satisfies the following universal property: Let $\calD$ be another finite linear category and let $F \colon \calA \times \calB \to \calD$ be a functor left exact in each argument, then there is a unique (up to unique natural isomorphism)
left exact functor $\widehat{F}  \colon \calA \boxtimes \calB \to \calD$ together with an equivalence $F \cong \widehat{F} \circ \boxtimes$. See \autocite[Sec.\,1.11]{EGNO} for the existence of the Deligne product as well as more properties.\footnote{Usually one defines the Deligne tensor product for right exact functors, however by the equivalence of left exact and right exact functors for finite linear categories \autocite[Thm.\,3.2]{FSS19eilenbergwatts} this does not make a difference, see also the discussion in \autocite[Sec.\,2.4]{BW22modfunc}.} For $\calc$ and $\calD$ modular tensor categories, also $\calc\boxtimes \calD$ is a modular tensor category. This can for example be seen by noting that the isomorphism $\coend_{\calc \boxtimes \calD} \cong \coend_{\calc}\boxtimes \coend_{\calD}$ \autocite[Cor.\,3.12]{FSS19eilenbergwatts} where $\coend_\calc$ and $\coend_\calD$ are the canonical Hopf algebra of $\calc$ and $\calD$, respectively, is an isomorphism of Hopf algebras. 

Let us now turn to the question of comodules of $\coend$. First note that we can equip any object $X \in \calc$ with the structure of a right $\coend$-comodule with coaction $\delta_X$ given by
\begin{equation}
    \delta_X := (\id_X \otimes \iota_X) \circ (\mathrm{coev}_X \otimes \id_X) \colon X \to X \otimes \coend.
\end{equation}
We will call this the \emph{canonical} $\coend$-\emph{coaction on} $X$ and the (left) partial trace $\check{\chi}_X := \lptr(\delta_X) = \iota_X \circ \rcoev_X \colon \unit \to \coend$ the \emph{internal cocharacter of} $X$.

\begin{lemma}\label{lem:double-comodules}
    The functor
    \begin{equation}
        \begin{aligned}
            \calc \boxtimes \overline{\calc} &\to \calc^{\coend} \\
            X \boxtimes Y &\mapsto (X\otimes Y, \id_X \otimes \delta_Y)
        \end{aligned}
    \end{equation}
    is an equivalence of categories.
\end{lemma}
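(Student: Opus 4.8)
The plan is to exhibit $\Phi\colon\calc\boxtimes\overline{\calc}\to\calc^{\coend}$, $X\boxtimes Y\mapsto(X\otimes Y,\id_X\otimes\delta_Y)$, as the comparison functor of a comonadic adjunction, with modularity entering through a single identification. Since $\overline{\calc}$ and $\calc$ coincide as $\kk$-linear categories, it suffices to prove that $\Phi$ is an equivalence of linear categories; the mirroring is irrelevant at this level and only becomes relevant for the (braided) monoidal refinement. First I would check that $\Phi$ is well defined: $(X,Y)\mapsto(X\otimes Y,\id_X\otimes\delta_Y)$ is exact in each variable since $\otimes$ is biexact, and $\id_X\otimes\delta_Y$ is a bona fide $\coend$-coaction because $\delta_Y$ is one and $\coend$ is a coalgebra; naturality of $Y\mapsto\delta_Y$ (a short diagram chase from the dinaturality of $\iota$ and the defining property of $\mathrm{coev}$) makes the assignment functorial, and the universal property of the Deligne product produces $\Phi$.

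The engine of the proof is the tensor-product functor $\otimes\colon\calc\boxtimes\overline{\calc}\to\calc$ induced by the monoidal product. Up to equivalence it is restriction of scalars along the comultiplication of a (quasi-)Hopf algebra presenting $\calc$, so it is faithful, hence conservative; being right exact it admits a right adjoint $R$, and being exact it preserves equalizers. By the (dual) Beck theorem $\otimes$ is therefore comonadic, and the comparison functor identifies $\calc\boxtimes\overline{\calc}$ with the category of comodules over the comonad $\mathbb{G}=\otimes\circ R$. A co/end computation identifies $\mathbb{G}$ with $-\otimes\eend$, where $\eend=\int_{X}X\otimes X^{*}$ is the canonical end, so $\calc\boxtimes\overline{\calc}\simeq\calc^{\eend}$ is the category of $\eend$-comodules. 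Unwinding the unit of $\otimes\dashv R$ shows that the comparison functor sends $X\boxtimes Y$ to $X\otimes Y$ with a coaction that is the identity on the first tensorand and, on the second, the canonical one built from $\mathrm{coev}_Y$; that is, it has exactly the shape $\id_X\otimes\delta_Y$, with the $X$-factor a spectator because $R$ attaches the dualising object on the right.

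Finally I would pass from $\eend$ to $\coend$, and this is the only place modularity is used: non-degeneracy of the Hopf pairing $\omega$ supplies an isomorphism $\coend\xrightarrow{\ \sim\ }\coend^{*}\cong\eend$ of Hopf algebras, in particular of coalgebras, so that $\calc^{\eend}\simeq\calc^{\coend}$; transporting the comparison functor along this equivalence turns the end-valued coaction into the $\coend$-valued canonical coaction $\delta_Y$ and reproduces $\Phi$. I expect the main obstacle to be precisely this last step together with the identification of $\mathbb{G}$: one must check that $\omega$ intertwines the natural coalgebra structure on $\eend$ with the comultiplication $\Delta$ of $\coend$, and that under $\eend\cong\coend$ the abstract coaction coming from $\otimes\dashv R$ becomes literally $\id_X\otimes\delta_Y$ (with $\iota_Y$ replacing the end projection). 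This also explains the role of modularity structurally: for a non-modular finite ribbon $\calc$ one has $\eend\not\cong\coend$, and while $\otimes$ remains comonadic, $\Phi$ ceases to be essentially surjective. As a consistency check one may instead combine the module equivalence \Cref{lem:center-modules} with the Frobenius structure of \Cref{prop:frobpairingcoend}, which yields $\calc_{\coend}\simeq\calc^{\coend}$, and the modular identification $\calZ(\calc)\simeq\calc\boxtimes\overline{\calc}$; tracing $\Phi$ through these equivalences requires the same bookkeeping.
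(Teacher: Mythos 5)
Your overall strategy---exhibiting $\Phi$ as the comparison functor of a comonadic adjunction along $\otimes\colon\calc\boxtimes\overline{\calc}\to\calc$---is sound, and it is close in spirit to the module-category arguments behind the references the paper cites (the paper gives no proof of its own, only citations to \autocite{Lyu99squared}, \autocite[Prop.\,7.18.4]{EGNO} and \autocite{Shimizu16pivotalcenter}). The comonadicity step itself is fine: $\otimes$ is exact and kills no nonzero object (any nonzero $A$ surjects onto some simple $S\boxtimes T$ and $S\otimes T\neq 0$), hence it is faithful and conservative, and right exact functors between finite linear categories admit right adjoints, so Beck's theorem applies. The genuine gap is your identification of the comonad: you assert that ``a co/end computation identifies $\mathbb{G}=\otimes\circ R$ with $-\otimes\eend$'', with modularity then needed only to trade $\eend$ for $\coend$. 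In fact the comonad of $\otimes\dashv R$ is $-\otimes\coend$ with the \emph{Hopf} comultiplication $\Delta$ of \Cref{fig:coend-Hopf-def}, with no modularity involved, and computing this (i.e.\ computing $R$ and the unit of the adjunction) is precisely the content of the lemma---it is the step you have not carried out. Worse, your version cannot be correct as a modularity-free statement: since $\eend\cong\coend^{*}$ by rigidity and pivotality, comodules over $\eend$ are the same as modules over $\coend$, so your claims combined with your own comonadicity argument and \Cref{lem:center-modules} would yield $\calc\boxtimes\overline{\calc}\simeq\calc^{\eend}\simeq\calc_{\coend}\simeq\calZ(\calc)$ for \emph{every} finite ribbon category, contradicting \Cref{prop:modularchar}, which says this equivalence characterises modularity. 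So ``$\mathbb{G}\cong-\otimes\eend$'' already presupposes modularity, and your final step ($\eend\cong\coend$ via $\omega$) is not a separate use of it but the same fact again; the proof as organised is circular at its core.

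Relatedly, your structural claim that for non-modular finite ribbon $\calc$ the functor $\Phi$ ceases to be essentially surjective is false, and this is a good way to see that your picture of where modularity enters is off. The lemma needs neither braiding nor modularity: $\delta_Y$ is built only from $\mathrm{coev}_Y$ and $\iota_Y$, and the equivalence $\calc\boxtimes\overline{\calc}\simeq\calc^{\coend}$ holds for any finite tensor category (which is why the paper can cite \autocite[Prop.\,7.18.4]{EGNO}, a statement with no modularity hypothesis). Concretely, for $\calc=\mathrm{Rep}(G)$ with $G$ finite non-abelian (symmetric, hence maximally non-modular) one has $\coend\cong\mathcal{O}(G)$ with conjugation action and the function coalgebra structure, the coaction $\delta_Y$ encodes the $G$-action on $Y$, and $\Phi$ is the standard equivalence $\mathrm{Rep}(G)\boxtimes\mathrm{Rep}(G)\simeq\mathrm{Rep}(G\times G)\simeq\mathrm{Rep}(G)^{\mathcal{O}(G)}$; by contrast $\calc^{\eend}\simeq\calc_{\coend}\simeq\calZ(\calc)\simeq\mathrm{Rep}(D(G))$ is a genuinely different category (for $G=S_3$: $8$ versus $9$ simples). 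Modularity enters the paper's development only \emph{after} this lemma, namely when the non-degenerate pairing $\omega$ is used to turn $\coend$-comodules into $\coend$-modules and thereby compare $\calc\boxtimes\overline{\calc}$ with $\calZ(\calc)$. To repair your proof you would need to actually compute the right adjoint $R$ and show that the resulting comonad is the cofree comonad of the coalgebra $(\coend,\Delta,\epsilon)$ with the comparison functor of the stated form; that computation, not comonadicity, is where all the work lies.
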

This is a special case of \autocite[Cor.\,2.7.2]{Lyu99squared}, see also \autocite[Prop.\,7.18.4]{EGNO} or \autocite[Rem.\,3.6]{Shimizu16pivotalcenter} for a proof based on the theory of exact module categories. 

Using the Hopf pairing $\omega\colon \coend \otimes \coend \to \unit$ we can turn comodules into modules yielding a functor $\calc^\coend \to \calc_\coend$. Under the equivalences of \Cref{lem:center-modules} and \ref{lem:double-comodules} this is precisely the ribbon functor $\calc\boxtimes\overline{\calc} \to \calZ(\calc)$ sending $X\boxtimes Y \in \calc\boxtimes\overline{\calc}$ to $(X\otimes Y,((\beta_{X,-}\otimes \id_Y) \circ(\id_X\otimes \overline{\beta}_{Y,-}))$. Moreover, since $\calc$ is assumed to be modular, i.e.\ $\omega$ is non-degenerate, this functor is an equivalence. 
The converse also holds:

\begin{proposition}[{\autocite[Thm.\,1.1]{SHIMIZU2019nondeg}}]\label{prop:modularchar}
    Let $\calB$ be a finite ribbon tensor category. Then $\calB$ is modular if and only if the canonical ribbon functor
    \begin{align}
        \calB \boxtimes \overline{\calB} \to \calZ(\calB)
    \end{align}
    is an equivalence.
\end{proposition}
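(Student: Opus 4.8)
\section*{Proof proposal}

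The plan is to transport the statement across the two equivalences already established and reduce it to a statement about the single morphism $\omega$. By \Cref{lem:center-modules} and \Cref{lem:double-comodules} -- both of which use only the braiding of $\calB$ and not modularity, hence apply to any finite ribbon category -- we may replace $\calZ(\calB)$ by the category $\calB_\coend$ of right $\coend$-modules and $\calB\boxtimes\overline{\calB}$ by the category $\calB^\coend$ of right $\coend$-comodules. As recorded in the paragraph preceding the statement, the canonical ribbon functor then becomes the functor
\[
  \Phi_\omega \colon \calB^\coend \to \calB_\coend, \qquad (M,\delta) \mapsto \bigl(M,\ (\id_M\otimes\omega)\circ(\delta\otimes\id_\coend)\bigr),
\]
induced by the Hopf pairing. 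Since $\Phi_\omega$ is the identity on underlying objects and morphisms (it only reinterprets a coaction as an action) it commutes with the forgetful functors to $\calB$ and is automatically faithful; moreover the module structure it produces is unital because $\omega\circ(\id_\coend\otimes\eta)=\epsilon$. Because modularity is by \Cref{def:mtc} exactly non-degeneracy of $\omega$, the proposition is equivalent to the claim that $\Phi_\omega$ is an equivalence if and only if $\omega$ is non-degenerate.

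For the ``if'' direction I would produce an explicit quasi-inverse. Non-degeneracy of $\omega$ furnishes a copairing $\Omega\colon\unit\to\coend\otimes\coend$ satisfying the two snake identities $(\omega\otimes\id_\coend)\circ(\id_\coend\otimes\Omega)=\id_\coend=(\id_\coend\otimes\omega)\circ(\Omega\otimes\id_\coend)$. Define $\Psi\colon\calB_\coend\to\calB^\coend$ by $(N,\rho)\mapsto(N,\ (\rho\otimes\id_\coend)\circ(\id_N\otimes\Omega))$. The main points to check are that the resulting $\delta$ is genuinely a coaction (coassociativity and counitality), which follows by translating the Hopf-pairing compatibilities of $\omega$ with $\mu$, $\Delta$, $\eta$ and $\epsilon$ through $\Omega$, and that $\Psi$ is a two-sided inverse of $\Phi_\omega$, which is immediate from the snake identities. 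This simultaneously yields fullness and essential surjectivity, so $\Phi_\omega$ is an equivalence.

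The ``only if'' direction is the main obstacle, since one must extract non-degeneracy of the concrete morphism $\omega$ from an abstract equivalence of categories. I would argue by contraposition: if $\omega$ is degenerate its left radical is a non-zero morphism $r\colon R\to\coend$ with $\omega\circ(r\otimes\id_\coend)=0$, and the point is that $\Phi_\omega$ then fails to be \emph{full}. Indeed, recovering the coaction $\delta$ from the induced action $\rho_\omega$ is precisely an inversion of $\omega$; concretely, applying $\Phi_\omega$ to cofree comodules $X\otimes\coend$ and using the cofree--forgetful adjunction together with the universal property of the coend identifies the comparison of Hom-spaces $\Hom_{\calB^\coend}(-,-)\to\Hom_{\calB_\coend}(\Phi_\omega-,\Phi_\omega-)$ with a pairing built from $\omega$, which degenerates exactly along $r$. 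Thus a morphism of the image modules need not be a morphism of comodules, so $\Phi_\omega$ is not full and hence not an equivalence. Alternatively, one may first invoke $\mathrm{FPdim}(\calZ(\calB))=\mathrm{FPdim}(\calB)^2=\mathrm{FPdim}(\calB\boxtimes\overline{\calB})$ to see that the ribbon functor is a tensor functor between finite tensor categories of equal Frobenius--Perron dimension, so that it suffices to verify fullness alone. The delicate part in either route is pinning down the distinguishing morphism and confirming that it survives (resp.\ is killed) under $\Phi_\omega$; this is where the full strength of the Hopf-pairing axioms and the universal property of $\coend$ enter.
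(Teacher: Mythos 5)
Your reduction of the statement to the functor $\Phi_\omega\colon \calB^\coend \to \calB_\coend$, $(M,\delta)\mapsto \bigl(M,(\id_M\otimes\omega)\circ(\delta\otimes\id_\coend)\bigr)$, and your treatment of the ``if'' direction via the copairing and the snake identities are correct, and they agree with what the paper records in the paragraph preceding the proposition (your observation that \Cref{lem:center-modules} and \Cref{lem:double-comodules} need only the braiding, not modularity, is also right). Note, however, that the paper does not prove this proposition at all: it cites it as Shimizu's theorem, and the entire content of that citation is precisely the converse you are attempting. There your argument has a genuine gap. You assert that degeneracy of $\omega$ makes $\Phi_\omega$ fail to be full, justified by an alleged identification of the Hom-space comparison on cofree comodules with ``a pairing built from $\omega$''. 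No such identification is available in the form you use it: the cofree--forgetful adjunction computes $\Hom_{\calB^\coend}(X\otimes\coend,Y\otimes\coend)$, but when $\omega$ is degenerate the image $\Phi_\omega(Y\otimes\coend)$ is \emph{not} a free $\coend$-module, so there is no matching adjunction description of the module Hom-space, and your argument never actually produces a module morphism that is not a comodule morphism. You concede this yourself (``the delicate part \dots is pinning down the distinguishing morphism''), so the decisive step of the hard direction is simply absent. Worse, the only route I see to non-fullness is a posteriori: non-equivalence together with the Frobenius--Perron dimension count implies non-fullness, so using non-fullness to prove non-equivalence is circular.

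The missing direction does admit a short direct argument, but at essential surjectivity rather than fullness. Suppose $\omega$ is degenerate. The morphism $\coend\to\coend^*$ obtained by currying the second argument of $\omega$ is then not an isomorphism (the two currying maps differ by the invertible antipode, by \eqref{eq:hopfpairingmirror}); since $\coend$ and $\coend^*$ have equal finite length, it is not even mono, so there is a nonzero $r\colon R\to\coend$ with $\omega\circ(\id_\coend\otimes r)=0$. Now suppose the regular module $(\coend,\mu)$ lay in the essential image of $\Phi_\omega$. Since $\Phi_\omega$ leaves underlying objects and morphisms unchanged, transporting the coaction along the module isomorphism yields a morphism $\delta'\colon\coend\to\coend\otimes\coend$ with $(\id_\coend\otimes\omega)\circ(\delta'\otimes\id_\coend)=\mu$. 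Precomposing with $\id_\coend\otimes r$ gives $\mu\circ(\id_\coend\otimes r)=\bigl(\id_\coend\otimes(\omega\circ(\id_\coend\otimes r))\bigr)\circ(\delta'\otimes\id_R)=0$, contradicting $\mu\circ(\eta\otimes r)=r\neq 0$. Hence $\Phi_\omega$ is not essentially surjective, so not an equivalence, which is exactly the contrapositive you need. With this substitution your outline becomes a complete, self-contained proof; as written, the ``only if'' direction is unproven, and the cited proof of Shimizu (which runs through triviality of the M\"uger centre) is considerably more involved than your sketch suggests.
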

Note that the functor $\calc\boxtimes\overline{\calc} \to \calc$ induced by the monoidal product $\otimes$ on $\calc$ corresponds to the forgetful functor $\calZ(\calc) \to \calc$ and thus also has a two-sided adjoint $\calc \to \calc\boxtimes\overline{\calc}$ which in this form is given by $F(X)= \int^{Y \in \calc} (Y^*\otimes X) \boxtimes Y \in \calc \boxtimes \overline{\calc}$. 

For an object $X \in \calc$ the canonical action $\rho_X$ and coaction $\delta_X$ satisfy $\rho_X = (\id_X \otimes \omega) \circ (\delta_X \otimes \id_\coend)$ which immediately gives $\chi_X = \omega \circ (\check{\chi}_X \otimes \id_\coend)$. 

Finally, using yet another characterisation of modularity\footnote{Namely the triviality of the Müger centre \autocite[Sec.\,4]{SHIMIZU2019nondeg}.} one can show that the endomorphisms obtained by combining the canonical coaction $\delta$ with the cointegral $\lambda$ has the following cutting property:
\begin{lemma}[{\autocite[Lem.\,6.3]{GR17projmod}}]\label{lem:cutting}
    For each $X \in \calc$ there exist $m\geq0$, $a \colon X \to \unit^{\oplus m}$, $b \colon  \unit^{\oplus m}\to X$ such that
    \begin{equation}
    \begin{aligned}
        (\id_X \otimes \lambda) \circ \delta_X &= b \circ a  \\
        &= \sum_{\alpha=1}^m  b_\alpha \circ a_\alpha
    \end{aligned}
    \end{equation}
    where $a_\alpha \colon X \to \unit$, $b_\alpha \colon  \unit \to X$ are the components of $a$, $b$.
\end{lemma}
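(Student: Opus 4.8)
The plan is to show that the endomorphism $e_X := (\id_X \otimes \lambda) \circ \delta_X \colon X \to X$ factors through the \emph{coinvariants} of the canonical $\coend$-coaction, to observe that this coinvariant object is transparent, and finally to invoke modularity to conclude that every transparent object is a direct sum of copies of $\unit$. First I would record that the canonical coaction is natural, i.e.\ $\delta$ is a natural transformation $\mathrm{Id}_\calc \Rightarrow (-)\otimes\coend$; equivalently $X \mapsto (X,\delta_X)$ is the canonical functor $\calc \to \calc^\coend$, so that in particular $\delta_X \circ e_X = (e_X \otimes \id_\coend)\circ \delta_X$.

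Applying this naturality, then coassociativity of $\delta_X$, and finally the defining property of the (two-sided, by unimodularity of $\calc$) cointegral, $(\lambda\otimes\id_\coend)\circ\Delta = \eta\circ\lambda$, a short computation gives
\begin{equation}
\delta_X\circ e_X
= (\id_X\otimes\lambda\otimes\id_\coend)\circ(\id_X\otimes\Delta)\circ\delta_X
= (\id_X\otimes\eta)\circ e_X .
\end{equation}
This is precisely the statement that $e_X$ equalizes the coaction $\delta_X$ and the trivial coaction $\id_X\otimes\eta$. Since $\calc$ is abelian, the equalizer $T := \ker\!\big(\delta_X - (\id_X\otimes\eta)\big) \hookrightarrow X$ exists, and as $\big(\delta_X-(\id_X\otimes\eta)\big)\circ e_X = 0$ the endomorphism factors as $e_X = j\circ \bar e_X$ with $j\colon T\to X$ the inclusion and $\bar e_X\colon X\to T$.

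Next I would argue that $T$ is transparent. By construction the coaction restricts to the trivial one, $\delta_T = \id_T\otimes\eta$. Feeding this into the relation $\rho_T = (\id_T\otimes\omega)\circ(\delta_T\otimes\id_\coend)$ from the excerpt and using $\omega\circ(\eta\otimes\id_\coend)=\epsilon$ shows that the canonical $\coend$-action on $T$ is also trivial, $\rho_T = \id_T\otimes\epsilon$. Since the canonical action encodes the double braiding, an object with trivial canonical action lies in the Müger centre $\calc'$, by the description of transparent objects via the coend in \autocite{SHIMIZU2019nondeg}. Modularity of $\calc$ is equivalent to the triviality of the Müger centre, $\calc'\simeq\vect$ (loc.\ cit., Sec.\ 4), so $T\cong\unit^{\oplus m}$ for some $m\geq 0$. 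Post-composing $e_X = j\circ\bar e_X$ with such an isomorphism yields $e_X = b\circ a$ with $a\colon X\to\unit^{\oplus m}$ and $b\colon\unit^{\oplus m}\to X$, whose components $a_\alpha\colon X\to\unit$, $b_\alpha\colon\unit\to X$ give $e_X=\sum_{\alpha} b_\alpha\circ a_\alpha$.

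The Hopf-algebraic identity exhibiting $e_X$ as landing in the coinvariants is routine once the cointegral property and coassociativity are in place. The conceptual core—and the step I expect to be the main obstacle—is the chain ``coinvariant $\Rightarrow$ transparent $\Rightarrow$ direct sum of units'': identifying the coinvariant object $T$ with an object of the Müger centre is the coend characterization of transparency, where one must take care that triviality of the \emph{full} canonical coaction (rather than merely of its associated partial traces) genuinely detects membership in $\calc'$. Only the final implication ``$T$ transparent $\Rightarrow T\cong\unit^{\oplus m}$'' uses non-degeneracy of $\omega$, i.e.\ modularity, through the triviality of $\calc'$.
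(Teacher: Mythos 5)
Your proof is correct and follows essentially the approach the paper points to: the paper gives no proof of its own, citing \autocite[Lem.\,6.3]{GR17projmod} and noting in the surrounding text and footnote that the argument rests on the characterisation of modularity as triviality of the M\"uger centre \autocite[Sec.\,4]{SHIMIZU2019nondeg}. Your implementation — the cointegral computation showing that $(\id_X\otimes\lambda)\circ\delta_X$ factors through the coinvariants, the passage from trivial coaction to trivial canonical action to transparency, and the final appeal to $\calc'\simeq\vect$ to get $\unit^{\oplus m}$ — is exactly that strategy, correctly carried out.
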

In particular, for non-semisimple $\calc$, the case $m=0$ appears for example for $X=\unit$ since $(\id_\unit \otimes \lambda)\circ \delta_\unit = \lambda \circ \eta = 0$ by a monadic extension of Maschke's theorem \autocite[Thm.\,6.5]{BV07hopfmonads}.
Using \eqref{eq:integralconv} we get a similar statement for the integral $\Lambda$ as $\rho_X \circ (\id_X \otimes \Lambda)= \zeta (\id_X \otimes \lambda) \circ \delta_X $.
\section{3d defect TFTs and topological world sheets}\label{sec:deftft}

In this section we will discuss the topological and combinatorial setup needed for the considerations in the rest of the paper. In particular we will discuss ($2$-)categories of bordisms with labelled stratifications and the corresponding TFTs. More specifically we will first briefly discuss the category of $3$-dimensional defect bordisms and the corresponding defect TFTs introduced in \autocite{CMS16deftri}. Afterwards we will define a $(2,1)$-category of ``topological world sheets" in analogy to the world sheets defined in \autocite[Sec.\,2.2]{FSY2023RCFTstring1} by introducing defects to the $(2,1)$-category of $2$-dimensional open-closed bordisms.

\subsection{Defect TFTs}\label{sec:3ddeftft}
In this section we will review the notion of \emph{defect TFTs} of \autocite{CMS16deftri,CRS17defRT}. A $3$-\emph{dimensional defect TFT} is a symmetric monoidal functor
\begin{equation}
    \rmZ \colon \bord_{3,2}^{\mathrm{def}}(\DD) \to \vect,
\end{equation}
where the source category consists of stratified and labelled bordisms.

To make this precise, first recall that a $n$-dimensional \emph{stratified} manifold $M$ consists of an $n$-dimensional manifold together with a filtration $M = M_n \supset M_{n-1} \supset \dots \supset M_0 \supset M_{-1} = \varnothing$ satisfying a number of technical conditions including that $M_i/M_{i-1}$ is an $i$-dimensional submanifold of $M$ for $0 \leq i \leq n$. The connected components of $M_i/M_{i-1}$ will be called the $i$-\emph{strata} of $M$. We will not make the other conditions explicit here as we will focus on a specific class of stratified manifolds and instead refer to \autocite[Sec.\,2.1]{CMS16deftri} for the full definition. The class of stratified manifolds we will work with is the one discussed in \autocite[Sec.\,2.2.1]{CRS17deforbi}, called defect manifolds there, for which local neighbourhoods are defined inductively. Instead of repeating the full definition here we will instead roughly go through it up to dimension $n \leq 3$ as this will cover all cases of interest for us:

For $n=1$ the induction starts and we have three possible local neighbourhoods: The open oriented interval $(-1,1)$, as well as the interval $(-1,1)$ with a single $0$-stratum at $0$ which is either positively or negatively oriented:
    \begin{equation}
        \begin{tikzpicture}
            \begin{scope}[decoration={markings, mark=at position 0.55 with {\arrow{>}}}] 
            \draw[line cap=round,thick,postaction={decorate},string-yellow] (0,0) -- (1.5,0);
            \end{scope}
        \end{tikzpicture}\, ,\quad
        \begin{tikzpicture}
            \begin{scope}[decoration={markings, mark=at position 0.55 with {\arrow{>}}}] 
            \draw[line cap=round,thick,postaction={decorate},string-yellow] (0,0) -- (0.75,0);
             \draw[line cap=round,thick,postaction={decorate},string-yellow] (0.75,0) -- (1.5,0);
            \end{scope}
            \fill[magenta] (0.75,0) circle (0.06);
            \node at (0.75,0) [above,magenta] {\scriptsize $+$};
        \end{tikzpicture}\, ,\quad
        \begin{tikzpicture}
            \begin{scope}[decoration={markings, mark=at position 0.55 with {\arrow{>}}}] 
            \draw[line cap=round,thick,postaction={decorate},string-yellow] (0,0) -- (0.75,0);
             \draw[line cap=round,thick,postaction={decorate},string-yellow] (0.75,0) -- (1.5,0);
            \end{scope}
            \fill[magenta] (0.75,0) circle (0.06);
            \node at (0.75,0) [above,magenta] {\scriptsize $-$};
        \end{tikzpicture}\, ;
    \end{equation}

 For $n=2$ we have the first induction step leading us to consider two types of local neighbourhoods: The first type is given by cylinders over the one dimensional local neighbourhoods with the induced orientation:
\begin{equation}\label{eq:2dlocngbh1}
        \begin{tikzpicture}[baseline=0.7cm]
            \fill[white!70!string-yellow] (0,0) rectangle (1.5,1.5);
        \end{tikzpicture}\, ,\quad
        \begin{tikzpicture}[baseline=0.7cm]
            \fill[white!70!string-yellow] (0,0) rectangle (1.5,1.5);
            \begin{scope}[decoration={markings, mark=at position 0.55 with {\arrow{>}}}] 
            \draw[line cap=round,thick,postaction={decorate},magenta] (0.75,0) -- (0.75,1.5);
            \end{scope}
        \end{tikzpicture}\, ,\quad
        \begin{tikzpicture}[baseline=0.7cm]
           \fill[white!70!string-yellow] (0,0) rectangle (1.5,1.5);
            \begin{scope}[decoration={markings, mark=at position 0.55 with {\arrow{>}}}] 
            \draw[line cap=round,thick,postaction={decorate},magenta] (0.75,1.5) -- (0.75,0);
            \end{scope}
        \end{tikzpicture}\, ;
    \end{equation}
Note that the last two are equivalent since there is an orientation preserving diffeomorphism between them. The second type is given by cones over $1$-dimensional stratified circles such as 
\begin{equation}
    \begin{tikzpicture}[baseline=0cm]
        \draw[string-yellow,very thick] (0,0) circle (1);
        \filldraw[magenta] (90:1) circle (0.06);
        \filldraw[magenta] (210:1) circle (0.06);
        \filldraw[magenta] (330:1) circle (0.06);
        \node at  (90:1) [above,magenta] {\scriptsize $-$};
        \node at  (210:1) [left,magenta] {\scriptsize $+$};
        \node at  (330:1) [right,magenta] {\scriptsize $-$};
    \end{tikzpicture}
\end{equation}
(with an arbitrary number of $0$-strata) with induced orientation on the $1$- and $2$-strata and two possible orientations on the $0$-stratum given by the cone point: 
\begin{equation}\label{eq:2dlocngbh2}
    \begin{tikzpicture}[baseline=0cm]
        \fill[white!70!string-yellow] (0,0) circle (1);
        \fill[white!70!string-yellow] (210:1) -- (0,0) -- (330:1)
        (210:1) arc (210:330:1);
        \fill[white!70!string-yellow] (90:1) -- (0,0) -- (210:1)
        (90:1) arc (90:210:1);
        \begin{scope}[decoration={
                      markings,
                      mark=at position 0.52 with {\arrow{>}}}
                     ] 
        \draw[magenta,thick,postaction={decorate}] (0,0) -- (90:1) ;
        \draw[magenta,thick,postaction={decorate}] (210:1)  -- (0,0);
        \draw[magenta,thick,postaction={decorate}] (0,0)  -- (330:1);
        \end{scope}
        \filldraw[string-blue] (0,0) circle (0.06);
        \node at (0,0) [below,string-blue] {\scriptsize $+$};
    \end{tikzpicture}\, ,\quad 
    \begin{tikzpicture}[baseline=0cm]
        \fill[white!70!string-yellow] (0,0) circle (1);
        \fill[white!70!string-yellow] (210:1) -- (0,0) -- (330:1)
        (210:1) arc (210:330:1);
        \fill[white!70!string-yellow] (90:1) -- (0,0) -- (210:1)
        (90:1) arc (90:210:1);
        \begin{scope}[decoration={
                      markings,
                      mark=at position 0.52 with {\arrow{>}}}
                     ] 
        \draw[magenta,thick,postaction={decorate}] (0,0) -- (90:1) ;
        \draw[magenta,thick,postaction={decorate}] (210:1)  -- (0,0);
        \draw[magenta,thick,postaction={decorate}] (0,0)  -- (330:1);
        \end{scope}
        \filldraw[string-blue] (0,0) circle (0.06);
        \node at (0,0) [below,string-blue] {\scriptsize $-$};
    \end{tikzpicture}\, ;
\end{equation}
    
Finally, for $n=3$ we repeat the previous step and again consider cylinders over the two dimensional local neighbourhoods as well as cones over stratified $2$-spheres, with the induced orientation. The cylinders can further be distinguished as cylinders over cylinders over the one dimensional local neighbourhoods:
    \begin{equation}
        \begin{tikzpicture}[baseline=0.75cm,scale=1.3]
            \draw[dotted, line cap=round,thin, white!70!string-yellow] (0,0) -- (1,0.75)
            (2.5,0.25) -- (1,0.75)
            (1,2.25) -- (1,0.75);
            \fill[white!70!string-yellow,opacity=0.3] (0,0) -- (1.5,-0.5) -- (1.5,1) -- (0,1.5) -- cycle;
            \fill[white!70!string-yellow,opacity=0.3] (1.5,1) -- (2.5,1.75) -- (2.5,0.25) --  (1.5,-0.5);
            \fill[white!70!string-yellow,opacity=0.3] (0,1.5) -- (1,2.25) -- (2.5,1.75) -- (1.5,1) -- cycle;
            \draw[thin,white!70!string-yellow] (0,0) -- (1.5,-0.5) -- (1.5,1) -- (0,1.5) -- cycle;
            \draw[thin,white!70!string-yellow] (1.5,1) -- (2.5,1.75) -- (2.5,0.25) --  (1.5,-0.5);
            \draw[thin,white!70!string-yellow] (0,1.5) -- (1,2.25) -- (2.5,1.75);
        \end{tikzpicture}\, , \quad
        \begin{tikzpicture}[baseline=0.75cm,scale=1.3]
            \draw[dotted, line cap=round,thin, white!70!string-yellow] (0,0) -- (1,0.75)
            (2.5,0.25) -- (1,0.75)
            (1,2.25) -- (1,0.75);
            \fill[magenta, opacity=0.2] (1.75,2) -- (1.75,0.5) -- (0.75,-0.25) -- (0.75,1.25) -- cycle;
            \draw[thin,white!40!magenta,line cap=round,dotted] (1.75,2) -- (1.75,0.5);
            \draw[thin,white!40!magenta,line cap=round,dotted] (1.75,0.5) -- (0.75,-0.25);
            \fill[white!70!string-yellow,opacity=0.3] (0,0) -- (1.5,-0.5) -- (1.5,1) -- (0,1.5) -- cycle;
            \fill[white!70!string-yellow,opacity=0.3] (1.5,1) -- (2.5,1.75) -- (2.5,0.25) --  (1.5,-0.5);
            \fill[white!70!string-yellow,opacity=0.3] (0,1.5) -- (1,2.25) -- (2.5,1.75) -- (1.5,1) -- cycle;
            \draw[thin,white!70!string-yellow,line cap=round] (0,0) -- (1.5,-0.5) -- (1.5,1) -- (0,1.5) -- cycle;
            \draw[thin,white!70!string-yellow,line cap=round] (1.5,1) -- (2.5,1.75) -- (2.5,0.25) --  (1.5,-0.5);
            \draw[thin,white!70!string-yellow,line cap=round] (0,1.5) -- (1,2.25) -- (2.5,1.75);
            \draw[thin,white!40!magenta,line cap=round] (0.75,1.25) -- (1.75,2);
            \draw[thin,white!40!magenta,line cap=round] (0.75,1.25) -- (0.75,-0.25);
            \draw[ultra thin,-{>[length=0.2mm]},line cap=round,magenta] (1.6,1.7) -- (1.65,1.735);
            \draw[ultra thin,-{>[length=0.2mm]},line cap=round,magenta] (1.6,1.7) -- (1.6,1.79);   
        \end{tikzpicture}\, , \quad
        \begin{tikzpicture}[baseline=0.75cm,scale=1.3]
            \draw[dotted, line cap=round,thin, white!70!string-yellow] (0,0) -- (1,0.75)
            (2.5,0.25) -- (1,0.75)
            (1,2.25) -- (1,0.75);
            \fill[magenta, opacity=0.2] (1.75,2) -- (1.75,0.5) -- (0.75,-0.25) -- (0.75,1.25) -- cycle;
            \draw[thin,white!40!magenta,line cap=round,dotted] (1.75,2) -- (1.75,0.5);
            \draw[thin,white!40!magenta,line cap=round,dotted] (1.75,0.5) -- (0.75,-0.25);
            \fill[white!70!string-yellow,opacity=0.3] (0,0) -- (1.5,-0.5) -- (1.5,1) -- (0,1.5) -- cycle;
            \fill[white!70!string-yellow,opacity=0.3] (1.5,1) -- (2.5,1.75) -- (2.5,0.25) --  (1.5,-0.5);
            \fill[white!70!string-yellow,opacity=0.3] (0,1.5) -- (1,2.25) -- (2.5,1.75) -- (1.5,1) -- cycle;
            \draw[thin,white!70!string-yellow,line cap=round] (0,0) -- (1.5,-0.5) -- (1.5,1) -- (0,1.5) -- cycle;
            \draw[thin,white!70!string-yellow,line cap=round] (1.5,1) -- (2.5,1.75) -- (2.5,0.25) --  (1.5,-0.5);
            \draw[thin,white!70!string-yellow,line cap=round] (0,1.5) -- (1,2.25) -- (2.5,1.75);
            \draw[thin,white!40!magenta,line cap=round] (0.75,1.25) -- (1.75,2);
            \draw[thin,white!40!magenta,line cap=round] (0.75,1.25) -- (0.75,-0.25);
            \draw[ultra thin,-{>[length=0.2mm]},line cap=round,magenta] (1.6,1.7) -- (1.65,1.735);
            \draw[ultra thin,-{>[length=0.2mm]},line cap=round,magenta] (1.6,1.7) -- (1.6,1.62);   
        \end{tikzpicture}\,,
    \end{equation}
    where the little coordinate systems in the second and third example illustrate the differing orientation on the $2$-stratum.\footnote{To be a bit more precise we consider the orientations on the $2$-strata such that the vertically drawn arrows correspond to the second tangent direction.} As well as cylinders over the two dimensional cones:
    \begin{equation}
        \begin{tikzpicture}[baseline=1cm,scale=1.5]
            \fill[magenta, opacity=0.2] ([yshift=2cm]20:0.75 and 0.4) -- (20:0.75 and 0.4) -- (0,0) -- (0,2) -- cycle;
            \fill[magenta, opacity=0.2] ([yshift=2cm]240:0.75 and 0.4) -- (240:0.75 and 0.4) -- (0,0) -- (0,2) -- cycle;
            \fill[magenta, opacity=0.2] ([yshift=2cm]140:0.75 and 0.4) -- (140:0.75 and 0.4) -- (0,0) -- (0,2) -- cycle;
            \draw[dotted, line cap=round,thin, white!70!string-yellow] (0.75,0) arc (0:180:0.75 and 0.4);
             \draw[thin,white!40!magenta,line cap=round,dotted] ([yshift=2cm]140:0.75 and 0.4) -- (140:0.75 and 0.4)
             ([yshift=2cm]20:0.75 and 0.4) -- (20:0.75 and 0.4)
             (0,0) -- (140:0.75 and 0.4)
            (0,0) -- (240:0.75 and 0.4)
            (0,0) -- (20:0.75 and 0.4);
            \fill[white!70!string-yellow,opacity=0.3] (0,0) ellipse (0.75 and 0.4);
            \fill[white!70!string-yellow,opacity=0.3] (0,2) ellipse (0.75 and 0.4);
            \fill[white!70!string-yellow,opacity=0.3] (-0.75,0) -- (-0.75,2) arc (180:360:0.75 and 0.4) -- (0.75,0) arc (0:180:0.75 and 0.4);
            \draw[thin,white!70!string-yellow,line cap=round] (0,2) ellipse (0.75 and 0.4)
            (-0.75,0) -- (-0.75,2) 
            (0.75,0) -- (0.75,2)
            (0.75,0) arc (0:-180:0.75 and 0.4);
            \draw[thin,white!40!magenta,line cap=round] (0,2) -- ([yshift=2cm]140:0.75 and 0.4)
            (0,2) -- ([yshift=2cm]240:0.75 and 0.4)
            (0,2) -- ([yshift=2cm]20:0.75 and 0.4)
            ([yshift=2cm]240:0.75 and 0.4) --(240:0.75 and 0.4);
            \begin{scope}[decoration={
                      markings,
                      mark=at position 0.52 with {\arrow{>[length=0.6mm]}}}
                     ] 
        \draw[string-blue,postaction={decorate},line cap=round,thick] (0,0) -- (0,2);
        \draw[thin,white!70!string-yellow,line cap=round] ([yshift=2cm]250:0.75 and 0.4) arc (250:280:0.75 and 0.4);
        \end{scope}
        \end{tikzpicture}\, ,\quad
        \begin{tikzpicture}[baseline=1cm,scale=1.5]
            \fill[magenta, opacity=0.2] ([yshift=2cm]20:0.75 and 0.4) -- (20:0.75 and 0.4) -- (0,0) -- (0,2) -- cycle;
            \fill[magenta, opacity=0.2] ([yshift=2cm]240:0.75 and 0.4) -- (240:0.75 and 0.4) -- (0,0) -- (0,2) -- cycle;
            \fill[magenta, opacity=0.2] ([yshift=2cm]140:0.75 and 0.4) -- (140:0.75 and 0.4) -- (0,0) -- (0,2) -- cycle;
            \draw[dotted, line cap=round,thin, white!70!string-yellow] (0.75,0) arc (0:180:0.75 and 0.4);
             \draw[thin,white!40!magenta,line cap=round,dotted] ([yshift=2cm]140:0.75 and 0.4) -- (140:0.75 and 0.4)
             ([yshift=2cm]20:0.75 and 0.4) -- (20:0.75 and 0.4)
             (0,0) -- (140:0.75 and 0.4)
            (0,0) -- (240:0.75 and 0.4)
            (0,0) -- (20:0.75 and 0.4);
            \fill[white!70!string-yellow,opacity=0.3] (0,0) ellipse (0.75 and 0.4);
            \fill[white!70!string-yellow,opacity=0.3] (0,2) ellipse (0.75 and 0.4);
            \fill[white!70!string-yellow,opacity=0.3] (-0.75,0) -- (-0.75,2) arc (180:360:0.75 and 0.4) -- (0.75,0) arc (0:180:0.75 and 0.4);
            \draw[thin,white!70!string-yellow,line cap=round] (0,2) ellipse (0.75 and 0.4)
            (-0.75,0) -- (-0.75,2) 
            (0.75,0) -- (0.75,2)
            (0.75,0) arc (0:-180:0.75 and 0.4);
            \draw[thin,white!40!magenta,line cap=round] (0,2) -- ([yshift=2cm]140:0.75 and 0.4)
            (0,2) -- ([yshift=2cm]240:0.75 and 0.4)
            (0,2) -- ([yshift=2cm]20:0.75 and 0.4)
            ([yshift=2cm]240:0.75 and 0.4) --(240:0.75 and 0.4);
            \begin{scope}[decoration={
                      markings,
                      mark=at position 0.52 with {\arrow{>[length=0.6mm]}}}
                     ] 
        \draw[string-blue,postaction={decorate},line cap=round,thick] (0,2) -- (0,0);
        \end{scope}
        \draw[thin,white!70!string-yellow,line cap=round] ([yshift=2cm]250:0.75 and 0.4) arc (250:280:0.75 and 0.4);
        \end{tikzpicture};
    \end{equation}
     Finally, the last type of local neighbourhoods are cones over stratified $2$-spheres with the induced orientation on the $1$-, $2$-, and $3$-strata and two possible orientations on the $0$-stratum given by the cone point:
     \begin{equation}
         \begin{tikzpicture}[baseline=0cm,scale=1.3]
            \fill[magenta, opacity=0.2] (0,0) ellipse (1.4 and 0.5);
            \draw[thin,white!40!magenta,line cap=round,dotted] (1.4,0) arc (0:180:1.4 and 0.5);
            \fill[magenta, opacity=0.2](0,0) ellipse (0.8 and 1.4);
            \fill[magenta, opacity=0.2] (0,0) -- (0,1.4) arc (90:-15:1 and 1.4) -- cycle;
            \draw[thin,white!40!magenta,line cap=round,dotted]
            (0,1.4) arc (90:-90:0.8 and 1.4);
            \fill[white!70!string-yellow,opacity=0.3] (0,0) circle (1.4);
            \draw[thin,white!70!string-yellow,line cap=round] (0,0) circle (1.4);
            \draw[thin,white!40!magenta,line cap=round] (1.4,0) arc (0:-180:1.4 and 0.5)
            (0,1.4) arc (90:270:0.8 and 1.4)
            (0,1.4) arc (90:-15:1 and 1.4);
            \begin{scope}[decoration={
                      markings,
                      mark=at position 0.52 with {\arrow{>[length=0.6mm]}}}
                     ] 
            \draw[string-blue,postaction={decorate},line cap=round,thick] (0,0) -- (0,1.4);
            \draw[string-blue,postaction={decorate},line cap=round,thick] (0,0) -- (-15:1 and 1.4);
            \draw[string-blue,postaction={decorate},line cap=round,thick] (237:1.4 and 0.5) -- (0,0);
            \draw[string-blue,postaction={decorate},line cap=round,thick] (57:1.4 and 0.5) -- (0,0);
            \end{scope}
            \fill[black] (0,0) circle (0.04);
            \node at (0,0) [below,black] {\scriptsize $+$};
        \end{tikzpicture}\, , \quad
        \begin{tikzpicture}[baseline=0cm,scale=1.3]
            \fill[magenta, opacity=0.2] (0,0) ellipse (1.4 and 0.5);
            \draw[thin,white!40!magenta,line cap=round,dotted] (1.4,0) arc (0:180:1.4 and 0.5);
            \fill[magenta, opacity=0.2](0,0) ellipse (0.8 and 1.4);
            \fill[magenta, opacity=0.2] (0,0) -- (0,1.4) arc (90:-15:1 and 1.4) -- cycle;
            \draw[thin,white!40!magenta,line cap=round,dotted]
            (0,1.4) arc (90:-90:0.8 and 1.4);
            \fill[white!70!string-yellow,opacity=0.3] (0,0) circle (1.4);
            \draw[thin,white!70!string-yellow,line cap=round] (0,0) circle (1.4);
            \draw[thin,white!40!magenta,line cap=round] (1.4,0) arc (0:-180:1.4 and 0.5)
            (0,1.4) arc (90:270:0.8 and 1.4)
            (0,1.4) arc (90:-15:1 and 1.4);
            \begin{scope}[decoration={
                      markings,
                      mark=at position 0.52 with {\arrow{>[length=0.6mm]}}}
                     ] 
            \draw[string-blue,postaction={decorate},line cap=round,thick] (0,0) -- (0,1.4);
            \draw[string-blue,postaction={decorate},line cap=round,thick] (0,0) -- (-15:1 and 1.4);
            \draw[string-blue,postaction={decorate},line cap=round,thick] (237:1.4 and 0.5) -- (0,0);
            \draw[string-blue,postaction={decorate},line cap=round,thick] (57:1.4 and 0.5) -- (0,0);
            \end{scope}
            \fill[black] (0,0) circle (0.04);
            \node at (0,0) [black,below] {\scriptsize $-$};
        \end{tikzpicture}\, ;
     \end{equation}

With the underlying topology out of the way let us now come to the labelling data:
The labels for the strata are encoded in the so-called \emph{defect data}
\begin{equation}
    \DD = (D_3,D_2,D_1;s,t,j).
\end{equation}
Here, the $D_i$, for $i \in \{1,2,3\}$, are sets whose elements will label the $i$-dimensional strata of the $3$-dimensional stratified manifolds while the source, target, and junction map $s,t\colon D_2 \times \{\pm\}\to D_3$ and $j\colon D_1 \times \{\pm\} \to (\textrm{ordered lists of elements of } D_2)$ encode the adjacency conditions for the labels including possible orientations, see \autocite{CMS16deftri,CRS17deforbi} for the detailed definition. Note that in contrast to \autocite{CMS16deftri,CRS17deforbi} we do not use a \emph{cyclic} order for the junction map $j$ but a linear order instead. This extra choice of order will be reflected in the labelling below.

To account for $0$-strata we would need a fourth set $D_0$ together with an `adjacency map' out of it to continue, see \autocite[Def.\,2.4]{CRS17deforbi}. However, it turns out that that for a given defect TFT, there is a canonical way to add this extra data by considering certain invariant vectors in the state spaces of defect spheres \autocite[Sect.\,2.4]{CRS17deforbi}. This process is referred to as ``$D_0$-completion'' and from now on we will always assume that our defect data $\DD$ has been $D_0$-completed. Due to this we will not discuss the combinatorial description of labelling $0$-strata in a $3$-manifold.

Since we want to construct a bordism category we need to label stratified surfaces as well as stratified $3$-manifolds with boundary. In particular, as explained above, the only local neighbourhoods we have to label are cylinders over $2$-dimensional local neighbourhoods which is the same as labelling the $2$-dimensional local neighbourhoods up to an index shift.

For a stratified surface we label every $i$-stratum for $i \in \{0,1,2\}$ with an element in the set $D_{i+1}$, such that the labels of the strata in its neighbourhood are compatible with the source, target, and junction map.
Let us explain how this works in the example of the first $2$-dimensional cone above:
\begin{equation}
    \begin{tikzpicture}[baseline=0cm]
        \fill[white!70!string-yellow] (0,0) circle (1.7);
        \fill[white!70!string-yellow] (210:1.7) -- (0,0) -- (330:1.7)
        (210:1.7) arc (210:330:1.7);
        \fill[white!70!string-yellow] (90:1.7) -- (0,0) -- (210:1.7)
        (90:1.7) arc (90:210:1.7);
        \begin{scope}[decoration={
                      markings,
                      mark=at position 0.52 with {\arrow{>}}}
                     ] 
        \draw[magenta,very thick,postaction={decorate}] (0,0) -- (90:1.7) ;
        \draw[magenta,very thick,postaction={decorate}] (210:1.7)  -- (0,0);
        \draw[magenta,very thick,postaction={decorate}] (0,0)  -- (330:1.7);
        \end{scope}
        \filldraw (240:1.7) circle (0.06);
        \filldraw[string-blue] (0,0) circle (0.06);
        \node at (330:2) [magenta] {$A$};
        \node at (90:2) [magenta] {$B$};
        \node at (210:2) [magenta] {$C$};
        \node at (270:0.5) [string-blue] {$X$};
        \node at (270:2) [string-yellow] {$\beta$};
        \node at (30:2) [string-yellow] {$\alpha$};
        \node at (150:2) [string-yellow] {$\gamma$};
        \node at (237:2) [black] {\scriptsize $-1$};
    \end{tikzpicture}
\end{equation}
The $2$-strata inherit the orientation of the whole underlying manifold, which in this case we take to be the standard orientation of the unit disc in $\mathbbm{R}^2$. 
The $0$-stratum is positively oriented and we label it with $X \in D_1$, then the labels $A,B,C \in D_2$ of the three $1$-strata need to satisfy $j(X,+) = ((C,-),(B,+),(A,+))$, where the signs are chosen in a way to indicate if the corresponding $1$-stratum is oriented towards or away from the $0$-stratum. The linear order of the list $((C,-),(B,+),(A,+))$ is obtained by going in clockwise direction along the stratified circle used to obtain the cone starting from the image of the south pole $-1 \in S^1$, illustrated as a black dot above. The labels need also be compatible with $2$-strata in the form of $s(A,+) = t(C,-) = \beta$, $t(A,+) = s(B,+) = \alpha$, and $t(B,+) = s(C,-) = \gamma$. We want to note here that which side of the $1$-stratum is the source and which one is the target is a convention, see \autocite[Sect.\,2.3]{CMS16deftri} for details.

To label bordisms between stratified and labelled surfaces we require the labels of the $i$-strata in the interior to match the $(i-1)$-strata on the boundary. The symmetric monoidal category $\bord_{3,2}^{\mathrm{def}}(\DD)$ of $3$-\emph{dimensional defect bordisms} with defect data $\DD$ now consists of stratified and labelled surfaces and equivalence classes of stratified and labelled bordisms between them, see \autocite[Def.\,2.4]{CRS17deforbi} for details. As in \autocite[Sec.\,4.3]{DGGPR19} it will be necessary to extend this bordism category to account for a possible gluing anomaly by adding Lagrangian subspaces and signature defects. We will denote the so obtained category by $\bord_{3,2}^{\chi,\mathrm{ def}}(\DD)$.

\begin{definition}
    A $3$-dimensional defect TFT with defect data $\DD$ is a symmetric monoidal functor 
    \begin{equation}
        \rmZ \colon \bord_{3,2}^{\chi,\mathrm{ def}}(\DD) \to \vect.
    \end{equation}
\end{definition}
For a $3$d defect TFT $\rmZ$ we will sometimes refer to the labelled $i$-dimensional strata for $i \in \{0,1,2\}$ as the \emph{point, line}, and \emph{surface defects} of $\rmZ$ and the labelled $3$-strata as the bulk phases of $\rmZ$.

\begin{remark}\label{rem:def3cat}
To any $3$d defect TFT $\rmZ$ with defect data $\DD$ one can associate a certain type of $3$-category $\calT_\rmZ$. The objects of $\calT_\rmZ$ are given by the elements of $D_3$, the $1$-morphisms by (lists of) elements of $D_2$, the $2$-morphisms by (lists of) elements of $D_1$, and the $3$-morphisms from state spaces of defect spheres as in the $D_0$-completion mentioned above. 
The rest of the data is obtained from the source, target, and junction maps. For more details see \autocite[Sect.\,3]{CMS16deftri}. We will refer to $\calT_\rmZ$ as the \emph{defect} $3$-category of $\rmZ$
\end{remark}

Let us now briefly describe how the $3$d TFT with embedded ribbon graphs of \autocite{DGGPR19} can be understood as a defect TFT in analogy to \autocite[Rem.\,5.9\,(ii)]{CRS17defRT} or \autocite[Sec.\,4.2]{CMS16deftri}. More precisely, let
\begin{equation}
        \widehat{\rmV}_\calc \colon \widehat{\bord}_{3,2}^{\chi}(\calc) \to \vect
\end{equation}
be the $3$d TFT constructed from the modular tensor category $\calc$ with admissibility condition on the outgoing boundary components as in \autocite[Sec.\,3]{HR2024modfunc}. The defect data is given by $D_3=\{*\}$,  $D_2=\{\unit\}$, $D_1 = \mathrm{ob}(\calc)$, and $D_0 = \mathrm{Mor}(\calc)$. The source target and junction map are trivial in this case. However, the map one obtains from $D_0$-completion is encoded via the structural morphisms of $\calc$. A bordism with embedded ribbon graph is turned into a bordism with defects by viewing an $X\in \calc$ labelled ribbon as a $\unit$-labelled surface defect bounded by an $X$-labelled line defect on one side and by a $\unit$-labelled line defect on the other side:
\begin{align}
    \begin{tikzpicture}[baseline=-0.1cm]
    \begin{scope}[decoration={markings, mark=at position 0.55 with {\arrow{>}}}] 
        \draw[ultra thick,string-blue,line cap = round,postaction={decorate}]
        (0,-0.7) -- (0,0.7);
    \end{scope}
    \node at (-0.1,0) [string-blue,left] {\scriptsize $X$};
    \end{tikzpicture} \;
    \longleftrightarrow
    \begin{tikzpicture}[baseline=-0.1cm]
    \fill[magenta, opacity=0.1] (0,-0.71) rectangle (0.5,0.71);
    \begin{scope}[decoration={markings, mark=at position 0.55 with {\arrow{>}}}] 
        \draw[thick,string-blue,line cap = round,postaction={decorate}]
        (0,-0.7) -- (0,0.7);
        \draw[magenta,line cap = round]
        (0.5,-0.7) -- (0.5,0.7);
    \end{scope}
    \node at (-0.1,0) [string-blue,left] {\scriptsize $X$};
    \node at (0.25,0.2) [magenta] {\scriptsize $\unit$};
    \node at (0.5,0) [magenta,right] {\scriptsize $\unit$};
    \end{tikzpicture} 
\end{align}

The coupons can be included via point defects in a similar way. From now on we will often view $\widehat{\rmV}_\calc$ as a defect TFT in this way without further comment. The corresponding $3$-category $\calT_{\widehat{\rmV}_\calc}$ is $B^2\calc$, i.e.\ the $3$-category with one object, only the identity $1$-morphism, and $\calc$ as $2$-endomorphism category \autocite[Prop.\,4.4]{CMS16deftri}.

\subsection{Topological world sheets}\label{subsec:topworld}
In this section we will define the $2$-category of \emph{topological world sheets}. The basic idea is to enhance the $2$-category of open-closed bordism to a $2$-category of open-closed \emph{defect} bordisms as we did in the last section with the ordinary bordism category. In this article a $2$-category will always mean a weak $2$-category otherwise known as a bicategory. Analogously a $2$-functor will always mean a weak $2$-functor with coherence isomorphisms often called pseudofunctor, see for example \autocite{JY20twodcategories}. For more background on symmetric monoidal $2$-categories and $2$-functors we refer to \autocite[Ch.\,2]{SchommerPries2011thesis}, a concise review is given in \autocite[App.\,D]{DeRenzi2022nonssext1}.

Let us first recall the symmetric monoidal $2$-category $\bord_{2+\epsilon,2,1}^{\oc}$ of two dimensional oriented open-closed bordisms as a categorification of the category of open-closed bordisms defined in \autocite[Sect.\,3]{LP2008octqfts}, see \autocite{BCR2006ocbordcat} for a detailed account. As objects we consider compact one dimensional manifolds with boundaries, consequentially bordisms between them need to have an underlying manifold with corners, more precisely a $\langle2\rangle$-manifold. A two dimensional $\langle2\rangle$-manifold $\Sigma$ is a $2$-dimensional compact manifold with corners together with a decomposition $\partial \Sigma = \partial^{\mathrm{g}} \Sigma \cup \partial^{\mathrm{f}} \Sigma$ of its boundary into a \emph{gluing} boundary $\partial^{\mathrm{g}} \Sigma$ and a \emph{free} boundary $\partial^{\mathrm{f}} \Sigma$ such that $\partial^{\mathrm{g}} \Sigma \cap \partial^{\mathrm{f}} \Sigma$ consists of the corner points of $\Sigma$. The open-closed bordism $2$-category $\bord_{2+\epsilon,2,1}^{\oc}$ is now defined as follows:
    \begin{itemize}
    \item objects: compact one-dimensional manifolds;
    \item $1$-morphisms: for objects $\Gamma$ and $\Gamma'$, a $1$-morphism $\Sigma$ from $\Gamma$ to $\Gamma'$, denoted as $\Sigma \colon \Gamma \to \Gamma'$,
    is a two-dimensional open-closed bordism $\Sigma \colon \Gamma \to \Gamma'$, i.e.\ a $\langle2\rangle$-manifold together with a parametrisation of its gluing boundary $\partial^{\mathrm{g}}\Sigma \cong -\Gamma \sqcup \Gamma'$;
    \item $2$-morphisms: for $1$-morphisms $\Sigma \colon \Gamma \to \Gamma'$ and $\Sigma' \colon \Gamma \to \Gamma'$ a $2$-morphism $[f] \colon \Sigma \Rightarrow \Sigma'$ is the isotopy class of a diffeomorphism $f \colon \Sigma \to \Sigma'$ of $\langle2\rangle$-manifolds which is compatible with the boundary parametrisations;
    \item horizontal composition: for $1$-morphisms $\Sigma \colon \Gamma \to \Gamma'$ and $\Sigma' \colon \Gamma' \to \Gamma''$ the horizontal composition $\Sigma'\diamond \Sigma \colon \Gamma \to \Gamma''$ is given by the $\langle 2\rangle$-manifold
    \begin{equation}
    \Sigma' \sqcup_{\Gamma'} \Sigma
    \end{equation}
    obtained by gluing along $\Gamma'$ using the parametrisation of the gluing boundaries of $\Sigma$ and $\Sigma'$, respectively.
    \item identity $1$-morphism: for an object $\Gamma$ the identity $1$-morphism $\id_\Gamma \colon \Gamma \to \Gamma$ is 
    \begin{equation}
    \Gamma\times I;
    \end{equation}    
    \item vertical composition: 
    for $2$-morphisms $[f] \colon (\Sigma,\lambda) \Rightarrow (\Sigma',\lambda')$ and
    \\
    \noindent
    $[g] \colon (\Sigma',\lambda')$ $\Rightarrow$ $(\Sigma'',\lambda'')$,     
    the vertical composition is given by the $2$-morphism 
    \begin{equation}
    [g\circ f];
    \end{equation}
    \item identity $2$-morphism: for a $1$-morphism $\Sigma \colon \Gamma \to \Gamma'$ the identity $2$-morphism $\id_{\Sigma} \colon \Sigma \Rightarrow \Sigma$ is  
    \begin{equation}
    [\id];
    \end{equation} 
    \item disjoint union as symmetric monoidal structure;
\end{itemize}
There is more coherence data which needs to be specified such as the associativity $2$-morphisms for horizontal composition, however we will skip these details, see for example \autocite[App.\,B \& C]{DeRenzi2022nonssext1} as well as \autocite[Rem.\,2.2]{FSY2023RCFTstring1} for more details.

As in \Cref{sec:3ddeftft} we will fix a class of stratified, open-closed surfaces by describing the allowed local neighbourhoods. We will call these \emph{unlabelled topological world sheets}, or often also just \emph{topological world sheets}.

In the interior, of a topological world sheet, we allow for the $2$-dimensional local neighbourhoods \eqref{eq:2dlocngbh1} and \eqref{eq:2dlocngbh2} discussed in \Cref{sec:3ddeftft}. Near the boundary components, we will need different local models, depending on the type of boundary. 

Firstly, near the gluing boundary we allow for:
\begin{equation}
    \begin{tikzpicture}[baseline=0.7cm]
        \fill[white!70!magenta] (0,0) rectangle (1.5,1.5);
        \begin{scope}[decoration={markings, mark=at position 0.55 with {\arrow{>}}}] 
            \draw[line cap=round,thick,magenta] (0.01,0) -- (1.49,0);
            \end{scope}
    \end{tikzpicture}\,,\quad
    \begin{tikzpicture}[baseline=0.7cm]
        \fill[white!70!magenta] (0,0) rectangle (1.5,1.5);
        \begin{scope}[decoration={markings, mark=at position 0.55 with {\arrow{>}}}]
            \draw[line cap=round,thick,magenta] (0.01,0) -- (1.49,0);
            \draw[line cap=round,thick,postaction={decorate},string-blue] (0.75,0) -- (0.75,1.49);
        \end{scope}
        \fill[string-blue] (0.75,0) circle (0.03);
    \end{tikzpicture}\,,\quad
    \begin{tikzpicture}[baseline=0.7cm]
        \fill[white!70!magenta] (0,0) rectangle (1.5,1.5);
        \begin{scope}[decoration={markings, mark=at position 0.55 with {\arrow{>}}}]
            \draw[line cap=round,thick,magenta] (0.01,0) -- (1.49,0);
            \draw[line cap=round,thick,postaction={decorate},string-blue] (0.75,1.49) -- (0.75,0);
        \end{scope}
        \fill[string-blue] (0.75,0) circle (0.03);
    \end{tikzpicture}\,,
\end{equation}
where the boundary $1$-strata, indicated with a darker colouring, inherit the orientation of the neighbouring $2$-strata in the interior. The $0$-stratum also inherits the orientation of the interior $1$-stratum ending on it. Moreover, a $0$-stratum, on a gluing boundary, is only allowed as the endpoint of exactly one $1$-stratum in the interior.

For free boundaries we remove the second restriction and allow for an arbitrary (finite) number of $1$-strata ending on a $0$-stratum:
\begin{equation}\label{eq:freeboundngbh}
    \begin{tikzpicture}[baseline=0.7cm]
        \fill[white!70!magenta] (0,0) rectangle (1.5,1.5);
        \begin{scope}[decoration={markings, mark=at position 0.55 with {\arrow{>}}}] 
            \draw[line cap=round,thick,postaction={decorate},string-violet] (0,1.49) -- (0,0.01);
            \end{scope}
    \end{tikzpicture}\, ,\quad
    \begin{tikzpicture}[baseline=0.7cm]
        \fill[white!70!magenta] (0,0) rectangle (1.5,1.5);
        \begin{scope}[decoration={markings, mark=at position 0.55 with {\arrow{>}}}] 
            \draw[line cap=round,thick,postaction={decorate},string-violet] (1.5,0.01) -- (1.5,0.75);
            \draw[line cap=round,thick,postaction={decorate},string-violet] (1.5,0.75) -- (1.5,1.49);
        \end{scope}
        \fill[black] (1.5,0.75) circle (0.06);
        \node at (1.5,0.75) [right,black] {\scriptsize $\pm$};
    \end{tikzpicture}\, ,\quad
    \begin{tikzpicture}[baseline=0.7cm]
        \fill[white!70!magenta] (0,0) rectangle (1.5,1.5);
        \begin{scope}[decoration={markings, mark=at position 0.55 with {\arrow{>}}}] 
            \draw[line cap=round,thick,postaction={decorate},string-violet] (0,0.75) -- (0,0.01);
            \draw[line cap=round,thick,postaction={decorate},string-violet] (0,1.49) -- (0,0.75);
            \draw[line cap=round,thick,postaction={decorate},string-blue] (1.49,1.25) -- (0,0.75);
            \draw[line cap=round,thick,postaction={decorate},string-blue] (0,0.75) -- (1.49,0.25);
        \end{scope}
        \fill[black] (0,0.75) circle (0.06);
        \node at (0,0.75) [left,black] {\scriptsize $+$};
    \end{tikzpicture}\,;
\end{equation}
It is useful to think of these neighbourhoods as a special case of the allowed $2$-dimensional neighbourhoods in the interior, where one $2$-stratum is "trivial" and the adjacent $1$-strata are part of the free boundary.\footnote{In principle we could also allow for an independent orientation on the free boundary $1$-strata, making this analogy even more apparent. However, in this case the the labelling, discussed below, becomes a bit more involved.}

Finally, near the corner point we allow for the following two possibilities:
\begin{equation}
    \begin{tikzpicture}[baseline=0.7cm]
    \fill[white!70!magenta] (0,0) rectangle (1.5,1.5);
        \begin{scope}[decoration={markings, mark=at position 0.55 with {\arrow{>}}}] 
            \draw[line cap=round,thick,postaction={decorate},string-violet] (0,1.49) -- (0,0.01);
            \draw[line cap=round,thick,magenta] (0.01,0) -- (1.49,0);
            \end{scope}
            \fill[string-violet] (0,0) circle (0.04);
    \end{tikzpicture}\, ,\quad
    \begin{tikzpicture}[baseline=0.7cm]
        \fill[white!70!magenta] (0,0) rectangle (1.5,1.5);
        \begin{scope}[decoration={markings, mark=at position 0.55 with {\arrow{>}}}] 
            \draw[line cap=round,thick,postaction={decorate},string-violet] (1.5,0.01) -- (1.5,1.49);
            \draw[line cap=round,thick,magenta] (0.01,0) -- (1.49,0);
            \end{scope}
            \fill[string-violet] (1.5,0) circle (0.04);
    \end{tikzpicture}\,, 
\end{equation}
where the $2$-stratum has the standard orientation of $\mathbbm{R}^2$. 

A \emph{morphism between (unlabelled) topological world sheets} is a continuous map between open-closed surfaces which sends $j$-strata to $j$-strata in a smooth and orientation preserving manner.

\begin{remark}
    This definition of unlabelled world sheet is closely related, but differs slightly from the one given in \autocite[Def.\,2.4]{FSY2023RCFTstring1}. They only consider stratifications with at least one $0$-stratum for every boundary component. This extra $0$-stratum is necessary for their construction to make the object of bulk fields, see \Cref{subsec:oplaxnattrafo}, unique \autocite[Sec.\,5.2]{yang2022thesis}.
\end{remark}

Next, let us discuss the labelling data. As in \Cref{sec:3ddeftft}, this will be encoded in what we will call $2$-\emph{dimensional defect data}
\begin{equation}
\DD^{2\mathrm{d}} = (D_2^{2\mathrm{d}},D_1^{2\mathrm{d}},D_0^{2\mathrm{d}};s^{2\mathrm{d}},t^{2\mathrm{d}},j^{2\mathrm{d}}).
\end{equation}
As before, the elements of the sets $D_i^{2\mathrm{d}}$, for $i\in \{0,1,2\}$, will be used to label the $i$-strata while the source, target, and junction map $s^{2\mathrm{d}},t^{2\mathrm{d}}\colon D_1^{2\mathrm{d}} \times \{\pm\}\to D_2^{2\mathrm{d}}$ and $j^{2\mathrm{d}}\colon D_0 \times \{\pm\} \to (\textrm{ordered lists of elements of } D_1^{2\mathrm{d}})$ encode the adjacency conditions, see \autocite[Sec.\,2.3]{DKR11defbicat} or \autocite[Ex.\,2.5]{CRS17deforbi}. In order to include the labels for the free boundaries, i.e.\ the boundary conditions, we assume that the set $D_2$ contains a special element $T$, which we think of as the ``trivial phase'', and label free boundaries with the elements in $D_1^{2\mathrm{d}}$ which have $T$ as their source. We will call $T$ the \emph{transparent} element in $D_2^{2\mathrm{d}}$.
\begin{remarks}
\begin{enumerate}
    \item Alternatively, we also could have introduced an extra set of boundary conditions $D_{1,\,\partial}^{2\mathrm{d}}$ and boundary point defects $D_{0,\,\partial}^{2\mathrm{d}}$ together with a target $t_\partial^{2\mathrm{d}}$ and junction map $j_\partial^{2\mathrm{d}}$. To see how this extra data is already encoded in the above, note that we can simply set $D_{1,\,\partial}^{2\mathrm{d}} := s^{-1}((T,+)) \subset D_1^{2\mathrm{d}}$. 
    Moreover, this process is reversible by taking the union of the corresponding sets for $1$- and $0$-dimensional objects and adding an extra element to $D_2^{2\mathrm{d}}$.
    \item Analogously to \Cref{rem:def3cat}, one can construct a $2$-category from the defect data $\DD^{2\mathrm{d}}$ with objects the elements in $D_2^{2\mathrm{d}}$, the $1$-morphisms (lists) of elements in $D_1^{2\mathrm{d}}$, and $2$-morphisms (lists) of elements in $D_0^{2\mathrm{d}}$, see \autocite{DKR11defbicat} for details. The transparent element $T$ plays the role of a distinguished object in this $2$-category similarly to a monoidal unit.
\end{enumerate}
\end{remarks}

\begin{definition}
    Let $\DD^{2\mathrm{d}}$ be a set of $2$-dimensional defect data. A $\DD^{2\mathrm{d}}$-labelled world sheet $\frakS$ consists of an unlabelled world sheet $\check{\frakS}$ together with the following assignments of labels to the strata of $\check{\frakS}$:
    \begin{itemize}
    \item to any $2$-stratum an element in $D_2^{2\mathrm{d}}$ which we will call its \emph{phase};
    \item to any $1$-stratum in the interior of $\check{\frakS}$ an element in $D_1^{2\mathrm{d}}$ compatible with its source and target map;
    \item to any gluing boundary $1$-stratum the element in $D_2^{2\mathrm{d}}$ labelling the neighbouring $2$-stratum in the interior;
    \item to any gluing $0$-stratum the element in $D_1^{2\mathrm{d}}$ which labels the interior $1$-stratum ending on it; 
    \item to any free boundary $1$-stratum an element in $D_1^{2\mathrm{d}}$ such that its source is the transparent element $T \in D_2^{2\mathrm{d}}$ and its target matches the neighbouring $2$-stratum in the interior;
    \item to any free boundary $0$-stratum an element in $D_0^{2\mathrm{d}}$ compatible with the labels of the surrounding strata as in \Cref{sec:3ddeftft};
    \item to any corner point the same element in $D_1^{2\mathrm{d}}$ as the unique free boundary $1$-stratum in its neighbourhood;
    \end{itemize}

A morphism of $\DD^{2\mathrm{d}}$-labelled world sheets $\frakS$ and $\frakS'$ is a morphism of unlabelled world sheets $\check{\frakS}$ and $\check{\frakS}'$ compatible with the labelling maps. 
\end{definition}
An example for an allowed labelling for the third local neighbourhood in \eqref{eq:freeboundngbh} is given by: 
\begin{equation}
    \begin{tikzpicture}[baseline=0.7cm]
        \fill[white!70!magenta] (0,0) rectangle (3,3);
        \begin{scope}[decoration={markings, mark=at position 0.55 with {\arrow{>}}}] 
            \draw[line cap=round,very thick,postaction={decorate},string-violet] (0,1.5) -- (0,0.01);
            \draw[line cap=round,very thick,postaction={decorate},string-violet] (0,2.99) -- (0,1.5);
            \draw[line cap=round,very thick,postaction={decorate},string-blue](2.99,2.5) -- (0,1.5);
            \draw[line cap=round,very thick,postaction={decorate},string-blue] (0,1.5) -- (2.99,0.5);
        \end{scope}
        \fill[black] (0,1.5) circle (0.06);
        \node at (0,1.5) [left,black] {$f$};
        \node at (0,0) [below,string-violet] { $M$};
        \node at (0,3) [above,string-violet] {$N$};
        \node at (1.5,0) [below,magenta] {$\alpha$};
        \node at (1.5,3) [above,magenta] {$\gamma$};
        \node at (3,1.5) [right,magenta] {$\beta$};
        \node at (3,0.5) [right,string-blue] {$X$};
        \node at (3,2.5) [right,string-blue] {$Y$};
    \end{tikzpicture}
\end{equation}
Here $\alpha,\beta,\gamma \in D_2^{2\mathrm{d}}$, $M,N,X,Y \in D_1^{2\mathrm{d}}$, and $f \in D_0^{2\mathrm{d}}$ are such that $j^{2\mathrm{d}}(f,+) = ((M,+),(X,+),(Y,-),(N,-))$ and $t(M,+)=s(X,+)= \alpha$, $t(X,+)=s(Y,-) = \beta$, $t(Y,-)=s(N,-) = \gamma$, and $t(N,-) = s(M,+)=T$. The linear order of the list $j^{2\mathrm{d}}(f,+) = ((N,-),(Y,-),(X,+),(M,+))$ is chosen as always starting from the free boundary $1$-stratum which goes out of the $0$-stratum, labelled by $M$ in this case, and then going counter-clockwise. This choice is always possible because every free boundary $0$-stratum will have exactly one incoming and one outgoing free boundary $1$-stratum.

By an analogous index shift as in the definition of $\bord_{3,2}^{\mathrm{def}}(\DD)$ we can also decorate compact stratified $1$-manifolds using the $2$d defect data $\DD^{2\mathrm{d}}$. Putting this together leads to the following bordism $2$-category:
\begin{definition}
    Let $\DD^{2\mathrm{d}}$ be a set of $2$-dimensional defect data as above. The $(2,1)$-category of $\DD^{2\mathrm{d}}$-labelled \emph{topological world sheets} $\mathfrak{WS}(\DD^{2\mathrm{d}})$ is the symmetric monoidal $2$-category consisting of:
    \begin{itemize}
    \item objects: $\DD^{2\mathrm{d}}$-decorated compact stratified $1$-manifolds;
    \item $1$-morphisms: for objects $\frakC$ and $\frakC'$, a $1$-morphism from $\frakC$ to $\frakC'$, denoted as $\frakC \to \frakC'$, is a topological world sheet $\frakS$, together with a parametrisation of its gluing boundary $\partial^{\mathrm{gl}}\frakS \cong -\frakC \sqcup \frakC'$ as defect $1$-manifolds with boundary;
    \item $2$-morphisms: for $1$-morphisms $\frakS \colon\frakC \to \frakC'$ and $\frakS' \colon\frakC \to \frakC'$ a $2$-morphism $[f] \colon \frakS \Rightarrow \frakS'$ is the isotopy class of an isomorphism $f \colon \frakS \to \frakS'$ of topological world sheets which is compatible with the boundary parametrisations;
    \item horizontal composition: for $1$-morphisms $\frakS_1 \colon\frakC_1 \to \frakC$ and $\frakS_2 \colon\frakC \to \frakC_2$ the horizontal composition $\frakS_2\diamond \frakS_1 \colon\frakC_1 \to \frakC_2$ is given by the topological world sheet
    \begin{equation}
    \frakS_2 \sqcup_{\frakC} \frakS_1
    \end{equation}
    obtained by gluing along $\frakC$ using the parametrisation of the gluing boundaries of $\frakS_1$ and $\frakS_2$, respectively.
    \item identity $1$-morphism: for an object $\frakC$ the identity $1$-morphism $\id_\frakC \colon \frakC \to \frakC$ is 
    \begin{equation}
    \frakC \times I
    \end{equation}
    with stratification and decoration induced from $\frakC$;
    \item vertical composition: 
    for $2$-morphisms $[f] \colon \frakS \Rightarrow \frakS'$ and $[g] \colon \frakS' \Rightarrow \frakS''$,     
    the vertical composition is given by the $2$-morphism 
    \begin{equation}
    [g\circ f];
    \end{equation}
    \item identity $2$-morphism: for a $1$-morphism $\frakS \colon\frakC \to \frakC'$ the identity $2$-morphism $\id_{\frakS} \colon \frakS \Rightarrow \frakS$ is  
    \begin{equation}
    [\id];
    \end{equation} 
    \item disjoint union as symmetric monoidal structure;
\end{itemize}
\end{definition}
An example of a $1$-morphism from an interval to the disjoint union of two defect circles is given by
\begin{equation}\label{eq:ws}
            \begin{tikzpicture}[baseline=0cm]
    \fill[white!80!magenta,opacity=0.7]
    {[rounded corners] (-0.75,-2) -- (-0.8,-0.8) -- (-1,-0.5) -- (-2,0.5) -- (-2.2,0.8) -- (-2.25,2)} --
    (-2.25,2) arc (180:0:0.75 and 0.5) --
    (-0.75,2) arc (180:360:0.75 and 1) --
    (0.75,2) arc (180:260:0.75 and 0.5) 
    {[rounded corners] ([xshift=1.5cm,yshift=2cm] 260:0.75 and 0.5) --([xshift=1.5cm,yshift=1.8cm] 260:0.75 and 0.5) -- ([yshift=-1.8cm] 110:0.75 and 1.5) -- ([yshift=-2cm] 110:0.75 and 1.5)} --
    ([yshift=-2cm] 110:0.75 and 1.5) arc (110:180:0.75 and 1.5);
    \fill[white!70!magenta,opacity=0.7]
    ([xshift=1.5cm,yshift=2cm] 70:0.75 and 0.5) arc (70:295.9:0.75 and 0.5);
    \fill[white!80!cyan,opacity=0.8]
    ([xshift=1.5cm,yshift=2cm] 70:0.75 and 0.5) arc (70:-64.1:0.75 and 0.5);
    \fill[white!70!magenta,opacity=0.7]
    (0.75,-2) arc (0:180:0.75 and 1.5) --
    (-0.75,-2) arc (180:0:0.75 and 0.5); 
    \fill[white!50!magenta,opacity=0.7]
    {[rounded corners] (1.3,-0.2) -- (0.8,-0.75) -- (0.75,-2)} --
    (0.75,-2) arc (0:50:0.75 and 1.5)  
    {[rounded corners] ([yshift=-2cm] 50:0.75 and 1.5) --
    ([xshift=0.15cm,yshift=-1.9cm] 50:0.75 and 1.5) -- (0.9,-0.3) -- (1.3,-0.2)} 
    ;
    \fill[white!60!magenta,opacity=0.7]
    {[rounded corners] (-0.75,-2) -- (-0.8,-0.8) -- (-1,-0.5) -- (-2,0.5) -- (-2.2,0.8) -- (-2.25,2)} --
    (-2.25,2) arc (180:360:0.75 and 0.5) --
    (-0.75,2) arc (180:360:0.75 and 1) --
    (0.75,2) arc (180:260:0.75 and 0.5) 
    {[rounded corners] ([xshift=1.5cm,yshift=2cm] 260:0.75 and 0.5) --([xshift=1.5cm,yshift=1.8cm] 260:0.75 and 0.5) -- ([yshift=-1.8cm] 110:0.75 and 1.5) -- ([yshift=-2cm] 110:0.75 and 1.5)} --
    ([yshift=-2cm] 110:0.75 and 1.5) arc (110:180:0.75 and 1.5)
    ;
    \fill[white!60!cyan,opacity=0.7]
    {[rounded corners] (1.4,-0.1) --  (1.4,0.1) -- ([xshift=1.6cm,yshift=0.7cm] 70:0.75 and 0.5) --([xshift=1.5cm,yshift=2cm] 295.9:0.75 and 0.5)} --([xshift=1.5cm,yshift=2cm] 295.9:0.75 and 0.5) arc (295.9:360:0.75 and 0.5) {[rounded corners] (2.25,2) --  (2.2,0.8) -- (2,0.5) -- (1.4,-0.1)}
    ;
    \fill[white!40!cyan,opacity=0.6]
    [rounded corners] ([xshift=1.5cm,yshift=2cm] 295.9:0.75 and 0.5) -- ([xshift=1.6cm,yshift=0.7cm] 70:0.75 and 0.5) --  (1.4,0.1) [sharp corners] --   (1.4,-0.1) -- (1.3,-0.2) [rounded corners] -- (0.9,-0.3) -- ([xshift=0.15cm,yshift=-1.9cm] 50:0.75 and 1.5)  [sharp corners] -- ([yshift=-2cm] 50:0.75 and 1.5) arc (50:110:0.75 and 1.5) [rounded corners] -- ([yshift=-1.8cm] 110:0.75 and 1.5) -- ([xshift=1.5cm,yshift=1.8cm] 260:0.75 and 0.5) [sharp corners] --([xshift=1.5cm,yshift=2cm] 260:0.75 and 0.5) arc (260:295.9:0.75 and 0.5);
    \begin{scope}[very thick,decoration={
                      markings,
                      mark=at position 0.45 with {\arrow{>}}}
                     ] 
    \draw[thick,string-red!70, line cap= round, dashed,postaction={decorate}] {[rounded corners] (1.4,-0.1) -- (1.4,0.1)  -- ([xshift=1.6cm,yshift=0.7cm] 70:0.75 and 0.5) -- ([xshift=1.5cm,yshift=2cm] 70:0.75 and 0.5)};
    \end{scope}
    \draw[line cap=round]{[rounded corners] (0.75,-2) -- (0.8,-0.8) -- (1,-0.5) -- (2,0.5) -- (2.2,0.8) -- (2.25,2)}
    {[rounded corners] (-0.75,-2) -- (-0.8,-0.8) -- (-1,-0.5) -- (-2,0.5) -- (-2.2,0.8) -- (-2.25,2)}
    (0.75,2) arc (0:-180:0.75 and 1);
    \begin{scope}[very thick,decoration={
                      markings,
                      mark=at position 0.5 with {\arrow{>}}}
                     ] 
    \draw[thick,string-green,postaction={decorate}] {[rounded corners]([xshift=1.5cm,yshift=2cm] 260:0.75 and 0.5) -- ([xshift=1.5cm,yshift=1.8cm] 260:0.75 and 0.5)-- ([yshift=-1.8cm] 110:0.75 and 1.5) -- ([yshift=-2cm] 110:0.75 and 1.5) };
    \end{scope}
    \draw[thick,string-green] {[rounded corners] ([yshift=-2cm] 110:0.75 and 1.5) -- ([yshift=-1.8cm] 110:0.75 and 1.5) -- ([xshift=1.5cm,yshift=1.8cm] 260:0.75 and 0.5) --([xshift=1.5cm,yshift=2cm] 260:0.75 and 0.5)};
    \draw[thick,string-red,line cap = round] {[rounded corners] ([yshift=-2cm] 50:0.75 and 1.5)  --([xshift=0.15cm,yshift=-1.9cm] 50:0.75 and 1.5) -- (0.9,-0.3) -- (1.3,-0.2)};
    \draw[thick, string-red, line cap= round] ([xshift=1.5cm,yshift=2cm] 70:0.75 and 0.5) --  ([xshift=1.5cm,yshift=2cm] 295.9:0.75 and 0.5);
    \draw[thick,white!30!magenta] (-1.5,2) ellipse (0.75 and 0.5)
    (0.75,-2) arc (0:180:0.75 and 0.5)
    ([xshift=1.5cm,yshift=2cm] 70:0.75 and 0.5) arc (70:260:0.75 and 0.5);
    \draw[thick,cyan]([xshift=1.5cm,yshift=2cm] 70:0.75 and 0.5) arc (70:-100:0.75 and 0.5);
    \begin{scope}[very thick,decoration={
                      markings,
                      mark=at position 0.502 with {\arrow{>}}}
                     ] 
    \draw[thick,line cap=round,string-violet!40!magenta,postaction={decorate}] (0.75,-2) arc (0:180:0.75 and 1.5);
    \draw[thick,line cap=round,string-blue!50!string-green,postaction={decorate}] ([yshift=-2cm] 110:0.75 and 1.5) arc (110:180:0.75 and 1.5);
    \end{scope}
    \begin{scope}[very thick,decoration={
                      markings,
                      mark=at position 0.7 with {\arrow{>}}}
                     ] 
    \draw[thick,line cap=round,string-violet,postaction={decorate}] ([yshift=-2cm] 0:0.75 and 1.5) arc (0:50:0.75 and 1.5);
    \end{scope}
    \fill[string-violet]
    (0.75,-2) circle (0.04);
   \fill[string-blue!50!string-green] (-0.75,-2) circle (0.04);
    \fill[black]([yshift=-2cm] 110:0.75 and 1.5) circle (0.04)
    ([yshift=-2cm] 50:0.75 and 1.5) circle (0.04);
    \fill[string-red]([xshift=1.5cm,yshift=2cm] 70:0.75 and 0.5) circle (0.03);
    \fill[string-green]([xshift=1.5cm,yshift=2cm] 260:0.75 and 0.5) circle (0.03);
    \draw[white!30!magenta,thick](3.2,-0.5) -- (3.2,0.5);
    \fill[]
    (3.2,-0.5) circle (0.04);
    \fill[string-violet] (3.2,0.5) circle (0.04);
    \draw[white!30!magenta,thick] (5.5,0) circle (0.5)
    (7,0.5) arc (90:270:0.5);
    \draw[cyan,thick] (7,0.5) arc (90:-90:0.5);
    \fill[string-green] (7,-0.5) circle (0.03);
    \fill[string-red] (7,0.5) circle (0.03);
    \draw[thick,->,line cap=round] (3.7,0) -- (4.7,0);
    \fill[black] (2.7,-0.07) circle (0.025)
    (2.7,0.07) circle (0.025);
\end{tikzpicture}\, .
\end{equation}
where we used different colours as an indication of different labels.

Forgetting the labels and stratification gives a symmetric monoidal $2$-functor
    \begin{equation}\label{eq:worldsheetmorphism3d}
       U \colon \WS(\DD^{2\mathrm{d}}) \to \bord_{2+\epsilon,2,1}^{\oc}
    \end{equation}
which sends a defect manifold to its underlying manifold. In the following we will denote the image under this functor simply by using the corresponding capital Greek letter, i.e.\ $U(\frakC) \equiv \Gamma$ and $U(\frakS)\equiv \Sigma$. Note that for a world sheet $\frakS \colon \frakC \to \frakC'$ its automorphism group $\mathrm{Aut}(\frakS) = \mathrm{End}_{\WS(\frakC,\frakC')}(\frakS)$ is by definition the subgroup of the mapping class group of $\Sigma$ which fixes the defects. This coincides with the notion of the mapping class group of a world sheet from \autocite[Def.\,2.17]{FSY2023RCFTstring1} as a morphism between stratified manifolds sending strata to strata.

\section{Algebraic structure underlying full 2d CFT}\label{sec:algcft}
In this section we will discuss the algebraic structure underlying a full two-dimensional CFT based on some fixed chiral data in the form of a not necessarily semisimple modular tensor category $\calc$ together with it's chiral modular functor. 

We will first review the $2$-categorical definitions of chiral and full modular functors of \autocite[Sec.\,2.1]{FSY2023RCFTstring1}. Afterwards we will give a brief overview of the main result of \autocite{HR2024modfunc} on how the $3$d TFTs of \autocite{DGGPR19} constructed from $\calc$ can be used to construct such modular functors. Finally, we will define a full CFT based on a full modular functor in $2$-categorical language and discuss why this definition encodes the data expected of a full CFT. For the relevant $2$-categorical notions we refer to \autocite{JY20twodcategories} and to \autocite[Ch.\,2]{SchommerPries2011thesis} for details on symmetric monoidal $2$-categories.
\subsection{Modular functors}\label{subsec:modfunc}
Both chiral and full modular functor are a form of categorified $2$d TFT with target the symmetric monoidal $2$-category $\cat{P}\mathrm{rof}_{\mathbbm{k}}^{\coend \mathrm{ex}}$ of \emph{linear, left exact profunctors} which which consists of
\begin{itemize}
    \item objects: finite linear categories;
    \item $1$-morphisms: for objects $\cat{A}$ and $\cat{B}$ a $1$-morphism from $\cat{A}$ to $\cat{B}$ is a left exact linear profunctor from $\cat{A} \slashedrightarrow \cat{B}$, i.e.\ a left exact linear functor $\cat{A}^{\mathrm{op}} \boxtimes \cat{B} \to \vect$;
    \item $2$-morphisms: natural transformations;
    \item horizontal composition: for $1$-morphisms $F \colon \cat{A} \slashedrightarrow \cat{B}$ and $G\colon \cat{B} \slashedrightarrow \calc$ the horizontal composition is the left exact coend
    \begin{align}
        G \diamond F (-,\sim) := \oint^{B \in \cat{B}}\! G(B,\sim) \otimes_{\mathbbm{k}} F(-,B)  \colon \cat{A} \slashedrightarrow \calc
    \end{align}
    where $-$ stands for an argument from the source category (in this case $\calA$) and $\sim$ for an argument from the target category (in this case $\calc$);
    \item identity $1$-morphism: for an object $\calA$ the identity $1$-morphism $\id_\calA \colon \calA \slashedrightarrow \calA$ is given by unique functor induced from the universal property of the Deligne product applied to the Hom functor $\Hom_\calA(-,\sim)$;
    \item vertical composition: standard vertical composition of natural transformations; 
    \item identity $2$-morphism: identity natural transformations;
    \item Deligne's tensor product of finite linear categories as the symmetric monoidal structure;
\end{itemize} 
For more details on left exact profunctors and left exact coends we refer to \autocite[Sec. 2.2]{hofer25thesis} and references therein. 
In the following we will often use $-$ for both the source and target argument as it will be clear from the context which one is which, e.g.\ we will write $\Hom_\calA(-,-)$ instead of $\Hom_\calA(-,\sim)$. 

Before we continue let us briefly explain how $\Prof$ is related to the more familiar target $2$-category $\calL\mathrm{ex}_\kk$ of finite linear categories, left exact functors, and natural transformations again with the Deligne tensor product as symmetric monoidal structure. There are two identity-on-objects $2$-functors $h_{-},h^{-} \colon \calL\mathrm{ex}_\kk \to \Prof$: The first one sends a functor $F \colon \calA \to \calB$ to the \emph{representable} profunctor $h_F \colon \calB \slashedrightarrow \calA$ induced by $\Hom_\calB(-,F(-))\colon \calB^\op \times \calA \to \vect$ on the Deligne product $\calB^\op\boxtimes \calA$. The second one sends a functor $F \colon \calA \to \calB$ to the \emph{corepresentable} profunctor $h^F \colon \calA \slashedrightarrow \calB$ induced by $\Hom_\calB(F(-),-) \colon \calA^\op \times \calB \to \vect$ on the Deligne product $\calA^\op\boxtimes \calB$. To be more precise $h_{-}$ is a $2$-functor contravariant on $1$-morphisms while $h^{-}$ is contravariant on $2$-morphisms. Compatibility of $h_{-}$ and $h^{-}$ with horizontal composition, i.e.\ functoriality, can be proved by directly using standard results on coends, see e.g.\ \autocite[Sec.\,5.2]{Loregian2021coendcalc}. 

For a fixed left exact functor $F \colon \calA \to \calB$ the two profunctors $h_F$ and $h^F$ are closely related: 
\begin{lemma}\label{lem:profadjunc}
    Let $F \colon \calA \to \calB$ be a left exact functor. The profunctor $h_F$ is left adjoint to the profunctor $h^F$ as a $1$-morphism in the $2$-category $\Prof$.
\end{lemma}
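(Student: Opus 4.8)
The plan is to establish the adjunction $h_F \dashv h^F$ at the level of the functors that horizontal composition induces on hom-categories, since that is the most transparent way to control the handedness. In the bicategory $\Prof$ an adjunction between the $1$-morphisms $h_F \colon \calB \slashedrightarrow \calA$ and $h^F \colon \calA \slashedrightarrow \calB$ is equivalent to a family of isomorphisms
\begin{equation*}
\Hom_{\Prof(\calD,\calA)}(h_F \diamond P,\, Q) \;\cong\; \Hom_{\Prof(\calD,\calB)}(P,\, h^F \diamond Q),
\end{equation*}
for profunctors $P \colon \calD \slashedrightarrow \calB$ and $Q \colon \calD \slashedrightarrow \calA$, pseudonatural in the probe object $\calD$; the naturality in $\calD$ is automatic from functoriality of $\diamond$, and from such a family one recovers the unit $\eta \colon \id_\calB \Rightarrow h^F \diamond h_F$ and counit $\varepsilon \colon h_F \diamond h^F \Rightarrow \id_\calA$ by evaluating at the identity profunctors. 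First I would therefore compute the two composites occurring here using the co-Yoneda lemma for the left exact coend.

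The key computation is to identify $h_F \diamond -$ with restriction along $F$. Unfolding the defining coend for horizontal composition and applying co-Yoneda gives
\begin{equation*}
(h_F \diamond P)(X, A) \;=\; \oint^{B \in \calB} \Hom_\calB(B, FA) \otimes_\kk P(X, B) \;\cong\; P(X, FA),
\end{equation*}
so that $h_F \diamond P$ is obtained by precomposing the target variable of $P$ with $F$. This reduction is legitimate for the \emph{left exact} coend precisely because the result $P(-,F-)$ is again left exact, so no left-exact correction intervenes. The decisive step is then to recognise $h^F \diamond -$ as the \emph{right adjoint} of this restriction functor: the ordinary $\vect$-valued coend defining $h^F \diamond Q$ is the left Kan extension of $Q$ along $F$, but composition in $\Prof$ uses the left exact coend, and I would show that the resulting left-exact extension is canonically the right adjoint of restriction along $F$. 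Granting this, restriction is left adjoint to the left-exact extension, which is exactly the displayed hom-isomorphism with $h_F \diamond -$ on the left, whence $h_F \dashv h^F$.

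The main obstacle is getting this handedness right, and it is genuinely the crux rather than a bookkeeping detail. With the ordinary coend one would find $h^F \diamond -$ to be a left Kan extension, hence \emph{left} adjoint to restriction, yielding the opposite adjunction $h^F \dashv h_F$ (the standard proarrow-equipment statement). It is only the passage to the left exact coend --- equivalently, the identification of the relevant extension with a right-Kan-type extension under the duality between left and right exact functors on finite linear categories --- that produces the adjunction in the direction claimed. I would isolate this single point as a lemma about the left exact coend and then let the triangle identities follow formally: each of the two zig-zag equations reduces, under the co-Yoneda identifications above, to the elementary naturality of the unit and counit of the restriction/extension adjunction, so no further explicit computation is needed.
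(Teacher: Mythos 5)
Your two coend collapses are fine, but the step you yourself flag as the crux --- that passing from the ordinary to the left exact coend turns $h^F \diamond (-)$ from a left-Kan-type extension into a \emph{right} adjoint of restriction, thereby flipping the classical equipment adjunction --- is false, and the proposal fails there. The principle you invoke to justify $h_F \diamond P \cong P(-,F(-))$ cuts against you: whenever the ordinary coend of a diagram is already left exact, it satisfies the universal property of $\oint$, so it \emph{is} the left exact coend. This applies equally to $h^F \diamond Q$ on representable probes, so on such probes both composites collapse to their classical values and no lex correction intervenes anywhere; the handedness is therefore the same as in ordinary $\cat{P}\mathrm{rof}$. Concretely, take $\calA = \vect$, let $\calB$ be the category of finite-dimensional modules over the path algebra of the $A_2$ quiver, and let $F = (-) \otimes_\kk N \colon \vect \to \calB$, which is left exact since $\vect$ is semisimple. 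If $h_F \dashv h^F$ held, whiskering would give an adjunction $h_F \diamond (-) \dashv h^F \diamond (-)$ between the hom-categories $\Prof(\vect,\calB)$ and $\Prof(\vect,\vect)$; identifying these with $\calB^{\op}$ and $\vect^{\op}$ via representables and using the co-Yoneda collapses $h_F \diamond \Hom_\calB(b,-) \cong \Hom_\calB(b,F(-))$ and $h^F \diamond \Hom_\vect(W,-) \cong \Hom_\calB(W\otimes N,-)$, this would force a natural isomorphism
\begin{equation*}
    \Hom_\calB(N,M) \;\cong\; \Hom_\calB(M,N)^{*} \qquad \text{for all } M \in \calB,
\end{equation*}
a Serre-duality statement that fails in general: for $N = S_1$ a simple and $M = P_1$ its projective cover one has $\Hom_\calB(S_1,P_1) = 0$ while $\Hom_\calB(P_1,S_1)^* \cong \kk$. (In the finite lex setting the right adjoint of restriction \emph{is} expressible by a coend formula, but only after a Nakayama-functor twist in the sense of the Eilenberg--Watts calculus; it is not $h^F \diamond (-)$ itself.)

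The paper's proof goes the classical way that you dismissed, and it is short and direct: the unit is induced by the action of $F$ on morphisms, $\Hom_\calA(-,-) \Rightarrow \Hom_\calB(F(-),F(-))$, after collapsing $\oint^{B\in\calB}\Hom_\calB(B,F(-))\otimes_\kk\Hom_\calB(F(-),B) \cong \Hom_\calB(F(-),F(-))$ by (lex) co-Yoneda, and the counit is induced by the composition map $\Hom_\calB(F(A),-)\otimes_\kk\Hom_\calB(-,F(A)) \Rightarrow \Hom_\calB(-,-)$, which factors through the left exact coend precisely because $\Hom_\calB$ is left exact; the triangle identities are then checked componentwise on the inducing transformations. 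Note that this unit emanates from $\Hom_\calA$, the identity at the source of $h^F$, so the adjunction actually constructed has the corepresentable $h^F$ as left adjoint and $h_F$ as right adjoint --- the classical proarrow-equipment direction you correctly recalled, and exactly the form used downstream, where the corepresentable $\Cor_\frakC \cong \Hom(\mathbbm{F}_\frakC,-)$ is left adjoint to $\Cor_\frakC^\dagger \cong \Hom(-,\mathbbm{F}_\frakC)$ with unit $\id_\vect \Rightarrow \Cor_\frakC^\dagger \diamond \Cor_\frakC$ sending $1 \mapsto \id_{\mathbbm{F}_\frakC}$. So the tension you detected between the lemma's wording and the classical statement is real, but its resolution is that the handedness in the statement (and the order of $\diamond$ inside the paper's proof, which is internally swapped) is a slip of wording; it is not that the left exact coend reverses Kan-extension handedness. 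The correct repair is to prove the classical direction and verify that the classical unit and counit descend to left exact coends --- which is what the paper does, and which your plan, resting on the nonexistent flip, cannot recover.
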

\begin{proof}
    This is a variation of the proof given in \autocite[Rem.\,5.2.1]{Loregian2021coendcalc}. For later use we will repeat some of the argument and indicate where one needs to be more careful.

    To obtain the unit $\eta \colon \Hom_\calA \Rightarrow h^{F} \diamond h_F$ first note that $h^{F} \diamond h_F$ is induced from $\oint^{B \in \calB} \Hom_\calB(B,F(-)) \otimes_\kk \Hom_\calB(F(-),B) \cong \Hom_\calB(F(-),F(-))$ where we used the Yoneda lemma. We define $\eta$ to be induced from the natural transformation $\Hom_{\calA}(-,-) \Rightarrow \Hom_\calB(F(-),F(-))$ given by the action of $F$ on morphisms. 

    The counit $\epsilon \colon h_F \diamond h^F \rightarrow \Hom_\calB$ is obtained as follows: First note that the natural transformation $\Hom_\calB(F(A),-) \otimes_\kk \Hom_\calB(-,F(A)) \Rightarrow \Hom_\calB(-,-)$ coming from the composition map is dinatural in $A$ and thus factorises over the coend $\oint^{A \in \calA} \Hom_\calB(F(A),-)  \otimes_\kk \Hom_\calB(-,F(A))$. We can now define $\epsilon$ to be the natural transformation $h_F \diamond h^F \rightarrow \Hom_\calB$ induced from this natural transformation.

    To prove the triangle identities it suffices to prove them for the natural transformations from which $\eta$ and $\epsilon$ are induced instead. This can now be done completely analogously as in \autocite[Rem.\,5.2.1]{Loregian2021coendcalc} by working component wise.
\end{proof}

Finally, we want to mention that under our finiteness assumptions the $2$-functors $h_{-}$ and $h^{-}$ are actually $2$-equivalences by the Eilenberg-Watts theorem, see \autocite[Lem.\,3.2]{Shimizu2016unimod}. The reason we use $\Prof$ as a target rather than $\calL\mathrm{ex}_\kk$ is that the former appears naturally when we define the modular functor on $1$- and $2$-morphisms. Moreover, the coends appearing in the definition of horizontal composition can physically be interpreted as a way of \emph{summing over intermediate states}, see \autocite[Sec.\,1.2]{FS16coendsCFT} for more details on this interpretation.

A \emph{chiral} modular functor has as its source a version of the $(2,1)$-category closed bordism category $\bord_{2+\epsilon,2,1}^{\chi}$ of closed $1$-manifolds, $2$-dimensional bordisms, and isotopy classes of diffeomorphisms, as well as extra data related to a gluing anomaly indicated by the superscript $\chi$.\footnote{This is the same anomaly data as the one needed for the $3$d TFT $\widehat{\rmV}_\calc$ of \autocite{DGGPR19}.} We will only tangentially work with $\bord_{2+\epsilon,2,1}^{\chi}$ here and refer to \autocite[Sec.\,4]{HR2024modfunc} for the full definition. A natural source of chiral modular functors are $3$d TFTs with embedded ribbon graphs obtained from modular tensor categories $\calc$, see \Cref{sec:modtens} for details on modular tensor categories and our conventions:
\begin{proposition}{\autocite[Thm.\,4.10]{HR2024modfunc}}
    For any modular tensor category $\calc$, the $3$d TFT with embedded ribbon graphs
\begin{equation}
        \widehat{\rmV}_\calc \colon \widehat{\bord}_{3,2}^{\chi}(\calc) \to \vect
\end{equation}
of \autocite{DGGPR19} induces a chiral modular functor 
\begin{align}
    \mathrm{Bl}^\chi_\calc \colon \bord^{\chi}_{2+\epsilon,2,1} \to  \cat{P}\mathrm{rof}_{\mathbbm{k}}^{\coend \mathrm{ex}}.
\end{align}
\end{proposition}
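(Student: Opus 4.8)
The plan is to construct $\mathrm{Bl}^\chi_\calc$ by dimensional reduction, reading off the profunctors and their natural transformations from the state spaces and the mapping class group actions of the $3$d TFT $\widehat{\rmV}_\calc$. On objects I would send a closed $1$-manifold $\Gamma$ to the Deligne product $\calc^{\boxtimes \pi_0(\Gamma)}$, with the empty manifold going to $\vect$ and a single circle going to $\calc$; compatibility with the disjoint-union symmetric monoidal structure is then essentially the universal property of the Deligne product.

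For a $2$-dimensional bordism $\Sigma \colon \Gamma \to \Gamma'$ I would define the value of the profunctor $\mathrm{Bl}^\chi_\calc(\Sigma) \colon \calc^{\boxtimes \pi_0(\Gamma)} \slashedrightarrow \calc^{\boxtimes \pi_0(\Gamma')}$ on a tuple of labels by inserting ribbon ends coloured by those labels on the boundary circles of $\Sigma$ (dualising on the incoming, i.e.\ $\mathrm{op}$, side) and evaluating $\widehat{\rmV}_\calc$ on the resulting surface. Functoriality in the labels comes from the functoriality of the ribbon-graph TFT, reading a morphism $X \to X'$ as a coupon. The first genuine technical point is \emph{left exactness} in each label: this holds because such state spaces are concretely of the form $\Hom_\calc\bigl(\coend^{\otimes g} \otimes X_1 \otimes \cdots, \unit\bigr)$, a Hom out of a tensor product, which is left exact in each argument by the finiteness and exactness properties of $\calc$ recalled in \Cref{sec:modtens}.

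The crux is horizontal composition, i.e.\ the gluing/factorisation axiom. For composable $\Sigma \colon \Gamma \to \Gamma'$ and $\Sigma' \colon \Gamma' \to \Gamma''$ I must produce a coherent isomorphism between the state space of the glued surface $\Sigma' \sqcup_{\Gamma'} \Sigma$ and the left exact coend $\mathrm{Bl}^\chi_\calc(\Sigma') \diamond \mathrm{Bl}^\chi_\calc(\Sigma) = \oint^{B} \mathrm{Bl}^\chi_\calc(\Sigma')(B,-) \otimes_\kk \mathrm{Bl}^\chi_\calc(\Sigma)(-,B)$ over $B \in \calc^{\boxtimes \pi_0(\Gamma')}$. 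The content is that gluing along a circle realises ``summing over intermediate states''; in the non-semisimple setting this is \emph{not} a finite direct sum over simple objects but precisely the left exact coend, and the gluing morphism of $\widehat{\rmV}_\calc$ implementing it is built from the canonical coend $\coend$ together with its integral $\Lambda$ and cointegral $\lambda$. I would establish this first for a single gluing circle using the excision property of $\widehat{\rmV}_\calc$ and the universal dinatural family $\iota_X$, and then for several circles by iteration. This isomorphism is the compositor of the pseudofunctor, and I expect it to be the main obstacle.

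Finally, a diffeomorphism $f \colon \Sigma \to \Sigma'$ fixing the boundary parametrisation induces, via its mapping cylinder viewed as a $3$-bordism, an isomorphism of state spaces natural in all labels, hence an invertible $2$-morphism of profunctors; I would check that it depends only on the isotopy class and is compatible with vertical and horizontal composition, so that $\mathrm{Bl}^\chi_\calc$ is well defined on $2$-morphisms. The anomaly decorations indicated by the superscript $\chi$ (Lagrangian subspaces and signature defects) are carried along exactly so that the gluing isomorphisms above are unambiguous; since $\widehat{\rmV}_\calc$ is anomalous with the same data, the two anomalies match and no extra phases appear. It then remains to verify the pseudofunctor coherence axioms, namely the associativity pentagon and the unit triangles for the compositors, which reduce to associativity and unitality of gluing in $\widehat{\rmV}_\calc$ together with the associativity of coends, and which are routine once the gluing isomorphism of the previous step is in hand.
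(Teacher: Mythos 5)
Your proposal is correct and follows essentially the same route as the paper (which recalls the construction of \autocite[Sec.\,4]{HR2024modfunc}): objects go to Deligne powers $\calc^{\boxtimes\pi_0(\Gamma)}$, a bordism is capped off with marked disks and fed to $\widehat{\rmV}_\calc$ to get a functor that is left exact in each label (via the identification of state spaces with Hom-spaces) and hence extends to the Deligne product, $2$-morphisms act through mapping cylinders, and the compositors come from gluing bordisms whose invertibility rests on the coend $\coend$ with its integral and cointegral, with the anomaly data on both sides matching by construction. You correctly single out the gluing/coend isomorphism as the only genuinely hard step, which is exactly where the cited proof (Prop.\,4.8 of \autocite{HR2024modfunc}) does its work.
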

Let us briefly recall how $\mathrm{Bl}_\calc^{\chi}$ is constructed, for the detailed construction and proofs see \autocite[Sec.\,4]{HR2024modfunc}. On objects $\Gamma \in \bord^{\chi}_{2+\epsilon,2,1}$ we have $\Blc\left(\Gamma \right):= \calc^{\boxtimes \pi_0(\Gamma)}$ where we index the factors directly by the connected components $\pi_0(\Gamma)$ of $\Gamma$.
A $1$-morphism $\Sigma \colon \Gamma \to \Gamma'$ in $\bord^{\chi}_{2+\epsilon,2,1}$ is first turned into a family of objects $\Sigma^{(\underline{X};\underline{Y})}$ in $\widehat{\bord}_{3,2}^{\chi}(\calc)$ with $\underline{X}\in \calc^{\times \pi_0(\Gamma)}$ and $\underline{Y}\in \calc^{\times \pi_0(\Gamma')}$ by gluing a disk with a marked point into every boundary component of $\Sigma$ with label the element in the list $(\underline{X};\underline{Y})$ matching the corresponding boundary component. Applying the TFT $\widehat{\rmV}_\calc$ gives rise to a functor 
\begin{equation}\label{eq:bllex-def}
    \begin{aligned}
     \mathrm{bl}^{\chi}_{\calc} (\Sigma)
     \colon (\calc^{\mathrm{op}})^{\times\pi_0(\Gamma)} \times \calc^{\times \pi_0(\Gamma')} &\to \vect \\    
     (\underline{X};\underline{Y}) &\mapsto \widehat{\rmV}_\calc(\Sigma^{(\underline{X};\underline{Y})})
    \end{aligned}
\end{equation}
The action of $\mathrm{bl}^{\chi}_{\calc} (\Sigma)$ on morphisms is obtained by considering cylinders over $\Sigma^{(\underline{X};\underline{Y})}$ with embedded ribbon graphs obtained by the morphisms. The functor $\mathrm{bl}^{\chi}_{\calc} (\Sigma)$ can be shown to be left exact in each argument and can thus be extended to the Deligne tensor product 
\begin{equation}
    \mathrm{Bl}_\calc^{\chi}(\Sigma) \colon (\calc^{\mathrm{op}})^{\boxtimes\pi_0(\Gamma)} \boxtimes \calc^{\boxtimes \pi_0(\Gamma')} \to \vect.
\end{equation}
Finally the action of $\mathrm{Bl}_\calc^{\chi}$ on a $2$-morphism $[f] \colon \Sigma \Rightarrow \Sigma'$ is obtained by considering mapping cylinders $C_f^{(\underline{X};\underline{Y})} \colon \Sigma^{(\underline{X};\underline{Y})} \to \Sigma'^{(\underline{X};\underline{Y})}$.

A \emph{full} modular functor has as its source the $(2,1)$-category of open-closed bordisms $\bord^{\oc}_{2+\epsilon,2,1}$ discussed in \Cref{subsec:topworld}. Note that we can pull back any full modular functor along the forgetful functor $\WS(\DD) \to \bord^{\oc}_{2+\epsilon,2,1}$. Next recall the \emph{orientation double} $2$-functor
\begin{align*}
    \widehat{(-)} \colon \bord^{\oc}_{2+\epsilon,2,1} \to \bord^{\chi}_{2+\epsilon,2,1}.
\end{align*}
discussed in \autocite[Sec.\,5.3]{HR2024modfunc} which sends an object $\Gamma \in \bord^{\oc}_{2+\epsilon,2,1}$ to
\begin{align}
    \widehat{\Gamma} = \Gamma \sqcup -\Gamma / \sim \quad \text{with} \quad (p,+) \sim (p,-) \quad \text{for} \quad p \in \partial \Gamma,
\end{align} 
and a $1$-morphism $\Sigma \colon \Gamma \to \Gamma'$ to 
\begin{align}
    \widehat{\Sigma} = \Sigma \sqcup -\Sigma / \sim \quad \text{with} \quad (p,+) \sim (p,-) \quad \text{for} \quad p \in \partial^f \Sigma.
\end{align}
We can pull back any chiral modular functor $\mathrm{Bl}^\chi \colon \bord^{\chi}_{2+\epsilon,2,1} \to  \cat{P}\mathrm{rof}_{\mathbbm{k}}^{\coend \mathrm{ex}}$ along $\widehat{(-)}$ to obtain a full modular functor. In the following we will be interested in the full modular functor $\mathrm{Bl}_\calc^{\chi} \circ \widehat{(-)}$ constructed from a modular tensor category $\calc$. To this end, it will be beneficial to describe the action of $\mathrm{Bl}_\calc^{\chi} \circ \widehat{(-)}$ on a $1$-morphism $\Sigma \colon \Gamma \to \Gamma'$ in $\bord^{\oc}_{2+\epsilon,2,1}$ a bit more explicitly. According to the description of $\mathrm{Bl}_\calc^{\chi}$ above we first glue disks with $\calc$-labelled marked points into the boundary components of $\widehat{\Sigma}$. However, for some of the boundary components it is more natural to consider them as pairs $S^1 \sqcup - S^1 = \widehat{S^1}$ since they arise as images of a single $S^1$-boundary components of $\Sigma$ under $\widehat{(-)}$. Moreover, by \autocite[Prop.\,5.5]{HR2024modfunc} we know that $\mathrm{Bl}_\calc^{\chi} (\widehat{S^1}) \simeq \calc \boxtimes \overline{\calc}$, where $\overline{\calc}$ denotes the \emph{reversed} ribbon category of $\calc$. It is therefore more natural to think of a $\widehat{S^1}$-boundary as a single boundary component labelled with an object in $\calc \times \overline{\calc}$ instead.\footnote{We can use $\calc \times \overline{\calc}$ instead of $\calc \boxtimes \overline{\calc}$ since in the full construction an extension from the Cartesian to the Deligne product is performed anyway \autocite[Def.\,4.4]{HR2024modfunc}.}

\subsection{Correlators and field content}\label{subsec:oplaxnattrafo}
We can now define the main object of interest of this article. 
\begin{definition}\label{def:fullcft} 
    Let $\mathrm{Bl}^\chi\colon\bord^{\chi}_{2+\epsilon,2,1} \to \Prof$ be a chiral modular functor and let $\DD^{2\mathrm{d}}$ be a set of $2$-dimensional defect data. Denote with $\Bl \colon \WS(\DD^{2\mathrm{d}}) \to \Prof$ the composition 
\begin{equation}
\WS(\DD^{2\mathrm{d}}) \stackrel{U}{\to} \bord^\oc_{2+\epsilon,2,1} \stackrel{\mathrm{\widehat{(-)}}}{\to} \bord^{\chi}_{2+\epsilon,2,1} \stackrel{\Bl^\chi}{\to} \Prof.
\end{equation}
    A \emph{full conformal field theory}, with chiral data governed by $\mathrm{Bl}^\chi$ and boundary conditions and topological defects encoded in $\DD^{2\mathrm{d}}$, is a braided monoidal oplax natural transformation
\[\begin{tikzcd}[ampersand replacement=\&]
	\WS(\DD^{2\mathrm{d}})\&\& {\Prof}
	\arrow[""{name=0, anchor=center, inner sep=0}, "{\Delta_{\kk}}", curve={height=-24pt}, from=1-1, to=1-3]
	\arrow[""{name=1, anchor=center, inner sep=0}, "{\mathrm{Bl}}"', curve={height=24pt}, from=1-1, to=1-3]
	\arrow["\Cor", shorten <=6pt, shorten >=6pt, Rightarrow, from=0, to=1]
\end{tikzcd}\]
where $\Delta_\kk \colon \WS(\DD^{2\mathrm{d}})\to \Prof$ is the constant symmetric monoidal $2$-functor sending every object to $\vect$.
\end{definition}
The rest of this section will be devoted to unwrapping this definition. An oplax natural transformation $\Cor$ as above consists of the following data \autocite[Def.\,4.3.1]{JY20twodcategories}:
\begin{itemize}
    \item $1$-morphism components: A left exact profunctor $\mathrm{Cor}_{\frakC} \colon \Delta_{\kk}(\frakC) \slashedrightarrow \mathrm{Bl}(\frakC)$ ;
    \item $2$-morphism components: A natural transformation
\[\begin{tikzcd}
	{\Delta_\kk(\frakC)} && {\Delta_\kk(\frakC')} \\
	\\
	{\mathrm{Bl}(\frakC)} && {\mathrm{Bl}(\frakC')}
	\arrow["{\Delta_\kk(\frakS)}", from=1-1, to=1-3]
	\arrow["{\Cor_\frakC}"', from=1-1, to=3-1]
	\arrow["{\Cor_\frakS}"{description}, Rightarrow, from=1-3, to=3-1]
	\arrow["{\Cor_{\frakC'}}", from=1-3, to=3-3]
	\arrow["{\mathrm{Bl}(\frakS)}"', from=3-1, to=3-3]
\end{tikzcd}\]
for every world sheet $\frakS \colon\frakC \to \frakC'$ in $\WS(\DD^{2\mathrm{d}})$;
\end{itemize}
A braided monoidal oplax natural transformation further includes \autocite[Def.\,2.7]{SchommerPries2011thesis}:
\begin{itemize}
    \item A modification $\Pi$ with components for any $\frakC,\frakC' \in \WS(\DD^{2\mathrm{d}})$ given by natural isomorphisms
\[\begin{tikzcd}
	& {\mathrm{Bl}(\frakC)\boxtimes\Delta_\kk(\frakC')} \\
	\\
	{\Delta_\kk(\frakC)\boxtimes\Delta_\kk(\frakC')} && {\mathrm{Bl}(\frakC)\boxtimes\mathrm{Bl}(\frakC')} \\
	\\
	{\Delta_\kk(\frakC\sqcup \frakC')} && {\mathrm{Bl}(\frakC\sqcup\frakC')}
	\arrow["{\id_{\mathrm{Bl}(\frakC)} \boxtimes \mathrm{Cor}_{\frakC'}}"{description}, from=1-2, to=3-3]
	\arrow["{ \mathrm{Cor}_{\frakC}\boxtimes\id_{\Delta_\kk(\frakC')}}"{description}, from=3-1, to=1-2]
	\arrow["\cong"', from=3-1, to=5-1]
	\arrow["{\Pi_{\frakC,\frakC'}}"{description}, Rightarrow, from=3-3, to=5-1]
	\arrow["\cong", from=3-3, to=5-3]
	\arrow["{\Cor_{\frakC\sqcup \frakC'}}"', from=5-1, to=5-3]
\end{tikzcd}\]
    where the unlabelled isomorphisms are part of the symmetric monoidal structure of $\Delta_\kk$ and $\mathrm{Bl}$;
    \item A natural isomorphism 
\[\begin{tikzcd}
	\vect & {\Delta_\kk(\varnothing)} \\
	\vect & {\mathrm{Bl}(\varnothing)}
	\arrow["\cong", from=1-1, to=1-2]
	\arrow[equals, from=1-1, to=2-1]
	\arrow["\Upsilon"{description}, Rightarrow, from=1-2, to=2-1]
	\arrow["{\Cor_\varnothing}", from=1-2, to=2-2]
	\arrow["\cong"', from=2-1, to=2-2]
\end{tikzcd}\]
    where the unlabelled isomorphisms are part of the symmetric monoidal structure of $\Delta_\kk$ and $\mathrm{Bl}$;
\end{itemize}
This data needs to satisfy the following axioms:
\begin{enumerate}[(i)]
    \item Naturality of $2$-morphism components: For any $2$-morphism $[f] \colon \frakS \Rightarrow \frakS'$ in $\WS(\DD^{2\mathrm{d}})$ the following diagram commutes:
\[\begin{tikzcd}[ampersand replacement=\&]
	{\mathrm{Cor}_{\frakC'}\diamond {\Delta_{\mathbb{k}}}(\frakS)} \& {\mathrm{Bl}({\frakS})\diamond\mathrm{Cor}_{\frakC}} \\
	{\mathrm{Cor}_{\frakC'}\diamond {\Delta_{\mathbb{k}}(\frakS})} \& {\mathrm{Bl}({\frakS})\diamond\mathrm{Cor}_{\frakC}}
	\arrow["{\mathrm{Cor}_{\frakS}}"', from=2-1, to=2-2]
	\arrow["{\mathrm{Cor}_{\frakS}}", from=1-1, to=1-2]
	\arrow["{\mathrm{Cor}_{\frakC'}\diamond \Delta_{\mathbb{k}}([f])}"', from=1-1, to=2-1]
	\arrow["{\mathrm{Bl}([f])\diamond\mathrm{Cor}_{\frakC}}", from=1-2, to=2-2]
\end{tikzcd}\]
\item Oplax naturality: For any pair of composable $1$-morphisms $\frakS_1 \colon \frakC_1 \rightarrow \frakC$ and $\frakS_2\colon\frakC \rightarrow\frakC_2$ in $\WS(\DD^{2\mathrm{d}})$ the following diagram commutes:

\hspace*{-1.5cm}
\begin{tikzcd}[ampersand replacement=\&]
	\& {(\mathrm{Bl}(\frakS_2)\diamond \mathrm{Cor}_{\frakC})\diamond\Delta_{\mathbb{k}}(\frakS_1)} \\
	{(\mathrm{Cor}_{\frakC_2}\diamond \Delta_{\mathbb{k}}(\frakS_2))\diamond\Delta_{\mathbb{k}}(\frakS_1)} \&\& {\mathrm{Bl}(\frakS_2)\diamond (\mathrm{Cor}_{\frakC}\diamond\Delta_{\mathbb{k}}(\frakS_1))} \\
	{\mathrm{Cor}_{\frakC_2}\diamond (\Delta_{\mathbb{k}}(\frakS_2)\diamond\Delta_{\mathbb{k}}(\frakS_1))} \&\& {\mathrm{Bl}(\frakS_2)\diamond (\mathrm{Bl}(\frakS_1)\diamond\mathrm{Cor}_{\frakC_1})} \\
	{\mathrm{Cor}_{\frakC_2}\diamond (\Delta_{\mathbb{k}}(\frakS_2\sqcup_{\frakC}\frakS_1))} \&\& {(\mathrm{Bl}(\frakS_2)\diamond \mathrm{Bl}(\frakS_1))\diamond\mathrm{Cor}_{\frakC_1}} \\
	\& {(\mathrm{Bl}(\frakS_2\sqcup_{\frakC}\frakS_1))\diamond\mathrm{Cor}_{\frakC_1}}
	\arrow["\cong", from=1-2, to=2-3]
	\arrow["{\mathrm{Cor}_{\frakS_2}\diamond \mathrm{id}}", from=2-1, to=1-2]
	\arrow["\cong"', from=2-1, to=3-1]
	\arrow["{\mathrm{id} \diamond \mathrm{Cor}_{\frakS_1}}", from=2-3, to=3-3]
	\arrow["\cong"', from=3-1, to=4-1]
	\arrow["\cong", from=3-3, to=4-3]
	\arrow["{\mathrm{Cor}_{\frakS_2\sqcup_{\frakC}\frakS_1}}"', from=4-1, to=5-2]
	\arrow["\cong", from=4-3, to=5-2]
\end{tikzcd}

where the isomorphisms correspond to either the associators of horizontal composition in $\Prof$ or the $2$-functor data of $\Delta_\kk$ and $\mathrm{Bl}$, respectively. 
\item
Oplax unitality:
\[\begin{tikzcd}[ampersand replacement=\&]
	{\mathrm{Cor}_{\frakC}\diamond \mathrm{id}_{\Delta_{\mathbb{k}}(\frakC)}} \& {\mathrm{Cor}_{\frakC}} \& {\mathrm{id}_{\mathrm{Bl}(\frakC)}\diamond\mathrm{Cor}_{\frakC}} \\
	{\mathrm{Cor}_{\frakC}\diamond {\Delta_{\mathbb{k}}(\mathrm{id}_{\frakC}})} \&\& {\mathrm{Bl}({\mathrm{id}_{\frakC}})\diamond\mathrm{Cor}_{\frakC}}
	\arrow["\Cor_{\id_\frakC}"', from=2-1, to=2-3]
	\arrow["\cong"', from=1-1, to=2-1]
	\arrow["\cong", from=1-1, to=1-2]
	\arrow["\cong", from=1-2, to=1-3]
	\arrow["\cong", from=1-3, to=2-3]
\end{tikzcd}\]
where the isomorphisms correspond to either the unitors of horizontal composition in $\Prof$ or the $2$-functor data of $\Delta_\kk$ and $\mathrm{Bl}$, respectively. 
\end{enumerate}
As well as four other axioms involving $\Pi$ and $\Upsilon$ which we will not discuss further, see \autocite[Def.\,2.7]{SchommerPries2011thesis}.

Let us take a moment to see that this data actually corresponds to something which deserves to be called a full conformal field theory. As mentioned above we will denote the manifolds underlying a defect $1$-manifold and a topological world sheet with the corresponding capital Greek letter. First, note that $\Cor_\frakC$ is given by a (left exact) functor $\mathrm{Bl}(\frakC) \to \vect$ because $\Delta_\kk(\frakC) = \vect$ for any $\frakC \in \WS(\DD^{2\mathrm{d}})$ and $\vect^\op \simeq \vect$. Thus we get an essentially unique object $\mathbbm{F}_\frakC \in \mathrm{Bl}(\frakC) = \mathrm{Bl}(\Gamma)$ such that $\mathrm{Cor}_{\frakC}(-) \cong \Hom_{\mathrm{Bl}(\Gamma)}(\mathbbm{F}_\frakC,-)$ by left exactness of $\Cor_\frakC$. Moreover, these objects factorise as $\mathbbm{F}_{\frakC \sqcup \frakC'} \cong \mathbbm{F}_{\frakC} \boxtimes \mathbbm{F}_{\frakC'}$ using the monoidality provided by $\Pi_{\frakC,\frakC'}$. Thus we can restrict our attention to $\frakC$ being a defect interval or circle. The category $\mathrm{Bl}(I) = \mathrm{Bl}^\chi(S^1)$ should correspond to the representation category of the chiral symmetry algebra while $\mathrm{Bl}(S^1) \simeq \mathrm{Bl}^\chi(S^1)\boxtimes \overline{\mathrm{Bl}^\chi(S^1)}$ corresponds to its double. This means that the objects $\mathbbm{F}_\frakC$ are automatically equipped with an action of the correct symmetry algebra. This observation suggests to interpret the $1$-morphism component of $\Cor$ as the \emph{field content} of the full CFT. 

Now by combining the isomorphism $\mathrm{Cor}_{\frakC}(-) \cong \Hom_{\mathrm{Bl}(\Gamma)}(\mathbbm{F}_\frakC,-)$ with the Yoneda lemma twice we get the following isomorphism of vector spaces
    \begin{equation}\label{eq:cor-block}
        \mathrm{Nat}(\Cor_{\frakC'}\diamond\Delta_{\kk}(\frakS),\mathrm{Bl}(\frakS)\diamond \Cor_\frakC) \cong \mathrm{Bl}(\frakS)(\mathbbm{F}_\frakC;\mathbbm{F}_{\frakC'})
    \end{equation}
for every world sheet $\frakS \colon \frakC \to \frakC'$. Thus the $2$-morphism component $\Cor_\frakS$ is automatically a vector in the space of conformal blocks of $\frakS$ with field insertions given by the field content. This suggests to think of the $2$-morphism components of $\Cor$ as the actual correlators of the theory.

Before we explain how the axioms $\Cor$ needs to satisfy should be interpreted, it will be beneficial to consider the following chain of abstract isomorphisms
    \begin{equation}\label{eq:cor-adj}
        \begin{aligned}
            \mathrm{Nat}(\Cor_{\frakC'}\diamond\Delta_{\kk}(\frakS),\mathrm{Bl}(\frakS)\diamond \Cor_\frakC) &\cong \mathrm{Nat}(\Cor_{\frakC'},\mathrm{Bl}(\frakS)\diamond \Cor_\frakC) \\
            &\cong \mathrm{Nat}(\Cor_{\frakC'}\diamond \Cor_\frakC^{\dagger},\mathrm{Bl}(\frakS))\\
            &\cong \mathrm{Nat}(\Cor_{\frakC'}\otimes_\kk \Cor_\frakC^{\dagger},\mathrm{Bl}(\frakS))
        \end{aligned}
    \end{equation}
where $\Cor_\frakC^\dagger(-) \cong \Hom_{\mathrm{Bl(\Gamma)}}(-,\mathbbm{F}_\frakC)$ is the (right) adjoint of $\Cor_\frakC$ in $\Prof$ from \Cref{lem:profadjunc}.
In the first step we used that $\Delta_\kk$ acts trivially, in the second one the adjunction, and in the final one that horizontal composition over $\vect$ is just the tensor product. From now on will suppress the isomorphisms \eqref{eq:cor-block} and \eqref{eq:cor-adj} from our notation and will always denote with $\Cor_\frakS$ the corresponding element in any of the three vector spaces as it will be clear from the context. 

Under the isomorphism \eqref{eq:cor-adj} the axioms $\Cor$ needs to satisfy become:
\begin{enumerate}[(i)]
    \item Naturality of $2$-morphism components reduces to commutativity of 
    \begin{equation}\label{diag:2natural}
        \begin{tikzcd}[ampersand replacement=\&]
    	\&\& {\mathrm{Bl}({\frakS})} \\
    	{\mathrm{Cor}_{\frakC'}\otimes_\kk\mathrm{Cor}_{\frakC}^\dagger} \\
    	\&\& {\mathrm{Bl}({\frakS}')}
    	\arrow["{\mathrm{Bl}([f])}", from=1-3, to=3-3]
    	\arrow["{\mathrm{Cor}_{\frakS}}", from=2-1, to=1-3]
    	\arrow["{\mathrm{Cor}_{\frakS'}}"', from=2-1, to=3-3]
    \end{tikzcd}.
    \end{equation}
    This is the \emph{covariance of correlators} and can be seen as a more general version of mapping class group action invariance. 
    \item 
    For simplicity we will only consider the oplax naturality axiom in a strictified version of $\Prof$, i.e.\ with trivial associators and unitors. In this setting oplax naturality becomes commutativity of
    \begin{align}\label{diag:horinatural}
    \begin{tikzcd}[ampersand replacement=\&]
	{\mathrm{Cor}_{\frakC_2}\otimes_\kk \mathrm{Cor}_{\frakC}^\dagger\diamond \mathrm{Cor}_{\frakC_1}\otimes_\kk \mathrm{Cor}_{\frakC}^\dagger} \&\& {\mathrm{Bl}(\frakS_2)\diamond \mathrm{Bl}(\frakS_1)} \\
	{\mathrm{Cor}_{\frakC_2}\otimes_\kk\mathrm{Cor}_{\frakC_1}^\dagger} \&\& {\mathrm{Bl}(\frakS_2\sqcup_{\frakC}\frakS_1)}
	\arrow["{\mathrm{Cor}_{\frakS_2}\diamond\mathrm{Cor}_{\frakS_1}}", from=1-1, to=1-3]
	\arrow["\cong", from=1-3, to=2-3]
	\arrow["{\id\otimes_\kk\eta_{\Cor_{\frakC}}\otimes_\kk\id}", from=2-1, to=1-1]
	\arrow["{\mathrm{Cor}_{\frakS_2\sqcup_{\frakC}\frakS_1}}"', from=2-1, to=2-3]
    \end{tikzcd}
    \end{align}
    where $\eta_{\Cor_{\frakC}} \colon \id_{\vect} \Rightarrow \Cor_{\frakC}^\dagger \diamond \Cor_{\frakC}$ denotes the unit of the adjunction $\Cor_{\frakC} \dashv \Cor_{\frakC}^\dagger$.
    This encodes the behaviour of the $2$-morphism components under gluing of world sheets and should be interpreted as the \emph{factorisation of correlators}. 
    \item Under the same strictification assumption as above, the oplax unitality axioms corresponds to
    \begin{equation}\label{diag:unitality}
        \Cor_{\id_\frakC} = \epsilon_{\Cor_\frakC}
    \end{equation}
    with $\epsilon_{\Cor_\frakC} \colon \Cor_\frakC \otimes_\kk \Cor_\frakC^\dagger \Rightarrow \mathrm{Bl}(\id_\frakC)$ the counit of the adjunction $\Cor_{\frakC} \dashv \Cor_{\frakC}^\dagger$. This can be interpreted as a non-degeneracy axiom for $2$-point correlators. 
\end{enumerate}
The axioms involving $\Pi$ essentially correspond to the statement that the correlator of the disjoint union of two world sheets should be the tensor product of the individual correlators. Finally, the axiom for $\Upsilon$ guarantees that the correlator for the empty manifold $\varnothing$, viewed as a world sheet $\varnothing \colon \varnothing\to\varnothing$ is trivial. These are precisely the consistency conditions one would expect from a consistent system of correlators.

In summary we see that the $1$-morphism component of $\Cor$ corresponds to the field content while the $2$-morphism component are the actual correlators of the full CFT.

Our definition of full CFT is closely related to the notions of twisted or relative field theories considered in \autocite{ST11twist,FT12relQFT,JFS17relTFT}. More precisely a full CFT, as defined above, can be understood as an open-closed, defect variant of a not fully extended relative field theory.

\section{TFT construction of full 2d CFT}\label{sec:tftconstcft}
In this section we will discuss how to use certain $3$-dimensional defect TFTs, obtained from the non-semisimple TFT of \autocite{DGGPR19}, to construct a full conformal field theory, leading to the main result of this article in the form of \Cref{thm:fullcft}. This construction can be seen as a non-semisimple extension of \autocite{FFFS2002,FRSI,FRSII,FRSIII,FRSIV,FjFRS,FFS12} with a particular emphasis on its topological nature. 

We will start with a discussion of the defect TFTs to which our construction can be applied and how the labelling data for the world sheet category $\WS(\DD_\calc)$ is obtained. Afterwards, we will construct the data of a full CFT, as in \Cref{def:fullcft}, by evaluating the $3$d defect TFT on certain manifolds in \Cref{sec:fullcftconst}. The rest of this chapter is devoted to checking that this data satisfies \Cref{def:fullcft}, i.e.\ we will prove that it satisfies the axioms of a braided monoidal oplax natural transformation discussed in \Cref{subsec:oplaxnattrafo}. 

\subsection{Allowed defect TFTs}\label{sec:alloweddefTFT}
As before we fix $\calc$ a modular tensor category and consider
\begin{align}
\widehat{\rmV}_{\calc} \colon \widehat{\bord}_{3,2}^{\chi}(\calc) \to \vect
\end{align}
the version of the TFT constructed from $\calc$ with admissibility condition on bordism components disjoint from the outgoing boundary viewed as defect TFT as in \autocite[Sec.\,3]{HR2024modfunc}. 

Instead of working with an explicit construction of a non-semisimple defect TFT we will assume there exists a defect TFT ``based on'' $\widehat{\rmV}_{\calc}$ satisfying certain algebraic assumptions. The reason we do this is because we want to be able to apply our construction to possible defect TFTs for which we currently do not have an explicit construction. Nonetheless, we do have two explicit non-semisimple defect TFTs in mind to which our construction applies, these will be discussed further below. Apart from this, taking the defect TFT as a ``blackbox'' also has the added benefit of highlighting the topological nature of our construction. 

Let us now first make precise what we mean with $3$d defect TFTs ``based on'' $\widehat{\rmV}_{\calc}$. 
\begin{definition}\label{def:deftftextension}
Let
 \begin{equation}
        \rmZ_\calc \colon \bord_{3,2}^{\chi,\mathrm{ def}}(\DD_\calc) \to \vect
\end{equation}
be a $3$d defect TFT such that its defect data $\DD_\calc$ contains the defect data of $\widehat{\rmV}_\calc$ as a subset in the sense of \autocite[Sec.\,2.3.1]{CRS17deforbi}. Let 
\begin{equation}
   \iota \colon \widehat{\bord}_{3,2}^{\chi}(\calc)  \to \bord_{3,2}^{\chi,\mathrm{ def}}(\DD_\calc)
\end{equation}
be the symmetric monoidal functor obtained from this inclusion of defect data. Then $\rmZ_\calc$ is said to \emph{extend} $\widehat{\rmV}_{\calc}$ if there exists a monoidal natural isomorphism $\Phi \colon \rmZ_\calc \circ \iota \Rightarrow \widehat{\rmV}_\calc$. We call a pair $(\rmZ_\calc,\Phi)$ an \emph{extension} of $\widehat{\rmV}_\calc$. 
\end{definition}
On the level of defect $3$-categories from \Cref{rem:def3cat} an extension $(\rmZ_\calc,\Phi)$ of $\widehat{\rmV}_\calc$ gives a way of exhibiting $B^2\calc$ as a sub-$3$-category of $\calT_{\rmZ_\calc}$.

The simplest example of an extension $(\rmZ_\calc,\Phi)$ is $(\widehat{\rmV}_\calc,\id)$ itself. A more interesting example can be obtained by introducing non-trivial surface defects into $\widehat{\rmV}_\calc$ using the so-called orbifold construction  \autocite{CRS17defRT}. The first example will be discussed in detail in \Cref{sec:logcardy} while the second example will be treated in detail in upcoming work of the first author.

From now on let $(\rmZ_\calc,\Phi)$ be a fixed extension of $\widehat{\rmV}_\calc$. Moreover, we will assume that $D_3$ is a singleton set corresponding to a unique phase of $\rmZ_\calc$. This is because we want to construct full CFTs for which the chiral and anti-chiral sector are governed by the same chiral data, see \autocite[Sec.\,1.1.4]{hofer25thesis} for some remarks on how this condition could be weakened. The aforementioned algebraic assumptions on $\rmZ_\calc$ will be discussed in detail when they come up.
In the following we will often view $\widehat{\bord}_{3,2}^{\chi}(\calc) $ as a (non-full) subcategory of $\bord_{3,2}^{\chi,\mathrm{ def}}(\DD_\calc)$ by identifying objects in $\widehat{\bord}_{3,2}^{\chi}(\calc)$ with their image under $\iota$ in $\bord_{3,2}^{\chi,\mathrm{ def}}(\DD_\calc)$ and accordingly suppress the functor $\iota$. 

\begin{remark}\label{rem:admisscond}
We want to note here that the $3$d defect TFT $\rmZ_\calc$ might also only be defined on a subcategory of $\bord_{3,2}^{\chi,\mathrm{ def}}(\DD_\calc)$ in analogy to how $\widehat{\rmV}_\calc$ is only defined on such an admissible subcategory. However, all bordisms in $\bord_{3,2}^{\chi,\mathrm{ def}}(\DD_\calc)$ which will be needed for our considerations will automatically be in such a subcategory as they will always contain an outgoing boundary.
\end{remark}

We could now use $\rmZ_\calc$ to construct a chiral modular functor
\begin{equation}
    \Bl_{\rmZ_\calc}^\chi \colon \bord_{2+\epsilon,2,1}^{\chi} \to \Prof
\end{equation}
in analogy to the previous chapter. However, this chiral modular functor will automatically be isomorphic to the chiral modular functor
\begin{align}
    \Bl_\calc^{\chi} \colon \bord_{2+\epsilon,2,1}^{\chi} \to \Prof
\end{align} 
constructed from $\widehat{\rmV}_\calc$. To see this note that in the construction we only need surfaces and $3$-bordisms which do not contain codimension $1$-strata, i.e.\ we only work with $\widehat{\bord}_{3,2}^{\chi}(\calc)$ and not all of $\bord_{3,2}^{\chi,\mathrm{ def}}(\DD_\calc)$. This means we can use the natural isomorphism $\Phi$ to construct an isomorphism of chiral modular functors. 

Next we need to discuss the allowed $2$-dimensional defect data $\DD^{2\mathrm{d}}$. As explained in the introduction we know that a full CFT should correspond to a surface defect in the $3$d TFT $\rmZ_\calc$. For this reason we will be interested in labelling data for topological world sheets such that we can view them as surface defects in the $3$d defect TFT $\rmZ_\calc$. Since we already assumed that $D_3$ is a singleton set we can use $\DD^{2\mathrm{d}} = \DD_\calc $ directly. In particular we have a canonical choice for the transparent element $T \in D_2^{2\mathrm{d}}$ of $\DD_\calc$: the unique element in $D_2$ we get from the inclusion of $\calc$ in $\DD_\calc$ as defect data. 

\begin{remark}
In terms of the higher categories of defects this amounts to taking the $2$-category of defects of the full CFT to be the endomorphism $2$-category of the unique object in the $3$-category $\calT_{\rmZ_\calc}$ of defects of $\rmZ_\calc$. Moreover, the canonical choice for the transparent surface defect $T$ is now simply the identity $1$-morphism of the unique object of $\calT_{\rmZ_\calc}$.
\end{remark}

Let $\WS(\DD_\calc)$ be the $(2,1)$-category of topological world sheets with labelling data obtained from $\DD_\calc$ and let us denote with $\Bl_\calc$ the composition
\begin{align}
    \WS(\DD_\calc) \stackrel{U}{\longrightarrow} \bord_{2+\epsilon,2,1}^{\oc} \stackrel{\widehat{(-)}}{\longrightarrow} \bord_{2+\epsilon,2,1}^{\chi} \stackrel{\Bl_\calc^{\chi}}{\longrightarrow}\Prof.
\end{align}
We will now construct a full CFT
\begin{align}
    \begin{tikzcd}[ampersand replacement=\&]
	\WS(\DD_\calc) \&\& {\Prof}
	\arrow[""{name=0, anchor=center, inner sep=0}, "{\Delta_{\kk}}", curve={height=-24pt}, from=1-1, to=1-3]
	\arrow[""{name=1, anchor=center, inner sep=0}, "{\mathrm{Bl_\calc}}"', curve={height=24pt}, from=1-1, to=1-3]
	\arrow["\Cor", shorten <=6pt, shorten >=6pt, Rightarrow, from=0, to=1]
    \end{tikzcd}
\end{align}
as in \Cref{def:fullcft} for the modular functor $\Bl_\calc$.

\subsection{Construction of full CFT}\label{sec:fullcftconst}
In this section we will explain how to construct the $1$- and $2$-morphism components of the  full CFT $\Cor$ for $\Bl_\calc$. As in \autocite{FFFS2002,FRSI}, the basic idea is to obtain the correlators by evaluating the $3$d defect TFT $\rmZ_\calc$ on the so-called \emph{connecting manifolds}. In contrast to their work we will not only consider this on the level of world sheets and the resulting $3$-manifolds but already one dimension lower in order to also obtain the field content.
\subsubsection{Field content}
Let $\frakC \in \WS(\DD_\calc)$ and denote with $\Gamma$ the underlying compact $1$-manifold. We want to construct a left exact profunctor $\Cor_\frakC \colon \Delta_\kk(\frakC) \slashedrightarrow \Bl_\calc(\frakC)$. By definition this is a left exact functor $\calc^{\boxtimes\pi_0(\widehat{\Gamma})} \to \vect$ since $\Bl_\calc(\frakC) = \Bl_\calc^\chi(\widehat{\Gamma}) = \calc^{\boxtimes\pi_0(\widehat{\Gamma})}$. To this end consider the \emph{connecting manifold} $M_\frakC$ of $\frakC$ defined as the defect $2$-manifold with underlying manifold
\begin{equation}
    M_\Gamma = \Gamma \times [-1,1] / \sim \quad \text{with} \quad (p,t) \sim (p,-t) \quad \text{for} \quad p \in \partial \Gamma
\end{equation}
and stratification induced from the inclusion $\Gamma \cong \Gamma \times \{0\} \hookrightarrow M_\Gamma$, see Figure\,\ref{fig:conman1} for an illustration.\footnote{The name connecting manifold comes from the idea that it ``connects'' a manifold $X$ with its orientation double $\widehat{X}$ without adding homotopical information.} Note that by construction $\partial M_\Gamma \cong \widehat{\Gamma}$.
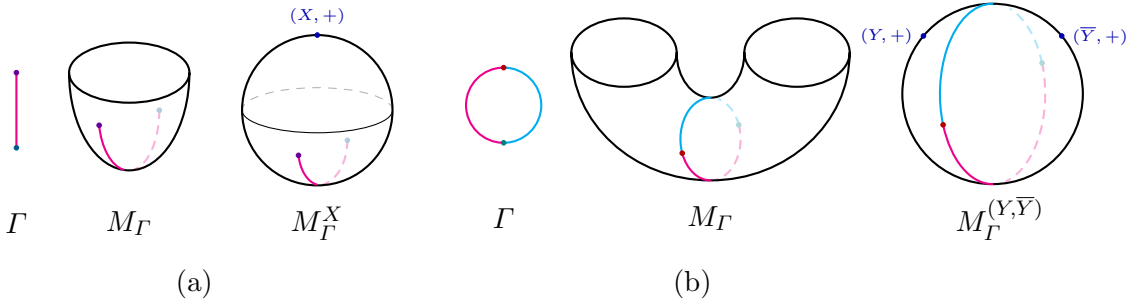
\begin{figure}[t]
    \hspace{0.2cm}
    \begin{subfigure}{0.4\textwidth}
         \centering
            \begin{tikzpicture}
        \draw[thick,magenta] (0,0) -- (0,1);
        \filldraw[string-blue!30!string-green] (0,0) circle (0.03);
        \filldraw[string-violet](0,1) circle (0.03);
        \node at (0,-1) {\small $\Gamma$};
        \begin{scope}[shift={(-1.5,0)}]
        \draw[thick,magenta,dashed,line cap= round] (3.4,0.5) arc (0:-80:0.4 and 0.8);
        \filldraw[string-blue!30!string-green] (3.4,0.5) circle (0.03);
        \fill[white,opacity=0.7](3.8,1) arc (0:-180:0.8 and 1.3);
        \draw[thick] (3,1) ellipse (0.8 and 0.4)
        (3.8,1) arc (0:-180:0.8 and 1.3);
        \draw[thick,magenta,line cap= round] (2.6,0.3) arc (190:260:0.4 and 0.73);
        \filldraw[string-violet] (2.6,0.3) circle (0.03);
        \node at (3,-1) {\small $M_\Gamma$};
        \end{scope}
        \begin{scope}[shift={(-3,0)}]        
        \draw[thick,magenta,line cap=round,dashed] (7.4,0.1) arc (0:-80:0.4 and 0.6);        
        \draw[dashed,line cap=round] (8,0.5) arc (0:180:1 and 0.3); 
        \filldraw[string-blue!30!string-green] (7.4,0.1) circle (0.03);
        \fill[white,opacity=0.7] (7,0.5) circle (1);
        \draw[thick] (7,0.5) circle (1);
        \draw (8,0.5) arc (0:-180:1 and 0.3);
        \draw[thick,magenta,line cap=round] (6.75,-0.1) arc (190:260:0.3 and 0.48);
        \filldraw[string-violet] (6.75,-0.1) circle (0.03);
        \filldraw[string-blue] (7,1.5) circle (0.03);
        \node at (7,1.5) [above,string-blue] {\tiny $(X,+)$};
        \node at (7,-1) {\small $M_\Gamma^X$};
        \end{scope}
    \end{tikzpicture}
        \caption{}\label{fig:connectint}
    \end{subfigure}
    \hspace{0.3cm}
    \begin{subfigure}{0.4\textwidth}
              \centering
                \begin{tikzpicture}
        \draw[thick,magenta,line cap=round] (0.5,0.8) arc (90:270:0.5);
        \draw[thick,cyan,line cap=round] (0.5,0.8) arc (90:-90:0.5);
        \filldraw[string-green] (0.5,-0.2) circle (0.03);
        \filldraw[string-red] (0.5,0.8) circle (0.03);
        \node at (0.5,-1.2) {\small $\Gamma$};
        \begin{scope}[shift={(-1,0)}]
        \draw[thick,magenta,dashed,line cap= round] ([xshift=4.25cm,yshift=-0.15cm] 20:0.4 and 0.55) arc (20:-78:0.4 and 0.55);
        \draw[thick,cyan,dashed,line cap= round] ([xshift=4.25cm,yshift=-0.15cm] 20:0.4 and 0.55) arc (20:75:0.4 and 0.55);
        \filldraw[string-green]
        ([xshift=4.25cm,yshift=-0.15cm] 20:0.4 and 0.55) circle (0.03);
        \fill[white,opacity=0.7](4.25,-1) rectangle (5,1);
        \draw[thick,line cap=round] (3.1,1) ellipse (0.7 and 0.455)
        (5.4,1) ellipse (0.7 and 0.455)
        (4.7,1) arc (0:-180:0.45 and 0.6)
        (6.1,1) arc (0:-180: 1.85 and 1.7);
        \draw[thick,magenta,line cap= round] ([xshift=4.25cm,yshift=-0.15cm] 200:0.4 and 0.55) arc (200:270:0.4 and 0.55);
        \draw[thick,cyan,line cap= round] ([xshift=4.25cm,yshift=-0.15cm] 200:0.4 and 0.55) arc (200:90:0.4 and 0.55);
        \filldraw[string-red] ([xshift=4.25cm,yshift=-0.15cm] 200:0.4 and 0.55) circle (0.03);
        \node at (4.25,-1.2) {\small $M_\Gamma$};
        \end{scope}
        \begin{scope}[shift={(-1.6,0)}]
        \draw[thick,magenta,dashed,line cap= round] ([xshift=8.6cm,yshift=0.45cm] 20:0.7 and 1.2) arc (20:-75:0.7 and 1.2);
        \draw[thick,cyan,dashed,line cap= round] ([xshift=8.6cm,yshift=0.45cm] 20:0.7 and 1.2) arc (20:70:0.7 and 1.2);
        \filldraw[string-green] ([xshift=8.6cm,yshift=0.45cm] 20:0.7 and 1.2) circle (0.03);
        \fill[white,opacity=0.7] (8.6,0.45) circle (1.2);
        \draw[thick] (8.6,0.45) circle (1.2);
        \draw[thick,magenta,line cap= round] ([xshift=8.6cm,yshift=0.45cm] 200:0.7 and 1.2) arc (200:270:0.7 and 1.2);
        \draw[thick,cyan,line cap= round] ([xshift=8.6cm,yshift=0.45cm] 90:0.7 and 1.2) arc (90:200:0.7 and 1.2);
        \filldraw[string-red] ([xshift=8.6cm,yshift=0.45cm] 200:0.7 and 1.2) circle (0.03);
        \filldraw[string-blue] ([xshift=8.6cm,yshift=0.45cm] 140:1.2) circle (0.03)
        ([xshift=8.6cm,yshift=0.45cm] 40:1.2) circle (0.03);
        \node at ([xshift=8.6cm,yshift=0.45cm] 140:1.2) [left,string-blue] {\tiny $(Y,+)$};
        \node at ([xshift=8.6cm,yshift=0.45cm] 40:1.2) [right,string-blue] {\tiny $(\overline{Y},+)$};
        \node at (8.7,-1.2) {\small $M_\Gamma^{(Y,\overline{Y})}$};
        \end{scope}
    \end{tikzpicture}
            \caption{}\label{fig:connectcirc}
        \end{subfigure}
    \caption{Connecting manifold $M_\Gamma$ for $\Gamma$ an interval (a) and a defect circle (b) with and without disc(s) glued in. Here the different colours are used to indicate different $\DD$-labels of the strata except for the blue marked and labelled points which are always labelled by elements in $\calc$. }
    \label{fig:conman1}
\end{figure}
Now let $\underline{X} \in \calc^{\times \pi_0(\widehat{\Gamma})}$. We construct an object $M_\Gamma^{\underline{X}} \in \bord_{3,2}^{\chi,\mathrm{def}}(\DD_\calc)$ as in \autocite[Sec.\,4.2]{HR2024modfunc} by gluing discs $D_\Gamma^{\underline{X}}$ bounding $\widehat{\Gamma}$ with $\underline{X}$-labelled, positively oriented, marked points into the boundary of $M_\Gamma$, see \Cref{fig:conman1} for an illustration.\footnote{Choosing the marked points to be positively oriented is a convention, which we use to ensure that we get covariant functors.}
By a similar argument as in \autocite[Sec.\,4.2]{HR2024modfunc} this can be extended to a functor
\begin{equation}
    \begin{aligned}
        \mathrm{cor}_\Gamma \colon \calc^{\times \pi_0(\widehat{\Gamma})} &\to \vect \\
            \underline{X} &\mapsto \rmZ_\calc(M_\Gamma^{\underline{X}}).
    \end{aligned}
\end{equation}
For later use let us also consider the functor $\mathrm{cor}_{\Gamma}^\dagger$ obtained analogously as $\mathrm{cor}_{\Gamma}$ but for $-M_\Gamma$ instead, i.e.\ all marked points are negatively oriented leading to a contravariant functor.

Since we do not have an explicit construction of $\rmZ_\calc$ we cannot guarantee that $\mathrm{cor}_\Gamma$ is left exact in general. This will give us to the first assumption on $\rmZ_\calc$: 
\begin{assumption}\label{ass:1}
    For any $\frakC \in \WS(\DD_\calc)$, the functors $\mathrm{cor}_\Gamma\colon \calc^{\times \pi_0(\widehat{\Gamma})} \to \vect$ and $\mathrm{cor}_{\Gamma}^\dagger\colon (\calc^\op)^{\times \pi_0(\widehat{\Gamma})} \to \vect$ are linear and left exact in each variable.
    
    As the notation suggests we further assume the extensions of $\mathrm{cor}_{\Gamma}$ and $\mathrm{cor}_{\Gamma}^\dagger$ to the Deligne product to be adjoint profunctors as in \Cref{lem:profadjunc}.
\end{assumption}

Under \Cref{ass:1} we can now extend $\mathrm{cor}_\Gamma$ to the Deligne tensor product.
\begin{definition}[$\Cor$ on objects]
    For $\frakC \in \WS(\DD_\calc)$ we define
    \begin{equation}
            \Cor_\frakC \colon \Delta_\kk(\frakC) \slashedrightarrow \Bl_\calc(\frakC) 
    \end{equation}
    to be the functor induced by $\mathrm{cor}_\Gamma$ on the Deligne tensor product $\calc^{\boxtimes \pi_0(\widehat{\Gamma})}$. 
\end{definition}

\subsubsection{Correlators}
Next let us construct the $2$-morphism components of $\Cor$. Let $\frakS \colon \frakC \to \frakC$ be a $1$-morphism in $\WS(\DD_\calc)$ and denote with $\Sigma \colon \Gamma \to \Gamma'$ its underlying open-closed bordism. The \emph{connecting manifold} $M_\frakS$ of $\frakS$ is the defect $3$-manifold (with corners) with underlying $3$-manifold (with corners)
\begin{equation}
    M_\Sigma = \Sigma \times [-1,1] / \sim \quad \text{with} \quad (p,t) \sim (p,-t) \quad \text{for} \quad p \in \partial^{\mathrm{f}} \Sigma
\end{equation}
and stratification induced from the embedding $\Sigma \cong \Sigma \times \{0\} \hookrightarrow M_\Sigma$, see Figure\,\ref{fig:conmanbord} for an illustration in the example of the $1$-morphism from \eqref{eq:worldsheetmorphism3d}.
\begin{figure}[t]
    \begin{subfigure}{0.3\textwidth}
     \centering
            \begin{tikzpicture}[scale=0.65]
        \draw[thick] (0,3) ellipse (3 and 1.125)
        (-1.1,3) ellipse (0.4 and 0.15)
        (0.95,3) ellipse (0.4 and 0.15);
        \draw[thick,line cap=round] (3,-3) arc (0:-180:3 and 1.125);
        \draw[thick,dashed,line cap=round] (3,-3) arc (0:180:3 and 1.125);
        \draw[thick,dashed,line cap=round] (-1.5,-3) arc (180:360:0.4 and 0.15)
        (0.55,-3) arc (180:360:0.4 and 0.15);
        \draw[thick,dotted,line cap=round] (-1.5,-3) arc (180:0:0.4 and 0.15)
        (0.55,-3) arc (180:0:0.4 and 0.15);
        \draw[line cap=round] (-1.5,-3) -- (-1.5,0)
        (-0.7,-3) -- (-0.7,0)
        (1.35,-3) -- (1.35,0)
        (0.55,-3) -- (0.55,0)
        (-2.625,-3.05) -- (-2.625,2.95);
        \fill[white] ([yshift=3cm] 190:3 and 1.125) arc (-80:280:0.4 and 0.15)
        ([yshift=-3cm] 190:3 and 1.125) arc (-80:280:0.4 and 0.15);
        \filldraw[fill=white,draw=black,thick,line cap= round] ([yshift=3cm] 190:3 and 1.125) arc (-80:85:0.4 and 0.15);
        \draw[thick,line cap= round,dashed]([yshift=-3cm] 190:3 and 1.125) arc (-80:0:0.4 and 0.15);
        \draw[thick,dotted,line cap=round] ([yshift=-3cm] 190:3 and 1.125) arc (-80:85:0.4 and 0.15);
        \draw (3,3) -- (3,-3);
        \fill[cyan!30,opacity=0.7] (0,0) ellipse (3 and 1.125);
        \fill[magenta!30,opacity=0.7] {[rounded corners] (0,-1.125) -- (0,-1) -- (1,-0.125) -- (1,0) -- (1,0.125) -- (0,1) -- (0,1.125)}
        (0,1.125) arc (90:270:3 and 1.125);
        \fill[magenta!30,opacity=0.7] {[rounded corners] (0,-1.125) -- (0,-1) -- (1,-0.125) -- (1,0) -- (1,0.125) -- (0,1) -- (0,1.125)}
        (0,1.125) arc (90:270:3 and 1.125);
        \draw[draw=string-violet,thick] (0,0) ellipse (3 and 1.125);
        \begin{scope}[very thick,decoration={
                      markings,
                      mark=at position 0.52 with {\arrow{>}}}
                     ] 
        \draw[thick,string-green,postaction={decorate}](0,-1.125) .. controls (0,-0.8) and (1,-0.325) .. (1,0);
        \draw[thick,string-red,postaction={decorate}] (1,0) .. controls (1,0.325) and (0,0.8) .. (0,1.125);
        \draw[thick,string-blue!30!string-green,postaction={decorate}] (0,1.125) arc (90:180:3 and 1.125);
        \draw[thick,string-violet,postaction={decorate}] (-3,0) arc (180:270:3 and 1.125);
        \draw[thick,string-violet!50!magenta] (0,-1.125) arc (-90:90:3 and 1.125);
        \end{scope}
        \filldraw[white] ( 190:3 and 1.125) arc (-80:280:0.4 and 0.15);
        \fill[white] (190:3 and 1.125) arc (-80:85:0.4 and 0.15);
        \draw[draw=magenta,thick] (190:3 and 1.125) arc (-80:0:0.4 and 0.15);
        \draw[draw=magenta,thick,dotted,line cap=round] (190:3 and 1.125) arc (-80:85:0.4 and 0.15);
        \fill[white] (0.95,0) ellipse (0.4 and 0.15);
        \filldraw[fill=white,draw=cyan,thick] (0.94,-0.15) arc (-90:0:0.4 and 0.15);
        \draw[cyan,thick,dotted,line cap=round] (0.94,-0.15) arc (-90:90:0.4 and 0.15);
        \filldraw[fill=white,draw=magenta,thick] (0.54,0) arc (180:270:0.4 and 0.15);
        \draw[draw=magenta,thick,dotted,line cap=round] (0.94,0.15) arc (90:270:0.4 and 0.15);
        \fill[white] (-1.1,0) ellipse (0.4 and 0.15);
        \draw[magenta,thick] (-1.5,0) arc (180:360:0.4 and 0.15);
        \draw[magenta,thick,dotted,line cap=round] (-1.5,0) arc (180:0:0.4 and 0.15);
        \draw[line cap= round] ([yshift=3cm] 175:3 and 1.125) -- ([yshift=-3cm] 175:3 and 1.125)
        (-1.5,3) -- (-1.5,0)
        (-0.7,3) -- (-0.7,0)
        (1.35,3) -- (1.35,0)
        (0.55,3) -- (0.55,0);
        \filldraw[string-blue!30!string-green,thick] (175:3 and 1.125) circle (0.03);
        \draw[line cap= round] ([yshift=3cm] 190:3 and 1.125) -- ([yshift=-3cm] 190:3 and 1.125);
        \filldraw[thick] (0,-1.125) circle (0.03);
        \filldraw[thick](0,1.125) circle (0.03);
        \filldraw[string-violet,thick](190:3 and 1.125) circle (0.03);
        \filldraw[string-green,thick](0.94,-0.15) circle (0.03);
        \filldraw[string-red,thick](0.94,0.15) circle (0.03);
        \draw (-2.625,-0.05) -- (-2.625,2.95);
        \end{tikzpicture}
    \caption{}
     \label{fig:cyl1}
    \end{subfigure}
    \begin{subfigure}{0.3\textwidth}
     \centering
            \begin{tikzpicture}[scale=0.65]
        \draw[thick] (0,0) circle (4)
        (4,0) arc (0:-173: 3.96 and 1.75);
        \draw[thick,dashed,line cap=round,] (4,0) arc (0:183: 3.98 and 1.75);
        \fill[white] (1.335,-3.65) -- (1.335,-3.95) -- (0.545,-4) -- (0.545,-3.65) -- cycle;
        \fill[white] (-1.5,-3.65) -- (-1.5,-3.95) -- (-0.7,-4) -- (-0.7,-3.65) -- cycle;
        \draw (1.34,0) -- (1.34,-3.75)
        (0.54,0) -- (0.54,-3.95)
        (-1.5,0) -- (-1.5,-3.7)
        (-0.7,0) -- (-0.7,-3.95);
        \fill[white] (-4.1,-0.49) -- (-3.95,-0.49) -- (-3.9,0.49) -- (-4.1,0.49) -- cycle;
        \draw (-2.59,0) arc (0:-87:1.45 and 0.5);
        \fill[cyan!30,opacity=0.7] (0,0) ellipse (3 and 1.125);
        \fill[magenta!30,opacity=0.7] {[rounded corners] (0,-1.125) -- (0,-1) -- (1,-0.125) -- (1,0) -- (1,0.125) -- (0,1) -- (0,1.125)}
        (0,1.125) arc (90:270:3 and 1.125);
        \begin{scope}[very thick,decoration={
                      markings,
                      mark=at position 0.52 with {\arrow{>}}}
                     ] 
        \draw[thick,string-green,postaction={decorate}](0,-1.125) .. controls (0,-0.8) and (1,-0.325) .. (1,0);
        \draw[thick,string-red,postaction={decorate}] (1,0) .. controls (1,0.325) and (0,0.8) .. (0,1.125);
        \draw[thick,string-blue!30!string-green,postaction={decorate}] (0,1.125) arc (90:180:3 and 1.125);
        \draw[thick,string-violet,postaction={decorate}] (-3,0) arc (180:270:3 and 1.125);
        \draw[thick,string-violet!50!magenta,postaction={decorate}] (0,-1.125) arc (-90:90:3 and 1.125);
        \end{scope}
        \filldraw[white](-3,0) circle (0.15);
        \fill[white] (-2.99,-0.15) arc (-90:90:0.4 and 0.15);
        \draw[magenta,thick] (-2.99,-0.15) arc (-90:0:0.4 and 0.15);
        \draw[magenta,thick,dotted,line cap=round] (-2.59,0) arc (0:90:0.4 and 0.15);
        \filldraw[white](0.94,0) ellipse (0.4 and 0.15);
        \draw[draw=cyan,thick] (0.94,-0.15) arc (-90:0:0.4 and 0.15);
        \draw[draw=cyan,thick,dotted,line cap=round] (1.34,0) arc (0:90:0.4 and 0.15);
        \draw[magenta,thick,dotted,line cap=round] (0.94,0.15) arc (90:180:0.4 and 0.15);
        \draw[magenta,thick] (0.54,0) arc (180:270:0.4 and 0.15);
        \fill[white] (-1.1,0) ellipse (0.4 and 0.15);
        \draw[magenta,thick] (-1.5,0) arc (180:360:0.4 and 0.15);
        \draw[magenta,thick,dotted,line cap=round] (-1.5,0) arc (180:0:0.4 and 0.15);
        \draw (-2.59,0) arc (0:87:1.45 and 0.5);
        \draw (-2.97,-0.15) .. controls (-3.25,-0.18) and (-3.5,-0.23) .. (-3.9,-0.23);
        \draw[dashed,line cap=round] (-2.97,0.15) .. controls (-3.25,0.18) and (-3.5,0.23) .. (-3.9,0.23);
        \filldraw[thick] (0,-1.125) circle (0.03);
        \filldraw[thick](0,1.125) circle (0.03);
        \filldraw[string-violet,thick](188:3 and 1.125) circle (0.03);
        \filldraw[string-blue!30!string-green,thick](172:3 and 1.125) circle (0.03);
        \filldraw[string-green,thick](0.94,-0.15) circle (0.03);
        \filldraw[string-red,thick](0.94,0.15) circle (0.03);
        \fill[white] (1.335,3.65) -- (1.335,3.95) -- (0.545,4) -- (0.545,3.65) -- cycle;
        \fill[white] (-1.5,3.65) -- (-1.5,3.95) -- (-0.7,4) -- (-0.7,3.65) -- cycle;
        \draw (1.34,0) -- (1.34,3.765)
        (0.54,0) -- (0.54,3.945)
        (-1.5,0) -- (-1.5,3.7)
        (-0.7,0) -- (-0.7,3.95);
        \draw[thick,line cap=round]  (1.337,3.772) .. controls (1,3.6) and (0.7,3.7) .. (0.54,3.96) ;
        \draw[thick,line cap=round]  (1.337,3.772) .. controls (1,3.7) and (0.7,3.8) .. (0.54,3.96) ;
        \draw[thick,line cap=round]  (-1.5,3.709) .. controls (-1.2,3.6) and (-0.9,3.7) .. (-0.7,3.94) ;
        \draw[thick,line cap=round]  (-1.5,3.707) .. controls (-1.2,3.7) and (-0.9,3.8) .. (-0.7,3.94) ;
        \draw[thick,dashed,line cap=round,line cap=round]  (1.337,-3.772) .. controls (1,-3.6) and (0.7,-3.7) .. (0.54,-3.96) ;
        \draw[thick,line cap=round]  (1.337,-3.772) .. controls (1,-3.7) and (0.7,-3.8) .. (0.54,-3.96) ;
        \draw[thick,dashed,line cap=round,line cap=round]  (-1.5,-3.709) .. controls (-1.2,-3.6) and (-0.9,-3.7) .. (-0.7,-3.94) ;
        \draw[thick,line cap=round]  (-1.5,-3.707) .. controls (-1.2,-3.7) and (-0.9,-3.8) .. (-0.7,-3.94);
        \draw[thick,line cap=round] (-3.97,-0.49) .. controls (-3.85,-0.35) and (-3.85,0.35) .. (-3.97,0.49);
        \draw[thick,line cap=round] (-3.97,-0.49) .. controls (-3.9,-0.35) and (-3.9,0.35) .. (-3.97,0.49);
    \end{tikzpicture}
    \caption{}
    \label{fig:conman2}
    \end{subfigure}
    \hspace{1cm}
    \begin{subfigure}{0.3\textwidth}
     \centering
            \begin{tikzpicture}[scale=0.65]
        \draw[black,thick] (-1.1,0) circle (0.4)
        (0.94,0) circle (0.4)
        (-2.99,0) circle (0.4);
        \filldraw[string-blue]
        (0.94,-0.4) circle (0.06)
        (-1.1,-0.4) circle (0.06);
        \begin{scope}[very thick,decoration={
                      markings,
                      mark=at position 0.52 with {\arrow{>}}}
                     ] 
        \draw[string-blue,thick,postaction={decorate}] (0.94,-0.4) -- (0.94,-3.86);  
        \draw[string-blue,thick,postaction={decorate}] (-1.1,-0.4) -- (-1.1,-3.83); 
        \end{scope}
        \fill[cyan!30,opacity=0.7] (0,0) ellipse (3 and 1.125);
        \fill[magenta!30,opacity=0.7] {[rounded corners] (0,-1.125) -- (0,-1) -- (1,-0.125) -- (1,0) -- (1,0.125) -- (0,1) -- (0,1.125)}
        (0,1.125) arc (90:270:3 and 1.125);
        \draw[draw=string-violet,thick] (0,0) ellipse (3 and 1.125);
        \begin{scope}[very thick,decoration={
                      markings,
                      mark=at position 0.52 with {\arrow{>}}}
                     ] 
        \draw[thick,string-green,postaction={decorate}](0,-1.125) .. controls (0,-0.8) and (1,-0.325) .. (1,0);
        \draw[thick,string-red,postaction={decorate}] (1,0) .. controls (1,0.325) and (0,0.8) .. (0,1.125);
        \draw[thick,string-blue!30!string-green,postaction={decorate}] (0,1.125) arc (90:180:3 and 1.125);
        \draw[thick,string-violet,postaction={decorate}] (-3,0) arc (180:270:3 and 1.125);
        \draw[thick,string-violet!50!magenta,postaction={decorate}] (0,-1.125) arc (-90:90:3 and 1.125);
        \end{scope}
        \filldraw[white](-3,0) circle (0.15);
        \fill[white] (-2.99,-0.15) arc (-90:90:0.4 and 0.15);
        \draw[magenta,thick] (-2.99,-0.15) arc (-90:0:0.4 and 0.15);
        \draw[magenta,thick,dotted,line cap=round] (-2.59,0) arc (0:90:0.4 and 0.15);
        \filldraw[white](0.94,0) ellipse (0.4 and 0.15);
        \draw[draw=cyan,thick] (0.94,-0.15) arc (-90:0:0.4 and 0.15);
        \draw[draw=cyan,thick,dotted,line cap=round] (1.34,0) arc (0:90:0.4 and 0.15);
        \draw[magenta,thick,dotted,line cap=round] (0.94,0.15) arc (90:180:0.4 and 0.15);
        \draw[magenta,thick] (0.54,0) arc (180:270:0.4 and 0.15);
        \fill[white] (-1.1,0) ellipse (0.4 and 0.15);
        \draw[magenta,thick] (-1.5,0) arc (180:360:0.4 and 0.15);
        \draw[magenta,thick,dotted,line cap=round] (-1.5,0) arc (180:0:0.4 and 0.15);
        \draw[black,thick] (-0.7,0) arc (0:180:0.4)
        (-2.59,0) arc (0:180:0.4)
        (1.34,0) arc (0:180:0.4)
        (-3.39,0) arc (180:270:0.4 and 0.15);
        \draw[thick,dotted,line cap=round](-3.39,0) arc (180:90:0.4 and 0.15);
        \filldraw[thick] (0,-1.125) circle (0.03);
        \filldraw[thick](0,1.125) circle (0.03);
        \filldraw[string-violet,thick](188:3 and 1.125) circle (0.03);
        \filldraw[string-blue!30!string-green,thick](172:3 and 1.125) circle (0.03);
        \filldraw[string-green,thick](0.94,-0.15) circle (0.03);
        \filldraw[string-red,thick](0.94,0.15) circle (0.03);
        \draw[thick] (0,0) circle (4)
        (4,0) arc (0:-180: 4 and 1.75);
        \draw[thick,dashed,line cap=round] (4,0) arc (0:180: 4 and 1.75);
        \filldraw[string-blue] (-4,0) circle (0.06)
        (0.94,3.87) circle (0.06)
        (-1.1,3.83) circle (0.06)
        (0.94,-3.87) circle (0.06)
        (-1.1,-3.83) circle (0.06)
        (-3.39,0) circle (0.06)
        (0.94,0.4) circle (0.06)
        (-1.1,0.4) circle (0.06); 
        \begin{scope}[very thick,decoration={
                      markings,
                      mark=at position 0.52 with {\arrow{>}}}
                     ] 
        \draw[string-blue,thick,postaction={decorate}] (0.94,0.4) -- (0.94,3.86);  
        \draw[string-blue,thick,postaction={decorate}] (-1.1,0.4) -- (-1.1,3.83); 
        \draw[string-blue,thick,postaction={decorate}] (-4,0) -- (-3.39,0); 
        \end{scope}
        \node at (-3.7,0) [above,string-blue] {\small $X$};
        \node at (-1.1,2.13) [left,string-blue] {\small $Y$};
        \node at (-1.1,-2.13) [left,string-blue] {\small $\overline{Y}$};
        \node at (0.94,2.13) [left,string-blue] {\small $Z$};
        \node at (0.94,-2.13) [left,string-blue] {\small $\overline{Z}$};
    \end{tikzpicture}
    \caption{}
    \label{fig:conman3}
    \end{subfigure}
    \caption{Schematic construction of the connecting bordism $M_\Sigma^{\underline{X},\underline{Y}}$ for $\Sigma$ the $1$-morphism in $\WS(\DD_\calc)$ from \eqref{eq:worldsheetmorphism3d}: (a) cylinder over $\Sigma$ with $\Sigma$ as surface defect, (b) the connecting manifold $M_\Sigma$ with corners, (c) connecting bordism $M_\Sigma^{\underline{X},\underline{Y}}$;}
    \label{fig:conmanbord}
\end{figure}

Note that the boundary of $M_\Sigma$ naturally decomposes as 
\begin{equation}
    \partial M_\Sigma \cong  -M_\Gamma \cup \widehat{\Sigma} \cup M_{\Gamma'}
\end{equation} 
and $\widehat{\Gamma} \sqcup \widehat{\Gamma}'$ as corner points. This can be seen by noting that
\begin{equation}
    \partial (\Sigma \times [-1,1]) \cong \partial \Sigma \times [-1,1] \sqcup_{(\Gamma\sqcup\Gamma') \times \{\pm1\}} \Sigma \times \{\pm1\}
\end{equation}
and tracking how the equivalence relation identifies points on the boundary, see \Cref{fig:cyl1} and \ref{fig:conman2} for an illustration. 

In particular, the boundary parametrisation $\partial \Sigma \cong -\Gamma \sqcup \Gamma'$ induces a parametrisation of the $M_\Gamma$ and $M_{\Gamma'}$ boundary components. Using this we can turn $M_\Sigma$ into a morphism in $\borddef$ as follows: Let $\underline{X}\in \calc^{\times \pi_0(\widehat{\Gamma})}$ and $\underline{Y}\in \calc^{\times \pi_0(\widehat{\Gamma}')}$, and let $M_\Gamma^{\underline{X}}$ and $M_{\Gamma'}^{\underline{Y}}$ be the corresponding objects in $\borddef$ obtained by gluing in discs as above. By construction these objects come with embeddings $M_\Gamma \hookrightarrow M_\Gamma^{\underline{X}}$ and $M_{\Gamma'} \hookrightarrow M_{\Gamma'}^{\underline{Y}}$. Now consider the cylinders $M_\Gamma^{\underline{X}} \times [-1,1]$ and $M_{\Gamma'}^{\underline{Y}} \times [-1,1]$ with standard boundary parametrisations as identity morphisms in $\borddef$. Using the embeddings $M_{\Gamma'} \hookrightarrow M_{\Gamma'}^{\underline{Y}}$ we want to view them as bordisms 
\begin{equation}
\begin{aligned}
    M_\Gamma^{\underline{X}} \times [-1,1] &\colon M_\Gamma^{\underline{X}} \to M_\Gamma \sqcup_{\widehat{\Gamma}} D_{\Gamma}^{\underline{X}} \\
    M_{\Gamma'}^{\underline{Y}} \times [-1,1] &\colon M_{\Gamma'}^{\underline{Y}} \to M_{\Gamma'} \sqcup_{\widehat{\Gamma}'} D_{\Gamma'}^{\underline{Y}}
\end{aligned}
\end{equation}
where $D_{\Gamma}^{\underline{X}}$ and $D_{\Gamma'}^{\underline{Y}}$ are the discs with marked points which were glued into $M_\Gamma$ and $M_{\Gamma'}$, see \Cref{fig:cylindersforconnmani} for an illustration.

\begin{figure}[t!]
    \centering
        \begin{tikzpicture}[scale=1.2]
        \draw[black,thick]
        (-3.8,0) arc (0:-180:0.4) 
        (-1.1,0) circle (0.4)
        (2,0) circle (0.4);
        \fill[cyan!30,opacity=0.7] ([shift={(2,0)}]240:0.4 and 0.15) --
        ([shift={(2,0)}]240:0.8 and 0.3) arc (240:60:0.8 and 0.3) -- ([shift={(2,0)}]60:0.4 and 0.15) arc (60:240:0.4 and 0.15);
        \fill[magenta!30,opacity=0.7] ([shift={(2,0)}]240:0.4 and 0.15) --
        ([shift={(2,0)}]240:0.8 and 0.3) arc (240:60:0.8 and 0.3) -- ([shift={(2,0)}]60:0.4 and 0.15) arc (60:240:0.4 and 0.15);
        \fill[cyan!30,opacity=0.7] ([shift={(2,0)}]240:0.4 and 0.15) --
        ([shift={(2,0)}]240:0.8 and 0.3) arc (240:420:0.8 and 0.3) -- ([shift={(2,0)}]420:0.4 and 0.15) arc (420:240:0.4 and 0.15);
        \filldraw[white](2,0) ellipse (0.4 and 0.15);
        \draw[cyan,thick] (2.4,0) arc (0:-120:0.4 and 0.15)
        (2.8,0) arc (0:-120:0.8 and 0.3);
        \draw[cyan,thick,dotted,line cap=round] (2.4,0) arc (0:60:0.4 and 0.15)
        (2.8,0) arc (0:60:0.8 and 0.3);
        \draw[string-red,thick]([shift={(2,0)}]60:0.4 and 0.15) --
        ([shift={(2,0)}]60:0.8 and 0.3);
        \filldraw[string-green,thick]([shift={(2,0)}]240:0.4 and 0.15) --
        ([shift={(2,0)}]240:0.8 and 0.3);
        \draw[magenta,thick,dotted,line cap=round] ([shift={(2,0)}]60:0.4 and 0.15) arc (60:180:0.4 and 0.15)
        ([shift={(2,0)}]60:0.8 and 0.3) arc (60:180:0.8 and 0.3);
        \draw[magenta,thick,line cap=round] (1.6,0) arc (180:240:0.4 and 0.15)
         ([shift={(2,0)}]180:0.8 and 0.3) arc (180:240:0.8 and 0.3);
        \fill[white] (-1.1,0) ellipse (0.4 and 0.15);
        \filldraw[string-green,thick]([shift={(2,0)}]240:0.4 and 0.15) circle (0.03)
        ([shift={(2,0)}]240:0.8 and 0.3) circle (0.03);
        \filldraw[string-red,thick] ([shift={(2,0)}]60:0.4 and 0.15) circle (0.03)
        ([shift={(2,0)}]60:0.8 and 0.3) circle (0.03);
        \fill[cyan!30,opacity=0.7, even odd rule] (-1.1,0) ellipse (0.4 and 0.15)
        (-1.1,0) ellipse (0.8 and 0.3);
        \fill[magenta!30,opacity=0.7, even odd rule] (-1.1,0) ellipse (0.4 and 0.15)
        (-1.1,0) ellipse (0.8 and 0.3);
        \draw[magenta,thick] (-1.5,0) arc (180:360:0.4 and 0.15)
        (-1.9,0) arc (180:360:0.8 and 0.3);
        \draw[magenta,thick,dotted,line cap=round] (-1.5,0) arc (180:0:0.4 and 0.15)
        (-1.9,0) arc (180:0:0.8 and 0.3);
        \draw[black,thick] (-0.7,0) arc (0:180:0.4)
        (2.4,0) arc (0:180:0.4);
        \draw[white,ultra thick] ([shift={(-4.2,-1.2)}]30:0.8 and 2.5) arc (-30:-150:0.8 and 0.6);
        \draw ([shift={(-4.2,-1.2)}]30:0.8 and 2.5) arc (-30:-150:0.8 and 0.6);
        \draw[dotted, line cap=round] ([shift={(-4.2,-1.2)}]30:0.8 and 2.5) arc (30:150:0.8 and 0.5);
        \draw[thick] (-3.4,-1.2) arc (0:180:0.8 and 2.5);
        \draw[thick] (-3.8,0) arc (0:-180:0.4 and 0.15);
        \draw[thick,dotted, line cap=round] (-3.8,0) arc (0:180:0.4 and 0.15);
        \fill[cyan!30,opacity=0.7] (-3.8,0) arc (0:180:0.4) -- ([shift={(-4.2,-1.2)}]150:0.8 and 2.5) arc (150:30:0.8 and 2.5) -- cycle;
        \fill[magenta!30,opacity=0.7] (-3.8,0) arc (0:180:0.4) -- ([shift={(-4.2,-1.2)}]150:0.8 and 2.5) arc (150:30:0.8 and 2.5) -- cycle;
        \draw[magenta,thick] (-3.8,0) arc (0:180:0.4);
        \draw[magenta,thick] ([shift={(-4.2,-1.2)}]30:0.8 and 2.5) arc (30:150:0.8 and 2.5); 
        \draw[string-blue!30!string-green,thick] (-3.8,0) -- ([shift={(-4.2,-1.2)}]30:0.8 and 2.5); 
        \draw[string-violet,thick] (-4.6,0) -- ([shift={(-4.2,-1.2)}]150:0.8 and 2.5);
        \filldraw[string-blue!30!string-green,thick](-3.8,0) circle (0.03) ([shift={(-4.2,-1.2)}]30:0.8 and 2.5) circle (0.03);
        \filldraw[string-violet,thick](-4.6,0) circle (0.03) ([shift={(-4.2,-1.2)}]150:0.8 and 2.5) circle (0.03);
        \filldraw[string-blue] (2,1.5) circle (0.06)
        (-1.1,1.5) circle (0.06)
        (2,-1.5) circle (0.06)
        (-1.1,-1.5) circle (0.06)
        (2,0.4) circle (0.06)
        (-1.1,0.4) circle (0.06)
        (2,-0.4) circle (0.06)
        (-1.1,-0.4) circle (0.06)
        (-4.2,-0.4) circle (0.06)
        (-4.2,-1.2) circle (0.06); 
        \draw[dashed, line cap=round,thick] (2.8,-1.5) arc (0:180:0.8 and 0.3)
        (-0.3,-1.5) arc (0:180:0.8 and 0.3)
        (-3.4,-1.2) arc (0:180:0.8 and 0.3);
        \begin{scope}[very thick,decoration={
                      markings,
                      mark=at position 0.55 with {\arrow{>}}}
                     ] 
        \draw[string-blue,thick,postaction={decorate}] (2,0.4) -- (2,1.5);  
        \draw[string-blue,thick,postaction={decorate}] (-1.1,0.4) -- (-1.1,1.5);
        \draw[string-blue,thick,postaction={decorate}] (2,-0.4) -- (2,-1.5);  
        \draw[string-blue,thick,postaction={decorate}] (-1.1,-0.4) -- (-1.1,-1.5);
        \end{scope}
        \begin{scope}[very thick,decoration={
                      markings,
                      mark=at position 0.65 with {\arrow{>}}}
                     ] 
        \draw[string-blue,thick,postaction={decorate}] (-4.2,-1.2) -- (-4.2,-0.4) ; 
        \end{scope}
        \draw[white,ultra thick] (2.8,1.5) arc (0:-180:0.8 and 0.3);
        \draw[thick] (2.8,1.5) -- (2.8,-1.5)    
        (1.2,1.5) -- (1.2,-1.5);
        \draw[thick] (2,1.5) ellipse (0.8 and 0.3);
        \draw[thick] (2.8,-1.5) arc (0:-180:0.8 and 0.3);
        \draw[white,ultra thick] (-0.3,1.5) arc (0:-180:0.8 and 0.3);
        \draw[thick] (-0.3,1.5) -- (-0.3,-1.5)    
        (-1.9,1.5) -- (-1.9,-1.5);
        \draw[thick] (-1.1,1.5) ellipse (0.8 and 0.3);
        \draw[thick] (-0.3,-1.5) arc (0:-180:0.8 and 0.3)
        (-3.4,-1.2) arc (0:-180:0.8 and 0.3); 
        \node at (2,0.9) [string-blue,left] {\scriptsize $Z$}; 
        \node at (-1.1,0.9) [string-blue,left] {\scriptsize $Y$};
        \node at (2,-0.85) [string-blue,left] {\scriptsize $\overline{Z}$}; 
        \node at (-1.1,-0.85) [string-blue,left] {\scriptsize $\overline{Y}$}; 
        \node at (-4.2,-0.75)  [string-blue,left] {\scriptsize $X$}; 
    \end{tikzpicture}
    \caption{Cylinders over $M_\Gamma^{\underline{X}}$ and $M_{\Gamma'}^{\underline{Y}}$ for $\Gamma$ and $\Gamma'$ the source and target of the $1$-morphism in $\WS(\DD_\calc)$ from \eqref{eq:worldsheetmorphism3d}.}
    \label{fig:cylindersforconnmani}
\end{figure}

Using this parametrisation we can now glue these cylinders along their outgoing boundary to $M_\Sigma$ and obtain
\begin{align}
    M_\Sigma^{\underline{X},\underline{Y}} := (M_\Gamma^{\underline{X}}\times [-1,1]) \sqcup_{M_\Gamma} M_\Sigma
  \sqcup_{M_{\Gamma'}} (M_{\Gamma'}^{\underline{Y}}\times [-1,1]).
\end{align}
By construction the boundary of $M_\Sigma^{\underline{X},\underline{Y}}$ is given by
\begin{align}
    \partial M_\Sigma^{\underline{X},\underline{Y}} \cong -M_{\Gamma}^{\underline{X}} \sqcup D_{\Gamma}^{\underline{X}} \sqcup_{\widehat{\Gamma}} \widehat{\Sigma} \sqcup_{\widehat{\Gamma}'} D_{\Gamma'}^{\underline{Y}} \sqcup M_{\Gamma'}^{\underline{Y}}
\end{align}
where $D_{\Gamma}^{\underline{X}}$ and $D_{\Gamma'}^{\underline{Y}}$ are the discs with marked points which were glued into $M_\Gamma$ and $M_{\Gamma'}$. Technically we would need to choose a smoothing of $M_\Sigma^{\underline{X},\underline{Y}}$, however, as mentioned in the conventions, we will instead work in the topological category here.

Let $\widehat{\Sigma}^{\underline{X},\underline{Y}} := D_{\Gamma}^{\underline{X}} \sqcup_{\widehat{\Gamma}} \widehat{\Sigma} \sqcup_{\widehat{\Gamma}'} D_{\Gamma'}^{\underline{Y}}$ be the object in $\borddef$ obtained by gluing discs into the boundary components of $\widehat{\Sigma}$. Using this we can turn $M_{\Sigma}^{\underline{X},\underline{Y}}$ into a morphism
\begin{align}
    M_{\Gamma'}^{\underline{Y}} \sqcup -M_\Gamma^{\underline{X}} \to \widehat{\Sigma}^{\underline{X},\underline{Y}}
\end{align}
in $\borddef$. Note here that the difference in orientation between $M_\Gamma^{\underline{X}} $ and $M_{\Gamma'}^{\underline{Y}}$ is a direct result from $\Gamma$ being the incoming boundary and $\Gamma'$ being the outgoing boundary of $\Sigma$. For an illustration of this whole procedure see \Cref{fig:conman3}, in particular the difference in the orientation of the line defects is induced by the difference in orientations of the incoming boundary. In the following we will call $M_{\Sigma}^{\underline{X},\underline{Y}}$ the \emph{connecting bordism} of $\frakS$.

Finally, observe that $\widehat{\Sigma}^{\underline{X},\underline{Y}}\in \mathrm{Bord}_{3,2}^{\chi}(\calc)$ is precisely the marked surface discussed in the description of the full modular functor from \Cref{subsec:modfunc} and since $\rmZ_\calc ( \widehat{\Sigma}^{\underline{X},\underline{Y}}) \cong \widehat{\rmV}_\calc ( \widehat{\Sigma}^{\underline{X},\underline{Y}})$ the functor 
\begin{equation}
\begin{aligned}
   \mathrm{bl}_Z(\widehat{\Sigma}) \colon (\calc^{\mathrm{op}})^{\times\pi_0(\widehat{\Gamma})} \times \calc^{\times \pi_0(\widehat{\Gamma'})} &\to \vect \\
     (\underline{X},\underline{Y}) &\mapsto \rmZ_\calc ( \widehat{\Sigma}^{\underline{X},\underline{Y}})
\end{aligned}
\end{equation}
lifts to the full modular functor $\Bl_\calc(\frakS)\colon (\calc^\op)^{\boxtimes \pi_0(\widehat{\Gamma})} \boxtimes (\calc)^{\boxtimes \pi_0(\widehat{\Gamma}')} \to \vect$.

It is straightforward to check that the family of linear maps
\begin{align}
    \mathrm{cor}_\Sigma^{\underline{X},\underline{Y}} \colon \rmZ_\calc (M_{\Gamma'}^{\underline{Y}} \sqcup -M_\Gamma^{\underline{X}}) \to \rmZ_\calc ( \widehat{\Sigma}^{\underline{X},\underline{Y}})
\end{align}
defines a natural transformation
\begin{align}
    \mathrm{cor}_\frakS \colon \mathrm{cor}_{\Gamma'}\otimes_\kk \mathrm{cor}_{\Gamma}^{\dagger} \Rightarrow \mathrm{bl}_Z(\widehat{\Sigma})
\end{align}
of functors from $(\calc)^{\times\pi_0(\widehat{\Gamma}')} \times (\calc^{\op})^{\times\pi_0(\widehat{\Gamma})}$ to $\vect$. 

\begin{definition}[$\Cor$ on morphisms]
    For $\frakS \colon \frakC \to \frakC$ a $1$-morphism in $\WS(\DD_\calc)$ we define
    \begin{equation}
            \Cor_\frakS \colon \mathrm{Cor}_{\frakC}\otimes_\kk \mathrm{Cor}_{\frakC}^{\dagger} \Rightarrow \mathrm{Bl}_\calc(\frakS) 
    \end{equation}
    to be the natural transformation induced by $\mathrm{cor}_\Sigma$ on the Deligne tensor product $(\calc)^{\boxtimes \pi_0(\widehat{\Gamma}')} \boxtimes (\calc^\op)^{\boxtimes \pi_0(\widehat{\Gamma})}$. 
    
\end{definition}
\begin{remark}\label{rem:anomalyconnectingbordism}
    As explained in \Cref{sec:3ddeftft} the bordism category $\borddef$ contains extra data needed to cancel a gluing anomaly of the TFT $\rmZ_\calc$. This data can be included in our construction in precisely the same way as in \autocite[Sec.\,3.1]{FRSII}. This is possible because for any object $\frakC\in\WS(\DD_\calc)$ the connecting manifold $M_\frakC^{\underline{X}}$ is topologically a finite disjoint union of $2$-spheres and thus has trivial first homology. Thus the extra boundary components of our connecting bordism in comparison to the one of \autocite[Sec.\,3.1]{FRSII} do not lead to extra contributions.
\end{remark}

The rest of this chapter will be devoted to proving that $\Cor$ satisfies the conditions of a braided monoidal oplax natural transformation under three more technical assumptions on $\rmZ_\calc$. To this end it will be useful to recall from \Cref{subsec:oplaxnattrafo} that for every object $\frakC \in \WS(\DD_\calc)$ there exists an essentially unique object $\mathbbm{F}_\frakC \in \Bl_\calc(\frakC)$ such that
\begin{align}
     \mathrm{Cor}_{\frakC}(-) \cong \Hom_{\Bl_\calc(\frakC)}(\mathbbm{F}_\frakC,-)
\end{align} 
by left exactness of $ \mathrm{Cor}_{\frakC}(-)$.

\subsection{Monoidality}
Let us first discuss how to obtain the braided monoidal structure on $\Cor$ in the form of $\Pi$ and $\Upsilon$. For $\Upsilon$ we have $M_\varnothing = \varnothing$ which directly leads to $\Cor_\varnothing \cong \id_\Vect$ as desired. For $\Pi_{\frakC,\frakC'}$ first note that $\mathrm{cor}_{\Gamma\sqcup \Gamma'}(-,\sim) \cong \mathrm{cor}_{\Gamma}(-) \otimes_\kk  \mathrm{cor}_{\Gamma'}(\sim)$ by monoidality of $\rmZ_\calc$ and thus 
\begin{equation}  
\begin{aligned}
    \mathrm{Cor}_{\frakC\sqcup \frakC'}(-\boxtimes\sim) &\cong \mathrm{Cor}_{\frakC}(-) \otimes_\kk \mathrm{Cor}_{\frakC'}(\sim) \\
    &\cong \Hom_{\Bl_\calc(\frakC)}(\mathbbm{F}_\frakC,-) \otimes_\kk \Hom_{\Bl_\calc(\frakC')}(\mathbbm{F}_{\frakC'},\sim).
\end{aligned}
\end{equation}
On the other hand a direct computation using the Yoneda lemma leads to
\begin{align}
    (\mathrm{Cor}_{\frakC} \boxtimes \id_{\Bl_\calc(\frakC')}) \diamond (\id_{\Delta_\kk(\frakC)} \boxtimes \mathrm{Cor}_{\frakC'})(- \boxtimes \sim) \cong \Hom_{\Bl_{\calc}(\frakC)\boxtimes\Bl_{\calc}(\frakC')}(\mathbbm{F}_{\frakC} \boxtimes \mathbbm{F}_{\frakC'},- \boxtimes \sim).
\end{align}
Combining this with the natural isomorphism
\begin{align}
       \Hom_{\Bl_\calc(\frakC) \boxtimes \Bl_\calc(\frakC')} (-\boxtimes -, -\boxtimes -)\cong \Hom_{\Bl_\calc(\frakC)}(-,-) \otimes_\kk \Hom_{\Bl_\calc(\frakC')}(-,-) 
\end{align}
coming from the universal property of the Deligne product \autocite[Prop.\,1.11.2]{EGNO} we finally get the desired isomorphism
\begin{align}
    \mathrm{Cor}_{\frakC\sqcup \frakC'}(-\boxtimes\sim) \cong (\mathrm{Cor}_{\frakC} \boxtimes \id_{\Bl_\calc(\frakC')}) \diamond (\id_{\Delta_\kk(\frakC)} \boxtimes \mathrm{Cor}_{\frakC'})(- \boxtimes \sim).
\end{align}
For the modification axiom of $\Pi_{\frakC,\frakC'}$, see e.g.\ \autocite[Def.\,4.4.2]{JY20twodcategories}, we observe that for $\frakS \colon \frakC \to \frakC'$ and $\widetilde{\frakS} \colon \widetilde{\frakC} \to \widetilde{\frakC'}$ the connecting manifold of their disjoint union $M_{\Sigma \sqcup \widetilde{\Sigma}}$ is the disjoint union of the connecting manifolds $M_{\Sigma} \sqcup M_{\widetilde{\Sigma}}$. Since $\Pi_{\frakC,\frakC'}$ is constructed using the monoidal structure of $\rmZ_\calc$ this implies the required relation between $\Cor_{\frakS \sqcup \widetilde{\frakS}}$ and $\Cor_{\frakS} \sqcup \Cor_{\widetilde{\frakS}}$.

\subsection{MCG covariance / 2-morphism naturality}
Next let us discuss $2$-morphism naturality in the form of commutativity of Diagram \ref{diag:2natural}. 
Let $f \colon \frakS \Rightarrow \frakS'$ be a $2$-morphism between $1$-morphisms $\frakS,\frakS' \colon \frakC \to \frakC'$ in $\WS(\DD_\calc)$. Since Diagram \ref{diag:2natural} consists of natural transformations we can do this component wise. Now all the functors and natural transformations in this diagram are specified via their action on pure tensors in the Deligne product. These actions are in turn induced from $\rmZ_\calc$ being evaluated on various manifolds which leads us to study the diagram
\begin{align}
    \begin{tikzcd}[ampersand replacement=\&]
	\& {\rmZ_\calc(\widehat{\Sigma}^{\underline{X},\underline{Y}})} \\
	{\rmZ_\calc (M_{\Gamma'}^{\underline{Y}} ) \otimes_\mathbb{k} \rmZ_\calc( -M_\Gamma^{\underline{X}})} \\
	\& {\rmZ_\calc(\widehat{\Sigma}'^{\underline{X},\underline{Y}})}
	\arrow["{\rmZ_\calc(\widehat{C_{f}}^{\underline{X},\underline{Y}})}", from=1-2, to=3-2]
	\arrow["{\rmZ_\calc(M_{\Sigma}^{\underline{X},\underline{Y}})}", from=2-1, to=1-2]
	\arrow["{\rmZ_\calc(M_{\Sigma'}^{\underline{X},\underline{Y}})}"', from=2-1, to=3-2]
\end{tikzcd}
\end{align}
in $\vect$ with $\underline{X}\in \calc^{\times\pi_0(\widehat{\Gamma})}$ and $\underline{Y}\in \calc^{\times\pi_0(\widehat{\Gamma}')}$. Here $\widehat{C_f}^{\underline{X},\underline{Y}}$ is the mapping cylinder bordism induced from the mapping class $f$ as in \autocite[Sec.\,4.3]{HR2024modfunc}. Using functoriality of $\rmZ_\calc$ we can instead check the diagram
\begin{align}
    \begin{tikzcd}[ampersand replacement=\&]
	\& {\widehat{\Sigma}^{\underline{X},\underline{Y}}} \\
	{M_{\Gamma'}^{\underline{Y}} \sqcup -M_\Gamma^{\underline{X}}} \\
	\& {\widehat{\Sigma}'^{\underline{X},\underline{Y}}}
	\arrow["{\widehat{C_{f}}^{\underline{X},\underline{Y}}}", from=1-2, to=3-2]
	\arrow["{M_{\Sigma}^{\underline{X},\underline{Y}}}", from=2-1, to=1-2]
	\arrow["{M_{\Sigma'}^{\underline{X},\underline{Y}}}"', from=2-1, to=3-2]
    \end{tikzcd}
\end{align}
in $\borddef$ for commutativity. In other words we need to find an isomorphism of defect $3$-bordisms $M_{\Sigma'}^{\underline{X},\underline{Y}} \cong \widehat{C_{f}}^{\underline{X},\underline{Y}} \sqcup_{\widehat{\Sigma}^{\underline{X},\underline{Y}}} M_{\Sigma}^{\underline{X},\underline{Y}}$. 

To this end first note that $f \times \id_{[-1,1]} \colon \Sigma \times [-1,1] \to \Sigma' \times [-1,1]$ descends to the quotients $\widetilde{f} \colon M_\Sigma \to M_{\Sigma'}$ because $f$ is a morphism of world sheets and thus compatible with the free boundaries. Moreover, since $f$ commutes with the boundary parametrisations we can extend $\widetilde{f}$ to $F \colon M_{\Sigma}^{\underline{X},\underline{Y}} \to M_{\Sigma'}^{\underline{X},\underline{Y}}$ which is an isomorphism of defect $3$-manifolds because $f$ takes the surface defect $\frakS$ in $M_\Sigma^{\underline{X},\underline{Y}}$ to $\frakS'$ in $M_{\Sigma'}^{\underline{X},\underline{Y}}$. Now finally note that $\widehat{C_{f}}^{\underline{X},\underline{Y}} \sqcup_{\widehat{\Sigma}^{\underline{X},\underline{Y}}} M_{\Sigma}^{\underline{X},\underline{Y}}$ differs from $ M_{\Sigma}^{\underline{X},\underline{Y}}$ only in the outgoing boundary parametrisation and $F$ is compatible with the one of $\widehat{C_{f}}^{\underline{X},\underline{Y}} \sqcup_{\widehat{\Sigma}^{\underline{X},\underline{Y}}} M_{\Sigma}^{\underline{X},\underline{Y}}$. Thus $F\colon \widehat{C_{f}}^{\underline{X},\underline{Y}} \sqcup_{\widehat{\Sigma}^{\underline{X},\underline{Y}}} M_{\Sigma}^{\underline{X},\underline{Y}} \to  M_{\Sigma'}^{\underline{X},\underline{Y}}$ gives the desired isomorphism of defect $3$-bordisms.

\subsection{Factorisation / oplax naturality}\label{sec:factorisation}
We now want to study oplax naturality in the form of Diagram \ref{diag:horinatural}, or in other words the behaviour of $\Cor$ under the gluing of world sheets. Let $\frakS_1 \colon \frakC_1 \to \frakC$ and $\frakS_2 \colon \frakC \to \frakC_2$ be $1$-morphisms in $\WS(\DD_\calc)$. 
In analogy to \autocite[Sec.\,4.4]{HR2024modfunc} we can restrict our attention to the case where $|\pi_0(\Gamma)| = 1$ since gluing is a local procedure. This leaves us with two distinct cases to consider: $\frakC$ is either a defect interval or a defect circle. We will refer to these cases as \emph{boundary} and \emph{bulk factorisation}, respectively. 

As in the previous section we want to reduce Diagram \ref{diag:horinatural} to a corresponding diagram in $\borddef$ by considering it component wise and then use functoriality of $\rmZ_\calc$. However, this is less straightforward here because Diagram \ref{diag:horinatural} contains two morphisms whose components are not necessarily in the image of $\rmZ_\calc$: The composition $\Cor_{\frakS_2}\diamond \Cor_{\frakS_1}$ and the unit $\eta_{\Cor_{\frakC}} \colon \id_{\vect} \Rightarrow \Cor_{\frakC}^\dagger \diamond \Cor_{\frakC}$ of the adjunction $\Cor_{\frakC} \dashv \Cor_{\frakC}^\dagger$ . 

Let us start with the composition $\Cor_{\frakS_2}\diamond \Cor_{\frakS_1}$. For this recall that the horizontal composition of $\Cor_{\frakS_1} \colon \Cor_{\frakC}\otimes_\kk \Cor_{\frakC_1}^{\dagger} \Rightarrow \mathrm{Bl}_\calc(\frakS_1) $ and $\Cor_{\frakS_2} \colon \Cor_{\frakC_2}\otimes_\kk \mathrm{Cor}_{\frakC}^{\dagger} \Rightarrow \mathrm{Bl}_\calc(\frakS_2)$ is defined via the universal property of the coend as the unique natural transformation which makes the following diagram commute:
\begin{align}\label{diag:corgluing}
    \begin{tikzcd}[ampersand replacement=\&]
	{\mathrm{Cor}_{\frakC_2}(\underline{Z})\otimes_\kk \mathrm{Cor}_{\frakC}^\dagger(\underline{Y}) \otimes_\kk \mathrm{Cor}_{\frakC}(\underline{Y})\otimes_\kk \mathrm{Cor}_{\frakC_1}^\dagger}(\underline{X}) \&\& {\mathrm{Bl}(\frakS_2)(\underline{Y},\underline{Z})\otimes_\kk \mathrm{Bl}(\frakS_1)(\underline{X},\underline{Y})} \\
	{\mathrm{Cor}_{\frakC_2}(\underline{Z})\otimes_\kk \mathrm{Cor}_{\frakC}^\dagger\diamond \mathrm{Cor}_{\frakC}\otimes_\kk \mathrm{Cor}_{\frakC_1}^\dagger}(\underline{X}) \&\& {(\mathrm{Bl}_\calc(\frakS_2)\diamond \mathrm{Bl}_\calc(\frakS_1))(\underline{X},\underline{Z})}
	\arrow["{\mathrm{Cor}_{\frakS_2}^{\underline{Y},\underline{Z}}\otimes_\kk\mathrm{Cor}_{\frakS_1}^{\underline{X},\underline{Y}}}", from=1-1, to=1-3]
	\arrow["{\id\otimes_\kk\iota_{\underline{Y}}\otimes_\kk \id}"', from=1-1, to=2-1]
	\arrow["{I_{\underline{Y}}}^{(\underline{X},\underline{Z})}", from=1-3, to=2-3]
	\arrow["{\exists!}", dashed, from=2-1, to=2-3]
\end{tikzcd}
\end{align}
Here $\underline{X}\in \calc^{\times\pi_0(\widehat{\Gamma_1})}$, $\underline{Y}\in \calc^{\times\pi_0(\widehat{\Gamma})}$, $\underline{Z}\in \calc^{\times\pi_0(\widehat{\Gamma_2})}$, and $\iota_{\underline{Y}}$ and $I_{\underline{Y}}^{(\underline{X},\underline{Z})}$ are the universal dinatural transformations. From \autocite[Prop.\,4.8]{HR2024modfunc} we know that the components of $I_{\underline{Y}}$ are obtained by evaluating the $3$d TFT $\widehat{\rmV}_\calc$ on (the double of) the gluing bordisms
\begin{equation}
    G_{\underline{Y}}^{(\underline{X},\underline{Z})} \colon \widehat{\Sigma_2}^{(\underline{Y},\underline{Z})} \sqcup\widehat{\Sigma_1}^{(\underline{X},\underline{Y})} \to (\widehat{\Sigma_2} \sqcup_{\widehat{\Gamma}} \widehat{\Sigma_1})^{(\underline{X},\underline{Z})} 
\end{equation}
defined in \autocite[Sec.\,4.4]{HR2024modfunc}. More concretely, the underlying manifold of $G_{\underline{Y}}^{(\underline{X},\underline{Z})}$ is given by
\begin{equation}
    (\widehat{\Sigma_2}^{(\underline{Y},\underline{Z})} \sqcup\widehat{\Sigma_1}^{(\underline{X},\underline{Y})}) \times I /\sim
\end{equation}
where $\sim$ identifies the discs glued into the components of $\widehat{\Gamma}$. For $\Gamma$ an interval a local visualisation of $G_{\underline{Y}}^{(\underline{X},\underline{Z})}$ is given in \Cref{fig:gluingbordloc}. 
\begin{figure}[t]
    \centering
    \includegraphics[width=0.4\linewidth]{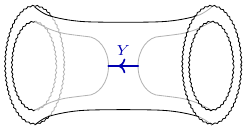}
    \caption{Local structure of the gluing bordism $G_{\underline{Y}}^{(\underline{X},\underline{Z})}$ for $\frakC$ a defect interval. The incoming boundary surface of this bordism is drawn inside and the blue $Y$-labelled ribbon connects it to itself. The curly lines indicate interfaces at which the displayed piece is connected to other parts of the bordisms.}
    \label{fig:gluingbordloc}
\end{figure}
For $\frakC$ a defect circle one needs to use the double of the gluing bordism or equivalently the composition of the gluing bordisms for each component of $\widehat{\Gamma}$. In both cases $G_{\underline{Y}}^{(\underline{X},\underline{Z})}$ is a cylinder outside of the depicted local region. This allows us to extend its definition to the defect bordism category $\borddef$. If the dinatural transformation $\iota_{\underline{Y}}$ is also induced by a family of gluing bordisms
\begin{align}\label{eq:gluebord1}
    \widetilde{G}_{\underline{Y}} \colon -M_{\Gamma}^{\underline{Y}} \sqcup M_{\Gamma}^{\underline{Y}} \to -M_{\Gamma} \sqcup_{\widehat{\Gamma}} M_{\Gamma},
\end{align}
we can consider the corresponding diagram in $\borddef$:
\begin{align}\label{diag:gluebordcat}
    \begin{tikzcd}[ampersand replacement=\&]
	{M_{\Gamma_2}^{\underline{Z}} \sqcup -M_{\Gamma}^{\underline{Y}}\sqcup M_{\Gamma}^{\underline{Y}}\sqcup -M_{\Gamma_1}^{\underline{X}}} \&\& {\widehat{\Sigma_2}^{\underline{Y},\underline{Z}} \sqcup \widehat{\Sigma_1}^{\underline{X},\underline{Y}}} \\
	\\
	{M_{\Gamma_2}^{\underline{Z}} \sqcup -M_{\Gamma}\sqcup_{\widehat{\Gamma}} M_{\Gamma}\sqcup -M_{\Gamma_1}^{\underline{X}}} \&\& {(\widehat{\Sigma_2}\sqcup_{\widehat{\Gamma}}\widehat{\Sigma_1})^{\underline{X},\underline{Z}}}
	\arrow["{M_{\Sigma_2}^{\underline{Y},\underline{Z}}\sqcup M_{\Sigma_1}^{\underline{X},\underline{Y}}}", from=1-1, to=1-3]
	\arrow["\id \sqcup \widetilde{G}_{\underline{Y}} \sqcup \id"', from=1-1, to=3-1]
	\arrow["{G_{\underline{Y}}^{(\underline{X},\underline{Z})}}", from=1-3, to=3-3]
	\arrow[dashed, from=3-1, to=3-3]
\end{tikzcd}
\end{align}
Moreover, let us further assume that we can find a bordism for the dashed arrow then functoriality of $\rmZ_\calc$ and the uniqueness coming from the universal property will guarantee that this bordism gets sent to the composition $\Cor_{\frakS_2}\diamond \Cor_{\frakS_1}$. This motivates the next algebraic condition on $\rmZ_\calc$:

\begin{assumption}\label{ass:3}
    For $\frakC \in \WS(\DD_\calc)$ a defect interval or circle, the universal dinatural transformation $\iota_{\underline{Y}} \colon \Cor_{\frakC}(\underline{Y})^\dagger \otimes_\kk \Cor_{\frakC}(\underline{Y}) \to \Cor_{\frakC}^\dagger \diamond \Cor_{\frakC}$ is induced by evaluating $\rmZ_\calc$ on the family of gluing bordisms
    \begin{align}
    \widetilde{G}_{\underline{Y}} \colon -M_{\Gamma}^{\underline{Y}} \sqcup M_{\Gamma}^{\underline{Y}} \to -M_{\Gamma} \sqcup_{\widehat{\Gamma}} M_{\Gamma}
    \end{align}
    in $\borddef$.
\end{assumption}

Now for the second problem recall from \Cref{lem:profadjunc} that $\eta_{\Cor_{\frakC}}$ is given by the action of the constant functor $\mathbbm{F}_{\frakC} \colon \vect \to \Bl_\calc(\frakC)$ on morphisms, i.e.\ by the linear map sending $1\in\kk$ to $\mathrm{id}_{\mathbbm{F}_{\frakC}}$ in $\Hom_{\Bl_\calc(\frakC)}(\mathbbm{F}_{\frakC},\mathbbm{F}_{\frakC}) \cong \Cor_{\frakC}^\dagger \diamond \Cor_{\frakC}$. Next note that the vector space $\Cor_{\frakC}^\dagger \diamond \Cor_{\frakC}$ is isomorphic to $\rmZ_\calc(-M_{\Gamma} \sqcup_{\widehat{\Gamma}} M_{\Gamma})$ by \Cref{ass:3}. We thus want to find a bordism $\eta_{M_\Gamma} \colon \varnothing \to -M_{\Gamma} \sqcup_{\widehat{\Gamma}} M_{\Gamma}$ such that $\rmZ_\calc(\eta_{M_\Gamma})$ corresponds to 
$\mathrm{id}_{\mathbbm{F}_{\frakC}}$ under the isomorphism $\Hom_{\Bl_\calc(\frakC)}(\mathbbm{F}_{\frakC},\mathbbm{F}_{\frakC}) \cong \Cor_{\frakC}^\dagger \diamond \Cor_{\frakC}$. Moreover by monoidality it suffices to show this for $\frakC$ a defect interval or a defect circle. This naturally leads us to the next condition on $\rmZ_\calc$:
\begin{assumption}\label{ass:2}
For $\frakC \in \WS(\DD_\calc)$ a defect interval or circle, the unit 
\begin{equation}
    \eta_{\Cor_{\frakC}} \colon \id_{\vect} \Rightarrow \Cor_{\frakC}^\dagger \diamond \Cor_{\frakC}
\end{equation}
of the adjunction $\Cor_{\frakC} \dashv \Cor_{\frakC}^\dagger$ corresponds to the linear map $\rmZ_\calc(\eta_{M_\Gamma})$, with 
\begin{equation}
    \eta_{M_\Gamma} \colon \varnothing \to -M_{\Gamma} \sqcup_{\widehat{\Gamma}} M_{\Gamma}
\end{equation}
as in \Cref{fig:unitadj}, under the isomorphism $\Cor_{\frakC}^\dagger \diamond \Cor_{\frakC} \cong \rmZ_\calc(-M_{\Gamma} \sqcup_{\widehat{\Gamma}} M_{\Gamma})$. In other words we require that $\rmZ_\calc(\eta_{M_\Gamma})\in \rmZ_\calc(-M_{\Gamma} \sqcup_{\widehat{\Gamma}} M_{\Gamma})$ corresponds to $\mathrm{id}_{\mathbbm{F}_{\frakC}} \in \Hom_{\Bl_\calc(\frakC)}(\mathbbm{F}_{\frakC},\mathbbm{F}_{\frakC})$.
\end{assumption}

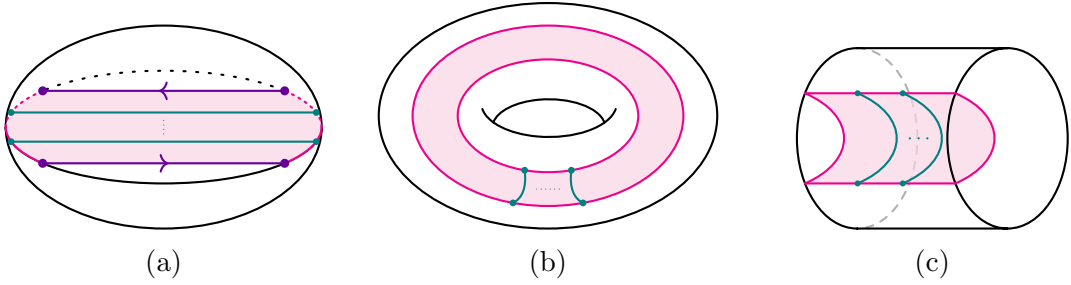
\begin{figure}[t]
    \centering
    \begin{subfigure}{0.3\textwidth}
         \centering
             \begin{tikzpicture}[scale=1.5]
        \draw[thick] (0,0) ellipse (1.4 and 0.9)
        (1.4,0) arc (0:-180:1.4 and 0.5);
        \fill [magenta!20,opacity=0.7] (140:1.4 and 0.5) -- (220:1.4 and 0.5) -- (320:1.4 and 0.5) -- (40:1.4 and 0.5) -- cycle
        (140:1.4 and 0.5) arc (140:220:1.4 and 0.5)
        (320:1.4 and 0.5) arc (320:400:1.4 and 0.5);
        \draw[thick,loosely dotted,line cap=round] (40:1.4 and 0.5) arc (40:140:1.4 and 0.5);
        \begin{scope}[very thick,decoration={
                      markings,
                      mark=at position 0.52 with {\arrow{>}}}
                     ]  
        \draw[string-violet,thick,postaction={decorate}](220:1.4 and 0.5)  -- (320:1.4 and 0.5);
        \draw[string-violet,thick,postaction={decorate}] (40:1.4 and 0.5) -- (140:1.4 and 0.5);
        \draw[string-green,thick,line cap=round] (165:1.4 and 0.5) -- (15:1.4 and 0.5)
        (195:1.4 and 0.5) -- (-15:1.4 and 0.5);
        \end{scope}
        \draw[magenta,thick,line cap=round] (-1.4,0) arc (180:220:1.4 and 0.5)
        (1.4,0) arc (0:-40:1.4 and 0.5);
        \draw[magenta,thick, dotted,line cap=round] (-1.4,0) arc (180:140:1.4 and 0.5)
        (1.4,0) arc (0:40:1.4 and 0.5);
        \fill[string-violet] (140:1.4 and 0.5) circle (0.04)
        (40:1.4 and 0.5) circle (0.04);
        \fill[string-violet] (320:1.4 and 0.5) circle (0.04)
        (220:1.4 and 0.5) circle (0.04);
        \draw[string-green,dash pattern=on 0pt off 4\pgflinewidth,line cap=round] (0,0.06) -- (0,-0.08);
        \fill[string-green] (165:1.4 and 0.5) circle (0.03)
        (15:1.4 and 0.5) circle (0.03)
        (195:1.4 and 0.5) circle (0.03) 
        (-15:1.4 and 0.5) circle (0.03);
    \end{tikzpicture}
        \caption{}\label{fig:intervalunit}
    \end{subfigure}
    \hspace{0.3cm}
    \begin{subfigure}{0.3\textwidth}
              \centering
             \begin{tikzpicture}[scale=1.5]
        \fill[magenta!20, opacity=0.7, even odd rule] 
        (0,0) ellipse (1.2 and 0.8)
        (0,0) ellipse (0.8 and 0.5);
        \draw[magenta,thick] (0,0) ellipse (1.2 and 0.8)
        (0,0) ellipse (0.8 and 0.5);
        \draw[thick,line cap = round] (0,0) ellipse (1.5 and 1)
           (0.6,0.06) arc (-10:-170:0.6 and 0.3)
           (0.5,-0.06) arc (10:170:0.5 and 0.25);
        \draw[string-green,thick,line cap=round] (255:1.2 and 0.8) .. controls (258:1.1 and 0.75) and (258:0.9 and 0.65) .. (255:0.8 and 0.5)
        (285:1.2 and 0.8) .. controls (282:1.1 and 0.75) and (282:0.9 and 0.65) .. (285:0.8 and 0.5);
        \draw[string-green,dash pattern=on 0pt off 4\pgflinewidth,line cap=round] (0.1,-0.65) -- (-0.1,-0.65);
        \fill[string-green] (285:0.8 and 0.5) circle (0.03)
        (285:1.2 and 0.8) circle (0.03)
        (255:1.2 and 0.8) circle (0.03)
        (255:0.8 and 0.5) circle (0.03);
    \end{tikzpicture}
            \caption{}\label{fig:circleunit}
    \end{subfigure}
    \hspace{0.3cm}
    \begin{subfigure}{0.3\textwidth}
              \centering
            \begin{tikzpicture}
        \draw[thick,dashed, line cap=round] (0,1.2) arc (90:-90:0.8 and 1.2);
        \fill[white,opacity=0.7] (0,-1.2) rectangle (2,1.2);
        \fill[magenta!20, opacity=0.7] (150:0.8 and 1.2) -- 
        ([shift={(2,0)}]150:0.8 and 1.2) .. controls (2,0.3) and (2,-0.3) .. ([shift={(2,0)}]210:0.8 and 1.2) -- 
        (210:0.8 and 1.2) .. controls (0,-0.3) and (0,0.3) .. (150:0.8 and 1.2);
        \draw[thick, line cap=round] (0,1.2) arc (90:270:0.8 and 1.2)
        (2,0) ellipse (0.8 and 1.2)
        (0,1.2) -- (2,1.2)
        (0,-1.2) -- (2,-1.2);
        \draw[thick,magenta,line cap=round] (150:0.8 and 1.2) -- ([shift={(2,0)}]150:0.8 and 1.2)
        (210:0.8 and 1.2) -- ([shift={(2,0)}]210:0.8 and 1.2);
        \draw[thick,magenta,line cap=round](150:0.8 and 1.2) .. controls (0,0.3) and (0,-0.3) .. (210:0.8 and 1.2)
         ([shift={(2,0)}]150:0.8 and 1.2) .. controls (2,0.3) and (2,-0.3) .. ([shift={(2,0)}]210:0.8 and 1.2);
        \draw[thick, string-green]  ([shift={(0.7,0)}]150:0.8 and 1.2) .. controls (0.7,0.3) and (0.7,-0.3) .. ([shift={(0.7,0)}]210:0.8 and 1.2)
        ([shift={(1.3,0)}]150:0.8 and 1.2) .. controls (1.3,0.3) and (1.3,-0.3) .. ([shift={(1.3,0)}]210:0.8 and 1.2);
        \fill[string-green] ([shift={(0.7,0)}]150:0.8 and 1.2) circle (0.04)
        ([shift={(0.7,0)}]210:0.8 and 1.2) circle (0.04)
        ([shift={(1.3,0)}]150:0.8 and 1.2) circle (0.04)
        ([shift={(1.3,0)}]210:0.8 and 1.2) circle (0.04);
        \draw[string-green,dash pattern=on 0pt off 4\pgflinewidth,line cap=round,thick] (0.7,0) -- (1,0);
    \end{tikzpicture}
            \caption{}
    \end{subfigure}
    \caption{The unit morphism $\eta_{M_\Gamma} \colon \varnothing \to -M_{\Gamma'} \sqcup_{\widehat{\Gamma}'} M_{\Gamma'}$ for $\frakC$ a defect interval (a) and a defect circle (b). As well as a cylinder (c) cut out of (b) to illustrate the stratification.}
    \label{fig:unitadj}
\end{figure}

\begin{remark}\label{rem:unitbord321}
    The bordisms depicted in \Cref{fig:unitadj} as candidates for the unit $\eta_{\Cor_{\frakC}}$ of the adjunction $\Cor_{\frakC} \dashv \Cor_{\frakC}^\dagger$ can be informally motivated from once-extended $3$d TFTs. Let us make this a bit more precise for $\Gamma'$ a defect interval. The underlying manifold of $M_{\Gamma}$ is a disc which we will view as the cup $1$-morphism $\varnothing \to S^1$ in the once-extended $3$-dimensional bordism $2$-category $\bord_{3,2,1}$. Now recall from the conjectured generators and relations description of $\bord_{3,2,1}$ discussed in \autocite[Sec.\,3]{BDSV15extTFT} that this cup is one of the $1$-morphism generators of $\bord_{3,2,1}$ and its two-sided adjoint is given by a cap $S^1 \to \varnothing$. Moreover, the unit of this adjunction is given by a $3$-ball, referred to as $\nu$ in \autocite{BDSV15extTFT}. But this is precisely the $3$-manifold underlying \Cref{fig:unitadj} (a). For $\Gamma$ a defect circle the idea is the same but one has to consider the composition of some of the generators because the underlying manifold of $M_{\Gamma}$ is the composition of the cup generator with the up-side-down pair of pants generator. We will not discuss this point further here.
\end{remark}

Under Assumptions \ref{ass:3} and \ref{ass:2} we are thus lead to study the following diagram in $\borddef$:
\begin{align}\label{diag:oplaxbordcat}
    \begin{tikzcd}[ampersand replacement=\&]
	{M_{\Gamma_2}^{\underline{Z}} \sqcup -M_{\Gamma}^{\underline{Y}}\sqcup M_{\Gamma}^{\underline{Y}}\sqcup -M_{\Gamma_1}^{\underline{X}}} \&\& {\widehat{\Sigma_2}^{\underline{Y},\underline{Z}} \sqcup \widehat{\Sigma_1}^{\underline{X},\underline{Y}}} \\
	\\
	{M_{\Gamma_2}^{\underline{Z}} \sqcup -M_{\Gamma}\sqcup_{\widehat{\Gamma}} M_{\Gamma'}\sqcup -M_{\Gamma_1}^{\underline{X}}} \&\& {(\widehat{\Sigma_2}\sqcup_{\widehat{\Gamma}}\widehat{\Sigma_1})^{\underline{X},\underline{Z}}} \\
	\\
	\& {M_{\Gamma_2}^{\underline{Z}} \sqcup -M_{\Gamma_1}^{\underline{X}}}
	\arrow["{M_{\Sigma_2}^{\underline{Y},\underline{Z}}\sqcup M_{\Sigma_1}^{\underline{X},\underline{Y}}}", from=1-1, to=1-3]
	\arrow["{\id \sqcup \widetilde{G}_{\underline{Y}} \sqcup \id}"', from=1-1, to=3-1]
	\arrow["{G_{\underline{Y}}^{(\underline{X},\underline{Z})}}", from=1-3, to=3-3]
	\arrow[dashed, from=3-1, to=3-3]
	\arrow["{\mathrm{id}\sqcup \eta_{M_{\Gamma}} \sqcup \mathrm{id}}", from=5-2, to=3-1]
	\arrow["{M_{\Sigma_2\sqcup_{\Gamma} \Sigma_1}^{\underline{X},\underline{Z}}}"', from=5-2, to=3-3]
\end{tikzcd}
\end{align}
We will now show that this diagram commutes by studying the two cases mentioned above explicitly.

\subsubsection{Boundary factorisation}
\begin{figure}[p!]
\hspace*{-1.5cm}\begin{tikzcd}[ampersand replacement=\&]
	{M_{\Gamma_2}^{\underline{Z}} \sqcup -M_{\Gamma}^{\underline{Y}}\sqcup M_{\Gamma}^{\underline{Y}}\sqcup -M_{\Gamma_1}^{\underline{X}}} \&\&\&\& {\widehat{\Sigma_2}^{\underline{Y},\underline{Z}} \sqcup \widehat{\Sigma_1}^{\underline{X},\underline{Y}}} \\
	\\
	\\
	\\
	\\
	\\
	{M_{\Gamma_2}^{\underline{Z}} \sqcup -M_{\Gamma}\sqcup_{\widehat{\Gamma}} M_{\Gamma}\sqcup -M_{\Gamma_1}^{\underline{X}}} \&\&\&\& {(\widehat{\Sigma_2}\sqcup_{\widehat{\Gamma}}\widehat{\Sigma_1})^{\underline{X},\underline{Z}}} \\
	\\
	\\
	\\
	\\
	\\
	\& {M_{\Gamma_2}^{\underline{Z}} \sqcup -M_{\Gamma_1}^{\underline{X}}}
	\arrow["{\includegraphics{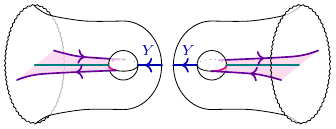}}", from=1-1, to=1-5]
	\arrow["{\includegraphics{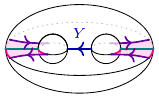}}"', from=1-1, to=7-1]
	\arrow["{\includegraphics{figs/gluingbordismlocal.pdf}}", from=1-5, to=7-5]
	\arrow["{\includegraphics{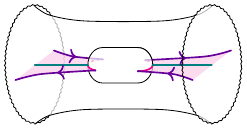}}",from=7-1, to=7-5]
	\arrow["{\includegraphics{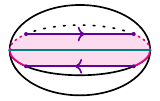}}", from=13-2, to=7-1]
	\arrow["{\includegraphics{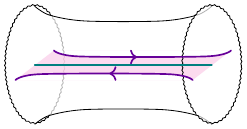}}"', from=13-2, to=7-5]
\end{tikzcd}
    \caption{Diagram \ref{diag:oplaxbordcat} for $\frakC$ a defect interval in a neighbourhood of $\frakC$. For both arrows on the left only the non-identity bordism components are drawn. The curly lines indicate interfaces at which the displayed piece is connected to other parts of the bordisms.}
    \label{fig:boundfact}
\end{figure}
\noindent Let us start with boundary factorisation, i.e.\ the case where $\frakC$ is a defect interval. In contrast to \autocite[Sec.\,4]{FFS12} we cannot assume that $\frakC$ has no $0$-strata in its interior as we do not have an explicit construction of the defect TFT $\rmZ_\calc$. 
Instead we will restrict our attention to the case where $\frakC$ has exactly one $0$-stratum in the interior. The general case can be treated completely analogously but its pictorial presentation is more crowded. 

The key observation to understand Diagram \ref{diag:oplaxbordcat} is to note that all bordisms only differ in a neighbourhood of where the gluing procedure takes place. This is essentially a direct consequence of the locality of gluing, as in \autocite[Sec.\,4.4]{HR2024modfunc}. In particular, it is sufficient to study Diagram \ref{diag:oplaxbordcat} locally. To do this we will only draw the corresponding local segments of the bordisms in analogy to the local illustration of the gluing bordism given in Figure \ref{fig:gluingbordloc}. The resulting local version of Diagram \ref{diag:oplaxbordcat} is illustrated in \Cref{fig:boundfact}. 

Now first note that the illustrated bordism for the lower horizontal line indeed makes the square commute. As discussed above, this implies that this bordism represents the composition  $\Cor_{\frakS_2}\diamond \Cor_{\frakS_1}$. Informally we can obtain this bordism by cutting the left gluing bordism out of the composition of the top horizontal and the right gluing bordism. Finally, it is clear that with this bordism also the triangle commutes.

\subsubsection{Bulk factorisation}
Let us now come to bulk factorisation. As before we will focus on the case where $\frakC$ is a defect circle with exactly one $0$-stratum. To study Diagram \ref{diag:oplaxbordcat} we will again restrict our attention to a local neighbourhood of the gluing process. 

Before we continue with the local version of Diagram \ref{diag:oplaxbordcat} it will be beneficial to introduce the ``wedge presentation'' of defect $2$- and $3$-manifolds building on the one introduced in \autocite[Sect.\,5.1]{FjFRS}. The idea behind this presentation is analogous to the presentation of a torus as a rectangle with opposite sides identified. In the wedge presentation we draw a horizontal disc as a disc sector with an identification of the legs of the sector is implied. As above we will use curly lines to indicate interfaces at which the illustrated piece is connected to other parts of the manifold and we will use dashed lines to indicate further identifications of the legs. Moreover, for clarity we will explicitly highlight the boundary components in this presentation using a light gray colouring. Examples of a cylindrical part of a defect surface, a defect ball, and the defect solid torus from \Cref{fig:circleunit} are illustrated in \Cref{fig:wedgepres}.
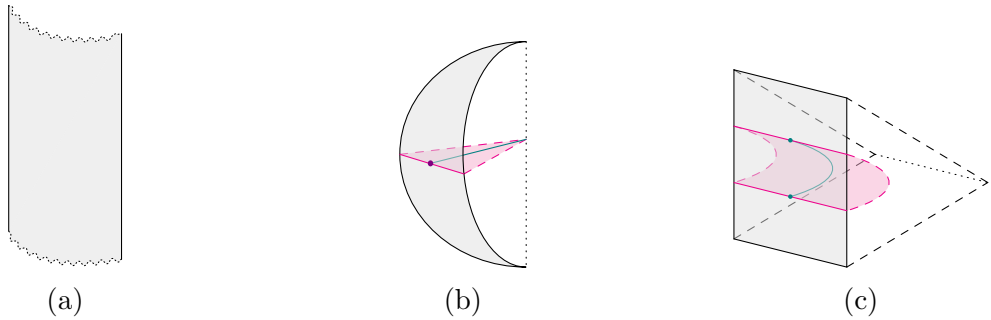
\begin{figure}[t!]
    \centering\begin{subfigure}{0.3\textwidth}\centering
        \begin{tikzpicture}[scale=1.5]
    \fill [lightgray!50, opacity =0.5,decoration={snake,segment length=1.5mm,amplitude=0.3mm}] 
    (-1,-1) -- (-1,1) decorate{
     .. controls (-0.8,0.7) and (-0.4,0.68) .. (0,0.75)};
     \fill [lightgray!50, opacity =0.5,decoration={snake,segment length=1.5mm,amplitude=0.3mm}] 
    (0,0.75) -- (0,-1.25) decorate{
     .. controls (-0.4,-1.32) and (-0.8,-1.3) .. (-1,-1)};
     \draw[line cap=round,densely dotted ,decoration={snake,segment length=1.5mm,amplitude=0.3mm}] 
    decorate{(-1,1)
     .. controls (-0.8,0.7) and (-0.4,0.68) .. (0,0.75)};
     \draw[line cap=round,densely dotted 
     ,decoration={snake,segment length=1.5mm,amplitude=0.3mm}] 
   decorate{
    (0,-1.25) .. controls (-0.4,-1.32) and (-0.8,-1.3) .. (-1,-1)};
    \draw[line cap=round] (-1,1) -- (-1,-1)
    (0,0.75) -- (0,-1.25);

    \end{tikzpicture}
    \caption{}
     \label{fig:wedgecyl}
    \end{subfigure}
    \hfill
    \begin{subfigure}{0.3\textwidth}\centering
        \begin{tikzpicture}[xscale=2.4,yscale=2]
    \draw [dotted,line cap = round] (0,0.75) -- (0,-0.75); 
    \fill[magenta!20] (180:0.7 and 0.75) -- (0,0.1) -- (190:0.35 and 0.75) -- cycle;
    \draw [magenta,line cap=round,dashed,line join=round] (180:0.7 and 0.75) -- (0,0.1) -- (190:0.35 and 0.75);
    \draw[string-green,line cap=round] (0,0.1) -- (-0.53,-0.06);
    \fill [lightgray!50,opacity=0.5] (0,0.75) arc (90:270:0.35 and 0.75) 
    (0,-0.75) arc (270:90:0.7 and 0.75);
    \draw (0,0.75) arc (90:270:0.35 and 0.75)
    (0,-0.75) arc (270:90:0.7 and 0.75);
    \draw [magenta,line cap=round] (180:0.7 and 0.75) -- (190:0.35 and 0.75);
    \fill[violet] (-0.53,-0.06) ellipse (0.015 and 0.02);
    \end{tikzpicture}
    \caption{}
    \label{fig:wedgepresball}
    \end{subfigure}
    \hfill
    \begin{subfigure}{0.3\textwidth}\centering
        \begin{tikzpicture}[scale=1.5]
    \draw [dashed,line cap = round] (-1.25,0.5) -- (0,-0.25) 
    (1,-0.5) -- (-0.25,0.25)
    (-1.25,-1) -- (0,-0.25) 
    (-0.25,-1.25) -- (1,-0.5); 
    \draw[dotted, line cap= round] (0,-0.25) -- (1,-0.5);
    \fill [magenta!40,opacity=0.5] (-1.25,0) .. controls (-0.75,-0.15) and (-0.75,-0.35) .. (-1.25,-0.5) -- (-0.25,-0.75) .. controls(0.25,-0.6) and (0.25,-0.4) .. (-0.25,-0.25);
    \draw[string-green,line cap=round] (-0.75,-0.125) .. controls (-0.25,-0.275) and (-0.25,-0.475) .. (-0.75,-0.625);
    \draw[magenta,line cap=round,dashed] (-1.25,0) .. controls (-0.75,-0.15) and (-0.75,-0.35) .. (-1.25,-0.5);
    \fill[lightgray!50,opacity=0.5] (-1.25,0.5) -- (-0.25,0.25) -- (-0.25,-1.25) --(-1.25,-1) -- cycle;
    \draw (-1.25,0.5) -- (-0.25,0.25) -- (-0.25,-1.25) --(-1.25,-1) -- cycle;
    \draw[magenta,line cap=round] (-1.25,0) -- (-0.25,-0.25)
    (-1.25,-0.5) -- (-0.25,-0.75);
    \draw[magenta,line cap=round,dashed] (-0.25,-0.25) .. controls (0.25,-0.4) and (0.25,-0.6) .. (-0.25,-0.75);
    \fill[string-green] (-0.75,-0.125) circle (0.02)
    (-0.75,-0.625) circle (0.02);
    \end{tikzpicture}
    \caption{}
    \label{fig:wedgeprestorus}
    \end{subfigure}
    \caption{Examples of a cylindrical part of a defect surface (a), a defect ball (b), and (c) the defect solid torus from \Cref{fig:circleunit}.}
    \label{fig:wedgepres}
\end{figure}

\begin{figure}[p]
\hspace*{-1cm}\begin{tikzcd}[ampersand replacement=\&]
	{M_{\Gamma_2}^{\underline{Z}} \sqcup -M_{\Gamma}^{\underline{Y}}\sqcup M_{\Gamma}^{\underline{Y}}\sqcup -M_{\Gamma_1}^{\underline{X}}} \&\&\& {\widehat{\Sigma_2}^{\underline{Y},\underline{Z}} \sqcup \widehat{\Sigma_1}^{\underline{X},\underline{Y}}} \\
	\\
	\\
    \\
    \\
	\\
	\\
	\\
	{M_{\Gamma_2}^{\underline{Z}} \sqcup -M_{\Gamma}\sqcup_{\widehat{\Gamma}} M_{\Gamma}\sqcup -M_{\Gamma_1}^{\underline{X}}} \&\&\& {(\widehat{\Sigma_2}\sqcup_{\widehat{\Gamma}}\widehat{\Sigma_1})^{\underline{X},\underline{Z}}} \\
	\\
	\\
	\\
	\\
	\\
	\& {M_{\Gamma_2}^{\underline{Z}} \sqcup -M_{\Gamma_1}^{\underline{X}}}
	\arrow["{\includegraphics[scale=0.8]{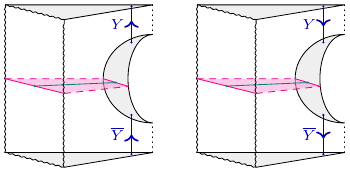}}", from=1-1, to=1-4]
	\arrow["{\includegraphics[scale=0.8]{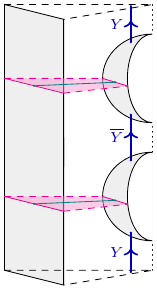}}"', from=1-1, to=9-1]
	\arrow["{\includegraphics[scale=0.8]{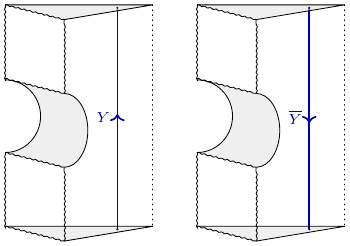}}", from=1-4, to=9-4]
	\arrow["{\includegraphics[scale=0.8]{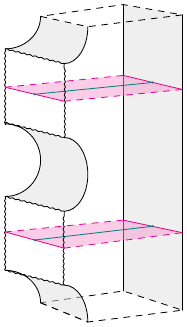}}",from=9-1, to=9-4]
	\arrow["{\includegraphics{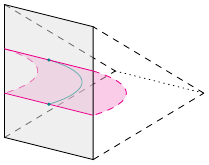}}", from=15-2, to=9-1]
	\arrow["{\includegraphics[scale=0.8]{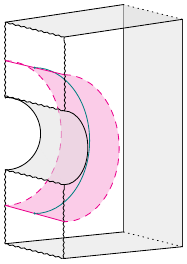}}"', from=15-2, to=9-4]
\end{tikzcd}
    \caption{Wedge presentation of diagram \ref{diag:oplaxbordcat} for $\frakC$ a defect circle in a neighbourhood of $\frakC$. For both arrows on the left only the non-identity bordism components are drawn.}
    \label{fig:bulkfact}
\end{figure}

With this preparation we can now study Diagram \ref{diag:oplaxbordcat}. The local version in the wedge presentation is illustrated in \Cref{fig:bulkfact}.

To compose bordisms in the wedge presentation we identify the corresponding gray shaded surface areas. In particular the composition of the top arrow with the right arrow amounts to 
\begin{equation}
    \left(\vcenter{\hbox
    {\includegraphics[scale=0.9]{figs/circlegluerightarrow.pdf}}}\right) 
    \circ  \left(\vcenter{\hbox{\includegraphics[scale=0.9]{figs/circlegluetoparrow.pdf}}}\right) \cong \vcenter{\hbox{\includegraphics[scale=0.9]{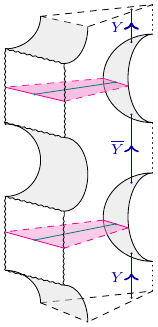}}}
\end{equation}
\noindent
Note here that the top and bottom triangle are identified.

Analogously the composition of the left arrow with the dashed arrow in the middle is given by
\begin{equation}
    \left(\vcenter{\hbox
    {\includegraphics[scale=0.9]{figs/circlegluedashedarrow.pdf}}}\right) 
    \circ  \left(\vcenter{\hbox{\includegraphics[scale=0.9]{figs/circleglueleftarrow.pdf}}}\right) \cong \vcenter{\hbox{\includegraphics[scale=0.9]{figs/circlegluecalc.pdf}}}
\end{equation}
Which shows that the dashed arrow indeed makes the square commute and thus represents the composition  $\Cor_{\frakS_2}\diamond \Cor_{\frakS_1}$. 

\noindent 
Finally, for the triangle we have
\begin{equation}
    \begin{aligned}
        \left(\vcenter{\hbox
    {\includegraphics[scale=0.9]{figs/circlegluedashedarrow.pdf}}}\right) 
    \circ  \left(\vcenter{\hbox{\includegraphics[scale=1.1]{figs/wedgeprestorus.pdf}}}\right) &\cong  \vcenter{\hbox{\includegraphics[scale=0.9]{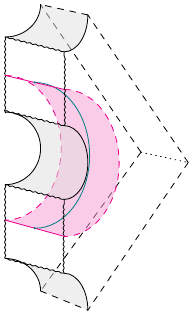}}}\\
    &\cong \vcenter{\hbox{\includegraphics[scale=0.9]{figs/circleglued.pdf}}}
    \end{aligned}
\end{equation}
where the second isomorphism uses the identification of the rectangular regions, see also \autocite[Sec.\,5.6]{FjFRS}. With this we find that also the triangle commutes.

\subsection{Non-degeneracy / oplax unitality}
The final axiom we need to check is the oplax unitality axiom in the form of \Cref{{diag:unitality}}. This is a purely algebraic condition on $\rmZ_\calc$ and thus leads to our final assumption:
\begin{assumption}\label{ass:4}
    For any $\frakC \in \WS(\DD_\calc)$, the components of the counit $\epsilon_{\Cor_\frakC} \colon \Cor_\frakC \otimes_\kk \Cor_\frakC^\dagger \Rightarrow \mathrm{Bl}_\calc(\id_\frakC)$ of the adjunction $\Cor_{\frakC} \dashv \Cor_{\frakC}^\dagger$ are given by $\rmZ_\calc(M_{\Gamma \times I}^{\underline{X},\underline{Y}})$.
\end{assumption}
\begin{remark}\label{rem:unitality}
For later use it is beneficial to note that $\epsilon_{\Cor_\frakC} \in \mathrm{Nat}(\Cor_\frakC \otimes_\kk \Cor_\frakC^\dagger, \mathrm{Bl}_\calc(\id_\frakC))$ gets sent to $\id_{\mathbbm{F}_\frakC} \in \Hom_{\mathrm{Bl}_\calc(\frakC)}(\mathbbm{F}_\frakC,\mathbbm{F}_\frakC)$ under the isomorphism $\mathrm{Nat}(\Cor_\frakC \otimes_\kk \Cor_\frakC^\dagger, \mathrm{Bl}_\calc(\id_\frakC)) \cong \Hom_{\mathrm{Bl}_\calc(\frakC)}(\mathbbm{F}_\frakC,\mathbbm{F}_\frakC)$ obtained by combining the isomorphisms \ref{eq:cor-block} and \ref{eq:cor-adj} with unitality of the modular functor, i.e.\ $\mathrm{Bl}_\calc(\id_\frakC)(-,-) \cong \Hom_{\mathrm{Bl}_\calc(\frakC)}(-,-)$.
\end{remark}
Putting everything together we arrive at the main result of this article:
\begin{theorem}\label{thm:fullcft}
Let 
\begin{equation}
        \rmZ_\calc \colon \bord_{3,2}^{\chi,\mathrm{ def}}(\DD_\calc) \to \vect
\end{equation}
be a $3$d defect TFT extending the $3$d TFT with embedded ribbon graphs
\begin{equation}
        \widehat{\rmV}_\calc \colon \widehat{\bord}_{3,2}^{\chi}(\calc) \to \vect
\end{equation}
of \autocite{DGGPR19} in the sense of \Cref{def:deftftextension}. If $\rmZ_\calc$ satisfies Assumptions \ref{ass:1}, \ref{ass:3}, \ref{ass:2}, and \ref{ass:4} then evaluating it on the connecting manifolds defines a full conformal field theory based on $\mathrm{Bl}_\calc$, i.e.\ a braided monoidal oplax natural transformation
\begin{align}
    \begin{tikzcd}[ampersand replacement=\&]
	\WS(\DD_\calc) \&\& {\Prof}
	\arrow[""{name=0, anchor=center, inner sep=0}, "{\Delta_{\kk}}", curve={height=-24pt}, from=1-1, to=1-3]
	\arrow[""{name=1, anchor=center, inner sep=0}, "{\mathrm{Bl_\calc}}"', curve={height=24pt}, from=1-1, to=1-3]
	\arrow["\Cor", shorten <=6pt, shorten >=6pt, Rightarrow, from=0, to=1]
    \end{tikzcd}.
\end{align}
\end{theorem}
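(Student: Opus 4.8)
The plan is to verify directly that the data $\Cor$ assembled in the preceding subsections satisfies each clause of the definition of a braided monoidal oplax natural transformation (\Cref{def:fullcft}), so that the proof reduces to collecting the separate verifications and supplying the one axiom not yet treated in the text. First I would recall that all the required data is already in place: the $1$-morphism components $\Cor_\frakC$ are the left exact profunctors induced by $\mathrm{cor}_\Gamma(\underline{X}) = \rmZ_\calc(M_\Gamma^{\underline{X}})$, which are well-defined and admit the adjoint $\Cor_\frakC^\dagger$ by Assumption \ref{ass:1}; the $2$-morphism components $\Cor_\frakS$ are induced by the connecting bordisms $M_\Sigma^{\underline{X},\underline{Y}}$; and the braided monoidal structure $(\Pi,\Upsilon)$ was produced in the Monoidality subsection from the monoidality of $\rmZ_\calc$ together with the identities $M_{\Sigma\sqcup\widetilde{\Sigma}} = M_\Sigma \sqcup M_{\widetilde{\Sigma}}$ and $M_\varnothing = \varnothing$.

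Next I would tick off the oplax naturality axioms already established. Axiom (i), naturality of the $2$-morphism components (Diagram \ref{diag:2natural}), follows from the mapping-cylinder isomorphism $F\colon \widehat{C_{f}}^{\underline{X},\underline{Y}} \sqcup_{\widehat{\Sigma}^{\underline{X},\underline{Y}}} M_\Sigma^{\underline{X},\underline{Y}} \to M_{\Sigma'}^{\underline{X},\underline{Y}}$ exhibited in the MCG-covariance subsection, together with functoriality of $\rmZ_\calc$. Axiom (ii), oplax naturality (Diagram \ref{diag:horinatural}), was reduced under Assumptions \ref{ass:3} and \ref{ass:2} to the commuting bordism Diagram \ref{diag:oplaxbordcat} and verified case by case in \Cref{sec:factorisation} for boundary and bulk factorisation via the cut-and-reglue pictures. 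The four remaining axioms involving $\Pi$ and $\Upsilon$ are the standard compatibilities of a braided monoidal oplax transformation and follow formally from the same disjoint-union identities for connecting manifolds and the monoidal coherence of $\rmZ_\calc$.

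It then remains only to establish axiom (iii), oplax unitality in the form $\Cor_{\id_\frakC} = \epsilon_{\Cor_\frakC}$ (Diagram \ref{diag:unitality}). Here I would invoke Assumption \ref{ass:4}: since $\id_\frakC = \frakC \times I$, the connecting bordism defining $\Cor_{\id_\frakC}$ is exactly $M_{\Gamma\times I}^{\underline{X},\underline{Y}}$, which by Assumption \ref{ass:4} computes the counit $\epsilon_{\Cor_\frakC}$, and under the identification of \Cref{rem:unitality} both sides correspond to $\id_{\mathbbm{F}_\frakC}$. As in the author's handling of axiom (ii), I would carry out all of these comparisons in a strictified model of $\Prof$ so as not to drag associators and unitors through the diagrams, noting that the return to the weak setting is routine coherence bookkeeping.

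The genuinely hard part is not this packaging but the factorisation step underlying axiom (ii): translating the abstract coend-defined horizontal composition $\Cor_{\frakS_2}\diamond\Cor_{\frakS_1}$ and the adjunction unit $\eta_{\Cor_\frakC}$ into honest defect bordisms in $\borddef$ is precisely what Assumptions \ref{ass:3} and \ref{ass:2} are designed to permit, and the geometric verification that the cut-out gluing bordism fills the dashed arrow of Diagram \ref{diag:oplaxbordcat} is where the real content lies. Once those identifications are granted, the main theorem is an assembly of the four subsections together with the immediate consequence of Assumption \ref{ass:4}.
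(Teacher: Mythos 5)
Your proposal is correct and follows essentially the same route as the paper: the theorem is indeed proved by assembling the construction of $\Cor$ (under Assumption \ref{ass:1}), the monoidality data $(\Pi,\Upsilon)$, the mapping-cylinder argument for $2$-morphism naturality, the factorisation argument for oplax naturality under Assumptions \ref{ass:3} and \ref{ass:2}, and finally oplax unitality, which — exactly as you argue — is immediate from Assumption \ref{ass:4} since $\Cor_{\id_\frakC}$ is by construction induced by $M_{\Gamma\times I}^{\underline{X},\underline{Y}}$. Your identification of the factorisation step (Diagram \ref{diag:oplaxbordcat}) as the genuine content, and the strictification of $\Prof$ as routine bookkeeping, also matches the paper's treatment.
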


\begin{remark}
For $\calc$ semisimple our construction recovers the work of \autocite{FRSI} by considering the defect TFT constructed in \autocite{CRS17defRT}. To be more precise the $2$-morphism components of $\Cor$ recover their correlators while the $1$-morphism components of $\Cor$ correspond to the vector spaces used in the analysis of the field content explained in \autocite[Sec.\,3]{FRSIV}. A more detailed comparison will be given in upcoming work. We also want to mention that the naturality proofs above are strongly informed by the topological considerations in the proofs given in \autocite{FjFRS,FFS12}. In particular, our discussion of bulk factorisation is heavily inspired by the ideas and argument in \autocite[Sec.\,5]{FjFRS}. The reason the algebraic arguments in their proofs are absent from our discussion is because they are hidden in the defect TFT.
\end{remark}

%
%
   
\section{Example: Diagonal CFT}\label{sec:logcardy}
In this final section we study the simplest non-trivial situation to which our construction applies, the one with defect TFT given by $\rmZ_\calc = \widehat{\rmV}_\calc \colon \widehat{\bord}^\chi_{3,2}(\calc) \to \vect$. In CFT terminology this example corresponds to the so-called \emph{diagonal}/\emph{charge-conjugate}/\emph{Cardy} case. 

As explained at the end of \Cref{sec:3ddeftft} the defect data $\DD_\calc$ is completely encoded in the modular tensor category $\calc$ itself with the label set of $1$-strata $D_1$ the objects of $\calc$, the label set of $0$-strata $D_0$ the morphisms of $\calc$, and the other two label sets one element sets which we take to be $\{\unit\}$. In particular the free boundaries are also labelled with objects in $\calc$. Let us denote the corresponding world sheet $2$-category by $ \WS_\calc:= \WS(\DD_\calc)$.

Before we can compute CFT quantities we first need to check that $\widehat{\rmV}_\calc$ satisfies the conditions of \Cref{thm:fullcft}. Assumptions \ref{ass:1} and \ref{ass:3} are \autocite[Lem.\,4.3]{HR2024modfunc} and \autocite[Prop.\,4.8]{HR2024modfunc}, respectively. For \Cref{ass:2} we need to compare the image of the bordisms depicted in \Cref{fig:unitadj} for any defect interval and defect circle under $\widehat{\rmV}_\calc$ with the identity of the corresponding field content. Due to this we will first compute the field content in \Cref{sec:fieldcont} and then verify \Cref{ass:2}. Afterwards we will discuss \Cref{ass:4} in detail in \Cref{sec:twopointcor}. Finally, in \Cref{sec:cftstuff} we will compute some quantities of physical interest including boundary states as well as partition functions and compare our results to the ones the proposed in \autocite{FGSS18cardy}. We will also compute the action of a line defect on bulk fields, which to our knowledge has not been considered in the non-semisimple setting before.

\subsection{Field content}\label{sec:fieldcont}
Let us start with the field content. First note that monoidality of $\calc$ allows us to reduce our discussion to understanding the case of a defect interval with no point defects in its interior and of a defect circle with one point defect. Informally this can be seen as the result of fusing the defects.

\subsubsection*{Boundary fields}
Let $m,n \in \calc$ and let $I_{n,m} \in \WS_\calc$ be the defect interval with underlying manifold the standard interval $I = [0,1]$ oriented from $0$ to $1$, and free boundaries labelled with $m$ at $\{0\} \subset I $ and $n$ at $\{1\} \subset I$. For $X \in \calc$ the corresponding connecting manifold $M_{I_{n,m}}^{X}$ is a three punctured sphere with the $X$ and $m$ puncture positively oriented and the $n$ puncture negatively oriented. The reason for these orientations is because we want to think of as the embedded interval as ingoing while the $X$-labelled marked point should be outgoing. From this we get
\begin{equation}
    \begin{aligned}
        \widehat{\rmV}_\calc\left(M_{I_{n,m}}^{\underline{X}}\right) &\cong \Hom_\calc(n, m \otimes X) \\
        &\cong \Hom_\calc(  m^*\otimes n ,X),
    \end{aligned}
\end{equation}
where we used that state spaces of the TFT $\widehat{\rmV}_\calc$ are isomorphic to morphism spaces in $\calc$ \autocite[Prop.\,4.17]{DGGPR19}.
We can now read off the boundary field content as
\begin{equation}
    \mathbbm{F}_{I_{n,m}} = m^* \otimes n.
\end{equation}
For \Cref{ass:2} we need to consider the bordism from \Cref{fig:intervalunit}. In our setting this reduces to
\begin{equation}
    \eta_{M_{I_{n,m}}} = 
    \begin{tikzpicture}[baseline=0cm]
        \draw[loosely dashed,thick,line cap=round] (1,0) arc (0:180:1 and 0.4);
        \fill[white,opacity=0.7] (0,0) circle (1); 
         \begin{scope}[decoration={
                      markings,
                      mark=at position 0.52 with {\arrow{>}}}
                     ] 
            \draw[string-violet,thick,postaction={decorate}] (-0.3,0.7) -- (-0.3,-0.7);
            \draw[string-violet,thick,postaction={decorate}] (0.3,-0.55) -- (0.3,0.85);
        \end{scope}
        \filldraw[string-violet] (-0.3,0.7) circle (0.04)
        (-0.3,-0.7) circle (0.04)
        (0.3,-0.55) circle (0.04)
        (0.3,0.85) circle (0.04);
        \draw[white, ultra thick] (1,0) arc (0:-180:1 and 0.4);
         \draw[thick] (0,0) circle (1)
         (1,0) arc (0:-180:1 and 0.4);
         \node at (-0.3,0) [left,string-violet] {\scriptsize $m$};
         \node at (0.3,0.05) [right,string-violet] {\scriptsize $n$};
    \end{tikzpicture}
    \colon \varnothing \longrightarrow \begin{tikzpicture}[baseline=0cm]
         \draw[thick, loosely dashed,line cap=round] (1,0) arc (0:180:1 and 0.4);
        \fill[white,opacity=0.7] (0,0) circle (1); 
        \draw[white, ultra thick] (1,0) arc (0:-180:1 and 0.4);
         \draw[thick] (0,0) circle (1)
         (1,0) arc (0:-180:1 and 0.4);
         \filldraw[string-violet] (120:1) circle (0.04)
        (60:1) circle (0.04)
        (240:1) circle (0.04)
        (300:1) circle (0.04);
         \node at (120:0.9) [above left,string-violet] {\scriptsize $(m,-)$};
         \node at (240:0.9) [below left,string-violet] {\scriptsize $(m,-)$};
         \node at (60:0.9) [above right,string-violet] {\scriptsize $(n,+)$};
         \node at (300:0.9) [below right,string-violet] {\scriptsize $(n,+)$};
    \end{tikzpicture}
\end{equation}
It is now straightforward to see that $\rmZ_\calc(\eta_{M_{I_{n,m}}})$ corresponds to $\id_{m^*\otimes n}$ 
under the isomorphism between the state spaces of $\widehat{\rmV}_\calc$ with morphism spaces in $\calc$, as desired.

\subsubsection*{Bulk and disorder fields}
Let $k\in \calc$ and let $S^1_k\in \WS_\calc$ be a defect circle with a single negatively oriented $0$-stratum at $(0,-1) \in S^1\subset \mathbbm{R}^2$ labelled with $k$. For $(X,\overline{X})\in \calc \times \overline{\calc} \cong \calc^{\pi_0(\widehat{S^1})}$ the connecting manifold $M_{S^1}^{(X,\overline{X})}$ 
is topologically a three-punctured $2$-sphere. We want to stress here that $X$ and $\overline{X}$ are independent of each other and do not correspond to the same object in the category underlying $\calc$ and $\overline{\calc}$. With this we compute 
\begin{equation}
    \begin{aligned}
        \widehat{\rmV}_\calc\left(M_{S^1_k}^{(X,\overline{X})}\right) &\cong \Hom_\calc(k,X\otimes \overline{X}) \\
        &\cong \Hom_{\calc\boxtimes\overline{\calc}}(F(k),X\boxtimes \overline{X})
    \end{aligned}
\end{equation}
where $F \colon \calc \to \calc \boxtimes \overline{\calc}$ is the two-sided adjoint of the monoidal product functor $\otimes \colon \calc\boxtimes \overline{\calc} \to \calc$ as discussed in \Cref{subsec:reptheory}. From this we get  
\begin{equation}
    \mathbbm{F}_{S^1_k} \cong F(k) \cong \int^{X \in \calc} X^*\otimes k \boxtimes X \in \calc \boxtimes \overline{\calc}.
\end{equation}
In particular for $k=\unit$ we get the bulk field content 
\begin{equation}
    \mathbbm{F}_{S^1_\unit} \cong \int^{X \in \calc} X^*\boxtimes X = L \in \calc \boxtimes \overline{\calc}.
\end{equation}
For $\calc$ semisimple this is simply $\coend \cong \bigoplus_{i\in \mathrm{Irr}} i^* \boxtimes i$ 
where $\mathrm{Irr}$ is a set of representatives of simple objects in $\calc$. With this we reproduced the known bulk fields for the rational full CFT with charge-conjugate partition function justifying the name also in the non-semisimple setting. We note that $\mathbbm{F}_{I_{n,m}}$ and $\mathbbm{F}_{S^1_\unit}$ are precisely the field content proposed in \autocite{FGSS18cardy}.

For \Cref{ass:2} first recall the equivalence $\calc \boxtimes \overline{\calc} \simeq \calc_\coend$ to the category of right $\coend$-modules in $\calc$ from \Cref{subsec:reptheory}. Under this equivalence $F(-)$ is simply the free functor $-\otimes \coend$. Next note that $\widehat{\rmV}_\calc\left(M_{S^1_k}^\dagger \sqcup_{\widehat{S_k^1}} M_{S^1_k}\right) \cong \Hom_\calc(k\otimes\coend, k)  \cong \Hom_\coend(k\otimes \coend, k \otimes \coend)$. Where the (inverse of the) second isomorphism is explicitly given by
\begin{equation}\label{eq:coendadjunc}
    \begin{aligned}
        \Hom_\coend(X,k\otimes \coend) &\to \Hom_\calc(X,k) \\
        f &\mapsto (\id_k \otimes \lambda) \circ f
    \end{aligned}
\end{equation}
and exhibits $- \otimes \coend$ as the right adjoint of the forgetful functor $\calc_\coend \to \calc$.
The bordism we have to consider is
\begin{equation}
    \eta_{S^1_k} = 
    \begin{tikzpicture}[baseline=0cm]
    \draw[thick] (0,0) ellipse (1.5 and 1)
           (0.5,0.06) arc (-10:-170:0.5 and 0.3)
           (0.4,-0.06) arc (10:170:0.4 and 0.25);
         \begin{scope}[decoration={
                      markings,
                      mark=at position 0.52 with {\arrow{>}}}
                     ] 
            \draw[string-green,thick,postaction={decorate}] (-0.6,-0.4) arc (180:360: 0.6 and 0.2);
        \end{scope}
        \filldraw[string-green] 
        (-0.6,-0.4) circle (0.5mm)
        (0.6,-0.4) circle (0.5mm);
         \node at (0,-0.35) [string-green] {\scriptsize $k$};
    \end{tikzpicture}
    \colon \varnothing \longrightarrow 
    \begin{tikzpicture}[baseline=0cm]
    \draw[thick] (0,0) ellipse (1.5 and 1)
           (0.5,0.06) arc (-10:-170:0.5 and 0.3)
           (0.4,-0.06) arc (10:170:0.4 and 0.25);
        \filldraw[string-green] 
        (-0.6,-0.4) circle (0.5mm)
        (0.6,-0.4) circle (0.5mm);
        \node at (-0.5,-0.4) [above left,string-green] {\scriptsize $(k,-)$};
        \node at (0.5,-0.4) [above right,string-green] {\scriptsize $(k,+)$};
    \end{tikzpicture}
\end{equation}
We can get $\widehat{\rmV}_\calc\left( \eta_{S^1_k} \right)$ using the sliding trick employed in \autocite[Sec.\,4.4]{HR2024modfunc} for $X=\unit$, or more precisely by setting $X=\unit$ in \autocite[Eq.\,4.26]{HR2024modfunc}. After a straightforward calculation in $\calc$ this then leads to $\widehat{\rmV}_\calc\left(\eta_{S^1_k}\right) = \lambda \otimes \id_k \in \Hom_\calc(\coend\otimes k, k)$. Which is the image of $\id_{\coend\otimes k} \in \Hom_\coend(\coend\otimes k, \coend \otimes k)$ under \eqref{eq:coendadjunc}.

\subsection{Two point correlators}\label{sec:twopointcor}
We now turn to study \Cref{ass:4}. To this end first recall \Cref{lem:profadjunc} that the components of the counit $\epsilon_{\Cor_\frakC} \colon \Cor_\frakC \otimes_\kk \Cor_\frakC^\dagger \Rightarrow \mathrm{Bl}_\calc(\id_\frakC)$ are induced by the composition map
\begin{equation}
       \Hom_{\Bl_\calc(\frakC)}(\underline{X},\mathbbm{F}_{\frakC}) \otimes_\kk  \Hom_{\Bl_\calc(\frakC)}(\mathbbm{F}_{\frakC},\underline{Y}) \to \Hom_{\Bl_\calc(\frakC)}(\underline{X},\underline{Y}).
\end{equation}

\subsubsection{Boundary}
Let us start with $I_{n,m}$ the defect interval considered above. For $X,Y\in \calc$ the connecting bordism
\begin{equation}
    M_{\id_{I_{n,m}}}^{(X,Y)} \colon -M_{I_{n,m}}^{X} \sqcup M_{I_{n,m}}^{Y} \to \widehat{\id_{I_{n,m}}}^{(X,Y)}
\end{equation}
of $\id_{I_{n,m}}$ is given by
\begin{equation}
        \begin{tikzpicture}
    \draw[thick,loosely dotted,line cap=round](0:2.5 and 0.9) arc (0:180:2.5 and 0.9);
    \fill[white,opacity=0.7] (0,0) ellipse (2.5 and 1.5);
    \draw[thick] (0,0) ellipse (2.5 and 1.5)
    (2.5,0) arc (0:-180:2.5 and 0.9);
    \draw[thick] (0.9,0) circle (0.5)
    (-0.9,0) circle (0.5);
    \draw[thick,loosely dotted,line cap=round](0.4,0) arc (180:0:0.5 and 0.2)
    (-0.4,0) arc (0:180:0.5 and 0.2); 
    \begin{scope}[very thick,decoration={
                      markings,
                      mark=at position 0.6 with {\arrow{>}}}
                     ]  
        \draw[string-blue,thick,postaction={decorate}] (-2.5,0) -- (-1.4,0);
        \draw[string-blue,thick,postaction={decorate}] (1.4,0) -- (2.5,0);
        \draw[string-violet,thick,postaction={decorate}] ([xshift=0.9cm]110:0.5 and 0.2)  .. controls (0,0.3) .. ([xshift=-0.9cm]70:0.5 and 0.2);
    \end{scope}
    \fill[string-violet] ([xshift=-0.9cm]70:0.5 and 0.2) circle (0.04)
    ([xshift=0.9cm]110:0.5 and 0.2) circle (0.04);
    \fill[white,opacity=0.7] (-0.4,0) arc (0:180:0.5)
    (1.4,0) arc (0:180:0.5);
    \draw[thick] (-0.4,0) arc (0:180:0.5)
    (1.4,0) arc (0:180:0.5)
    (0.4,0) arc (180:360:0.5 and 0.2)
    (-0.4,0) arc (0:-180:0.5 and 0.2);
    \draw (-0.4,0) arc (0:180:0.5);
    \begin{scope}[very thick,decoration={
                      markings,
                      mark=at position 0.6 with {\arrow{>}}}
                     ]  
        \draw[string-violet,thick,postaction={decorate}] ([xshift=-0.9cm]290:0.5 and 0.2) .. controls (0,-0.3) .. ([xshift=0.9cm]250:0.5 and 0.2);
    \end{scope}
    \fill[string-violet] ([xshift=-0.9cm]290:0.5 and 0.2) circle (0.04)
    ([xshift=0.9cm]250:0.5 and 0.2) circle (0.04);
    \fill[string-blue](-2.5,0) circle (0.04)
     (-1.4,0) circle (0.04)
     (1.4,0) circle (0.04)
     (2.5,0) circle (0.04);
     \node at (0,0.4) [string-violet, above] {\scriptsize $m$};
     \node at (0,-0.4) [string-violet, below] {\scriptsize $n$};
     \node at (-1.9,-0.05) [string-blue, below] {\scriptsize $X$};
     \node at (1.9,-0.05) [string-blue, below] {\scriptsize $Y$};
     %
     %
     %
     \begin{scope}[shift={(4.5,0)}]
    \draw[thick] (0.9,0) circle (0.5)
    (-0.9,0) circle (0.5);
    \draw[thick,loosely dotted,line cap=round](0.4,0) arc (180:0:0.5 and 0.2)
    (-0.4,0) arc (0:180:0.5 and 0.2); 
    \fill[white,opacity=0.7] (-0.4,0) arc (0:180:0.5)
    (1.4,0) arc (0:180:0.5);
    \draw[thick] (-0.4,0) arc (0:180:0.5)
    (1.4,0) arc (0:180:0.5)
    (0.4,0) arc (180:360:0.5 and 0.2)
    (-0.4,0) arc (0:-180:0.5 and 0.2);
    \draw[thick] (-0.4,0) arc (0:180:0.5);
    \fill[string-violet] ([xshift=-0.9cm]60:0.5) circle (0.04)
    ([xshift=0.9cm]240:0.5) circle (0.04)
    ([xshift=-0.9cm]120:0.5) circle (0.04)
    ([xshift=0.9cm]300:0.5) circle (0.04);
    \fill[string-blue](-0.9,-0.5) circle (0.04)
     (0.9,0.5) circle (0.04);
     \node at (-0.9,-0.45) [string-blue,below] {\scriptsize $(X,-)$};
     \node at ([xshift=-0.9cm]70:0.4) [string-violet,above right] {\scriptsize $(n,+)$};
     \node at ([xshift=-0.9cm]110:0.4) [string-violet,above left] {\scriptsize $(m,-)$};
     \node at (0.9,0.5) [string-blue,above] {\scriptsize $(Y,+)$};
     \node at ([xshift=0.9cm]300:0.4) [string-violet,below right] {\scriptsize $(n,-)$};
     \node at ([xshift=0.9cm]250:0.4) [string-violet,below left] {\scriptsize $(m,+)$};
     \end{scope}
     %
     %
     %
    \begin{scope}[shift={(8.2,0)}]
    \draw[thick, loosely dotted,line cap=round] (0.6,0) arc (0:180:0.6 and 0.3);
    \fill[white,opacity=0.7] (0,0) ellipse (0.6);
    \draw[thick] (0,0) ellipse (0.6);
    \draw[thick] (0.6,0) arc (0:-180:0.6 and 0.3);
    \fill[string-blue](0,-0.6) circle (0.04)
     (0,0.6) circle (0.04);
     \node at (0,-0.55) [string-blue,below] {\scriptsize $(X,-)$};
     \node at (0,0.55) [string-blue,above] {\scriptsize $(Y,+)$};
    \end{scope}
    \draw[thick,->,line cap=round] (6.1,0) -- (7.2,0);
    \fill[black] (2.7,-0.07) circle (0.025)
    (2.7,0.07) circle (0.025);
    \end{tikzpicture}
\end{equation}
Here and below we will always use blue ribbons for ``free labels'' $X,Y$, i.e.\ the components of the natural transformation $\mathrm{cor}_{\id_{n,m}}$. 
Applying $\widehat{\rmV}_\calc$ and using the isomorphism between TFT state spaces and Hom-spaces in $\calc$ we immediately get the linear map 
\begin{equation}
    \begin{aligned}
     \Hom_\calc(X,m^*\otimes n) \otimes_\kk  \Hom_\calc(m^* \otimes n, Y) &\to  \Hom_\calc(X,Y) \\
     f \otimes_\kk g &\mapsto g \circ f.
    \end{aligned}
\end{equation}
Thus $\mathrm{cor}_{\id_{n,m}}$ induces the same map as the composition map.
\subsubsection{Bulk}\label{subsec:bulk2point}
Let $S^1_k \in \WS_\calc$ be the defect circle from above. For $(X,\overline{X}),(Y,\overline{Y}) \in \calc\times \overline{\calc}$ the connecting bordism 
\begin{equation}
    M_{\id_{S^1_k}}^{(X,\overline{X},Y,\overline{Y})} \colon -M_{S^1_k}^{(X,\overline{X})} \sqcup M_{S^1_k}^{(Y,\overline{Y})} \to \widehat{\id_{S^1_k}}^{(X,\overline{X},Y,\overline{Y})}
\end{equation}
of $\id_{S^1_k}$ is given by a bordism which has as underlying $3$-manifold $S^3$ with two incoming and two outgoing $3$-balls cut out. Since this cannot be as nicely visualised, we instead draw it as a standard $3$-ball with three smaller $3$-balls cut out and read the middle interior boundary sphere as outgoing:
\begin{equation}
        \begin{tikzpicture}
    \draw[thick,loosely dotted,line cap=round](0:2.5 and 0.9) arc (0:180:2.5 and 0.9);
    \fill[white,opacity=0.7] (0,0) circle (2.5);
    \draw[thick,loosely dotted,line cap=round](0:0.5 and 0.2) arc (0:180:0.5 and 0.2)
    ([yshift=-1.5cm]0:0.5 and 0.2) arc (0:180:0.5 and 0.2)
    ([yshift=1.5cm]0:0.5 and 0.2) arc (0:180:0.5 and 0.2);
    \fill[white,opacity=0.7](0,0) circle (0.5)
    (0,1.5) circle (0.5)
    (0,-1.5) circle (0.5);
    \draw[thick] (0,0) circle (0.5)
    (0,1.5) circle (0.5)
    (0,-1.5) circle (0.5);
    \draw[thick,line cap=round](0:0.5 and 0.2) arc (0:-180:0.5 and 0.2)
    ([yshift=-1.5cm]0:0.5 and 0.2) arc (0:-180:0.5 and 0.2)
    ([yshift=1.5cm]0:0.5 and 0.2) arc (0:-180:0.5 and 0.2);
    \begin{scope}[thick,decoration={
                      markings,
                      mark=at position 0.57 with {\arrow{>}}}
                     ]  
        \draw[string-blue,thick,postaction={decorate}] (0,1) -- (0,0.5);
        \draw[string-blue,thick,postaction={decorate}] (0,2) -- (0,2.5);
        \draw[string-blue,thick,postaction={decorate}] (0,-0.5) -- (0,-1);
        \draw[string-blue,thick,postaction={decorate}] (0,-2.5) -- (0,-2);
    \end{scope}
    \begin{scope}[thick,decoration={
                      markings,
                      mark=at position 0.51 with {\arrow{>}}}
                     ]  
                     \draw[string-green,thick,postaction={decorate}] (0.5,-1.5) arc (-90:90:0.7 and 1.5);
    \end{scope}
    \draw[ultra thick,white](2.5,0) arc (0:-180:2.5 and 0.9);
    \draw[thick] (0,0) ellipse (2.5)
    (2.5,0) arc (0:-180:2.5 and 0.9);
    \fill[string-green](0.5,-1.5) circle (0.04)
     (0.5,1.5) circle (0.04);
     \fill[string-blue]
     (0,2.5) circle (0.04)
     (0,2) circle (0.04)
     (0,1) circle (0.04)
     (0,0.5) circle (0.04)
     (0,-0.5) circle (0.04)
     (0,-1) circle (0.04)
     (0,-2) circle (0.04)
     (0,-2.5) circle (0.04);
    \node at (1.45,0) [string-green] {\scriptsize $k$};
    \node at (0,2.25) [string-blue,right] {\scriptsize $\overline{Y}$};
    \node at (0,0.75) [string-blue,right] {\scriptsize $Y$};
    \node at (0,-0.7) [string-blue,right] {\scriptsize $X$};
    \node at (0,-2.25) [string-blue,right] {\scriptsize $\overline{X}$};
     %
     %
     %
     \begin{scope}[shift={(4.5,0)}]
    \draw[thick] (0.9,0) circle (0.5)
    (-0.9,0) circle (0.5);
    \draw[thick,loosely dotted,line cap=round](0.4,0) arc (180:0:0.5 and 0.2)
    (-0.4,0) arc (0:180:0.5 and 0.2); 
    \fill[white,opacity=0.7] (-0.4,0) arc (0:180:0.5)
    (1.4,0) arc (0:180:0.5);
    \draw[thick] (-0.4,0) arc (0:180:0.5)
    (1.4,0) arc (0:180:0.5)
    (0.4,0) arc (180:360:0.5 and 0.2)
    (-0.4,0) arc (0:-180:0.5 and 0.2);
    \draw[thick] (-0.4,0) arc (0:180:0.5);
    \fill[string-blue] ([xshift=0.9cm]60:0.5) circle (0.04)
    ([xshift=-0.9cm]240:0.5) circle (0.04)
    ([xshift=0.9cm]120:0.5) circle (0.04)
    ([xshift=-0.9cm]300:0.5) circle (0.04);
    \fill[string-green](0.9,-0.5) circle (0.04)
     (-0.9,0.5) circle (0.04);
     \node at (0.9,-0.45) [string-green,below] {\scriptsize $(k,-)$};
     \node at ([xshift=0.9cm]70:0.4) [string-blue,above right] {\scriptsize $(\overline{Y},+)$};
     \node at ([xshift=0.9cm]110:0.4) [string-blue,above left] {\scriptsize $(Y,+)$};
     \node at (-0.9,0.5) [string-green,above] {\scriptsize $(k,+)$};
     \node at ([xshift=-0.9cm]300:0.4) [string-blue,below right] {\scriptsize $(\overline{X},-)$};
     \node at ([xshift=-0.9cm]250:0.4) [string-blue,below left] {\scriptsize $(X,-)$};
     \end{scope}
     %
     %
     %
    \begin{scope}[shift={(8.8,0)}]
    \draw[thick] (0.5,0) circle (0.5)
    (-0.9,0) circle (0.5);
    \draw[thick,loosely dotted,line cap=round](0,0) arc (180:0:0.5 and 0.2)
    (-0.4,0) arc (0:180:0.5 and 0.2); 
    \fill[white,opacity=0.7] (-0.4,0) arc (0:180:0.5)
    (1,0) arc (0:180:0.5);
    \draw[thick] (-0.4,0) arc (0:180:0.5)
    (1,0) arc (0:180:0.5)
    (0,0) arc (180:360:0.5 and 0.2)
    (-0.4,0) arc (0:-180:0.5 and 0.2);
    \draw[thick] (-0.4,0) arc (0:180:0.5);
    \fill[string-blue](-0.9,-0.5) circle (0.04)
     (-0.9,0.5) circle (0.04)
     (0.5,-0.5) circle (0.04)
     (0.5,0.5) circle (0.04);
     \node at (-0.9,-0.45) [string-blue,below] {\scriptsize $(X,-)$};
     \node at (-0.9,0.5) [string-blue,above] {\scriptsize $(Y,+)$};
     \node at (0.5,-0.45) [string-blue,below] {\scriptsize $(\overline{X},-)$};
     \node at (0.5,0.5) [string-blue,above] {\scriptsize $(\overline{Y},+)$};
    \end{scope}
    \draw[thick,->,line cap=round] (6.1,0) -- (7.2,0);
    \fill[black] (2.7,-0.07) circle (0.025)
    (2.7,0.07) circle (0.025);
    \end{tikzpicture}
\end{equation}

Applying the TFT $\widehat{\rmV}_\calc$ we get a linear map
\begin{equation}\label{eq:circlepairingtft}
     \widehat{\rmV}_\calc\left(M_{\id_{S^1_k}}^{(X,\overline{X},Y,\overline{Y})}\right) \colon \widehat{\rmV}_\calc \left(- M_{S^1_k}^{(X,\overline{X})} \sqcup M_{S^1_k}^{(Y,\overline{Y})}\right) \to  \widehat{\rmV}_\calc\left(\widehat{\id_{S^1_k}}^{(X,\overline{X},Y,\overline{Y})}\right)
\end{equation}
which we want to compare to the composition map
\begin{equation}
    \Hom_{\calc\boxtimes\overline{\calc}}(X\boxtimes \overline{X},F(k)) \otimes_\kk \Hom_{\calc\boxtimes\overline{\calc}}(F(k),Y\boxtimes \overline{Y}) \to \Hom_{\calc\boxtimes\overline{\calc}}(X\boxtimes \overline{X},Y\boxtimes\overline{Y}).
\end{equation}

To this end it will again be useful to employ the equivalence $\calc\boxtimes \overline{\calc} \simeq \calc_\coend$. Under this equivalence the relevant adjunction isomorphisms are given by
\begin{equation}
\begin{aligned}
    \phi \colon \Hom_\calc(k,Y\otimes \overline{Y}) &\to \Hom_\coend(k\otimes\coend,Y\otimes \overline{Y}) \\
     g &\mapsto \rho_{Y\otimes\overline{Y}} \circ (g \otimes \id_\coend)
\end{aligned}  
\end{equation}
and
\begin{equation}
\begin{aligned}
  \psi \colon \Hom_\calc(X\otimes \overline{X},k) &\to \Hom_\coend(X\otimes \overline{X},k\otimes\coend) \\
    f &\mapsto \calD^{-1} (f \otimes \id_\coend) \circ \delta^\Lambda_{X\otimes \overline{X}}
\end{aligned}
\end{equation}
where $\rho_{Y\otimes\overline{Y}}$ is the canonical $\coend$ action from \Cref{lem:center-modules}, $\delta^\Lambda_{X\otimes \overline{X}}$ is the coaction on $X\otimes \overline{X}$ obtained by precomposing the canonical action $\rho_{X\otimes \overline{X}}$ with the Radford copairing $(S \otimes\id ) \circ\Delta \circ \Lambda$ from \Cref{prop:frobpairingcoend}, and $\calD$ is a choice of a square root of the modularity parameter $\zeta$ needed to define the $3$-manifold invariant, see e.g.\ \autocite[Sec.\,3.2]{DGGPR19}. Note that introducing the factor $\calD^{-1}$ is merely a convention and we choose this one because it will make the rest of our argument more transparent. Next recall the canonical isomorphisms
\begin{align}\label{eq:homspaceprod}
       \Hom_\calc(X,Y)\otimes_\kk \Hom_{\overline{\calc}}(\overline{X},\overline{Y})\cong\Hom_{\calc\boxtimes\overline{\calc}}(X\boxtimes \overline{X},Y\boxtimes\overline{Y}) \cong \Hom_\coend(X\otimes\overline{X},Y\otimes\overline{Y})
\end{align}
coming from the universal property of the Deligne product \autocite[Prop.\,1.11.2]{EGNO} and the equivalence $\calc\boxtimes \overline{\calc} \simeq \calc_\coend$. 
Combining these isomorphisms with the isomorphisms between TFT state spaces and morphism spaces in $\calc$ leads us to study commutativity of the following diagram:
\begin{equation}\label{diag:circlepairing}
    \begin{tikzcd}[ampersand replacement=\&]
	{\widehat{\rmV}_\calc \left( -M_{S^1_k}^{(X,\overline{X})} \sqcup M_{S^1_k}^{(Y,\overline{Y})}\right)} \& {\widehat{\rmV}_\calc \left(\widehat{\id_{S^1_k}}^{(X,\overline{X},Y,\overline{Y})}\right)} \\
	{\Hom_\calc(X\otimes \overline{X},k) \otimes_\kk  \Hom_\calc(k,Y\otimes \overline{Y})} \& {\Hom_\calc(X,Y)\otimes_\kk \Hom_{\overline{\calc}}(\overline{X},\overline{Y})} \\
	{\Hom_\coend(X\otimes \overline{X},k\otimes\coend) \otimes_\kk \Hom_\coend(k\otimes\coend,Y\otimes \overline{Y})} \& {\Hom_\coend(X\otimes\overline{X},Y\otimes\overline{Y})}
	\arrow["{\widehat{\rmV}_\calc\left(M_{\id_{S^1_k}}^{(X,\overline{X},Y,\overline{Y})}\right)}", from=1-1, to=1-2]
	\arrow["\cong"', from=1-1, to=2-1]
	\arrow["\cong", from=1-2, to=2-2]
	\arrow["{\psi\otimes_\kk\phi}"', from=2-1, to=3-1]
	\arrow["\cong", from=2-2, to=3-2]
	\arrow["\circ", from=3-1, to=3-2]
\end{tikzcd}
\end{equation}
Let us first analyse the purely algebraic part. To this end consider the linear map
\begin{equation}
\begin{aligned}
    \Hom_\calc(X\otimes \overline{X},k) \otimes_\kk \Hom_\calc(k,Y\otimes \overline{Y}) &\to \Hom_\coend(X\otimes\overline{X},Y\otimes \overline{Y}) \\
     f \otimes_\kk g &\mapsto \calD^{-1} \rho_{Y\otimes\overline{Y}} \circ (g \circ f \otimes \id_\coend) \circ \delta^\Lambda_{X\otimes \overline{X}}.
\end{aligned}
\end{equation}
In terms of bichrome graphs, as defined in \autocite[Sec.\,3.2]{DGGPR19}, we can rewrite this as follows
\begin{equation}\label{eq:cuttingforbordism}
\begin{aligned}
\rho_{Y\otimes\overline{Y}} \circ (g \circ f \otimes \id_\coend) \circ \delta^\Lambda_{X\otimes \overline{X}} &=
        \begin{tikzpicture}[baseline=-0.1cm]
            \draw[draw=string-blue,thick, line cap = round] (0.4,-1.6) -- (0.4,1.6)
            (-0.4,-1.6) -- (-0.4,1.6);
            \draw[draw=string-blue,thick, line cap = round] (0.9,-0.6) -- (0.9,0.6);
            \filldraw[fill=white,draw=string-blue,thick, line cap = round] (-0.6,-0.3) rectangle (0.6,0.3);
            \filldraw[fill=white,draw=string-blue,thick, line cap = round] (-0.6,-1.2) rectangle (1.2,-0.6);
            \filldraw[fill=white,draw=string-blue,thick, line cap = round] (-0.6,0.6) rectangle (1.2,1.2);
            \node at (0.9,0) [right,string-blue] {\scriptsize $\coend$};
            \node at (-0.4,-1.6) [below,string-blue] {\scriptsize $X$};
            \node at (0.4,-1.56) [below,string-blue] {\scriptsize $\overline{X}$};
            \node at (-0.4,1.6) [above,string-blue] {\scriptsize $Y$};
            \node at (0.4,1.6) [above,string-blue] {\scriptsize $\overline{Y}$};
            \node at (0,0) [string-blue] {\scriptsize $g\circ f$};
            \node at (0.45,0.9) [string-blue] {\scriptsize $\rho$\tiny${}_{Y\otimes\overline{Y}}$};
            \node at (0.45,-0.9) [string-blue] {\scriptsize$\delta$\tiny${}^\Lambda_{X\otimes \overline{X}}$};
        \end{tikzpicture} = \begin{tikzpicture}[baseline=-0.1cm]
            \draw[draw=string-blue,thick, line cap = round] (0.4,-1.3) -- (0.4,1.3)
            (-0.4,-1.3) -- (-0.4,1.3);
            \draw[white,line width=1mm] (1.1,0.6) arc (0:180:0.5 and 0.25)
            (0.1,-0.6) arc (180:360:0.5 and 0.25);
            \begin{scope}[decoration={
                      markings,
                      mark=at position 0.5 with {\arrow{>}}}
                     ] 
            \draw[string-red,thick,line cap= round] (1.1,-0.6) -- (1.1,0.6);
            \draw[string-red,thick,postaction={decorate},line cap= round] (1.1,0.6) arc (0:180:0.5 and 0.25);
            \draw[string-red,thick,line cap= round] (0.1,0.6) arc (180:270:0.5 and 0.25);
            \draw[string-red,thick,postaction={decorate},line cap= round] (0.1,-0.6) arc (180:360:0.5 and 0.25);
            \draw[string-red,thick,line cap= round] (0.1,-0.6) arc (180:90:0.5 and 0.25);
            \draw[string-red,thick,line cap= round] (0.6,-0.35) arc (-90:90:0.2 and 0.35);
            \end{scope}
            \draw[white,line width=1mm](0.4,-0.5) -- (0.4,0.5);
            \draw[draw=string-blue,thick, line cap = round] (0.4,-0.6) -- (0.4,0.6);
            \fill[fill=white] (-0.3,-0.35) rectangle (0.3,0.35);\filldraw[fill=white,draw=string-blue,thick, line cap = round] (-0.6,-0.3) rectangle (0.6,0.3);
            \node at (-0.4,-1.3) [below,string-blue] {\scriptsize $X$};
            \node at (0.4,-1.26) [below,string-blue] {\scriptsize $\overline{X}$};
            \node at (-0.4,1.3) [above,string-blue] {\scriptsize $Y$};
            \node at (0.4,1.3) [above,string-blue] {\scriptsize $\overline{Y}$};
            \node at (0,0) [string-blue] {\scriptsize $g\circ f$};
        \end{tikzpicture}\\
        &=\begin{tikzpicture}[baseline=-0.1cm]
            \draw[draw=string-blue,thick, line cap = round] (0.4,-0.4) -- (0.4,1.3)
            (-0.4,-1.3) -- (-0.4,1.3)
            (0.8,-0.4) -- (0.8,0.9)
            (0.4,-1.3) -- (0.4,-1.2)
            (1.2,0.9) -- (1.2,-0.4)
            (0.4,-1.2) .. controls (0.4,-1) and (1.2,-0.6) .. (1.2,-0.4);
            \begin{scope}[decoration={
                      markings,
                      mark=at position 0.5 with {\arrow{>}}}
                     ] 
            \draw[string-blue,thick,postaction={decorate},line cap= round] (0.8,-0.4) arc (0:-180:0.2);
            \draw[string-blue,thick,postaction={decorate},line cap= round] (1.2,0.9) arc (0:180:0.2);
            \end{scope}
            
            \draw[white,line width=1mm] (0.6,0.6) ellipse (0.4 and 0.2);
            \begin{scope}[decoration={
                      markings,
                      mark=at position 0.25 with {\arrow{>}}}
                     ] 
            \draw[string-red,thick,postaction={decorate}] (0.6,0.6) ellipse (0.4 and 0.2);
            \end{scope}
            \draw[white,line width=1mm] (0.4,0.3) -- (0.4,0.6)
            (0.8,0.3) -- (0.8,0.6);
            \draw[draw=string-blue,thick, line cap = round] (0.4,0.3) -- (0.4,0.6)
            (0.8,0.3) -- (0.8,0.6);
            \filldraw[fill=white,draw=string-blue,thick, line cap = round] (-0.6,-0.3) rectangle (0.6,0.3);
            \node at (-0.4,-1.3) [below,string-blue] {\scriptsize $X$};
            \node at (0.4,-1.26) [below,string-blue] {\scriptsize $\overline{X}$};
            \node at (-0.4,1.3) [above,string-blue] {\scriptsize $Y$};
            \node at (0.4,1.3) [above,string-blue] {\scriptsize $\overline{Y}$};
            \node at (0,0) [string-blue] {\scriptsize $g\circ f$};
        \end{tikzpicture}
        = \zeta \sum_{\alpha=1}^m \begin{tikzpicture}[baseline=-0.1cm]
            \draw[draw=string-blue,thick, line cap = round] (0.4,-0.4) -- (0.4,0.4)
            (-0.4,-1.3) -- (-0.4,1.6)
            (0.8,-0.4) -- (0.8,0.4)
            (0.4,-1.3) -- (0.4,-1.2)
            (1.2,1.2) -- (1.2,-0.4)
            (0.4,1.2) -- (0.4,1.6)
            (0.4,-1.2) .. controls (0.4,-1) and (1.2,-0.6) .. (1.2,-0.4);
            \begin{scope}[decoration={
                      markings,
                      mark=at position 0.5 with {\arrow{>}}}
                     ] 
            \draw[string-blue,thick,postaction={decorate},line cap= round] (0.8,-0.4) arc (0:-180:0.2);
            \draw[string-blue,thick,postaction={decorate},line cap= round] (1.2,1.2) arc (0:180:0.2);
            \end{scope}
            
            \filldraw[fill=white,draw=string-blue,thick, line cap = round] (-0.6,-0.3) rectangle (0.6,0.3);
            \filldraw[fill=white,draw=string-blue,thick, line cap = round] (0.3,0.4) rectangle (0.9,0.7);
            \filldraw[fill=white,draw=string-blue,thick, line cap = round] (0.3,0.9) rectangle (0.9,1.2);
            \node at (-0.4,-1.3) [below,string-blue] {\scriptsize $X$};
            \node at (0.4,-1.26) [below,string-blue] {\scriptsize $\overline{X}$};
            \node at (-0.4,1.6) [above,string-blue] {\scriptsize $Y$};
            \node at (0.4,1.6) [above,string-blue] {\scriptsize $\overline{Y}$};
            \node at (0,0) [string-blue] {\scriptsize $g\circ f$};
            \node at (0.6,0.55) [string-blue] {\scriptsize $a$\tiny${}_\alpha$};
            \node at (0.6,1.05) [string-blue] {\scriptsize $b$\tiny${}_\alpha$};
        \end{tikzpicture}
\end{aligned}
\end{equation}
Where in the last step we used \Cref{lem:cutting} to get $m\geqslant 0 $, $a_\alpha \colon \overline{Y} \otimes \overline{X}^* \to \unit$, and $b_\alpha \colon \unit \to \overline{Y} \otimes \overline{X}^*$.
Thus the map 
\begin{equation}
\begin{aligned}
    \Psi \colon \Hom_\calc(X\otimes \overline{X},k) \otimes_\kk \Hom_\calc(k,Y\otimes \overline{Y}) &\to \Hom_\calc(X,Y)\otimes_\kk \Hom_{\overline{\calc}}(\overline{X},\overline{Y}) \\
     f \otimes_\kk g &\mapsto = \calD \sum_{\alpha=1}^m 
     \begin{tikzpicture}[baseline=-0.1cm]
            \draw[draw=string-blue,thick, line cap = round] (0.4,-0.4) -- (0.4,0.4)
            (-0.4,-1.3) -- (-0.4,1.6)
            (0.8,-0.4) -- (0.8,0.4);
            \begin{scope}[decoration={
                      markings,
                      mark=at position 0.5 with {\arrow{>}}}
                     ] 
            \draw[string-blue,thick,postaction={decorate},line cap= round] (0.8,-0.4) arc (0:-180:0.2);
            \end{scope}
            \filldraw[fill=white,draw=string-blue,thick, line cap = round] (-0.6,-0.3) rectangle (0.6,0.3);
            \filldraw[fill=white,draw=string-blue,thick, line cap = round] (0.3,0.4) rectangle (0.9,0.7);
            \node at (-0.4,-1.3) [below,string-blue] {\scriptsize $X$};
            \node at (-0.4,1.6) [above,string-blue] {\scriptsize $Y$};
            \node at (0,0) [string-blue] {\scriptsize $g\circ f$};
            \node at (0.6,0.55) [string-blue] {\scriptsize $a$\tiny${}_\alpha$};
        \end{tikzpicture}
        \otimes_\kk \begin{tikzpicture}[baseline=-0.1cm]
            \draw[draw=string-blue,thick, line cap = round] 
            (0.4,-1.3) -- (0.4,-1.2)
            (1.2,0.6) -- (1.2,-0.4)
            (0.4,0.6) -- (0.4,1.6)
            (0.4,-1.2) .. controls (0.4,-1) and (1.2,-0.6) .. (1.2,-0.4);
            \begin{scope}[decoration={
                      markings,
                      mark=at position 0.5 with {\arrow{>}}}
                     ] 
            \draw[string-blue,thick,postaction={decorate},line cap= round] (1.2,0.6) arc (0:180:0.2);
            \end{scope}
            
            \filldraw[fill=white,draw=string-blue,thick, line cap = round] (0.3,0.3) rectangle (0.9,0.6);
            \node at (0.4,-1.26) [below,string-blue] {\scriptsize $\overline{X}$};
            \node at (0.4,1.6) [above,string-blue] {\scriptsize $\overline{Y}$};
            \node at (0.6,0.45) [string-blue] {\scriptsize $b$\tiny${}_\alpha$};
        \end{tikzpicture}
\end{aligned}
\end{equation}
makes the diagram 
\[\begin{tikzcd}[ampersand replacement=\&]
	{\Hom_\calc(X\otimes \overline{X},k) \otimes_\kk  \Hom_\calc(k,Y\otimes \overline{Y})} \& {\Hom_\calc(X,Y)\otimes_\kk \Hom_{\overline{\calc}}(\overline{X},\overline{Y})} \\
	{\Hom_\coend(X\otimes \overline{X},k\otimes\coend) \otimes_\kk \Hom_\coend(k\otimes\coend,Y\otimes \overline{Y})} \& {\Hom_\coend(X\otimes\overline{X},Y\otimes\overline{Y})}
	\arrow["\Psi", from=1-1, to=1-2]
	\arrow["{\psi\otimes_\kk\phi}"', from=1-1, to=2-1]
	\arrow["\cong", from=1-2, to=2-2]
	\arrow["\circ", from=2-1, to=2-2]
\end{tikzcd}\]
commute. Next for $a_{\alpha},b_{\alpha}$ as above we define the family of bordisms $M_\alpha^{(X,\overline{X},Y,\overline{Y})}$:
\begin{equation}
            \begin{tikzpicture}
    \draw[thick,loosely dotted,line cap=round](0:2.5 and 0.9) arc (0:180:2.5 and 0.9);
    \fill[white,opacity=0.7] (0,0) circle (2.5);
    \draw[thick,loosely dotted,line cap=round]
    ([yshift=-1.3cm]0:0.5 and 0.2) arc (0:180:0.5 and 0.2)
    ([yshift=1.3cm]0:0.5 and 0.2) arc (0:180:0.5 and 0.2);
    \fill[white,opacity=0.7](0,0) circle (0.5)
    (0,1.3) circle (0.5)
    (0,-1.3) circle (0.5);
    \draw[thick] (0,1.3) circle (0.5)
    (0,-1.3) circle (0.5);
    \draw[thick,line cap=round]
    ([yshift=-1.3cm]0:0.5 and 0.2) arc (0:-180:0.5 and 0.2)
    ([yshift=1.3cm]0:0.5 and 0.2) arc (0:-180:0.5 and 0.2);
    \begin{scope}[thick,decoration={
                      markings,
                      mark=at position 0.57 with {\arrow{>}}}
                     ]  
        \draw[string-blue,thick,postaction={decorate}] (0,1.8) -- (0,2.5);
        \draw[string-blue,thick,postaction={decorate}] (0,-2.5) -- (0,-1.8);
        \draw[string-green,thick,postaction={decorate}] (0,-0.8) -- (0,0.8);
    \end{scope}
    \begin{scope}[thick,decoration={
                      markings,
                      mark=at position 0.5 with {\arrow{>}}}
                     ]  
                     \draw[string-blue,thick,postaction={decorate}] (1.6,0.3) .. controls (1.6,0) and  ([xshift=0.3cm, yshift=-1.3cm]30:0.5).. ([yshift=-1.3cm]30:0.5);
                     \draw[string-blue,thick,postaction={decorate}] (0.5,0.3) arc (-180:0:0.3 and 0.2); 
    \end{scope}
    \draw[string-blue,thick] ([yshift=1.3cm]330:0.5) .. controls ([xshift=0.1cm,yshift=1.3cm]330:0.5) and (0.5,0.5) .. (0.5,0.3);
    \draw[ultra thick,white](2.5,0) arc (0:-180:2.5 and 0.9);
    \draw[thick] (0,0) ellipse (2.5)
    (2.5,0) arc (0:-180:2.5 and 0.9);
    \fill[string-green](0,-0.8) circle (0.04)
     (0,0.8) circle (0.04);
     \fill[string-blue]
     (0,2.5) circle (0.04)
     (0,1.8) circle (0.04)
     (0,-1.8) circle (0.04)
     (0,-2.5) circle (0.04)
     ([yshift=1.3cm]330:0.5) circle (0.04)
     ([yshift=-1.3cm]30:0.5) circle (0.04);
     \filldraw[fill=white,draw=string-blue,thick] (1,0.3) rectangle (1.7,0.7);
    \node at (0.05,0.05) [right,string-green] {\scriptsize $k$};
    \node at (0,2.25) [string-blue,right] {\scriptsize $Y$};
    \node at (0.5,-0.15) [string-blue,right] {\scriptsize $\overline{Y}$};
    \node at (0,-2.25) [string-blue,right] {\scriptsize $X$};
    \node at (1.2,-0.5) [string-blue,right] {\scriptsize $\overline{X}$};
    \node at (1.37,0.5) [string-blue] {\scriptsize $a$\tiny${}_\alpha$ };
    %
    %
    \draw[thick,loosely dotted,line cap=round]([xshift=4cm]0:1.3 and 0.5) arc (0:180:1.3 and 0.5);
    \fill[white,opacity=0.7] (4,0) circle (1.3);
    \filldraw[fill=white,draw=string-blue,thick] (3.65,-0.2) rectangle (4.35,0.2);
     \begin{scope}[thick,decoration={
                      markings,
                      mark=at position 0.5 with {\arrow{>}}}
                     ]  
        \draw[string-blue,thick,postaction={decorate}] (3.75,0.2) .. controls (3.75,0.6) and (4,1) .. (4,1.3);
        \draw[string-blue,thick,postaction={decorate}] (4.85,0.2) arc (0:180:0.3 and 0.4);
    \end{scope}
    \draw[string-blue,thick] (4,-1.3) .. controls (4,-1) and (4.85,-0.2) .. (4.85,0.2);
    \draw[white,ultra thick] ([xshift=4cm]0:1.3 and 0.5) arc (0:-180:1.3 and 0.5);
    \draw[thick] (4,0) circle (1.3)
    ([xshift=4cm]0:1.3 and 0.5) arc (0:-180:1.3 and 0.5);
    \fill[string-blue]
     (4,1.3) circle (0.04)
     (4,-1.3) circle (0.04);
     \node at (4.1,0.8) [string-blue] {\scriptsize $\overline{Y}$};
    \node at (4.5,-0.8) [string-blue] {\scriptsize $\overline{X}$};
    \node at (4,0) [string-blue] {\scriptsize $b$\tiny${}_\alpha$ };
     %
     %
     %
     \begin{scope}[shift={(7.5,0)}]
    \draw[thick] (0.9,0) circle (0.5)
    (-0.9,0) circle (0.5);
    \draw[thick,loosely dotted,line cap=round](0.4,0) arc (180:0:0.5 and 0.2)
    (-0.4,0) arc (0:180:0.5 and 0.2); 
    \fill[white,opacity=0.7] (-0.4,0) arc (0:180:0.5)
    (1.4,0) arc (0:180:0.5);
    \draw[thick] (-0.4,0) arc (0:180:0.5)
    (1.4,0) arc (0:180:0.5)
    (0.4,0) arc (180:360:0.5 and 0.2)
    (-0.4,0) arc (0:-180:0.5 and 0.2);
    \draw[thick] (-0.4,0) arc (0:180:0.5);
    \fill[string-blue] ([xshift=0.9cm]60:0.5) circle (0.04)
    ([xshift=-0.9cm]240:0.5) circle (0.04)
    ([xshift=0.9cm]120:0.5) circle (0.04)
    ([xshift=-0.9cm]300:0.5) circle (0.04);
    \fill[string-green](0.9,-0.5) circle (0.04)
     (-0.9,0.5) circle (0.04);
     \node at (0.9,-0.45) [string-green,below] {\scriptsize $(k,-)$};
     \node at ([xshift=0.9cm]70:0.4) [string-blue,above right] {\scriptsize $(\overline{Y},+)$};
     \node at ([xshift=0.9cm]110:0.4) [string-blue,above left] {\scriptsize $(Y,+)$};
     \node at (-0.9,0.5) [string-green,above] {\scriptsize $(k,+)$};
     \node at ([xshift=-0.9cm]300:0.4) [string-blue,below right] {\scriptsize $(\overline{X},-)$};
     \node at ([xshift=-0.9cm]250:0.4) [string-blue,below left] {\scriptsize $(X,-)$};
     \end{scope}
     %
     %
     %
    \begin{scope}[shift={(11.8,0)}]
    \draw[thick] (0.5,0) circle (0.5)
    (-0.9,0) circle (0.5);
    \draw[thick,loosely dotted,line cap=round](0,0) arc (180:0:0.5 and 0.2)
    (-0.4,0) arc (0:180:0.5 and 0.2); 
    \fill[white,opacity=0.7] (-0.4,0) arc (0:180:0.5)
    (1,0) arc (0:180:0.5);
    \draw[thick] (-0.4,0) arc (0:180:0.5)
    (1,0) arc (0:180:0.5)
    (0,0) arc (180:360:0.5 and 0.2)
    (-0.4,0) arc (0:-180:0.5 and 0.2);
    \draw[thick] (-0.4,0) arc (0:180:0.5);
    \fill[string-blue](-0.9,-0.5) circle (0.04)
     (-0.9,0.5) circle (0.04)
     (0.5,-0.5) circle (0.04)
     (0.5,0.5) circle (0.04);
     \node at (-0.9,-0.45) [string-blue,below] {\scriptsize $(X,-)$};
     \node at (-0.9,0.5) [string-blue,above] {\scriptsize $(Y,+)$};
     \node at (0.5,-0.45) [string-blue,below] {\scriptsize $(\overline{X},-)$};
     \node at (0.5,0.5) [string-blue,above] {\scriptsize $(\overline{Y},+)$};
    \end{scope}
    \draw[thick,->,line cap=round] (9.1,0) -- (10.2,0);
    \fill[black] (5.7,-0.07) circle (0.025)
    (5.7,0.07) circle (0.025);
    \end{tikzpicture}.
\end{equation}
By construction this family satisfies $\Psi = \calD\sum_{\alpha=1}^m \widehat{\rmV}_\calc\left(M_\alpha^{(X,\overline{X},Y,\overline{Y})}\right)$. With this we can reformulate commutativity of Diagram \ref{diag:circlepairing} to the equality
\begin{equation}\label{eq:tftequality}
   \widehat{\rmV}_\calc\left(M_{\id_{S^1_k}}^{(X,\overline{X},Y,\overline{Y})}\right) = \calD \sum_{\alpha=1}^m \widehat{\rmV}_\calc\left(M_\alpha^{(X,\overline{X},Y,\overline{Y})}\right)
\end{equation}
of linear maps. To show this equality we will compare the corresponding matrix elements. Recall from \autocite[Prop.\,4.17]{DGGPR19}, see also \autocite[Sec.\,3.4]{HR2024modfunc} for our conventions, that the state space $\widehat{\rmV}_\calc\left(- M_{S^1_k}^{(X,\overline{X})} \sqcup M_{S^1_k}^{(Y,\overline{Y})}\right)$ is spanned by bordisms $B_{f,g} \colon \varnothing \to- M_{S^1_k}^{(X,\overline{X})} \sqcup M_{S^1_k}^{(Y,\overline{Y})}$ with underlying $3$-manifold the disjoint union of two $3$-balls and embedded ribbon graphs given by $f\in \Hom_\calc(X\otimes \overline{X},k) $ and $g \in \Hom_\calc(k,Y\otimes \overline{Y})$. For the dual state space $ \widehat{\rmV}_\calc\left(\widehat{\id_{S^1_k}}^{(X,\overline{X},Y,\overline{Y})}\right)^*$
recall from \autocite[Prop.\,4.11]{DGGPR19} that it is generated by admissible bichrome graphs $T$, i.e.\ $T$ containing at least one blue edge labelled with a projective object of $\calc$, inside a fixed connected $3$-manifold viewed as a bordism $N_T \colon \widehat{\id_{S^1_k}}^{(X,\overline{X},Y,\overline{Y})} \to \varnothing$.\footnote{Note that in our conventions the TFT $\widehat{\rmV}_\calc$ corresponds to the dual TFT $\check{\rmV}_\calc'$ in \autocite{DGGPR19} and vice versa.} In particular, we can choose the manifold underlying $N_T$ to be given by $S^2\times I$. With this we are lead to study the invariants of the closed $3$-manifolds with admissible ribbon graphs given by:
\begin{equation}
   A = N_T \bigsqcup_{\widehat{\id_{S^1_k}}^{(X,\overline{X},Y,\overline{Y})}}M_{\id_{S^1_k}}^{(X,\overline{X},Y,\overline{Y})} \bigsqcup_{- M_{S^1_k}^{(X,\overline{X})} \sqcup M_{S^1_k}^{(Y,\overline{Y})}} B_{f,g}
\end{equation}
and
\begin{equation}
  B_\alpha = N_T \bigsqcup_{\widehat{\id_{S^1_k}}^{(X,\overline{X},Y,\overline{Y})}} M_\alpha^{(X,\overline{X},Y,\overline{Y})} \bigsqcup_{- M_{S^1_k}^{(X,\overline{X})} \sqcup M_{S^1_k}^{(Y,\overline{Y})}} B_{f,g}.
\end{equation}
First note that the manifolds underlying $A$ and $B_\alpha$ are given by $S^2\times S^1$ and $S^3$, respectively, and thus differ by index 1 surgery. In particular the two ribbons running along the core of the extra $1$-handle of $A$ can be chosen to be the ones labelled with $\overline{X}$ and $\overline{Y}$, so we find the following bichrome graph presentation of $A$ in $S^3$:
\begin{equation}
    \begin{tikzpicture}[baseline=-0.1cm]
            \draw[draw=string-blue,thick, line cap = round] (0.4,0) -- (0.4,1.2)
            (0.8,0) -- (0.8,1.2);
            
            \draw[white,line width=1mm] (0.6,0.6) ellipse (0.4 and 0.2);
            \begin{scope}[decoration={
                      markings,
                      mark=at position 0.25 with {\arrow{>}}}
                     ] 
            \draw[string-red,thick,postaction={decorate}] (0.6,0.6) ellipse (0.4 and 0.2);
            \end{scope}
            \begin{scope}[decoration={
                      markings,
                      mark=at position 0.5 with {\arrow{>}}}
                     ] 
            \draw[string-blue,thick,postaction={decorate}] (1.6,1.2) arc (0:180:0.4 and 0.3);
            \draw[string-blue,thick,postaction={decorate}] (0.4,1.2) arc (180:0:1 and 0.8);
            \draw[string-blue,thick,postaction={decorate}] (0.8,0) arc (-180:0:0.4 and 0.3);
            \draw[string-blue,thick,postaction={decorate}] (2.4,0) arc (0:-180:1 and 0.8);
            \end{scope}
            \draw[string-blue,thick,line cap=round] (1.6,0) -- (1.6,1.2)
            (2.4,0) --(2.4,1.2);
            \draw[white,line width=1mm] (0.4,0.3) -- (0.4,0.6)
            (0.8,0.3) -- (0.8,0.6);
            \draw[draw=string-blue,thick, line cap = round] (0.4,0.3) -- (0.4,0.6)
            (0.8,0.3) -- (0.8,0.6);
            \filldraw[fill=white,draw=string-blue,thick, line cap = round] (1.2,0.3) rectangle (2.8,0.9);
            \node at (2,0.6) [string-blue] {\scriptsize $T \cup (g \circ f)$};
            \node at (1.8,1.1) [string-blue] {\scriptsize $\overline{X}$};
            \node at (2.6,1.1) [string-blue] {\scriptsize $\overline{Y}$};
        \end{tikzpicture}
\end{equation}
where $T\cup (g\circ f)$ contains the admissible graph $T$ and the graph corresponding to $(g\circ f)$.
Analogously for $B_{\alpha}$ we have:
\begin{equation}
    \begin{tikzpicture}[baseline=-0.1cm]            
            \begin{scope}[decoration={
                      markings,
                      mark=at position 0.5 with {\arrow{>}}}
                     ] 
            \draw[string-blue,thick,postaction={decorate}] (1.6,1.2) arc (0:180:0.4 and 0.3);
            \draw[string-blue,thick,postaction={decorate}] (0.4,1.2) arc (180:0:1 and 0.8);
            \draw[string-blue,thick,postaction={decorate}] (0.8,0) arc (-180:0:0.4 and 0.3);
            \draw[string-blue,thick,postaction={decorate}] (2.4,0) arc (0:-180:1 and 0.8);
            \end{scope}
            \draw[string-blue,thick,line cap=round] (1.6,0) -- (1.6,1.2)
            (2.4,0) --(2.4,1.2);
            \filldraw[fill=white,draw=string-blue,thick, line cap = round] (1.2,0.3) rectangle (2.8,0.9);
            \filldraw[fill=white,draw=string-blue,thick, line cap = round] (0.3,0) rectangle (0.9,0.4);
            \filldraw[fill=white,draw=string-blue,thick, line cap = round] (0.3,0.8) rectangle (0.9,1.2);
            \node at (2,0.6) [string-blue] {\scriptsize $T \cup (g \circ f)$};
            \node at (1.8,1.1) [string-blue] {\scriptsize $\overline{X}$};
            \node at (2.6,1.1) [string-blue] {\scriptsize $\overline{Y}$};
            \node at (0.6,0.2) [string-blue] {\scriptsize $a$\tiny${}_\alpha$};
            \node at (0.6,1) [string-blue] {\scriptsize $b$\tiny${}_\alpha$};
        \end{tikzpicture}
\end{equation}
By \Cref{lem:cutting} and since the surgery link for $B$ has one less component then the one for $A$ an analogous argument as in the proof of \autocite[Prop.\,4.10]{DGGPR19} leads to
\begin{equation}
     \calD \widehat{\rmV}_\calc(A) = \zeta \sum_{\alpha=^1}^m \widehat{\rmV}_\calc(B_\alpha).
\end{equation}
Since $\zeta = \calD^2$ we get
\begin{equation}
   \widehat{\rmV}_\calc\left(M_{\id_{S^1_k}}^{(X,\overline{X},Y,\overline{Y})}\right) = \calD \sum_{\alpha=1}^m \widehat{\rmV}_\calc\left(M_\alpha^{(X,\overline{X},Y,\overline{Y})}\right).
\end{equation}
as desired.

\subsection{Some CFT quantities}\label{sec:cftstuff}
In this section we compute some more quantities of physical interest including boundary states and annulus amplitudes for different boundary conditions and compare our results to the ones proposed in \autocite{FGSS18cardy}. Moreover, we will compute the torus partition function as well as the action of line defects on bulk fields.
\subsubsection{Boundary states}\label{subsec:boundarystates}
Boundary states are bulk one point correlators on a disc with fixed boundary condition. In our formulation these correspond to the correlators for a world sheet $\frakD$ with underlying surface a cylinder such that one boundary is a gluing boundary and the other one a free boundary labelled with a boundary condition $n \in \calc$. If the gluing boundary is outgoing, i.e.\ $\frakD^{\mathrm{out}}_n \colon \varnothing \to S^1$ we will call it an \emph{outgoing} boundary state, and for an incoming gluing boundary $\frakD^{\mathrm{in}}_n \colon S^1\to \varnothing$ an \emph{incoming} boundary state.

Let $(X,\overline{X}) \in \calc\times\overline{\calc}$, the connecting bordism $M_{\frakD_n}^{(X,\overline{X})} \colon M_{S^1}^{(X,\overline{X})} \to \widehat{\frakD_n}^{(X,\overline{X})}$ of $\frakD^{\mathrm{out}}_n \colon \varnothing \to S^1$ is given by 
\begin{equation}
            \begin{tikzpicture}
    \draw[thick,loosely dotted,line cap=round]
    (0.6,0) arc (0:180:1.5 and 0.75); 
    \fill[white,opacity=0.7] (0.6,0) arc (0:180:1.5);
    \begin{scope}[thick,decoration={
                      markings,
                      mark=at position 0.57 with {\arrow{>}}}
                     ]  
        \draw[string-blue,thick,postaction={decorate}] (-0.9,-0.5) -- (-0.9,-1.5);
    \end{scope}
    \draw[white,ultra thick] (-0.9,0) ellipse (1.2 and 0.6);
    \begin{scope}[thick,decoration={
                      markings,
                      mark=at position 0.495 with {\arrow{>}}}
                     ]  
        \draw[string-violet,thick,postaction={decorate}] (-0.9,0) ellipse (1.2 and 0.6);
    \end{scope}
    \draw[white,ultra thick] (-0.9,0.5) -- (-0.9,1.5);
    \begin{scope}[thick,decoration={
                      markings,
                      mark=at position 0.57 with {\arrow{>}}}
                     ]  
        \draw[string-blue,thick,postaction={decorate}] (-0.9,0.5) -- (-0.9,1.5);
    \end{scope}
    \draw[white, ultra thick](0.6,0) arc (0:-180:1.5 and 0.75);
    \draw[thick](0.6,0) arc (0:-180:1.5 and 0.75);
    \draw[thick] (-0.9,0) circle (1.5);
    \draw[thick] (-0.4,0) arc (0:180:0.5);
    \draw[thick] (-0.9,0) circle (0.5);
    \draw[thick,loosely dotted,line cap=round]
    (-0.4,0) arc (0:180:0.5 and 0.2); 
    \fill[white,opacity=0.7] (-0.4,0) arc (0:180:0.5);
    \draw[thick] (-0.4,0) arc (0:180:0.5)
    (-0.4,0) arc (0:-180:0.5 and 0.2);
    \draw[thick] (-0.4,0) arc (0:180:0.5);
    \fill[string-blue](-0.9,-0.5) circle (0.04)
     (-0.9,0.5) circle (0.04)
     (-0.9,-1.5) circle (0.04)
     (-0.9,1.5) circle (0.04);
     \node at  (-0.85,-1) [string-blue,right] {\scriptsize $\overline{X}$}; 
     \node at  (-0.85,1) [string-blue,right] {\scriptsize $X$};
     \node at  (0.33,0) [string-violet,left] {\scriptsize $n$};
     %
     %
     %
     \begin{scope}[shift={(2.5,0)}]
    \draw[thick] (-0.9,0) circle (0.5);
    \draw[thick,loosely dotted,line cap=round]
    (-0.4,0) arc (0:180:0.5 and 0.2); 
    \fill[white,opacity=0.7] (-0.4,0) arc (0:180:0.5);
    \draw[thick] (-0.4,0) arc (0:180:0.5)
    (-0.4,0) arc (0:-180:0.5 and 0.2);
    \draw[thick] (-0.4,0) arc (0:180:0.5);
    \fill[string-blue](-0.9,-0.5) circle (0.04)
     (-0.9,0.5) circle (0.04);
     \node at (-0.9,-0.45) [string-blue,below] {\scriptsize $(\overline{X},+)$};
    \node at (-0.9,0.45) [string-blue,above] {\scriptsize $(X,+)$};
     \end{scope}
     %
     %
     %
    \begin{scope}[shift={(4.9,0)}]
    \draw[thick] (-0.9,0) circle (0.5);
    \draw[thick,loosely dotted,line cap=round]
    (-0.4,0) arc (0:180:0.5 and 0.2); 
    \fill[white,opacity=0.7] (-0.4,0) arc (0:180:0.5);
    \draw[thick] (-0.4,0) arc (0:180:0.5)
    (-0.4,0) arc (0:-180:0.5 and 0.2);
    \draw[thick] (-0.4,0) arc (0:180:0.5);
    \fill[string-blue](-0.9,-0.5) circle (0.04)
     (-0.9,0.5) circle (0.04);
     \node at (-0.9,-0.45) [string-blue,below] {\scriptsize $(\overline{X},+)$};
    \node at (-0.9,0.45) [string-blue,above] {\scriptsize $(X,+)$};
     \end{scope}
    \draw[thick,->,line cap=round] (2.3,0) -- (3.3,0);
    \fill[black] (0.8,-0.07) circle (0.025)
    (0.8,0.07) circle (0.025);
    \end{tikzpicture}
\end{equation}
The connecting bordism for $\frakD^{\mathrm{in}}_n \colon S^1\to \varnothing$ differs from the one illustrated above only in the orientation of the $(X,\overline{X})$-labelled ribbons. We will only discuss the outgoing case in detail since the ingoing case can be treated completely analogously.

Applying the TFT $\widehat{\rmV}_\calc$ to $M_{\frakD_n}^{(X,\overline{X})}$ and using the isomorphism between TFT state spaces and morphism spaces in $\calc$ we get the linear map
\begin{equation}
    \begin{aligned}
    \Hom_\calc(\unit,X\otimes\overline{X}) &\to \Hom_\calc(\unit,X\otimes\overline{X}) \\
        \begin{tikzpicture}[baseline=1cm]
            \filldraw[fill=white,draw=string-blue,thick, line cap = round] (-0.6,-0.1) rectangle (0.6,0.4);
            \draw[draw=string-blue,thick, line cap = round] (0.4,0.4) -- (0.4,1.6)
            (-0.4,0.4) -- (-0.4,1.6);
            \node at (-0.4,1.6) [above,string-blue] {\scriptsize $X$};
            \node at (0.4,1.6) [above,string-blue] {\scriptsize $\overline{X}$};
        \end{tikzpicture}
        &\mapsto 
        \begin{tikzpicture}[baseline=1cm]
            \filldraw[fill=white,draw=string-blue,thick, line cap = round] (-0.6,-0.1) rectangle (0.6,0.4);
            \draw[draw=string-blue,thick, line cap = round] (0.4,0.4) -- (0.4,1.6)
            (-0.4,0.4) -- (-0.4,1.6);
            \draw[draw=white,double=string-red,very thick] (-0.6,1) circle (0.4);
            \begin{scope}[decoration={
                      markings,
                      mark=at position 0.52 with {\arrow{>}}}
                     ] 
            \draw[string-violet,thick,postaction={decorate},line cap= round] (-0.2,1) arc (0:180:0.4);
            \draw[string-violet,thick,postaction={decorate},line cap= round] (-1,1) arc (180:360:0.4);
            \end{scope}
            \draw[draw=white,double=string-blue,very thick, line cap = round] (-0.4,1.2) -- (-0.4,1.6);
            \draw[string-blue,thick, line cap = round] (-0.4,1) -- (-0.4,1.6);
            \node at (-0.4,1.6) [above,string-blue] {\scriptsize $X$};
            \node at (0.4,1.6) [above,string-blue] {\scriptsize $\overline{X}$};
            \node at (-1,1) [left,string-violet] {\scriptsize $n$};
        \end{tikzpicture}
    \end{aligned}.
\end{equation}
Next to get $\Cor_{\frakD^{\mathrm{out}}_n}$ we use the adjunction $\Hom_\calc(\unit,X\otimes\overline{X}) \cong \Hom_\coend(\coend,X\otimes\overline{X})$, where we view $X\otimes\overline{X}$ as an $L$-module with it's canonical $L$-module structure constructed from the half braiding on $X\otimes\overline{X}$ as in \Cref{lem:center-modules} to obtain a linear map 
\begin{align}
\Hom_\coend(\coend,X\otimes\overline{X}) \to \Hom_\coend(\coend,X\otimes\overline{X})
\end{align}
which we will also call $\widehat{\rmV}_\calc(M_{D_n}^{(X,\overline{X})})$.\footnote{It is important here that $\overline{X}$ is an object in $\overline{\calc}$, i.e.\ we use the mirrored braiding on $\overline{X}$ to define the $\coend$-action.}
By definition $\Cor_{\frakD^{\mathrm{out}}_n}\in \mathrm{Nat}(\Cor_{S^1},\Bl_\calc(\frakD^{\mathrm{out}}_n))$ is induced by the natural transformation with components $\widehat{\rmV}_\calc(M_{\frakD_n}^{(X,\overline{X})})$. In order to compare this with the proposed boundary states of \autocite{FGSS18cardy} we need to make this abstractly defined $\Cor_{\frakD^{\mathrm{out}}_n}$ more concrete. To this end, consider the linear isomorphisms
\begin{equation}
    \begin{aligned}
        \mathrm{Nat}(\Cor_{S^1},\Bl_\calc(\frakD^{\mathrm{out}}_n)) &\cong \mathrm{Nat}(\Hom_\coend(\coend,-),\Hom_\coend(\coend,-)) \\
        &\cong \Hom_\coend(\coend,\coend) \\
        &\cong \Hom_\calc(\unit,\coend)
    \end{aligned}
\end{equation}
where in the first step we used $\Cor_{S^1} \cong \Hom_\coend(\coend,-) \cong \Bl_\calc(\frakD^{\mathrm{out}}_n)$, the Yoneda lemma in the second, and the free-forgetful adjunction $\Hom_\coend(\coend,\coend) \cong \Hom_\calc(\unit,\coend)$ in the last one. The space $\Hom_\calc(\unit,\coend)$ is precisely the one also considered in \autocite[Sec.\,3.2]{FGSS18cardy}. Unfortunately, we cannot simply compute the image of $\Cor_{\frakD^{\mathrm{out}}_n}$ under this chain of isomorphisms because we do not have direct access to the $L$-component of the natural transformation which is needed for the Yoneda lemma. Instead we will work our way backwards by showing that the outgoing boundary state proposed in \autocite[Sec.\,3.2]{FGSS18cardy} induces the natural transformation $\Cor_{\frakD^{\mathrm{out}}_n}$. According to \autocite[Sec.\,3.2]{FGSS18cardy} an outgoing boundary state should be described by the \emph{cocharacter} $\check{\chi}_n = \iota_n \circ \lcoev_n \in \Hom_\calc(\unit,\coend)$ 
of $n$ from \Cref{subsec:reptheory}. Following the isomorphism above the cocharcter $\check{\chi}_n$ gets sent to the natural transformation with components 
\begin{equation}
    \begin{aligned}
       \Phi_n^{Y}\colon \Hom_\coend(\coend,Y) &\to \Hom_\coend(\coend,Y) \\
        f &\mapsto f \circ \mu \circ (\check{\chi}_n \otimes \id_\coend)
    \end{aligned}
\end{equation}
for $Y \in \calc_\coend$.
It now follows from a straightforward calculation that $\Phi_n^{X\otimes\overline{X}} = \widehat{\rmV}_\calc(M_{D^{\mathrm{out}}_n}^{(X,\overline{X})})$, thus they both induce $\Cor_{\frakD^{\mathrm{out}}_n}$. As mentioned above, an analogous computation can be done for an incoming boundary state $\Cor_{\frakD^{\mathrm{in}}_n}$. In this case we obtain a \emph{character} $\chi_n \in \Hom_\calc(\coend,\unit)$ precomposed with the modular $S$-transformation $\calS \colon \coend \to \coend$ from \Cref{eq:def-S-endo-of-L}. With this we confirm the relation between boundary states and (co)characters postulated in \autocite[Sec.\,3.2]{FGSS18cardy}.

\subsubsection{Annulus amplitude}
Next we compute the annulus amplitude in the form of the correlator for an annulus $\frakA_{n,m}$ with boundary conditions $m,n\in \calc$.
The connecting bordism of $\frakA_{n,m}$ is given by 
\begin{equation}
    M_{\frakA_{m,n}} = 
    \begin{tikzpicture}[baseline=0cm]
    \draw[thick] (0,0) ellipse (1.5 and 1)
           (0.5,0.06) arc (-10:-170:0.5 and 0.3)
           (0.4,-0.06) arc (10:170:0.4 and 0.25);
         \begin{scope}[decoration={
                      markings,
                      mark=at position 0.5 with {\arrow{<}}}
                     ] 
            \draw[string-violet,thick,postaction={decorate}] (0,0) ellipse (0.9 and 0.6);
        \end{scope}
        \begin{scope}[decoration={
                      markings,
                      mark=at position 0.5 with {\arrow{>}}}
                     ] 
            \draw[string-violet,thick,postaction={decorate}] (0,0) ellipse (1.2 and 0.8);
        \end{scope}
        \node at (0.95,0) [string-violet,left] {\scriptsize $m$};
        \node at (1.1,0) [string-violet,right] {\scriptsize $n$};
    \end{tikzpicture}
    \colon \varnothing \longrightarrow 
    \begin{tikzpicture}[baseline=0cm]
    \draw[thick] (0,0) ellipse (1.5 and 1)
           (0.5,0.06) arc (-10:-170:0.5 and 0.3)
           (0.4,-0.06) arc (10:170:0.4 and 0.25);
    \end{tikzpicture}
\end{equation}
To compute $\widehat{\rmV}_\calc(M_{\frakA_{m,n}})$ we employ \autocite[Eq.\,4.26]{HR2024modfunc} again, this time for $X = m^* \otimes n$. 
We find that $\widehat{\rmV}_\calc(M_{\frakA_{m,n}}) \in \widehat{\rmV}_\calc(T^2)$ corresponds to $\check{\chi}_{m^*\otimes n} \in \Hom_\calc(\unit,\coend)$ under the isomorphisms $\widehat{\rmV}_\calc(T^2) \cong \Hom_\calc(\coend,\unit) \cong \Hom_\calc(\unit,\coend)$ where we used the Frobenius algebra structure on $\coend$ for the second one. This again reproduces the results of \autocite[Sec.\,4]{FGSS18cardy}. To be more precise we recover their result for the \emph{open-string channel}. The \emph{closed-string channel} can be obtained by performing a modular $S$-transformation.

\subsubsection{Torus partition function}
For the partition function, in the form of the torus correlator $\Cor_{T^2}$, we have to compute $\widehat{\rmV}_\calc(T^2\times I) \colon \widehat{\rmV}_\calc(\varnothing) \to \widehat{\rmV}_\calc(T^2\sqcup -T^2)$. In the semisimple setting one can compute the matrix coefficients of $\widehat{\rmV}_\calc(T^2\times I)$ by pairing with a dual basis of $\widehat{\rmV}_\calc(T^2)$ given by ribbons along the non-contractible cycle in a solid torus \autocite[Sec.\,5.3]{FRSI}. In the non-semisimple setting this approach cannot be used because $\widehat{\rmV}_\calc$ is defined on a non-rigid bordism category which results in not having a canonical basis for the dual state space. 

Instead we will compute $\Cor_{T^2}$ using factorisation of correlators in the form of Diagram \ref{diag:horinatural}. Let us denote with $\mathrm{ev}_{S^1} \colon S^1 \sqcup -S^1 \to \varnothing$ and $\mathrm{coev}_{S^1} \colon \varnothing \to S^1 \sqcup -S^1$ the cylinder $S^1\times I$ viewed as the evaluation and coevaluation morphism of $S^1$, respectively. With this we have $T^2 = \mathrm{ev}_{S^1} \sqcup_{ S^1 \sqcup -S^1} \mathrm{coev}_{S^1}$ and thus 
\begin{equation}
\Cor_{T^2} = (\Cor_{\mathrm{ev}_{S^1}} \diamond \Cor_{\mathrm{coev}_{S^1}}) \circ \eta_{\Cor_{ S^1 \sqcup -S^1}}.
\end{equation}

Using monoidality of $\widehat{\rmV}_\calc$ it is straightforward to check that $\eta_{\Cor_{S^1 \sqcup -S^1}} = \eta_{\Cor_{S^1}}\otimes_\kk \eta_{\Cor_{-S^1}}$. To get $\Cor_{\mathrm{ev}_{S^1}} \diamond \Cor_{\mathrm{coev}_{S^1}}$ we have to consider the corresponding version of Diagram \ref{diag:corgluing}. For this we need to first compute $\Cor_{\mathrm{ev}_{S^1}}^{(X,\overline{X},Y,\overline{Y})}$ and $\Cor_{\mathrm{coev}_{S^1}}^{(X,\overline{X},Y,\overline{Y})}$ for $(X,\overline{X}),(Y,\overline{Y})\in\calc\times\overline{\calc}$. Instead of computing $\Cor_{\mathrm{ev}_{S^1}}^{(X,\overline{X},Y,\overline{Y})}$ and $\Cor_{\mathrm{coev}_{S^1}}^{(X,\overline{X},Y,\overline{Y})}$ by evaluating with the TFT, as we did before, we can use that the only difference between the connecting manifolds of $\mathrm{ev}_{S^1}$, $\mathrm{coev}_{S^1}$, and $\id_{S^1}$ is the orientation of some of the ribbons. Algebraically changing the orientation of a ribbon corresponds to exchanging the labelling object with its dual, this implies $\Cor_{\mathrm{ev}_{S^1}}^{(X,\overline{X},Y,\overline{Y})} = \Cor_{\mathrm{id}_{S^1}}^{(X,\overline{X},Y^*,\overline{Y}^*)}$ as well as $\Cor_{\mathrm{coev}_{S^1}}^{(X,\overline{X},Y,\overline{Y})} = \Cor_{\mathrm{id}_{S^1}}^{(X^*,\overline{X}^*,Y,\overline{Y})}$. Since we already computed $\Cor_{\id_{S^1}}$ in \Cref{sec:twopointcor} we can use this to continue. 

The vector spaces in Diagram \ref{diag:corgluing} are isomorphic to the following $\Hom$-spaces in $\calc_\coend$
\begin{align*}
    \Cor_{S^1}(-) \cong \Hom_\coend(\coend,-)&, \quad \Cor_{-S^1}(-) \cong \Hom_\coend(\coend,(-)^*) \\
    \Bl_\calc(\mathrm{ev}_{S^1})(-,-) \cong \Hom_\coend(-,(-)^*)&, \quad \Bl_\calc(\mathrm{coev}_{S^1})(-,-) \cong \Hom_\coend((-)^*,-) \\
    \Bl_\calc(T^2) \cong \Hom_\coend(\unit,\mathbbm{L})&
\end{align*}
 where $\mathbbm{L} = \int^{Z \in \calc_\coend} Z^*\otimes Z$ is the canonical coend in $\calc_\coend \simeq \calc\boxtimes\overline{\calc}$. Under these isomorphism the relevant gluing maps are given by

\begin{equation}
    \begin{aligned}
        i^{(X,\overline{X})} \colon \Hom_\coend(X\otimes\overline{X},\coend) \otimes_\kk \Hom_\coend((X\otimes\overline{X})^*,\coend) &\to \Hom_\coend(\unit,\coend\otimes\coend) \\
        h \otimes_\kk k &\mapsto (h \otimes k)\circ \mathrm{ev}_{(X\otimes\overline{X})},
    \end{aligned}
\end{equation}
\begin{equation}
    \begin{aligned}
        \Tilde{i}^{(Y,\overline{Y})} \colon \Hom_\coend(\coend,Y\otimes\overline{Y}) \otimes_\kk \Hom_\coend(\coend,(Y\otimes\overline{Y})^*) &\to \Hom_\coend(\coend\otimes\coend,\unit) \\
        l \otimes_\kk m &\mapsto \mathrm{coev}_{(X\otimes\overline{X})}\circ(l \otimes m),
    \end{aligned}
\end{equation}
and 
\begin{equation}
    \begin{aligned}
        I^{(X,\overline{X},Y,\overline{Y})}\colon\Hom_\coend(X\otimes\overline{X},(Y\otimes\overline{Y})^*) &\otimes_\kk \Hom_\coend((X\otimes\overline{X})^*,Y\otimes\overline{Y}) \to \Hom_\coend(\unit,\mathbbm{L}) \\
        f &\otimes_\kk g \mapsto \iota_{(Y\otimes\overline{Y})}\circ(f \otimes g)\circ \mathrm{ev}_{(X\otimes\overline{X})},
    \end{aligned}
\end{equation}
where $\iota_{(Y\otimes\overline{Y})} \colon (Y\otimes\overline{Y})^*\otimes (Y\otimes\overline{Y}) \to \mathbbm{L}$ is the universal dinatural morphism in $\calc_\coend$.
Commutativity of Diagram \ref{diag:corgluing} thus corresponds to finding the unique linear map
\begin{equation}
       \Cor_{\mathrm{ev}_{S^1}} \diamond \Cor_{\mathrm{coev}_{S^1}} \colon \Hom_\coend(\coend\otimes\coend,\unit) \otimes_\kk \Hom_\coend(\unit,\coend\otimes\coend) \to \Hom_\coend(\unit,\mathbbm{L}) 
\end{equation}
such that 
\begin{equation}\label{eq:torusgluecond}
    \Cor_{\mathrm{ev}_{S^1}} \diamond \Cor_{\mathrm{coev}_{S^1}} \circ (\iota^{(X,\overline{X})} \otimes_\kk \Tilde{\iota}^{(Y,\overline{Y})}) = I^{(X,\overline{X},Y,\overline{Y})} \circ (\Cor_{\mathrm{id}_{S^1}}^{(X^*,\overline{X}^*,Y,\overline{Y})} \otimes_\kk \Cor_{\mathrm{id}_{S^1}}^{(X,\overline{X},Y^*,\overline{Y}^*)}).
\end{equation}
A straightforward computation now shows that the map
\begin{equation}
\begin{aligned}
       \Hom_\coend(\coend\otimes\coend,\unit) \otimes_\kk \Hom_\coend(\unit,\coend\otimes\coend) &\to \Hom_\coend(\unit,\mathbbm{L}) \\
        \phi \otimes_\kk \psi &\mapsto (\phi \otimes \iota_{\coend})\circ (\id_\coend \otimes \lcoev_\coend \otimes \id_\coend) \circ \psi
\end{aligned}
\end{equation}
satisfies \eqref{eq:torusgluecond}.

Putting everything together gives
$\Cor_{T^2} = \iota_{\coend}\circ \lcoev_\coend = \check{\chi}_\coend \in \Hom_{\coend}(\unit,\mathbbm{L})$. Combining \autocite[Cor.\,4.3]{Shimizu15character} and \autocite[Thm.\,4.11]{Shimizu15character} we can express this in terms of characters of simple objects in $\calc$ as follows
\begin{equation}
    \check{\chi}_\coend = \sum_{U,V\in \mathrm{Irr}(\calc)} C_{U,V} \,\check{\chi}_{U^*} \boxtimes \check{\chi}_{V}
\end{equation}
where $C_{U,V}$ is the Cartan matrix of $\calc$, i.e.\ $C_{U,V} = \mathrm{dim}_\kk \Hom_\calc(P_V,P_U)$ with 
$P_S$ the projective cover of the simple $S$. 
With this we reproduce \autocite[Thm.\,3]{FSS13CardyCartan} in the Hopf algebra setting, see also \autocite[Rem.\,7.8]{woike2025constructioncorrelatorsfiniterigid}.

\subsubsection{Line defect action on bulk fields}
Finally, let us discuss how a line defect $D\in \calc$ acts on the bulk fields $\mathbbm{F}_{S^1}$. The action is induced by the world sheet with underlying surface the cylinder $S^1\times I$ and $D$-labelled line defect at $S^1\times \{1/2\}$
\begin{equation}
    \mathfrak{O}_D =\begin{tikzpicture}[baseline=-0.1cm]
        \draw[thick,line cap=round,dashed] (0.5,-1) arc (0:180:0.5 and 0.3);
        \draw[string-violet,thick,line cap=round,dashed] (0.5,0) arc (0:180:0.5 and 0.3);
        \fill[white,opacity=0.7] (-0.5,-1) rectangle (0.5,1);
        \draw[thick,line cap=round] (0,1) ellipse (0.5 and 0.3)
        (0.5,-1) arc (0:-180:0.5 and 0.3);
        \draw[thick,line cap=round] (0.5,-1) --(0.5,1)
        (-0.5,-1) --(-0.5,1);
        \begin{scope}[decoration={
                      markings,
                      mark=at position 0.5 with {\arrow{>}}}
                     ] 
        \draw[string-violet,thick,line cap=round,postaction={decorate}] (-0.5,0) arc (180:360:0.5 and 0.3);
        \end{scope}
        \node at (-0.5,0) [left,string-violet] {\scriptsize $D$};
    \end{tikzpicture}
    \colon 
    \begin{tikzpicture}[baseline=-0.1cm]
        \draw[thick,line cap=round] (0,0) circle (0.4);
    \end{tikzpicture} 
    \longrightarrow
    \begin{tikzpicture}[baseline=-0.1cm]
        \draw[thick,line cap=round] (0,0) circle (0.4);
    \end{tikzpicture}.
\end{equation}
The connecting bordism $M_{\frakO_D}^{(X,\overline{X},Y,\overline{Y})}$ for this world sheet is given by 
\begin{equation}
            \begin{tikzpicture}
    \draw[thick,loosely dotted,line cap=round](0:2.5 and 0.9) arc (0:180:2.5 and 0.9);
    \fill[white,opacity=0.7] (0,0) circle (2.5);
    \draw[thick,loosely dotted,line cap=round](0:0.5 and 0.2) arc (0:180:0.5 and 0.2)
    ([yshift=-1.5cm]0:0.5 and 0.2) arc (0:180:0.5 and 0.2)
    ([yshift=1.5cm]0:0.5 and 0.2) arc (0:180:0.5 and 0.2);
    \draw[thick,string-violet,line cap=round,->](1.3,0) arc (0:180: 1.3 and 0.575);
    \fill[white,opacity=0.7](0,0) circle (0.5)
    (0,1.5) circle (0.5)
    (0,-1.5) circle (0.5);
    \draw[white,ultra thick] (0,1) -- (0,0.5);
    \draw[thick] (0,0) circle (0.5)
    (0,1.5) circle (0.5)
    (0,-1.5) circle (0.5);
    \draw[thick,line cap=round](0:0.5 and 0.2) arc (0:-180:0.5 and 0.2)
    ([yshift=-1.5cm]0:0.5 and 0.2) arc (0:-180:0.5 and 0.2)
    ([yshift=1.5cm]0:0.5 and 0.2) arc (0:-180:0.5 and 0.2);
    \begin{scope}[thick,decoration={
                      markings,
                      mark=at position 0.57 with {\arrow{>}}}
                     ]  
        \draw[string-blue,thick,postaction={decorate}] (0,1) -- (0,0.5);
        \draw[string-blue,thick,postaction={decorate}] (0,2) -- (0,2.5);
        \draw[string-blue,thick,postaction={decorate}] (0,-0.5) -- (0,-1);
        \draw[string-blue,thick,postaction={decorate}] (0,-2.5) -- (0,-2);
    \end{scope}
    \draw[ultra thick,white](1.3,0) arc (0:-180:1.3 and 0.575);
    \draw[thick,string-violet,line cap=round,<-](1.3,0) arc (0:-180: 1.3 and 0.575);
    \draw[ultra thick,white](2.5,0) arc (0:-180:2.5 and 0.9);
    \draw[thick] (0,0) ellipse (2.5)
    (2.5,0) arc (0:-180:2.5 and 0.9);
     \fill[string-blue]
     (0,2.5) circle (0.04)
     (0,2) circle (0.04)
     (0,1) circle (0.04)
     (0,0.5) circle (0.04)
     (0,-0.5) circle (0.04)
     (0,-1) circle (0.04)
     (0,-2) circle (0.04)
     (0,-2.5) circle (0.04);
    \node at (0,2.25) [string-blue,right] {\scriptsize $\overline{Y}$};
    \node at (0,0.75) [string-blue,right] {\scriptsize $Y$};
    \node at (0,-0.72) [string-blue,right] {\scriptsize $X$};
    \node at (0,-2.25) [string-blue,right] {\scriptsize $\overline{X}$};
    \node at (1.32,-0.1) [string-violet,right] {\scriptsize $D$};
     %
     %
     %
     \begin{scope}[shift={(4.5,0)}]
    \draw[thick] (0.9,0) circle (0.5)
    (-0.9,0) circle (0.5);
    \draw[thick,loosely dotted,line cap=round](0.4,0) arc (180:0:0.5 and 0.2)
    (-0.4,0) arc (0:180:0.5 and 0.2); 
    \fill[white,opacity=0.7] (-0.4,0) arc (0:180:0.5)
    (1.4,0) arc (0:180:0.5);
    \draw[thick] (-0.4,0) arc (0:180:0.5)
    (1.4,0) arc (0:180:0.5)
    (0.4,0) arc (180:360:0.5 and 0.2)
    (-0.4,0) arc (0:-180:0.5 and 0.2);
    \draw[thick] (-0.4,0) arc (0:180:0.5);
    \fill[string-blue] ([xshift=0.9cm]60:0.5) circle (0.04)
    ([xshift=-0.9cm]240:0.5) circle (0.04)
    ([xshift=0.9cm]120:0.5) circle (0.04)
    ([xshift=-0.9cm]300:0.5) circle (0.04);
     \node at ([xshift=0.9cm]70:0.4) [string-blue,above right] {\scriptsize $(\overline{Y},+)$};
     \node at ([xshift=0.9cm]110:0.4) [string-blue,above left] {\scriptsize $(Y,+)$};
     \node at ([xshift=-0.9cm]300:0.4) [string-blue,below right] {\scriptsize $(\overline{X},-)$};
     \node at ([xshift=-0.9cm]250:0.4) [string-blue,below left] {\scriptsize $(X,-)$};
     \end{scope}
     %
     %
     %
    \begin{scope}[shift={(8.8,0)}]
    \draw[thick] (0.5,0) circle (0.5)
    (-0.9,0) circle (0.5);
    \draw[thick,loosely dotted,line cap=round](0,0) arc (180:0:0.5 and 0.2)
    (-0.4,0) arc (0:180:0.5 and 0.2); 
    \fill[white,opacity=0.7] (-0.4,0) arc (0:180:0.5)
    (1,0) arc (0:180:0.5);
    \draw[thick] (-0.4,0) arc (0:180:0.5)
    (1,0) arc (0:180:0.5)
    (0,0) arc (180:360:0.5 and 0.2)
    (-0.4,0) arc (0:-180:0.5 and 0.2);
    \draw[thick] (-0.4,0) arc (0:180:0.5);
    \fill[string-blue](-0.9,-0.5) circle (0.04)
     (-0.9,0.5) circle (0.04)
     (0.5,-0.5) circle (0.04)
     (0.5,0.5) circle (0.04);
     \node at (-0.9,-0.45) [string-blue,below] {\scriptsize $(X,-)$};
     \node at (-0.9,0.5) [string-blue,above] {\scriptsize $(Y,+)$};
     \node at (0.5,-0.45) [string-blue,below] {\scriptsize $(\overline{X},-)$};
     \node at (0.5,0.5) [string-blue,above] {\scriptsize $(\overline{Y},+)$};
    \end{scope}
    \draw[thick,->,line cap=round] (6.1,0) -- (7.2,0);
    \fill[black] (2.7,-0.07) circle (0.025)
    (2.7,0.07) circle (0.025);
    \end{tikzpicture}.
\end{equation}
By functoriality of $\widehat{\rmV}_\calc$ the linear map $\widehat{\rmV}_\calc\left(M_{ _D}^{(X,\overline{X},Y,\overline{Y})}\right)$ can be obtained by composing the maps $\widehat{\rmV}_\calc\left(M_{\id_{S^1}}^{(X,\overline{X},Y,\overline{Y})}\right)$ and $\widehat{\rmV}_\calc\left(M_{\frakD^{\mathrm{out}}_D}^{(Y,\overline{Y})}\right)$ from Sections \ref{subsec:bulk2point} and \ref{subsec:boundarystates}. In particular, the natural transformation induced by $\widehat{\rmV}_\calc\left(M_{\frakO_N}^{(X,\overline{X},Y,\overline{Y})}\right)$ is the composition
\begin{equation}
    \Cor_{\frakO_D} = \epsilon_{\Cor_{S^1}} \circ (\id_{\Cor_{S^1}^\dagger}\otimes_\kk \Cor_{\frakD^{\mathrm{out}}_D}) \colon \Cor_{S^1}^\dagger \otimes_\kk \Cor_{S^1} \Rightarrow \Bl_\calc(\frakO_D).
\end{equation}
Next note that by the Yoneda lemma we have $\mathrm{Nat}(\Cor_{S^1}^\dagger \otimes_\kk \Cor_{S^1}, \Bl_\calc(\frakO_D)) \cong \Hom_\coend(\mathbbm{F}_{S^1},\mathbbm{F}_{S^1})$, so instead of computing the corresponding natural transformation we will focus on the endomorphism of $\mathbbm{F}_{S^1} \cong \coend$ instead. We will call this endomorphism the \emph{defect operator} associated to $D$ and denote it with $\mathcal{O}_D$. To compute $\mathcal{O}_D$ first recall from \Cref{rem:unitality} that $\epsilon_{\Cor_{S^1}}$ gets sent to $\id_{\coend} \in \Hom_\coend(\coend,\coend)$.
In the discussion in \Cref{subsec:boundarystates} we saw that $\Cor_{\frakD_D^{\mathrm{out}}}$ corresponds to $\mu \circ (\check{\chi}_D \otimes\id_\coend) \colon \coend \to \coend$. Combining this we find
\begin{equation}
    \mathcal{O}_D = \mu \circ (\check{\chi}_D \otimes\id_\coend) \in \Hom_\coend(\coend,\coend).
\end{equation}
As a self-consistency check note that we also could have decomposed $M_{\frakO_N}^{(X,\overline{X},Y,\overline{Y})}$ using $M_{\frakD_D^{\mathrm{in}}}^{(X,\overline{X})}$ instead of $M_{\frakD_D^{\mathrm{out}}}^{(Y,\overline{Y})}$. By doing this we obtain the endomorphism $((\chi_D\circ \calS) \otimes \id_\coend)\circ \Delta_{\Lambda}$ where $\Delta_{\Lambda}$ is the Frobenius coalgebra structure on $\coend$ from \Cref{prop:frobpairingcoend} and $\calS$ is the modular S-transformation from \Cref{eq:def-S-endo-of-L}. Using the relation between characters, cocharacters, and the modular S-transformation from \autocite[Sec.\,2.4]{FGSS18cardy} a direct calculation gives $\mu \circ (\check{\chi}_D \otimes\id_\coend) = ((\chi_D\circ \calS) \otimes \id_\coend)\circ \Delta_{\Lambda}$. Thus both decompositions of $M_{\frakO_N}^{(X,\overline{X},Y,\overline{Y})}$ lead to the same endomorphism of $\coend$. 

\begin{remark}
    In the semisimple setting we have $\coend \cong \bigoplus_{i\in \mathrm{Irr}(\calc)} i^*\boxtimes i$ as objects in $\calc_\coend \simeq \calc\boxtimes\overline{\calc}$ where $\mathrm{Irr}(\calc)$ is a set of representatives of simple objects in $\calc$. For $j$ a simple object the defect operator $\mathcal{O}_j$ acts on $L$ by multiplication with the (normalised) $S$-matrix element $S_{j,i^*}/S_{0,i^*}$, see e.g.\ \autocite[Sec.\,6.2]{FFRS2007defects}.
\end{remark}

We can compose two line defects by bringing them close to each other. To be more precise for a second line defect labelled by $E\in \calc$ the composed line defect is labelled by $E\otimes D$. This is compatible with composition of endomorphisms since $ \mathcal{O}_E \circ  \mathcal{O}_D =  \mathcal{O}_{E\otimes D}$ because $\mu \circ (\check{\chi}_E \otimes \check{\chi}_D) = \check{\chi}_{E\otimes D}$, see \autocite[Thm.\,3.10]{Shimizu15character} or \autocite[Sec.\,2.3]{FGSS18cardy}. Finally, by reformulating \autocite[Cor.\,4.3]{Shimizu15character} we find that the defect operators span the Grothendieck ring of $\calc$: 
\begin{proposition}\label{prop:algofdefs}
    The subalgebra of $\mathrm{End}_\coend(\coend)$ generated by the defect operators $(\mathcal{O}_D)_{D\in \calc}$ is isomorphic (as an algebra) to the linearised Grothendieck ring $\mathrm{Gr}_\kk(\calc)$.
\end{proposition}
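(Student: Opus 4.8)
The plan is to identify $\mathrm{End}_\coend(\coend)$ with $\coend$ itself, recognise the defect operators as left multiplications by internal cocharacters, and thereby reduce the statement to the categorical linear independence of characters.

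First I would use that $\coend$ is the right regular $\coend$-module: every right-module endomorphism $\phi$ equals left multiplication $L_a := \mu\circ(a\otimes\id_\coend)$ by the element $a = \phi\circ\eta \in \Hom_\calc(\unit,\coend)$, and $L_a\circ L_b = L_{ab}$ by associativity of $\mu$. This gives an algebra isomorphism $\coend \xrightarrow{\sim} \mathrm{End}_\coend(\coend)$, $a\mapsto L_a$, under which the defect operator $\mathcal{O}_D = \mu\circ(\check{\chi}_D\otimes\id_\coend)$ is precisely $L_{\check{\chi}_D}$, left multiplication by the internal cocharacter $\check{\chi}_D = \iota_D\circ\rcoev_D$. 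It therefore suffices to prove that the internal cocharacter map $\Phi\colon \mathrm{Gr}_\kk(\calc)\to\coend$, $[D]\mapsto\check{\chi}_D$, is a well-defined, injective, unital algebra homomorphism; its image is then exactly the subalgebra of $\mathrm{End}_\coend(\coend)$ generated by the $\mathcal{O}_D$.

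Next I would verify that $\Phi$ is a unital algebra homomorphism. Well-definedness on $\mathrm{Gr}_\kk(\calc)$ amounts to additivity of $D\mapsto\check{\chi}_D$ on short exact sequences, a standard property of internal (co)characters that follows from naturality of the universal dinatural transformation $\iota$ together with Shimizu's character theory. Multiplicativity $\Phi([E])\cdot\Phi([D]) = \mu\circ(\check{\chi}_E\otimes\check{\chi}_D) = \check{\chi}_{E\otimes D} = \Phi([E\otimes D])$ is the cocharacter fusion rule \autocite[Thm.\,3.10]{Shimizu15character} already invoked above, and unitality holds because $\check{\chi}_\unit$ coincides with the unit $\eta$ of $\coend$, so that $\mathcal{O}_\unit = L_\eta = \id_\coend$. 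Since the linear span of the $\check{\chi}_D$ is closed under $\mu$, it coincides with the subalgebra generated by the defect operators, and $\Phi$ surjects onto it.

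The crux is injectivity. As the classes $[S]$ of simple objects form a basis of $\mathrm{Gr}_\kk(\calc)$, this is equivalent to the linear independence of $\{\check{\chi}_S\}_{S\in\mathrm{Irr}(\calc)}$ in $\coend$, i.e.\ the cocharacter form of the categorical linear-independence-of-characters theorem. The internal characters $\{\chi_S\}_{S\in\mathrm{Irr}(\calc)}$ are linearly independent in $\Hom_\calc(\coend,\unit)$ \autocite[Cor.\,4.3]{Shimizu15character}, and the relation $\chi_X = \omega\circ(\check{\chi}_X\otimes\id_\coend)$ together with non-degeneracy of the Hopf pairing $\omega$ (that is, modularity of $\calc$) shows that $\check{\chi}\mapsto\chi$ is injective, transporting linear independence from the $\chi_S$ to the $\check{\chi}_S$. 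This is the single place where non-semisimplicity must be handled by Shimizu's machinery rather than by a semisimple basis count, and I expect it to be the main obstacle; everything else is formal. Assembling the three steps yields the desired algebra isomorphism $\mathrm{Gr}_\kk(\calc)\xrightarrow{\sim}\langle\mathcal{O}_D : D\in\calc\rangle$.
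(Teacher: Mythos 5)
Your proposal is correct and is essentially the paper's own argument: the paper likewise reads $\mathcal{O}_D$ as multiplication by the internal cocharacter $\check{\chi}_D$, obtains multiplicativity from Shimizu's cocharacter fusion rule (Thm.\,3.10 of that reference), and deduces that the defect operators span a copy of $\mathrm{Gr}_\kk(\calc)$ from the linear independence of irreducible characters (Cor.\,4.3), which are exactly the two inputs you invoke. Your write-up merely makes explicit the bookkeeping the paper leaves implicit --- the identification $\mathrm{End}_\coend(\coend)\cong\Hom_\calc(\unit,\coend)$ as a convolution algebra, additivity of $D\mapsto\check{\chi}_D$ on exact sequences, and unitality $\check{\chi}_\unit=\eta$.
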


Combining the above proposition with \autocite[Cor.\,8.7]{GR17projmod}, which states that $\mathrm{Gr}_\kk(\calc)$ is semisimple as a $\kk$-algebra if and only if $\calc$ is semisimple, gives:

\begin{corollary}
    The subalgebra of $\mathrm{End}_\coend(\coend)$ generated by defect operators is semisimple if and only if $\calc$ is semisimple.
\end{corollary}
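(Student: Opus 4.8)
The plan is to deduce this directly from the algebra isomorphism established in \Cref{prop:algofdefs} together with the cited structural result on the linearised Grothendieck ring. The key observation is that semisimplicity is a property of an abstract finite-dimensional algebra which is invariant under algebra isomorphism; hence it suffices to transport the question across the isomorphism of \Cref{prop:algofdefs} to $\mathrm{Gr}_\kk(\calc)$, where the answer is already known.

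Concretely, first I would recall that \Cref{prop:algofdefs} identifies the subalgebra $\mathcal{A} \subseteq \mathrm{End}_\coend(\coend)$ generated by the defect operators $(\mathcal{O}_D)_{D\in\calc}$ with the linearised Grothendieck ring $\mathrm{Gr}_\kk(\calc)$ as $\kk$-algebras. Since an algebra isomorphism preserves the Jacobson radical, and thus semisimplicity in the sense of a vanishing radical, the algebra $\mathcal{A}$ is semisimple if and only if $\mathrm{Gr}_\kk(\calc)$ is. Second, I would invoke \autocite[Cor.\,8.7]{GR17projmod}, which asserts precisely that $\mathrm{Gr}_\kk(\calc)$ is semisimple as a $\kk$-algebra exactly when $\calc$ is semisimple. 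Chaining the two equivalences yields the claim.

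The main point requiring care is not a genuine obstacle but a bookkeeping check on the ground field. The defect operators $\mathcal{O}_D$ are $\kk$-linear endomorphisms of $\coend$ and the isomorphism of \Cref{prop:algofdefs} is an isomorphism of $\kk$-algebras, with $\kk$ algebraically closed of characteristic zero as fixed in our conventions; the cited corollary is stated over the same ground field, so the two notions of semisimplicity coincide and no base-change subtlety arises. Beyond this verification the argument is immediate, and the corollary is a direct combination of \Cref{prop:algofdefs} with \autocite[Cor.\,8.7]{GR17projmod}.
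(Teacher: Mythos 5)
Your proposal is correct and coincides with the paper's own argument: the corollary is obtained there exactly by combining \Cref{prop:algofdefs} with \autocite[Cor.\,8.7]{GR17projmod}, using that semisimplicity is preserved under algebra isomorphism. Your additional remark on the ground field is a harmless bookkeeping check that the paper leaves implicit.
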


As an example, consider the non-semisimple modular tensor category $\mathcal{SF}(N)$ of $N \in \mathbb{Z}_{>0}$ pairs of symplectic fermions \autocite{Run12freesupbos,DR12extHopf,FGR17sympfermi}. It has 4 simple objects $\unit$, $\Pi\unit$, $T$, $\Pi T$, and hence $\mathrm{Gr}_\mathbb{C}(\mathcal{SF}(N))$ is a 4-dimensional $\mathbb{C}$-algebra with basis $[\unit]$, $[\Pi\unit]$, $[T]$, $[\Pi T]$. The product is given by
\begin{align}
        &[\Pi\unit] \cdot [\Pi\unit] = [\unit] ~~,\quad
        [\Pi\unit] \cdot [T] = [\Pi T] ~,
        \nonumber \\
        &[T] \cdot [T] = [T] \cdot [\Pi T] = 2^{2N-1} ([\unit] + [\Pi\unit]) \ .
\end{align}
This algebra is indeed not semisimple, for example the element $n = [T]-[\Pi T]$ satisfies $n^2=0$. Thus in a faithful representation, $n$ is non-diagonalisable, and hence one of $[T]$ and $[\Pi T]$ (and hence both) are non-diagonalisable.

By Proposition \ref{prop:algofdefs}, the defect operators $\mathcal{O}_X$, $X \in \{\unit, \Pi\unit, T, \Pi T \}$ satisfy the same composition rules. In particular, the action of $\mathcal{O}_T$ on the space of bulk fields cannot be diagonalised.

\addcontentsline{toc}{section}{References}
\sloppy
\printbibliography
\end{document}